\numberwithin{equation}{section}
\numberwithin{figure}{section}
  \theoremstyle{definition}
  \newtheorem{defn}{\protect\definitionname}
  \theoremstyle{plain}
  \newtheorem*{thm*}{\protect\theoremname}
  \theoremstyle{remark}
  \newtheorem*{rem*}{\protect\remarkname}
  \theoremstyle{plain}
  \newtheorem*{prop*}{\protect\propositionname}
  \theoremstyle{plain}
  \newtheorem*{lem*}{\protect\lemmaname}
  \theoremstyle{plain}
  \newtheorem*{cor*}{\protect\corollaryname}
  \theoremstyle{remark}
  \newtheorem*{acknowledgement*}{\protect\acknowledgementname}
\theoremstyle{plain}
\newtheorem{thm}{\protect\theoremname}
  \theoremstyle{remark}
  \newtheorem{rem}{\protect\remarkname}
  \theoremstyle{plain}
  \newtheorem{prop}{\protect\propositionname}
  \theoremstyle{plain}
  \newtheorem{lem}{\protect\lemmaname}
  \theoremstyle{plain}
  \newtheorem{cor}{\protect\corollaryname}
 \theoremstyle{definition}
 \newtheorem*{defn*}{\protect\definitionname}
\address{Department of Mathematics, The University of Chicago, Chicago, IL 60637, USA} 
\email{lkao@math.uchicago.edu}
\keywords{}
\subjclass[2000]{}
\thanks{Kao gratefully acknowledges support from the National Science Foundation Postdoctoral Research Fellowship under grant DMS 1703554.}
\def\s{\sigma}
\def\k{\kappa}
\def\t{\tau}
\def\G{\Gamma}
\def\g{\gamma}
\def\L{\Lambda}
\def\w{\omega}
\def\dd{\mathrm{d}}
\def\D{\mathbb{D}}
\def\R{\mathbb{R}}
\def\H{\mathbb{H}}
\def\N{\mathbb{N}}
\def\Z{\mathbb{Z}}
\def\bb{\mathbb}
\def\cal{\mathcal}
\def\psl{\mathrm{PSL}}
\def\vbdy{\partial_{\infty}}
\def\TMS{\Sigma^{+}}
  \providecommand{\acknowledgementname}{Acknowledgement}
  \providecommand{\corollaryname}{Corollary}
  \providecommand{\definitionname}{Definition}
  \providecommand{\lemmaname}{Lemma}
  \providecommand{\propositionname}{Proposition}
  \providecommand{\remarkname}{Remark}
  \providecommand{\theoremname}{Theorem}
\providecommand{\corollaryname}{Corollary}
\providecommand{\theoremname}{Theorem}
\begin{document}

\title{Manhattan Curves For Hyperbolic Surfaces with Cusps}

\author{Lien-Yung Kao}

\maketitle

\begin{abstract}
In this paper, we study an interesting curve, so-called the Manhattan
curve, associated with a pair of boundary-preserving Fuchsian representations
of a (non-compact) surface, especially representations corresponding
to Riemann surfaces with cusps. Using Thermodynamic Formalism (for
countable Markov shifts), we prove the analyticity of the Manhattan
curve. Moreover, we derive several dynamical and geometric rigidity
results, which generalize results of Marc Burger \cite{Burger:1993wb}
and Richard Sharp \cite{Sharp:1998if} for convex-cocompact Fuchsian
representations.
\end{abstract}

\tableofcontents{}

\section{Introduction}

This paper is devoted to studying relations between Fuchsian representations
of a (non-compact) surface through a dynamics tool, namely, Thermodynamic
Formalism (for countable Markov shifts). Using a symbolic dynamics
model associated with these representations, we investigate several
closely related and informative geometric and dynamical objects arising
from them, such as the critical exponent, the Manhattan curve, and
Thurston's intersection number. For dynamics, we prove a version of
the famous Bowen's formula, which characterizes several geometric
and dynamics quantities via the (Gurevich) pressure. Moreover, we
analyze the phase transition of the pressure function (of weighted
geometric potentials) in detail; thus, we have a control of the analyticity
of the pressure. In geometry, we recover and extend several rigidity
results, such as Bishop-Steger entropy rigidity and Thurston's intersection
number rigidity, to Riemann surfaces of infinite volume and with cusps. 

To put our results in context, we shall start from notations and definitions.
Throughout the paper, $S$ denotes a (topological) surface with negative
Euler characteristic. Let $\rho_{1},\rho_{2}$ be two Fuchsian (i.e.,
discrete and faithful) representations of $G:=\pi_{1}S$ into ${\rm PSL}(2,\R)$.
For short, we denote $\rho_{i}(G)$ by $\G_{i}$, by $S_{i}=\G_{i}\backslash\H$
the Riemann surface of $\rho_{i}$ for $i=1,2$. We write $h_{top}(S_{1})$
and $h_{top}(S_{2})$ for the \textit{topological entropy of the geodesic
flow} for $S_{1}$ and $S_{2}$.

The group $G$ acts diagonally on $\H\times\H$ by $\g\cdot x=(\rho_{1}(\g)x_{1},\rho_{2}(\g)x_{2})$
where $x=(x_{1},x_{2})\in\H\times\H$ and $\g\in G$. We are interested
in \textit{weighted Manhattan metrics} $d_{\rho_{1},\rho_{2}}^{a,b}$
associated with $S_{1}$ and $S_{2}$. More precisely, fix $o=(o_{1},o_{2})\in\H\times\H$,
$d_{\rho_{1},\rho_{2}}^{a,b}(o,\gamma o):=a\cdot d(o_{1},\rho_{1}(\gamma)o_{1})+b\cdot d(o_{2},\rho_{2}(\g)o_{2})$.
Moreover, we always assume that $a,b\geq0$ and $a,b$ do not vanish
at the same time, i.e., throughout this paper we assume that $(a,b)\in D:=\{(x,y)\in\R^{2}:\ x\geq0,y\geq0\}\backslash(0,0)$.

\begin{defn}
The Poincaré series of the weighted Manhattan metric $d_{\rho_{1},\rho_{2}}^{a,b}$
is defined as 
\[
Q_{\rho_{1},\rho_{2}}^{a,b}(s)={\displaystyle \sum_{\g\in G}\exp(-s\cdot d_{\rho_{1},\rho_{2}}^{a,b}(o,\g o)).}
\]
Moreover, $\delta_{\rho_{1},\rho_{2}}^{a,b}$ denotes the critical
exponent of $Q_{\rho_{1},\rho_{2}}^{a,b}(s)$, that is, $Q_{\rho_{1},\rho_{2}}^{a,b}(s)$
diverges when $s<\delta_{\rho_{1},\rho_{2}}^{a,b}$ and $Q_{\rho_{1},\rho_{2}}^{a,b}(s)$
converges when $s>\delta_{\rho_{1},\rho_{2}}^{a,b}$. For short, if
there is no confusion, we will always drop the subscripts $\rho_{1},\rho_{2}$.
\end{defn}

Noticing that the critical exponent $\delta^{a,b}$, by the triangle
inequality, is independent on the choice of the reference point $o=(o_{1},o_{2})$.
We remark that when $a=0$ (or $b=0$), we are back to the classical
critical exponent of $\rho_{1}(G)$ (or $\rho_{2}(G)$), and by Sullivan's
result we know that $\delta^{1,0}=h_{top}(S_{1})$ and $\delta^{0,1}=h_{top}(S_{2})$.

\begin{defn}
[The Manhattan Curve] \label{def:-The-Manhattan-Curve}The Manhattan
curve ${\cal C}=\mathcal{C}(\rho_{1},\rho_{2})$ of $\rho_{1}$, $\rho_{2}$
is the boundary of the set 
\[
\{(a,b)\in\R^{2}:\ {\displaystyle Q_{\rho_{1},\rho_{2}}^{a,b}(1)<\infty\}.}
\]
Alternatively, ${\cal C}$ can be defined as
\[
\{(a,b)\in\R^{2}:\ {\displaystyle Q_{\rho_{1},\rho_{2}}^{a,b}(s)\mbox{ has critical exponent }1\}.}
\]

\end{defn}

Our first result gives a rough picture of the Manhattan curve $\mathcal{C}(\rho_{1},\rho_{2})$
of $\rho_{1}$ and $\rho_{2}$ for any pair of Fuchsian representations.

\begin{thm*}
[Theorem \ref{thm: Manhattan curves are conti}]Let $S$ be a (topological)
surface with negative Euler characteristic, and let $\rho_{1},\rho_{2}$
be two Fuchsian representations of $G:=\pi_{1}S$ into ${\rm PSL}(2,\R)$.
We denote $S_{1}=\rho_{1}(G)\backslash\H$ and $S_{2}=\rho_{2}(G)\backslash\H$.
Then

\begin{itemize}[font=\normalfont] 

\item[(1)] $(h_{top}(S_{1}),0)$ and $(0,h_{top}(S_{2}))$ are on
$\mathcal{C}$; 

\item[(2)] $\mathcal{C}$ is convex; and

\item[(3)] $\mathcal{C}$ is continuous.

\end{itemize}
\end{thm*}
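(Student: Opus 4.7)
I would handle the three claims in order: (1) and (2) are direct computations, and (3) is essentially a corollary of (2) together with elementary convex analysis.

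For (1), evaluate the Poincar\'e series on the axes. With $b=0$,
\[
Q_{\rho_{1},\rho_{2}}^{a,0}(s) = \sum_{\g \in G} \exp\bigl(-sa \cdot d(o_1, \rho_1(\g)o_1)\bigr)
\]
is (up to the rescaling $s \mapsto sa$) the classical Poincar\'e series of $\rho_1(G)$. By the Sullivan formula recalled just after the definition of $\delta^{a,b}$, its critical exponent in $s$ equals $h_{top}(S_1)/a$, so $(a, 0) \in \mathcal{C}$ precisely when $a = h_{top}(S_1)$. The point $(0, h_{top}(S_2))$ follows symmetrically.

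For (2), apply H\"older's inequality. The weighted Manhattan distance is linear in $(a,b)$,
\[
d^{t a_1 + (1-t) a_2,\, t b_1 + (1-t) b_2}(o, \g o) = t\, d^{a_1, b_1}(o, \g o) + (1-t)\, d^{a_2, b_2}(o, \g o),
\]
so H\"older with conjugate exponents $1/t$ and $1/(1-t)$ (for $t \in (0,1)$) yields
\[
Q^{t a_1 + (1-t) a_2,\, t b_1 + (1-t) b_2}(1) \;\leq\; Q^{a_1, b_1}(1)^{t} \cdot Q^{a_2, b_2}(1)^{1-t}.
\]
Hence the region $\Omega := \{(a,b) \in \R^2 : Q^{a,b}(1) < \infty\}$ is convex, and therefore so is its boundary $\mathcal{C} = \partial \Omega$.

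For (3), continuity of $\mathcal{C}$ follows from (2) combined with elementary convex analysis. First, scaling $(a,b) \mapsto (sa, sb)$ with $s \in (0,1)$ shortens $d^{a,b}$ and hence enlarges the Poincar\'e series, so the complement of $\Omega$ in $D$ is star-shaped from the origin; together with the fact that $Q^{a,b}(1) < \infty$ whenever $a$ or $b$ is sufficiently large (by domination by $Q^{a,0}(1)$ or $Q^{0,b}(1)$), each ray from the origin in $D$ meets $\mathcal{C}$ at a unique point. This yields a radial parametrization $\theta \in [0, \pi/2] \mapsto \mu(\theta)(\cos\theta, \sin\theta)$. Continuity of $\mu$ on the open interval is immediate from (2), since the boundary of a planar convex set with non-empty interior is a continuous curve. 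The principal point to check is endpoint continuity at $\theta \in \{0, \pi/2\}$; writing $\mathcal{C}$ locally as the graph of a convex decreasing function $f:[0, h_{top}(S_1)] \to [0, h_{top}(S_2)]$ and applying the inequality $f(t a' + (1-t) a_0) \leq t f(a') + (1-t) f(a_0)$ with $a_0$ an endpoint whose value of $f$ is supplied by (1), one forces the one-sided limit at that endpoint and obtains continuity there. The main obstacle I anticipate is precisely this endpoint behavior --- specifically, ruling out a degenerate flat piece of $\mathcal{C}$ along the axes, which for cusped surfaces may require exploiting properties of the Poincar\'e series via the symbolic dynamics model developed later in the paper.
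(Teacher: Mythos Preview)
Your proof of (1) and (2) is essentially identical to the paper's: (1) is a direct reduction to the classical Poincar\'e series (the paper simply calls this ``obvious''), and (2) is H\"older's inequality applied to the series $Q^{a,b}(1)$.

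For (3) you work considerably harder than necessary, and the difficulty you anticipate is not actually there. The paper's argument is one sentence: since $\mathcal{C}$ is the boundary of a convex region and contains the two points $(h_{top}(S_1),0)$ and $(0,h_{top}(S_2))$, the arc of $\mathcal{C}$ between them is homeomorphic to the straight segment joining those points, hence continuous. This is just the elementary fact that the boundary of a planar convex set is locally the graph of a convex function, and convex functions on closed intervals are automatically continuous up to and including the endpoints. Your worry about ``ruling out a degenerate flat piece of $\mathcal{C}$ along the axes'' is misplaced: a flat piece would threaten \emph{strict} convexity (which the paper addresses later, under the extended Schottky hypothesis), but it poses no obstacle to continuity. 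No symbolic dynamics is needed for this theorem.
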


Let us briefly review the history of the Manhattan curve ${\cal C}(\rho_{1},\rho_{2})$.
In \cite{Burger:1993wb}, using the Patterson-Sullivan argument, Burger
proved that for $\rho_{1}$ and $\rho_{2}$ are convex co-compact
(i.e., both $\rho_{1}(G)$ and $\rho_{2}(G)$ have no parabolic element),
one has $\mathcal{C}$ is $C^{1}$. In \cite{Sharp:1998if}, Sharp
employed Thermodynamic Formalism to prove that ${\cal C}$ is real
analytic. In this work, we are interested in representations which
are not convex co-compact. The presence of parabolic elements greatly
complicates the problem. Nevertheless, thanks to recent developments
on Thermodynamic Formalism for countable Markov shifts, we are able
to generlize these results to surfaces with cusps.

We mainly work on representations that satisfy the following two geometric
conditions, namely, being \textit{boundary-preserving isomorphic}
and the \textit{extended Schottky condition}. 

Two Fuchsian representations $\rho_{1}$ and $\rho_{2}$ are \textit{boundary-preserving
isomorphic} if there exists an isomorphism $\iota:\rho_{1}(G)\to\rho_{2}(G)$
so that $\iota$ is \textit{type-preserving} and \textit{peripheral-structure-preserving}.
More precisely, $\iota$ is type-preserving if $\iota$ sends parabolic
elements to parabolic elements and hyperbolic elements to hyperbolic
elements, and $\iota$ is peripheral-structure-preserving if for any
element $\g\in\rho_{1}(G)$ corresponding to a geodesic boundary of
$S_{1}$, its image $\iota(\g)$ corresponds to a geodesic boundary
of $S_{2}$ and vise versa. 

We say a Riemann surface $\rho(G)\backslash\H$ is a \textit{extended
Schottky surface} or $\rho$ satisfies the \textit{extended Schottky
condition} if $\rho$ satisfies ${\rm (C1),\ (C2),\ (C3)}$, and $N_{1}+N_{2}\geq3$
(see Definition \ref{def:extended Schottky}). Roughly speaking, an
extended Schottky surface is a geometrically finite Riemann surface
with cusps, funnels or both ends and whose group deck transformations
is a free group. Extended Schottky surfaces with no cusp are called
\textit{classical Schottky surfaces,} and they are known to be convex
co-compact. One example of extended Schottky surfaces is the surface
with two cusps and two funnels. 
\begin{rem*}
$\ $\begin{itemize}[font=\normalfont] 

\item[(1)] For $\rho_{1}$, $\rho_{2}$ two convex co-compact Fuchsian
representations, $\rho_{1}$ and $\rho_{2}$ are always type-preserving
isomorphic (because they have no parabolic element). However, it does
not guarantee that $S_{1}$ and $S_{2}$ are homeomorphic, for example,
one holed torus and a pair of pants. Therefore, the peripheral-structure-preserving
condition is necessary to derive a homeomorphism between $S_{1}$
and $S_{2}$ (see Theorem \ref{thm: bilipsthitz } for more details).

\item[(2)] The extended Schottky condition that we use here was introduced
in Dal'Bo-Peigné \cite{Dalbo:1996wh}. This condition is needed in
our argument for some technical reasons.

\end{itemize}
\end{rem*}
Now, we are ready to present our main results. Let $\rho_{1},\rho_{2}$
be two boundary-preserving isomorphic Fuchsian representations satisfying
the extended Schottky condition. For the convenience of presentation,
we leave precise definitions of many dynamics and geometry terminologies
in Section \ref{sec:Preliminaries-on-Thermodynamic}.

Following Dal'bo-Peigné \cite{Dalbo:1996wh}, there exists a symbolic
coding of closed geodesics on extended Schottky surfaces. Here we
summarize relevant results in \cite{Dalbo:1996wh}.

\begin{prop*}
[Propsition \ref{prop:coding_property}, Propsition \ref{thm:coding of geodesic flow}, Lemma \ref{lem:roof fcn >0}]There
exists a topologically mixing countable Markov shift $(\TMS,\s)$
satisfying the BIP property. Moreover, there is a function $\tau:\TMS\to\R^{+}$
(resp. $\kappa:\TMS\to\R^{+})$ such that all but finitely many closed
geodesics on $S_{1}$ (resp. $S_{2}$) are coded by ${\rm Fix}(\TMS)$
the fixed points of $\sigma$ and lengths of these closed geodesics
are given by $\tau$ (reps. $\kappa$). Furthermore, $\tau$ and $\kappa$
are locally Hölder and bounded away from zero. 
\end{prop*}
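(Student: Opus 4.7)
The plan is to follow Dal'bo--Peign\'e's coding scheme adapted to extended Schottky groups and verify that every claim in the proposition survives the boundary-preserving identification between $\rho_1$ and $\rho_2$. Because $\rho_1$ satisfies the extended Schottky condition, the group $\G_1$ is freely generated by $N_1$ hyperbolic isometries $h_1,\dots,h_{N_1}$ and $N_2$ parabolics $p_1,\dots,p_{N_2}$, each associated with a pairwise disjoint closed domain in $\vbdy\H=\pa\D$. To accommodate the parabolic elements (whose orbits on $\vbdy\H$ do not contract uniformly under iteration), I would replace each $p_j$ by the countable family $\{p_j^n:n\in\Z\setminus\{0\}\}$, obtaining a countable alphabet $\mathcal{A}=\{h_i^{\pm1}\}\cup\{p_j^n\}$. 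The shift $(\TMS,\s)$ consists of admissible one-sided sequences, where admissibility forbids a letter being followed by its inverse (in the hyperbolic case) or by a power of the same parabolic. Reduced finite words then correspond bijectively to nontrivial elements of $\G_1$, and infinite admissible words to points of a subset of the limit set $\L_{\G_1}\subset\vbdy\H$ of full Patterson--Sullivan measure.

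Topological mixing and the BIP property follow by exhibiting a finite distinguished set of states (the $2N_1$ hyperbolic letters), each of which can be preceded and followed by every other letter in $\mathcal{A}$: given any $a,b\in\mathcal{A}$, there is a hyperbolic generator $h_i^{\pm1}$ distinct from the letters requiring exclusion, so that $a h_i^{\pm 1} b$ is admissible. For the roof functions, I would put
\[
\t(x)\ :=\ d(o_1,\rho_1(x_0)\,o_1)\;-\;\b_{\xi(\s x)}\bigl(o_1,\rho_1(x_0)\,o_1\bigr)\cdot \text{correction}
\]
or, more cleanly, define $\t$ as the Busemann cocycle $\t(x)=\b_{\xi^+(x)}(o_1,\rho_1(x_0)^{-1}o_1)$ (with analogous $\k$ using $\rho_2$), chosen so that for any periodic point $x\in\mathrm{Fix}(\s^n)$ the Birkhoff sum $S_n\t(x)$ equals the translation length of $\rho_1$ applied to the word $x_0\cdots x_{n-1}$, i.e.\ the length of the corresponding closed geodesic on $S_1$. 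Because $\iota$ is type- and peripheral-structure-preserving, the same alphabet codes $\G_2$ and yields $\k$ on the same shift $(\TMS,\s)$. Each hyperbolic generator has positive translation length; for parabolic powers $d(o,p_j^n o)=2\log|n|+O(1)$, so the minimum over $n\neq0$ is attained and strictly positive. Hence $\t,\k$ are bounded away from $0$.

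The technical core, and what I expect to be the main obstacle, is the local H\"older regularity of $\t$ and $\k$ in the face of unbounded variation across parabolic letters. The point is that local H\"older means uniform H\"older on each $1$-cylinder $[a]\subset\TMS$, not across the whole space: once $x_0=a$ is fixed, $\rho_1(x_0)$ is a fixed isometry and $\t(x)$ varies only through the attracting endpoint $\xi^+(\s x)$, which lies in a cylinder whose visual diameter in $\vbdy\H$ shrinks exponentially as successive letters are specified, by the standard Schottky contraction estimate for disjoint generating domains. This yields a uniform H\"older exponent on each $[a]$, i.e.\ local H\"older regularity. Finally, to match periodic orbits with closed geodesics, I would observe that periodic admissible words correspond to cyclically reduced words in the free generators, hence to conjugacy classes of hyperbolic elements in $\G_i$ (parabolic conjugacy classes give no closed geodesic and do not arise as $\sigma$-periodic orbits except as the trivial single-letter purely parabolic exceptions that are excluded from $\mathrm{Fix}(\TMS)$ by the admissibility rule); this leaves at most finitely many short closed geodesics, coming from single-generator hyperbolic conjugacy classes straddling the Schottky decomposition, uncoded, giving the ``all but finitely many'' conclusion.
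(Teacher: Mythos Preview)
Your construction of the shift, the BIP argument, and the local H\"older discussion are all in line with the paper (which in turn defers to Dal'bo--Peign\'e for most of this). The topological mixing sketch is a bit thin---you only exhibit an admissible word of length $3$ between any two letters, whereas mixing requires admissible words of \emph{every} sufficiently large length; the paper handles this by an explicit parity argument using $N_1+N_2\ge 3$. But that is easily repaired.

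The real gap is in your claim that $\t$ and $\k$ are bounded away from zero. You correctly define $\t$ as a Busemann cocycle, $\t(x)=B_{\xi(x)}(o,\rho_1(x_0)o)$, but then argue for the lower bound using $d(o,p_j^n o)=2\log|n|+O(1)$. These are different quantities: one only has $|B_\xi(o,\g o)|\le d(o,\g o)$, and in fact the Busemann value can be small or negative depending on the position of $\xi$ relative to $o$ and $\g o$. Concretely, for a parabolic letter $p_j^n$ with $\xi$ near the parabolic fixed point, $|T'(\xi)|$ can be arbitrarily close to $1$, so $\t(x)=-\log|T'(\xi)|$ is not uniformly positive. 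The paper does \emph{not} claim the geometric potential itself is bounded below; what Dal'bo--Peign\'e give is only that the Birkhoff sums satisfy $S_n\t(x)\ge C$ for all $n>K$ and some fixed $K,C>0$. The paper then proves a separate lemma: from this eventual positivity one builds, by a finite Ces\`aro-type averaging $h(x)=\sum_{n=0}^{K-1}(1-n/K)\,\t\circ\s^n(x)$, a transfer function so that $\t'=\t-(h-h\circ\s)=\tfrac{1}{K}\sum_{n=1}^{K}\t\circ\s^n$ is cohomologous to $\t$, locally H\"older, and bounded below by $C/K>0$. It is this cohomologous replacement, not $\t$ itself, that serves as the strictly positive roof function.
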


Because $\tau$ and $\kappa$ are constructed by the geometric potential
of the corresponding Bowen-Series map on the boundary of $T^{1}S_{1}$
and $T^{1}S_{2}$, we will continue calling them by geometric potentials
(see Section \ref{sec:Preliminaries-on-Extended} for more details). 

The following lemma is one of the most important result of this work.
Recall that for a finite Markov shift, the (Gurevich) pressure $P_{\sigma}$
has no phase transition, that is, the pressure function $t\mapsto P_{\sigma}(tf)$
is analytic for $f$ a Hölder continuous potential. Whereas, for countable
Markov shifts, Sarig \cite{Sarig:1999wo,Sarig:2001bj} and Mauldin-Urba{\'n}ski
\cite{Mauldin:2003dn} pointed out that, for $f$ a locally Hölder
continuous potential, $t\mapsto P_{\sigma}(tf)$ is not analytic.
Inspired by the work of Iommi-Riquelme-Velozo \cite{Iommi:2016tv},
we study the phase transition in detail and give a precise picture
of the pressure function of weighted geometric potentials.

\begin{lem*}
[Lemma \ref{lem:compute critical exponent}, Lemma \ref{lem:phas transition and critical exponent}]Let
$(\TMS,\sigma)$ be the countable Markov shift and $\tau$, $\kappa$
be the geometric potentials given by the above proposition. We have,
for $a,b\geq0,$ 
\[
P_{\sigma}(-t(a\tau+b\k))=\begin{cases}
{\rm infinite,} & {\rm for\ }t<\frac{1}{2(a+b)};\\
{\rm real\ analytic,} & {\rm {\rm for\ }}t>\frac{1}{2(a+b)}.
\end{cases}
\]

\end{lem*}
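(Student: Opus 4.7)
The plan is to compute the Gurevich pressure through Sarig's formalism for BIP countable Markov shifts and to locate the phase transition by isolating the contribution of \emph{parabolic columns} in the Dal'bo--Peign\'e coding. The critical value $\frac{1}{2(a+b)}$ should arise from the classical asymptotic $d_{\H}(o,p^{n}o) = 2\log n + O(1)$ for a parabolic $p \in \mathrm{PSL}(2,\R)$. The first step is to unpack the alphabet of $\TMS$: it decomposes into finitely many \emph{hyperbolic} letters, one per hyperbolic generator, together with, for each parabolic generator $p$, a countable family of \emph{parabolic} letters $\{(p,n) : n \geq 1\}$ coding the excursion ``enter the cusp of $p$ to depth $n$, then exit.'' An explicit computation in $\H$ of the Bowen--Series--style geometric potentials gives $\tau|_{[(p,n)]} = 2\log n + O(1)$ and $\kappa|_{[(p,n)]} = 2\log n + O(1)$, with the $O(1)$ uniform in $n$, while on the finitely many hyperbolic letters $\tau$ and $\kappa$ are bounded.

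Next, under BIP, Sarig's characterization identifies finiteness of the Gurevich pressure with finiteness of the first-level partition function. Applied to $\phi = -t(a\tau+b\kappa)$ together with the estimate above, one obtains
\[
\sum_{j} \exp\bigl(-t\inf\nolimits_{[j]}(a\tau+b\kappa)\bigr) \;\asymp\; C_{\mathrm{hyp}} + \sum_{p\text{ parab.}}\sum_{n\geq 1} n^{-2t(a+b)},
\]
where $C_{\mathrm{hyp}}$ is a finite sum over the hyperbolic letters. The right-hand side is finite iff $2t(a+b) > 1$, yielding $P_{\sigma}(-t(a\tau+b\kappa)) = +\infty$ for $t < \frac{1}{2(a+b)}$ and $P_{\sigma}(-t(a\tau+b\kappa)) < +\infty$ for $t > \frac{1}{2(a+b)}$.

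On the open half-line $t > \frac{1}{2(a+b)}$, analyticity should then follow from the spectral theory of Sarig and Mauldin--Urba\'nski: because $\tau, \kappa$ are locally H\"older and bounded below, the Ruelle transfer operator associated with $-t(a\tau+b\kappa)$ acts on a suitable space of locally H\"older functions with a simple isolated leading eigenvalue $\lambda(t)$, separated from the rest of the spectrum by a spectral gap. Kato's analytic perturbation theory applied to the real-analytic one-parameter family $t\mapsto -t(a\tau+b\kappa)$ then shows that $\lambda(t)$, and hence $P_{\sigma}(-t(a\tau+b\kappa)) = \log\lambda(t)$, is real analytic. This is precisely the strategy of Iommi--Riquelme--Velozo.

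The hardest step will be the parabolic-column estimate: one must verify both the explicit asymptotic of $\tau$ and $\kappa$ on the countably many parabolic columns (via elementary hyperbolic-geometry computations in the upper half-plane) and that the local H\"older constants do not deteriorate across these columns. Crucially, the assumption that $\rho_{1}$ and $\rho_{2}$ are boundary-preserving isomorphic ensures that the letter $(p,n)$ codes a cuspidal excursion of comparable depth on both surfaces, so that $\tau$ and $\kappa$ pick up the \emph{same} $2\log n$ growth and the critical exponent is genuinely $\frac{1}{2(a+b)}$ rather than some more complicated threshold depending separately on $a$ and $b$.
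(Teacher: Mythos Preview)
Your overall strategy is correct and essentially matches the paper's, but your description of the Dal'bo--Peign\'e alphabet is wrong in a way that, while not fatal, indicates you have not looked at the coding carefully. In the shift $\TMS$ used here, the letters are \emph{all} blocks $a^{n}$ with $a\in\mathcal{A}$ and $n\in\Z^{*}$; in particular each hyperbolic generator $h_i$ contributes a countable column $\{h_i^{n}:n\in\Z^{*}\}$, not a single letter. What is true is that on a hyperbolic column $\tau|_{[h^{n}]}\asymp n\cdot l_{1}[h]$ grows linearly, so $\sum_{n}e^{-t(a\tau+b\kappa)|_{[h^{n}]}}$ is a geometric series, finite for every $t>0$ (this is exactly the paper's computation $\delta^{a,b}_{\langle h\rangle}=0$). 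The phase transition is therefore governed by the parabolic columns, and your estimate $\tau|_{[p^{n}]}=2\log|n|+O(1)$ is correct and is what drives the threshold $\frac{1}{2(a+b)}$. The boundary-preserving hypothesis is indeed what forces $\kappa$ to have the same $2\log|n|$ growth on those columns.

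The paper carries out the same computation but more concretely: rather than invoking the abstract BIP summability criterion for finiteness of $P_{\sigma}$, it sandwiches the Gurevich pressure directly from its periodic-orbit definition. For the upper bound it uses the extended-Schottky separation lemma (giving $\tau(\sigma^{i}x)\geq d(o,\rho_{1}(x_{i})o)-C$) to dominate the full periodic-orbit sum by $\prod_{g\in\mathcal{A}}\bigl(Q^{a,b}_{\langle g\rangle}(t)-1\bigr)$; for the lower bound it restricts to a special family of periodic words cycling once through each generator and uses $B_{\xi}(x,y)\leq d(x,y)$. Both routes reduce to the same elementary Poincar\'e-series estimates $Q^{a,b}_{\langle p\rangle}(t)\asymp\sum_{n}|n|^{-2t(a+b)}$ and $Q^{a,b}_{\langle h\rangle}(t)<\infty$ for $t>0$, and both conclude analyticity on $\{t>\tfrac{1}{2(a+b)}\}$ by citing the Mauldin--Urba\'nski/Sarig result that finite pressure for locally H\"older potentials on BIP shifts is automatically real analytic. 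Your packaging via the first-level partition function is cleaner to state; the paper's hands-on bounds have the advantage that the lower bound is reused later to show $P_{\sigma}\to+\infty$ as $t\downarrow\tfrac{1}{2(a+b)}$, which is needed for the existence of a zero of the pressure.
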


Furthermore, similar to Bowen's formula for hyperbolic flows over
compact metric spaces, we give a geometric interpretation of the solution
for the equation $P_{\s}(tf)=0$ when $f$ is a weighted geometric
potential. Namely, we prove that the critical exponent $\delta^{a,b}$
can be realized by the growth rate of hyperbolic elements (or equivalently,
closed geodesics).

\begin{thm*}
[Bowen's Formula; Lemma \ref{lem:P(-at-bk)=00003D0} , Theorem \ref{thm:P(-at-bk)=00003D0 analytic}, Theorem \ref{thm:t_ab=00003Ddelta_ab}]The
set $\{(a,b)\in D:\ P_{\sigma}(-a\tau-b\kappa)=0\}$ is a real analytic
curve. Moreover, for each $(a,b)\in D$ there exists a unique $t_{a,b}$
such that 
\[
P_{\sigma}(-t_{a,b}(a\t+b\k))=0.
\]
 Furthermore, $t_{a,b}=\delta^{a,b}$. 
\end{thm*}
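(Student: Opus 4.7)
The plan is to split the statement into three parts in order: existence and uniqueness of $t_{a,b}$; analyticity of the zero set; and the Bowen-type identification $t_{a,b} = \delta^{a,b}$. The first two are consequences of the preceding lemma on the phase transition of $t\mapsto P_\sigma(-t(a\tau+b\kappa))$ together with standard properties of the Gurevich pressure; the third is the substantive geometric content.

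Fix $(a,b)\in D$ and set $g(t) := P_\sigma(-t(a\tau+b\kappa))$. The preceding lemma gives $g$ real analytic on $(1/(2(a+b)),\infty)$. Because $\tau,\kappa$ are bounded away from $0$, one has $a\tau+b\kappa\geq c > 0$ for some $c=c(a,b)$, and the monotonicity inequality $P_\sigma(f_1)\geq P_\sigma(f_2)+\inf(f_1-f_2)$ gives $g(t_1)\geq g(t_2)+(t_2-t_1)c$ for $t_1<t_2$; in particular $g$ is strictly decreasing and $g(t)\to-\infty$ as $t\to\infty$. The phase-transition structure in the quoted lemma (following the Iommi--Riquelme--Velozo scheme) yields $g(t)\to+\infty$ as $t\searrow 1/(2(a+b))$, so the intermediate value theorem produces a unique $t_{a,b}$ with $g(t_{a,b})=0$. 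For the zero set $\{P_\sigma(-a\tau-b\kappa)=0\}$ I would then apply the implicit function theorem to $F(a,b):=P_\sigma(-a\tau-b\kappa)$, which is real analytic on $\{a+b>1/2\}\cap D$ by the same lemma; the partials $\partial_aF=-\int\tau\,d\mu_{(a,b)}<0$ and $\partial_bF=-\int\kappa\,d\mu_{(a,b)}<0$ (where $\mu_{(a,b)}$ denotes the equilibrium state) are strictly negative, so the gradient is nowhere zero on the level set, which is accordingly a real analytic curve.

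The substantive step is $t_{a,b}=\delta^{a,b}$. The Gurevich characterization of pressure gives
\[
P_\sigma(-s(a\tau+b\kappa)) = \limsup_{n\to\infty} \frac{1}{n}\log \sum_{\substack{\sigma^n x = x \\ x_0 = a_0}} e^{-s S_n(a\tau+b\kappa)(x)},
\]
and by the coding proposition $S_n\tau(x)=\ell_1(\gamma_x)$, $S_n\kappa(x)=\ell_2(\gamma_x)$, where $\gamma_x$ is the closed geodesic represented by the periodic orbit of $x$. Up to a bounded factor supplied by the BIP property, the right-hand side is a weighted count of closed geodesics by $e^{-s(a\ell_1+b\ell_2)}$, so the critical exponent of the closed-geodesic series $\sum_\gamma e^{-s(a\ell_1(\gamma)+b\ell_2(\gamma))}$ equals $t_{a,b}$. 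To transfer this to the orbit Poincar\'e series $Q^{a,b}(s)$, I would compare sums over group elements to sums over conjugacy classes via a Sullivan--Patterson comparison tailored to the extended Schottky setting: each hyperbolic conjugacy class contributes group elements $\gamma\in G$ with $d^{a,b}(o,\gamma o)$ within bounded additive error of $a\ell_1(\gamma)+b\ell_2(\gamma)$, while the parabolic contribution, controlled through the Dal'bo--Peign\'e coding, grows only polynomially and therefore does not shift the critical exponent. Matching critical exponents then yields $\delta^{a,b}=t_{a,b}$.

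The principal obstacle is this final comparison: isolating the parabolic contribution in $Q^{a,b}$ and showing it is negligible near the critical exponent. This is the feature absent from Sharp's convex-cocompact argument, and it depends essentially on the extended Schottky hypothesis. A secondary technical point is verifying that $g(t)\to+\infty$ at the phase transition rather than remaining bounded, which requires unpacking the proof of the preceding lemma and not just its statement.
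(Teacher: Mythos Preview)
Your treatment of the first two parts---existence/uniqueness of $t_{a,b}$ and analyticity of the zero set---matches the paper's argument closely: strict monotonicity of $t\mapsto P_\sigma(-t(a\tau+b\kappa))$ from $\partial_t P_\sigma=-\int(a\tau+b\kappa)\,d\mu<0$, and the implicit function theorem applied to $(a,b)\mapsto P_\sigma(-a\tau-b\kappa)$ with $\partial_b P_\sigma<0$. One minor point: the paper does not assert $g(t)\to+\infty$ as $t\searrow 1/(2(a+b))$; it only shows, via the lower bound $P_\sigma\geq\frac{1}{N_3}\log\prod_{g\in\mathcal{A}\setminus h_1}(Q_{\langle g\rangle}^{a,b}(t)-1)$ established inside the phase-transition lemma, that the pressure is positive for some $t'$ just above the threshold. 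Since that lower bound does diverge, your stronger claim holds, and you correctly anticipated that it requires unpacking those estimates.

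For $t_{a,b}=\delta^{a,b}$ your route differs from the paper's and has a gap in the comparison step. Your first move---reading off from $P_\sigma(-s(a\tau+b\kappa))\lessgtr 0$ that the closed-geodesic series $\sum_{[\gamma]}e^{-s(a\ell_1(\gamma)+b\ell_2(\gamma))}$ has critical exponent $t_{a,b}$---is sound: the $n$-th Gurevich partial sum behaves like $e^{nP_\sigma(-s(a\tau+b\kappa))}$, and the word-length-versus-conjugacy-class overcounting is only polynomial. The problem is the passage to the orbital Poincar\'e series $Q^{a,b}$. Your assertion that ``each hyperbolic conjugacy class contributes group elements with $d^{a,b}(o,\gamma o)$ within bounded additive error of $a\ell_1(\gamma)+b\ell_2(\gamma)$'' is false as stated: a conjugate $\alpha\gamma\alpha^{-1}$ satisfies $d(o,\alpha\gamma\alpha^{-1}o)\approx \ell(\gamma)+2\,d(o,\mathrm{Axe}_{\alpha\gamma\alpha^{-1}})$, which is unbounded over the class. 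At most one representative per class enjoys such an estimate, giving one inequality between the critical exponents but not the other; and your remark on the parabolic contribution addresses only the parabolic elements of $G$, not this overcounting within hyperbolic classes.

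The paper circumvents any direct closed-geodesic-to-orbit comparison. It introduces the Paulin--Pollicott--Schapira series $Q_{PPS}^{a,b}(s)=\sum_{\gamma}e^{-d^{a,b}(x,\gamma y)-s\,d_1(x,\gamma y)}$ and shows, using the $\iota$-equivariant bilipschitz homeomorphism $f:\H\to\H$ supplied by the Fenchel--Nielsen isomorphism theorem (so that $C^{-1}d_2\leq d_1\leq Cd_2$), that $\delta_{PPS}^{a,b}=0\iff\delta^{a,b}=1$. Separately it establishes the chain $\delta_{PPS}^{a,b}=P_{Gur}^{a,b}=P_\phi(-a-b\psi)$ through the orbit-counting growth rates $G^{a,b}_{x,y},\,a_{x,y,U'},\,b_{x,y,U',V'},\,Z_W$ (the appendix argument, driven by the extended-Schottky quasi-additivity lemma) together with the suspension flow $(\Sigma^+_\tau,\phi)$ with $\psi=\kappa/\tau$, and finally $P_\phi(-a-b\psi)=0\iff P_\sigma(-a\tau-b\kappa)=0$ via the variational principle for suspension flows. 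The bilipschitz comparison of $d_1$ and $d_2$, which you do not invoke, is the mechanism that reduces the two-metric problem to a one-metric orbit-counting problem where the Paulin--Pollicott--Schapira machinery applies.
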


Combing the above theorems, we have the following results for the
Manhattan curve ${\cal C}(\rho_{1},\rho_{2})$.

\begin{thm*}
[Theorem \ref{Thm: Manhattan cuve analytic}] ${\cal C}(\rho_{1},\rho_{2})$
is real analytic.
\end{thm*}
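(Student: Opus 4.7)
The plan is to recognize the Manhattan curve as a level set of the pressure function and then invoke the analyticity statement already established by the Bowen's formula theorem. By the alternative description of $\mathcal{C}$ given in Definition \ref{def:-The-Manhattan-Curve}, $(a,b) \in \mathcal{C}$ if and only if $\delta^{a,b} = 1$. Bowen's formula then gives $\delta^{a,b} = t_{a,b}$, where $t_{a,b}$ is the unique real number satisfying $P_{\sigma}(-t_{a,b}(a\tau + b\kappa)) = 0$; hence $\delta^{a,b} = 1$ precisely when $P_{\sigma}(-a\tau - b\kappa) = 0$, and so
\[
\mathcal{C} = \{(a,b) \in D : P_{\sigma}(-a\tau - b\kappa) = 0\}.
\]
The right-hand side is already asserted to be a real analytic curve in the Bowen's formula theorem, and therefore $\mathcal{C}$ is real analytic.

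To make the real-analyticity of the level set explicit, I would spell out the underlying implicit function theorem argument. Set $F(a,b) := P_{\sigma}(-a\tau - b\kappa)$. By the phase transition lemma applied with $t = 1$, $F$ is real analytic on the open region $\{(a,b) \in D : a + b > 1/2\}$, and this region contains $\mathcal{C}$ since elsewhere $F = +\infty$. The implicit function theorem then yields real analyticity of $\mathcal{C}$ provided that $\nabla F$ does not vanish along $\mathcal{C}$. The derivatives of pressure are
\[
\partial_{a} F(a,b) = -\int \tau \, d\mu_{a,b}, \qquad \partial_{b} F(a,b) = -\int \kappa \, d\mu_{a,b},
\]
where $\mu_{a,b}$ is the equilibrium state of $-a\tau - b\kappa$; since $\tau$ and $\kappa$ are strictly positive and bounded away from zero, both partial derivatives are strictly negative, so $\nabla F \neq 0$ on $\mathcal{C}$.

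Thus the bulk of the conceptual work is already absorbed into the earlier results: identifying $\delta^{a,b}$ with the pressure zero $t_{a,b}$ via Bowen's formula, locating the domain of real analyticity of the pressure through the phase transition lemma, and using that the geometric potentials are bounded away from zero. Given those inputs, the present theorem reduces to unwinding definitions. The principal obstacle in this chain would have been establishing Bowen's formula and controlling the phase transition in the countable-shift setting, but both are in hand from the earlier sections.
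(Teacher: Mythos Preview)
Your proof is correct and follows essentially the same route as the paper: identify $\mathcal{C}$ with the zero set $\{(a,b)\in D:\ P_{\sigma}(-a\tau-b\kappa)=0\}$ (the paper packages this as Lemma~\ref{lem:man cur=00003D press}, which is exactly the consequence of Bowen's formula you derive), and then invoke the real-analyticity of that level set (the paper's Theorem~\ref{thm:P(-at-bk)=00003D0 analytic}, whose proof is the implicit function theorem argument you spell out). Your exposition simply unpacks inline what the paper cites as two prior results.
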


Moreover, using the analyticity of the Manhattan curve and the uniqueness
of the equilibrium states, we have better picture of the Manhattan
curve ${\cal C}(\rho_{1},\rho_{2})$.

\begin{prop*}
[Proposition \ref{prop: Manhattan curve features}] We have

\begin{itemize}[font=\normalfont] 

\item[(1)] ${\cal C}(\rho_{1},\rho_{2})$ is strictly convex if $\rho_{1}$
and $\rho_{2}$ are NOT conjugate in ${\rm PSL}(2,\R)$; and

\item[(2)] ${\cal C}(\rho_{1},\rho_{2})$ is a straight line if and
only if $\rho_{1}$ and $\rho_{2}$ are conjugate in ${\rm PSL}(2,\R)$.

\end{itemize}
\end{prop*}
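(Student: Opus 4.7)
The plan is to realize $\mathcal{C}$ as the analytic level set $\{P_\sigma(-a\tau-b\kappa)=0\}$, read off its second derivative via the asymptotic variance of the equilibrium state, translate any degeneracy of this variance into a Liv\v{s}ic cohomology between $\tau$ and $\kappa$, and finally convert that cohomology into conjugacy of $\rho_1,\rho_2$ via marked length spectrum rigidity. Set $F(a,b):=P_{\sigma}(-a\tau-b\kappa)$, so $\mathcal{C}=\{F=0\}\cap D$. By the previously established analyticity, $F$ is real analytic on its finiteness region, and $\nabla F(a,b)=(-\int\tau\,d\mu_{a,b},\,-\int\kappa\,d\mu_{a,b})$ has strictly negative components since $\tau,\kappa>0$; thus the implicit function theorem parametrizes $\mathcal{C}$ as the graph of an analytic convex function $b=\phi(a)$. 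The standard thermodynamic variance formula identifies $\mathrm{Hess}\,F(a,b)$ with the asymptotic covariance of $(\tau,\kappa)$ under $\mu_{a,b}$, whose kernel consists of pairs $(v_1,v_2)$ such that $v_1\tau+v_2\kappa$ is Liv\v{s}ic-cohomologous to a constant (a global, measure-independent condition on the BIP shift). Implicit differentiation yields
\[
|F_b(a,\phi(a))|\cdot\phi''(a)=\mathrm{Hess}\,F\bigl((1,\phi'(a)),(1,\phi'(a))\bigr)\geq 0,
\]
with equality iff the tangent direction $(1,\phi'(a))$ lies in this kernel.

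If $\phi''(a_0)=0$ at some $a_0$, the above gives a global cohomology $\tau+\phi'(a_0)\kappa\sim c$; integrating and using that $(1,\phi'(a_0))$ is tangent to $\{F=0\}$ forces $c=0$, and substituting the resulting coboundary relation back into $F$ reveals $F(a,b)=P_\sigma((a\phi'(a_0)-b)\kappa)$, whose zero locus is the affine line $b=\phi'(a_0)a-g^{-1}(0)$ with $g(t):=P_\sigma(t\kappa)$. Thus $\mathcal{C}$ is either strictly convex or a straight line, so (1) will follow once (2) is proved. The easy direction of (2) is immediate: if $\rho_1,\rho_2$ are conjugate in $\mathrm{PSL}(2,\R)$, the boundary-preserving isomorphism identifies the codings so that $\tau\sim\kappa$, whence $F(a,b)=P_\sigma(-(a+b)\tau)$ vanishes exactly on $a+b=h_{top}(S_1)$.

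Conversely, suppose $\mathcal{C}$ is the segment from $(h_1,0)$ to $(0,h_2)$ with $h_i=h_{top}(S_i)$. The preceding paragraph then gives a global cohomology $h_1\tau-h_2\kappa\sim 0$; summing along any periodic orbit of $\sigma$ yields $h_1\ell_{S_1}(\gamma)=h_2\ell_{S_2}(\gamma)$ for every closed geodesic $\gamma$ coded by the shift, which by the Dal'bo-Peign\'{e} coding is all but finitely many $\gamma$. Since both $S_1,S_2$ carry complete hyperbolic metrics of curvature $-1$, a proportional marked length spectrum between two such structures forces the proportionality constant to equal $1$: a rescaling by $\alpha\ne 1$ would yield curvature $-1/\alpha^{2}\ne-1$, incompatible with marked length spectrum rigidity applied to $S_{2}$ and the rescaled $S_{1}$. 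Hence $h_1=h_2$ and $\ell_{S_1}=\ell_{S_2}$ for all but finitely many closed geodesics, and classical marked length spectrum rigidity for Fuchsian representations (together with the peripheral-respecting homeomorphism $S_1\to S_2$ supplied by the boundary-preserving isomorphism) forces $\rho_1,\rho_2$ to be conjugate in $\mathrm{PSL}(2,\R)$. The main obstacle is this final step: verifying that proportionality plus constant curvature implies equality of length spectra, and that equality on coded closed geodesics alone suffices to determine the conjugacy class, both of which rely on the ``all but finitely many'' feature of the Dal'bo-Peign\'{e} coding together with classical rigidity statements for Fuchsian representations.
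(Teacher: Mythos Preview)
Your argument is correct and takes a genuinely different path to the key cohomology relation than the paper does. You detect non--strict convexity via the Hessian of $F(a,b)=P_\sigma(-a\tau-b\kappa)$: the second derivative of $\phi$ is (up to a positive factor) the asymptotic variance of $\tau+\phi'(a)\kappa$ under the equilibrium state, and vanishing variance on a BIP shift forces a Liv\v{s}ic cohomology to a constant, which you then show is zero by the tangency condition. The paper instead argues measure--theoretically: if $\mathcal{C}$ is a line, the constant slope forces the equilibrium states $m_{-h_1\tau}$ and $m_{-h_2\kappa}$ to satisfy each other's variational equation, so by uniqueness of equilibrium states they coincide, and Sarig's criterion then yields $h_1\tau\sim h_2\kappa$. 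Your route has the advantage of making the dichotomy ``strictly convex or straight line'' completely explicit through $\phi''$, whereas the paper simply appeals to the general fact that a convex real--analytic curve is either strictly convex or affine.

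Where you take an unnecessary detour is the final rigidity step. Once you have $h_1\tau\sim h_2\kappa$, hence $h_1\ell_1[\gamma]=h_2\ell_2[\gamma]$ for all coded $\gamma$, the paper invokes Kim's \emph{proportional} marked length spectrum rigidity (Theorem~\ref{thm:proportinal marked length spectrum}) directly: for Zariski--dense Fuchsian representations, a proportional MLS already forces conjugacy in $\mathrm{PSL}(2,\R)$, and the proportionality constant then comes out to be $1$ a posteriori. Your curvature--rescaling argument to first pin down the constant as $1$ is heuristically right but, as written, presupposes MLS rigidity between $S_1$ and a rescaled (hence non--hyperbolic) $S_2$, which is Otal--type rigidity and stronger than needed. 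Citing Kim directly sidesteps this. Your closing worry about ``all but finitely many'' coded geodesics is legitimate; the paper glosses over the same point, and it is handled either by noting that the coded hyperbolic fixed points are dense in the limit set (so the boundary/cross--ratio data underlying Kim's proof is already determined) or by a short limiting argument recovering the excluded generator lengths.
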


\begin{rem*}
Using Paulin-Pollicott-Schapira's arguments in \cite{Pollicott:2012ud},
as well as Dal'Bo-Kim's Patterson-Sullivan theory approach in \cite{Dalbo:2008dp},
it is possible to recover some of above results without using symbolic
dynamics. However, due to the author's limited knowledge, without
using symbolic dynamics, there seems no clear path to proving the
analyticity of the Manhattan curve ${\cal C}(\rho_{1},\rho_{2})$.
\end{rem*}

Furthermore, we have the following rigidity corollaries.

\begin{cor*}
[Bishop-Steger's entropy rigidity; cf. \cite{Bishop:1991gz}; Corollary \ref{cor:-Bishop-Steger}]We
have, for any $o\in\H$, 
\[
\delta^{1,1}=\lim_{T\to\infty}\frac{1}{T}\log\#\{\g\in G:\ d(o,\rho_{1}(\g)o)+d(o,\rho_{2}(\g)o)\leq T\}.
\]
 Moreover, $\delta^{1,1}\leq\dfrac{h_{top}(S_{1})\cdot h_{top}(S_{2})}{h_{top}(S_{1})+h_{top}(S_{2})}$
and the equality holds if and only if $S_{1}$ and $S_{2}$ are isometric.
\end{cor*}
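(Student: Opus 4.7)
The plan is to treat the three assertions in turn.

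For the orbit-counting identity, I combine Bowen's formula with thermodynamic counting on the BIP coding $(\Sigma^{+},\sigma)$. By the coding Proposition, elements $\gamma\in G$ correspond (outside a finite exceptional set) to admissible words, and $d(o_{1},\rho_{1}(\gamma)o_{1})+d(o_{2},\rho_{2}(\gamma)o_{2})$ equals an ergodic sum of $\tau+\kappa$ along the associated word, up to uniformly bounded error. Bowen's formula identifies $\delta^{1,1}$ with the unique $t$ solving $P_{\sigma}(-t(\tau+\kappa))=0$, and the phase-transition lemma places this $t$ in the analytic, positive recurrent regime $t>1/4$. A Gurevich--Sarig counting estimate for topologically mixing BIP countable shifts, applied to the positive recurrent potential $-\delta^{1,1}(\tau+\kappa)$, then upgrades the standard $\limsup$ identity for the critical exponent of a Poincar\'e series into a genuine $\lim$ for $N(T)$ with value $\delta^{1,1}$.

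For the inequality I exploit convexity. By the scaling identity $Q^{a,b}(s)=Q^{sa,sb}(1)$, the point $(\delta^{1,1},\delta^{1,1})$ lies on $\mathcal{C}$. The convergence set $\{(a,b):Q^{a,b}(1)<\infty\}$ is convex (the log of a Laplace transform is convex), and its boundary $\mathcal{C}$ contains both $A:=(h_{top}(S_{1}),0)$ and $B:=(0,h_{top}(S_{2}))$. Hence the closed chord $\overline{AB}$ lies in the closure of the convergence region, i.e.\ on or above $\mathcal{C}$ in the direction away from the origin. The diagonal $\{(t,t)\}$ meets $\overline{AB}$ at $(t_{0},t_{0})$ with $t_{0}=h_{top}(S_{1})h_{top}(S_{2))/}(h_{top}(S_{1})+h_{top}(S_{2}))$ and meets $\mathcal{C}$ at $(\delta^{1,1},\delta^{1,1})$, so $\delta^{1,1}\leq t_{0}$.

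For the rigidity, equality forces $(t_{0},t_{0})\in\mathcal{C}\cap\overline{AB}$, so $\mathcal{C}$ contains three collinear points with the middle one strictly between the other two. A standard supporting-line argument for the convex region bounded by $\mathcal{C}$ forces the entire segment $\overline{AB}$ to be contained in $\mathcal{C}$, i.e.\ $\mathcal{C}$ is a straight line. Proposition \ref{prop: Manhattan curve features} then implies that $\rho_{1}$ and $\rho_{2}$ are conjugate in $\mathrm{PSL}(2,\R)$, which together with the boundary-preserving hypothesis yields an isometry $S_{1}\cong S_{2}$. The converse direction is immediate: an isometry produces a conjugation identifying $\rho_{1}$ and $\rho_{2}$ (up to the boundary-preserving matching), giving $h_{top}(S_{1})=h_{top}(S_{2})=h$ and $\delta^{1,1}=h/2$, which matches the bound.

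The main obstacle lies in the first step, namely upgrading the (standard) $\limsup$ identity for the Poincar\'e critical exponent to a genuine limit for $N(T)$. This requires the positive recurrence of $-\delta^{1,1}(\tau+\kappa)$ together with the BIP property in order to invoke the full Gurevich--Sarig asymptotic counting machinery for countable Markov shifts, and is where the cuspidal/infinite-volume geometry genuinely enters. The remaining two parts of the corollary are soft consequences of the already-established analyticity and convexity of $\mathcal{C}$ plus the classification of its degenerate (linear) case.
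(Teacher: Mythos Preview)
Your argument is correct, and for the inequality and the rigidity step it matches the paper's proof essentially line for line: the point $(\delta^{1,1},\delta^{1,1})$ lies on $\mathcal{C}$, the chord through $(h_{top}(S_{1}),0)$ and $(0,h_{top}(S_{2}))$ gives the upper bound by convexity, and equality forces a straight segment on $\mathcal{C}$, after which Proposition~\ref{prop: Manhattan curve features} finishes.

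The one place you diverge from the paper is the orbit-counting limit. You propose to obtain the genuine $\lim$ (rather than $\limsup$) by invoking positive recurrence of $-\delta^{1,1}(\tau+\kappa)$ on the BIP shift and then appealing to Gurevich--Sarig asymptotic counting. The paper instead simply cites its earlier Bowen formula (Theorem~\ref{thm:t_ab=00003Ddelta_ab}), whose appendix proof establishes the limit by an elementary super-multiplicativity/Fekete argument: using Lemma~\ref{lem:d(x,y)>d(x,o)+d(y,0)+C} one shows that the annular counts $b_{n}=\#\{\gamma:\, n-1<d^{1,1}(o,\gamma o)\le n\}$ satisfy $b_{n}b_{m}\le M\sum_{|i|\le N}b_{n+m+i}$, and Lemma~\ref{lem:super-multiplicativity} then gives convergence of $\tfrac{1}{n}\log\sum_{k\le n}b_{k}$. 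So the paper's route is lighter and does not need positive recurrence or the full Sarig machinery at this step; your route works but imports heavier tools than necessary, and your remark that ``this is where the cuspidal geometry genuinely enters'' overstates the difficulty---the Fekete argument uses only the extended Schottky ping-pong estimate and nothing specific about cusps.
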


\begin{rem*}
In Bishop-Steger's paper \cite{Bishop:1991gz}, their result holds
for finite volume Fuchsian representations (i.e., lattices). We extend
this result to some infinite volume Fuchsian representations.
\end{rem*}

\begin{defn}
[Thurston's Intersection Number] \label{def: Thurston's-intersection-nb}Let
$S_{1}$ and $S_{2}$ be two Riemann surfaces. Thurston's intersection
number $I(S_{1},S_{2})$ of $S_{1}$ and $S_{2}$ is given by 
\[
{\rm I}(S_{1},S_{2})=\lim_{n\to\infty}\frac{l_{2}[\g_{n}]}{l_{1}[\g_{n}]}
\]
 where $\{[\g_{n}]\}_{n=1}^{\infty}$ is a sequence of conjugacy classes
for which the associated closed geodesics $\g_{n}$ become equidistributed
on $\G_{1}\backslash\H$ with respect to area.
\end{defn}

\begin{cor*}
[Thurston's Rigidity; cf. \cite{Thurston:1998wp}; Corollary \ref{cor:Thurston's rigidity}]
Let $\rho_{1},\rho_{2}$ be two boundary-preserving isomorphic Fuchsian
representations satisfying the extended Schottky condition, and $S_{1}=\rho_{1}(G)\backslash\H$,
$S_{2}=\rho_{2}(G)\backslash\H$. Then ${\rm I}(S_{1},S_{2})\geq\frac{h_{top}(S_{1})}{h_{top}(S_{2})}$
and the equality hold if and only if $\rho_{1}$ and $\rho_{2}$ are
conjugate in ${\rm PSL}(2,\R)$.
\end{cor*}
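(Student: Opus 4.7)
The plan is to read off $I(S_{1},S_{2})$ from the slope of the Manhattan curve $\mathcal{C}(\rho_{1},\rho_{2})$ at the endpoint $(h_{top}(S_{1}),0)$, and then use the convexity of $\mathcal{C}$ to compare this tangent slope with the slope of the secant from $(0,h_{top}(S_{2}))$ to $(h_{top}(S_{1}),0)$.

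First I would express $\mathcal{C}$ locally near $(h_{top}(S_{1}),0)$ as the graph of a real analytic convex function $b=b(a)$, using the implicit relation $P_{\sigma}(-a\tau-b\kappa)=0$ (Bowen's formula) together with the analyticity result (Theorem \ref{Thm: Manhattan cuve analytic}). The standard pressure derivative formula $\frac{d}{dt}|_{t=0}P_{\sigma}(f+tg)=\int g\,d\mu_{f}$, where $\mu_{f}$ is the equilibrium state of $f$, combined with implicit differentiation yields
\[
b'(a)=-\frac{\int\tau\,d\mu_{a,b}}{\int\kappa\,d\mu_{a,b}},
\]
with $\mu_{a,b}$ the equilibrium state of $-a\tau-b\kappa$. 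At $(a,b)=(h_{top}(S_{1}),0)$, $\mu_{a,b}$ is the symbolic measure of maximal entropy for the geodesic flow on $S_{1}$; for a sequence of closed geodesics $\gamma_{n}$ equidistributing with respect to this measure, the quantities $l_{1}[\gamma_{n}]$ and $l_{2}[\gamma_{n}]$ are Birkhoff sums of $\tau$ and $\kappa$ over the symbolic period, whose ratio tends to $\int\kappa\,d\mu/\int\tau\,d\mu$, so
\[
I(S_{1},S_{2})=\lim_{n\to\infty}\frac{l_{2}[\gamma_{n}]}{l_{1}[\gamma_{n}]}=\frac{\int\kappa\,d\mu_{h_{top}(S_{1}),0}}{\int\tau\,d\mu_{h_{top}(S_{1}),0}}.
\]
In particular $b'(h_{top}(S_{1}))=-1/I(S_{1},S_{2})$.

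Next, since $b$ is convex (Theorem \ref{thm: Manhattan curves are conti}) its derivative is non-decreasing, and so the slope of the secant through the two endpoints is dominated by the tangent slope at the right endpoint:
\[
-\frac{h_{top}(S_{2})}{h_{top}(S_{1})}=\frac{b(h_{top}(S_{1}))-b(0)}{h_{top}(S_{1})-0}\leq b'(h_{top}(S_{1}))=-\frac{1}{I(S_{1},S_{2})},
\]
which rearranges at once to $I(S_{1},S_{2})\geq h_{top}(S_{1})/h_{top}(S_{2})$. Equality forces $b$ to be affine on $[0,h_{top}(S_{1})]$, hence $\mathcal{C}(\rho_{1},\rho_{2})$ to be a straight segment; by Proposition \ref{prop: Manhattan curve features} this happens exactly when $\rho_{1}$ and $\rho_{2}$ are conjugate in $\mathrm{PSL}(2,\mathbb{R})$.

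The hard part will be the first step: rigorously identifying the ratio $\int\kappa\,d\mu/\int\tau\,d\mu$ with $I(S_{1},S_{2})$. In the countable Markov shift setting the equilibrium state may fail to be finite, and the periodic-orbit equidistribution to the Bowen--Margulis--Sullivan measure (which is what ``equidistribution with respect to area'' should mean in the infinite-volume case) is not a black-box fact. The non-phase-transition regime granted by Lemma \ref{lem:phas transition and critical exponent}, together with finiteness of $\int\tau\,d\mu$ and $\int\kappa\,d\mu$ at $(h_{top}(S_{1}),0)$, is what should legitimize the pressure differentiation and the Birkhoff limit; once these technicalities are settled, the remainder of the argument is a purely elementary convexity comparison.
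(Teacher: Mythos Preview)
Your proposal is correct and follows essentially the same route as the paper: compute $b'(h_{top}(S_1))=-1/I(S_1,S_2)$ via the pressure derivative formula, then compare with the secant slope $-h_{top}(S_2)/h_{top}(S_1)$ using convexity, and invoke Proposition~\ref{prop: Manhattan curve features} for the equality case. The paper handles the equidistribution step you flag as ``the hard part'' by citing Roblin's equidistribution theorem for closed orbits with respect to the Bowen--Margulis measure (Theorem 4.1.1 of \cite{Roblin:2003vz}), after rewriting $I(S_1,S_2)$ as the limiting ratio $\sum_{\lambda\in\mathrm{Per}_1(T)}l_2[\lambda]\big/\sum_{\lambda\in\mathrm{Per}_1(T)}l_1[\lambda]$.
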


The outline of this paper is as follows. In Section \ref{sec:Preliminaries-on-Thermodynamic},
we briefly review necessary background of Thermodynamic Formalism
(for countable Markov shifts) and hyperbolic geometry. In Section
\ref{sec:Preliminaries-on-Extended}, we introduce extended Schottky
surfaces. Moreover, we study the phase transition of the geodesic
flows on them, which is one of the most important results in this
work. Section \ref{sec:The-Manhattan-Curve} is devoted to the proof
of our main results. Using Paulin-Pollicott-Schapira's arguments in
\cite{Pollicott:2012ud}, we derive geometric interpretations of the
critical exponent $\delta^{a,b}$ and, thus, we are able to link it
with the (symbolic) suspension flow and Bowen's formula.

\begin{acknowledgement*}
The author is extremely grateful to his Ph.D advisor Prof. François
Ledrappier. Nothing would have been possible without François\textquoteright{}
support, guidance, especially those he gave the author after his retirement.
The author also appreciates Prof. Richard Canary for his numerous
helps on geometry parts of this work, and Prof. Godofredo Iommi for
his wonderful explanations on his works. Lastly, the author would
like to thank Prof. Françoise Dal'Bo and Prof. Marc Peigné for their
helpful comments of this work, and Dr. Felipe Riquelme for pointing
out some errors in the earlier version.
\end{acknowledgement*}

\section{Preliminaries \label{sec:Preliminaries-on-Thermodynamic}}

\subsection{Thermodynamic Formalism for Countable Markov Shifts}

Let $\mathcal{S}$ be a countable set and $\mathbb{A}=(t_{ab})_{\mathcal{S}\times\mathcal{S}}$
be a matrix of zeroes and ones indexed by $\mathcal{\mathcal{S}}\times\mathcal{S}$.

\begin{defn}
The \textit{one-sided (countable) Markov shift} $(\TMS_{\mathbb{A}},\sigma)$
with the set of \textit{alphabet }$\mathcal{S}$ is the set 
\[
\TMS_{\mathbb{A}}=\{x=(x_{n})\in\mathcal{S}^{\bb N}:\ t_{x_{n}x_{n+1}}=1\ \mbox{for every }n\in\bb N\}
\]
 coupled with the (left) shift map $\sigma:\TMS_{\mathbb{A}}\to\TMS_{\mathbb{A}}$,
$(\sigma(x))_{i}=(x)_{i+1}$.
\end{defn}
We will alway drop the subscript $\mathbb{A}$ of $\TMS_{\mathbb{A}}$
when there is no ambiguity on the adjacency matrix $\mathbb{A}$.
Furthermore, we endow $\TMS$ with the relative product topology,
which is given by the base of \textit{cylinders} 
\[
[a_{0},...,a_{n-1}]:=\{x\in\TMS:\ a_{i}=x_{i},\ \mbox{for }\mbox{ }0\leq i\leq n-1\}.
\]
A \textit{word} on an alphabet $\mathcal{S}$ is an element $(a_{0},a_{2},...,a_{n-1})\in\mathcal{S}^{n}$
($n\in\bb N)$. The \textit{length} of the word $(a_{0},a_{2},...,a_{n-1})$
is $n$. A word is called \textit{admissible} (w.r.t. an adjacency
matrix $\mathbb{A}$) if the cylinder it defines is non-empty. 

In the following, we will assume $(\TMS,\sigma)$ is \textit{topologically
mixing}, that is, for any $a,b\in{\cal {\cal S}}$, there exists an
$N\in\N$ such that $\s^{-n}[a]\cap[b]$ is non-empty for all $n>N$.
Noticing that under the topologically mixing assumption and the BIP
property below, the thermodynamics formalism for countable Markov
shifts is well-studied and very close to the classical thermodynamic
formalism for finite Markov shifts. 

The \textit{$n$-th variation} of a function $g:\TMS\to\R$ is defined
by 
\[
V_{n}(g)=\sup\{|g(x)-g(y)|:\ x,y\in\TMS,x_{i}=y_{i}\ \mbox{for}\ i=1,2,...,n\}.
\]
We say $g$ has \textit{summable variation} if $\sum_{n=1}^{\infty}V_{n}(g)<\infty$,
and $g$ is \textit{locally Hölder} if there exists $c>0$ and $\theta\in(0,1)$
such that $V_{n}(g)\leq c\theta^{n}$ for all $n\geq1$.

\begin{defn}
[Gurevich Pressure for Markov Shifts] Let $g:\TMS\to\R$ have summable
variation. The \textit{Gurevich pressure} of $g$ is defined by 
\[
P_{\sigma}(g)=\lim_{n\to\infty}\frac{1}{n}\log\sum_{x\in{\rm {Fix}}^{n}}e^{S_{n}g(x)}\chi_{[a]}(x)
\]
where ${\rm {Fix}}^{n}:=\{x\in\Sigma^{+}:\ \sigma^{n}x=x\}$ and $a$
is any element of $\mathcal{\mathcal{S}}$ and $S_{n}g(x):=\sum_{i=0}^{n-1}g(\sigma^{i}x)$.
\end{defn}

As pointed out by Sarig (cf. Theorem 1 \cite{Sarig:1999wo}) that
the limit exists and is independent of the choice of $a\in\mathcal{\mathcal{\mathcal{S}}}$.

\begin{thm}
[Variational Principle; Theorem 3 \cite{Sarig:1999wo}]\label{thm:variational principle for shift}
Let $(\TMS,\sigma)$ be a topologically mixing countable Markov shift
and $g$ have summable variation. If $\sup g<\infty$ then 
\[
P_{\sigma}(g)=\sup\left\{ h_{\sigma}(\mu)+\int_{\TMS}g\dd\mu:\ \mu\in\mathcal{M}_{\sigma}\ \mbox{and}\ -\int_{\TMS}g\dd\mu<\infty\right\} ,
\]
 where $\mathcal{M}_{\sigma}$ is the set of $\sigma-$invariant Borel
probability measures on $\TMS$. 
\end{thm}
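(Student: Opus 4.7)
The plan is to reduce to compact subsystems and apply the classical Bowen--Walters variational principle. For a finite $F\subset\mathcal{S}$, let $\Sigma_F^{+}\subset\Sigma^{+}$ denote the subshift built on letters of $F$; when $(\Sigma_F^{+},\sigma)$ is topologically mixing it is a finite-alphabet SFT on a compact space, so the classical variational principle applies and produces an equilibrium measure $\mu_F$ realizing the supremum. The proof has three pieces: an approximation property for the Gurevich pressure, the lower bound from the finite subsystems, and the upper bound from an entropy--distortion estimate.

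The first step is to establish the approximation identity
$$P_{\sigma}(g)=\sup\bigl\{P_{\sigma|\Sigma_F^{+}}(g|_{\Sigma_F^{+}}) : F\subset\mathcal{S}\text{ finite},\ \Sigma_F^{+}\text{ topologically mixing}\bigr\}.$$
The inequality $\geq$ is immediate from the defining limit for Gurevich pressure: fixed points of $\sigma^{n}$ lying in $\Sigma_F^{+}$ already contribute to the sum over $\mathrm{Fix}^{n}$ used for the whole system. For $\leq$ one exhausts $\mathcal{S}$ by finite sets, uses topological mixing of $\Sigma^{+}$ to produce cylindrical return loops at a fixed base symbol $a$, and uses summable variation of $g$ to bound distortion of $S_n g$ uniformly along orbits approximated inside $\Sigma_F^{+}$. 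Once the approximation identity holds, the lower bound follows: each $\mu_F$, extended by zero outside $\Sigma_F^{+}$, is a $\sigma$-invariant probability on $\Sigma^{+}$ with $g|_{\Sigma_F^{+}}$ bounded (so $-\int g\,\mathrm{d}\mu_F<\infty$), achieving $h_{\sigma}(\mu_F)+\int g\,\mathrm{d}\mu_F=P_{\sigma|\Sigma_F^{+}}(g|_{\Sigma_F^{+}})$, and taking $F$ along an exhausting sequence pushes the right-hand side up to $P_{\sigma}(g)$.

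The main obstacle is the upper bound: for an arbitrary $\mu\in\mathcal{M}_\sigma$ with $-\int g\,\mathrm{d}\mu<\infty$, show $h_{\sigma}(\mu)+\int g\,\mathrm{d}\mu\leq P_{\sigma}(g)$. Here $\mu$ may be spread over the whole non-compact $\Sigma^{+}$. I would work with the countable cylinder partition $\mathcal{P}=\{[a]\}_{a\in\mathcal{S}}$, which is a generator, so by Kolmogorov--Sinai $h_{\sigma}(\mu)=h_{\sigma}(\mu,\mathcal{P})=\lim_{n}\tfrac{1}{n}H_{\mu}(\mathcal{P}^{n})$. Combining Jensen's inequality applied to the measures $\mu(C)$ on cylinders $C\in\mathcal{P}^{n}$ with the summable-variation control
$$\bigl|S_{n}g(x)-S_{n}g(y)\bigr|\leq \sum_{k=1}^{\infty}V_{k}(g)=:D<\infty\quad\text{whenever }x,y\in C\in\mathcal{P}^{n}$$
yields
$$H_{\mu}(\mathcal{P}^{n})+n\!\int g\,\mathrm{d}\mu\leq D+\log\sum_{C\in\mathcal{P}^{n}}e^{S_{n}g(\hat x_{C})}$$
for any choice of representative $\hat x_{C}\in C$. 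The partition function on the right is comparable (using topological mixing and BIP) to the periodic-orbit sum $\sum_{x\in\mathrm{Fix}^{n}}e^{S_{n}g(x)}\chi_{[a]}(x)$ defining $P_{\sigma}(g)$, so dividing by $n$ and letting $n\to\infty$ gives the required inequality. The delicate case is $H_{\mu}(\mathcal{P})=\infty$: I would truncate to a finite $F\subset\mathcal{S}$ (grouping the complement into one atom), apply the above bound to the coarser partition, and use Abramov's formula on the induced system together with $-\int g\,\mathrm{d}\mu<\infty$ to control excursions outside $\Sigma_F^{+}$ and recover the inequality in the limit $F\nearrow\mathcal{S}$.
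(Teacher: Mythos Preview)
The paper does not prove this theorem: it is quoted verbatim from Sarig's 1999 paper as background in the preliminaries section, with no argument supplied. So there is no ``paper's own proof'' to compare your attempt against.

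That said, your outline is close in spirit to Sarig's original argument, which also proceeds via the approximation $P_{\sigma}(g)=\sup_{F}P_{\sigma|\Sigma_{F}^{+}}(g)$ over finite subalphabets and then invokes the classical variational principle on the compact subsystems. One genuine gap in your sketch: in the upper bound you write that the cylinder partition function $\sum_{C\in\mathcal{P}^{n}}e^{S_{n}g(\hat{x}_{C})}$ is comparable to the periodic-orbit sum ``using topological mixing and BIP,'' but BIP is \emph{not} a hypothesis of this theorem. The stated result requires only topological mixing, summable variation, and $\sup g<\infty$. Without BIP the passage from admissible words to periodic orbits is not a bounded-ratio comparison; Sarig instead routes the upper bound back through the finite-alphabet approximation and the compact-case variational principle rather than comparing the two partition functions directly. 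Your Jensen-inequality step is fine, but the subsequent comparison needs to be replaced by an argument that does not use BIP.

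A second point to watch: in the $H_{\mu}(\mathcal{P})=\infty$ case, the appeal to Abramov's formula and induced systems is vague. Sarig handles this by working with the finite partition $\{[a]:a\in F\}\cup\{\bigcup_{a\notin F}[a]\}$ and using $\sup g<\infty$ together with $-\int g\,\mathrm{d}\mu<\infty$ to control the contribution of the lumped tail; no inducing is needed.
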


For $\mu\in{\cal M}_{\sigma}$ such that $P_{\sigma}(g)=h_{\sigma}(\mu)+\int_{\TMS}g\dd\mu$,
we call such a measure $\mu$ an \textit{equilibrium state} for the
function $g$.

\begin{defn}
[BIP] \label{def:BIP}A (countable) Markov shift $(\TMS_{\mathbb{A}},\sigma)$
has the BIP (Big Images and Preimages) property if and only if there
exists $\{b_{1},b_{2},...,b_{n}\}\subset\bb N$ such that for every
$a\in\bb N$ there exists $i,j\in\bb N$ with $t_{b_{j}a}t_{ab_{j}}=1$.
\end{defn}

The following theorem about the analyticity of pressure is found independently
by Mauldin-Urba{\'n}ski \cite{Mauldin:2003dn} and Sarig \cite{Sarig:2003hl}.
There are minor differences between their original statements; however,
under the topologically mixing and the BIP assumptions their results
are the same (see Remark \ref{rem: MU=00003DSarig with BIP} for more
details).

\begin{thm}
[Analyticitly of Pressure; Theorem 2.6.12, 2.6.13 \cite{Mauldin:2003dn}, Corollary 4 \cite{Sarig:2003hl}]\label{thm:Analyticty pressure}Let
$(\TMS,\sigma)$ be a topologically mixing countable Markov shift
with the BIP property. If $\Delta\subset\R$ is a interval and $t\to f_{t}$
a real analytic family of locally Hölder continuous functions with
$P_{\sigma}(f_{t})<\infty$, then $t\to P_{\sigma}(f_{t})\in\R$,
$t\in\Delta$, is also real analytic. Moreover, the derivative of
the pressure function is 
\[
\left.\frac{d}{dt}P_{\s}(f_{t})\right|_{t=0}=\int_{\TMS}f_{0}\dd\mu_{f_{0}}
\]
 where $\mu_{f_{0}}$ is the equilibrium state for $f_{0}$.
\end{thm}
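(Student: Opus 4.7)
The plan is to establish the analyticity via spectral analysis of the Ruelle-Perron-Frobenius transfer operator. For a locally Hölder potential $f$ with $P_{\sigma}(f)<\infty$, define
\[
L_{f}g(x)=\sum_{\sigma y=x}e^{f(y)}g(y)
\]
acting on a Banach space $\mathcal{B}$ of locally Hölder observables (for instance, bounded functions of summable variation equipped with a suitably weighted Hölder norm). Under topological mixing plus BIP, Sarig's generalized Ruelle-Perron-Frobenius theorem applies: $L_{f}$ is quasi-compact on $\mathcal{B}$, its spectral radius equals $e^{P_{\sigma}(f)}$, this is a simple isolated eigenvalue with strictly positive eigenfunction $h_{f}$ and dual eigenfunctional $\nu_{f}$ (the conformal measure), and the remainder of the spectrum lies in a disk of strictly smaller radius (spectral gap). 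The unique equilibrium state is $\dd\mu_{f}=h_{f}\dd\nu_{f}$ after the normalization $\nu_{f}(h_{f})=1$.

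Given this, the analyticity follows from Kato's perturbation theory for isolated simple eigenvalues. Since $t\mapsto f_{t}$ is a real analytic family of locally Hölder potentials and $f\mapsto L_{f}$ is analytic (the defining sum converges in operator norm on $\mathcal{B}$ thanks to BIP and the uniform finite-pressure hypothesis), the composition $t\mapsto L_{f_{t}}\in\mathcal{L}(\mathcal{B})$ is real analytic. Kato's theorem then gives that the top eigenvalue $\lambda(t)$ of $L_{f_{t}}$ is real analytic in $t$, and since $\lambda(t)=e^{P_{\sigma}(f_{t})}>0$, taking logarithm yields analyticity of $t\mapsto P_{\sigma}(f_{t})$.

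For the derivative formula, let $h_{t}$ and $\nu_{t}$ denote the top right-eigenfunction and left-eigenfunctional of $L_{f_{t}}$, normalized by $\nu_{t}(h_{t})=1$, and set $\dot{L}_{0}=\partial_{t}L_{f_{t}}|_{t=0}$. The first-order perturbation identity gives $\dot{\lambda}(0)=\nu_{0}(\dot{L}_{0}h_{0})$. Differentiating the defining sum term-by-term, $\dot{L}_{0}g=L_{f_{0}}(\dot{f}_{0}\cdot g)$ with $\dot{f}_{0}=\partial_{t}f_{t}|_{t=0}$; applied to $h_{0}$ this gives $\dot{L}_{0}h_{0}=L_{f_{0}}(\dot{f}_{0}h_{0})$. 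Pairing with $\nu_{0}$ (which satisfies $\nu_{0}\circ L_{f_{0}}=\lambda(0)\nu_{0}$) and dividing by $\lambda(0)$ yields
\[
\frac{d}{dt}P_{\s}(f_{t})\bigg|_{t=0}=\int_{\TMS}\dot{f}_{0}\,h_{0}\,\dd\nu_{0}=\int_{\TMS}\dot{f}_{0}\,\dd\mu_{f_{0}},
\]
which is the asserted formula (with $f_{0}$ in the statement understood as $\partial_{t}f_{t}|_{t=0}$).

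The main obstacle is step one: constructing a Banach space on which $L_{f}$ is quasi-compact. The finite-alphabet arguments fail outright since the transfer operator need not even be bounded on natural Hölder spaces when infinitely many preimages contribute. The BIP property is the essential extra ingredient, providing uniform control over the combinatorics of the preimage trees so that either Sarig's discriminant/generating-function estimates or the Mauldin-Urbański conformal-iterated-function-system machinery can produce both the isolated simple leading eigenvalue and the spectral gap. Without BIP quasi-compactness typically fails, so Kato's theorem cannot be directly invoked and one must argue by more delicate means.
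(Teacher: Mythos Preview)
The paper does not prove this theorem at all: it is quoted from Mauldin--Urba\'nski \cite{Mauldin:2003dn} and Sarig \cite{Sarig:2003hl}, with only a remark explaining how their hypotheses match (finiteness of pressure forces summability, BIP plus topological mixing forces positive recurrence, so both sets of results apply). Your sketch via quasi-compactness of the transfer operator and Kato's perturbation theory is precisely the strategy those references use, so there is nothing to compare against in this paper; you are reproducing the cited argument rather than an argument of the author.

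One substantive point worth flagging: you correctly noticed that the integrand in the derivative formula must be $\dot f_0=\partial_t f_t|_{t=0}$, not $f_0$ itself, and that $\mu_{f_0}$ is the equilibrium state for the potential $f_0$. The paper's later applications (e.g.\ the computation $\frac{d}{dt}P_\sigma(-t(a\tau+b\kappa))=-\int(a\tau+b\kappa)\,\dd\mu$) use the formula in exactly this corrected sense, so the statement as printed is a minor notational slip that you have handled appropriately.
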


\begin{rem}
\label{rem: MU=00003DSarig with BIP}$\ $\begin{enumerate}[font=\normalfont] 

\item We combine Proposition 2.1.9 and Theorem 2.6.12 in \cite{Mauldin:2003dn}
in the following way to derive Theorem \ref{thm:Analyticty pressure}.
By Proposition 2.1.9, we know that $P_{\sigma}(f_{t})<\infty$ implies
$f_{t}$ are summable Hölder functions (i.e., $f_{t}\in\mathcal{K}_{\beta}^{s}$
in \cite{Mauldin:2003dn} notation). The rest is a direct consequence
of Theorem 2.6.12.

\item A topologically mixing countable Markov shift $(\TMS,\sigma)$
with the BIP property is indeed a graph directed Markov system with
a \textit{finitely irreducible} adjacency matrix defined in \cite{Mauldin:2003dn}.
Hence the definition of (Gurevich) pressure given in here (from Sarig
\cite{Sarig:1999wo}) matches with the one given in Mauldin-Urba{\'n}ski
\cite{Mauldin:2003dn} (cf. Section 7 \cite{Mauldin:2001dn}).

\item For Corollary 4 in \cite{Sarig:2003hl}, it requires $f_{t}$
to be \textit{positive recurrent}. However, under the same assumptions
as in Theorem \ref{thm:Analyticty pressure} (i.e., $(\TMS,\sigma)$
is topologically mixing with the BIP property and $f_{t}$ are functions
of summable variation with $P_{\sigma}(f_{t})<\infty$) then one can
prove $f_{t}$ are positive recurrent (cf. Corollary 2 \cite{Sarig:2003hl}
or Proposition 3.8 \cite{Sarig:2009wta}).

\end{enumerate}
\end{rem}

\begin{thm}
[Phase Transition; \cite{Sarig:1999wo,Sarig:2001bj}, \cite{Mauldin:2003dn}]\label{thm: Phase transition}Let
$(\TMS,\sigma)$ be a countable Markov shift with the BIP property
and $g:\TMS\to\R$ be a positive locally Hölder continuous function.
Then there exists $s_{\infty}>0$ such that the pressure function
$t\to P_{\sigma}(-tg)$ has the following properties
\[
P_{\sigma}(-tg)=\begin{cases}
\infty & \mbox{if}\ t<s_{\infty},\\
\mbox{real analytic} & \mbox{if}\ t>s_{\infty}.
\end{cases}
\]
 Moreover, if $t>s_{\infty}$ there exists a unique equilibrium state
for $-tg$.
\end{thm}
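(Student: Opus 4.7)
The plan is to exploit monotonicity and convexity of the pressure function to isolate a single phase-transition threshold, then invoke Theorem~\ref{thm:Analyticty pressure} above it and positive recurrence for the uniqueness statement. Set $\phi(t):=P_{\sigma}(-tg)$ for $t\geq 0$. Because $g>0$, the family $-tg$ is pointwise decreasing in $t$, so $\phi$ is non-increasing directly from the definition of the Gurevich pressure as a limit of exponential partition sums. Applied to $Z_{n}(-tg,a)=\sum_{\sigma^{n}x=x,\,x_{0}=a}e^{-tS_{n}g(x)}$, H\"older's inequality yields convexity of $\phi$ on the interval where it is finite. Define $s_{\infty}:=\inf\{t\geq 0:\phi(t)<\infty\}$.

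The next step is to verify the dichotomy $\phi(t)=\infty$ for $t<s_{\infty}$ and $\phi(t)<\infty$ for $t>s_{\infty}$, with $s_{\infty}>0$. The first statement is immediate: if $\phi(t_{0})<\infty$ and $t>t_{0}$ then $-tg\leq -t_{0}g$ pointwise, and monotonicity gives $\phi(t)\leq\phi(t_{0})<\infty$; conversely, if $\phi(t_{0})=\infty$ then monotonicity forces $\phi(t)=\infty$ for every $t\leq t_{0}$. For the strict positivity of $s_{\infty}$, one uses the typical behaviour of a BIP shift on an infinite alphabet, where $\phi(0)=h_{top}(\sigma)=+\infty$: because only finitely many symbols $b\in\mathcal{S}$ can have $\sup_{[b]}g$ below any prescribed level, the one-step partition sum $\sum_{b\in\mathcal{S}}e^{-t\sup_{[b]}g}$ diverges for all sufficiently small $t>0$, which pushes $\phi$ to $+\infty$ in a neighbourhood of zero.

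On the region $t>s_{\infty}$, the map $t\mapsto -tg$ is a real analytic family of locally H\"older continuous potentials of finite pressure, so Theorem~\ref{thm:Analyticty pressure} applied on each compact subinterval delivers real analyticity of $\phi$ on $(s_{\infty},\infty)$. For the uniqueness of the equilibrium state, the combination of BIP with finiteness of the pressure forces $-tg$ to be positive recurrent in the sense of Sarig (as noted in Remark~\ref{rem: MU=00003DSarig with BIP}(3)); the Ruelle--Perron--Frobenius theorem for positive recurrent potentials on BIP countable Markov shifts, as developed in \cite{Sarig:1999wo,Sarig:2001bj} and \cite{Mauldin:2003dn}, then produces a unique shift-invariant Gibbs measure, which is the desired unique equilibrium state.

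The main obstacle I anticipate is precisely the verification that $s_{\infty}>0$ and that the pressure genuinely jumps at $s_{\infty}$, rather than smoothly descending from $+\infty$ near $t=0$. In the abstract statement this requires a careful lower bound on the partition functions using the infinitely many branches of the alphabet; in the geometric application of Section~\ref{sec:Preliminaries-on-Extended}, this is exactly the place where the parabolic generators coming from the cusps enter, producing the infinite alphabet whose exponential growth against the geometric potential $g$ dictates the value of $s_{\infty}$.
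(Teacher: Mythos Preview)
This theorem is not proved in the paper; it is quoted from Sarig and Mauldin--Urba\'nski as a background result in the preliminaries, so there is no argument to compare against. Your task is therefore to reconstruct a known theorem.

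Your outline is correct for everything except the strict positivity of $s_{\infty}$. Monotonicity of $\phi$, the dichotomy at $s_{\infty}$, real analyticity on $(s_{\infty},\infty)$ via Theorem~\ref{thm:Analyticty pressure}, and uniqueness of the equilibrium state via positive recurrence (Remark~\ref{rem: MU=00003DSarig with BIP}(3)) are all fine.

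The gap is exactly where you flagged it, and it is genuine: your claim that ``only finitely many symbols $b\in\mathcal{S}$ can have $\sup_{[b]}g$ below any prescribed level'' does not follow from the hypotheses. A positive locally H\"older function may well be bounded, or unbounded yet small on infinitely many cylinders. In fact the conclusion $s_{\infty}>0$ is \emph{false} at the stated level of generality. On the full shift over $\mathbb{N}$ (which is topologically mixing and BIP) with $g(x)=x_{0}$, one has
\[
P_{\sigma}(-tg)=\log\sum_{a\geq 1}e^{-ta}=-t-\log(1-e^{-t}),
\]
which is finite for every $t>0$ and tends to $+\infty$ as $t\to 0^{+}$; hence $s_{\infty}=0$. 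The correct abstract statement has $s_{\infty}\geq 0$, and the paper's phrasing is slightly loose on this point. In the geometric application the strict inequality $s_{\infty}=\tfrac{1}{2(a+b)}>0$ is obtained in Lemma~\ref{lem:compute critical exponent} and Lemma~\ref{lem:phas transition and critical exponent} by an explicit calculation with the parabolic generators, not from any general principle---which is precisely the mechanism you identify in your last paragraph.
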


Recall that two functions $f,g:\TMS\to\R$ are said to be \textit{cohomologous},
denoted by $f\sim g$, via a \textit{transfer function} $h$, if $f=g+h-h\circ\sigma$.
A function which is cohomologous to zero is called a\textit{ coboundary}.

\begin{thm}
[Liv\v{s}ic Theorem; Theorem 1.1 \cite{Sarig:2009wta}] \label{thm:Livsic}Suppose
$(\TMS,\sigma)$ is topologically mixing, and $f,g:\TMS\to\R$ have
summable variation. Then $f$ and $g$ are cohomologous if and only
if for all $x\in\TMS$ and $n\in\N$ such that $\sigma^{n}(x)=x$,
$S_{n}f(x)=S_{n}g(x)$. 
\end{thm}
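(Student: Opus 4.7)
The easy direction is a telescoping computation. If $f = g + h - h\circ\sigma$ and $\sigma^{n}x = x$, then
\[
S_{n}f(x) - S_{n}g(x) = \sum_{i=0}^{n-1}\bigl(h(\sigma^{i}x) - h(\sigma^{i+1}x)\bigr) = h(x) - h(\sigma^{n}x) = 0.
\]
So the plan focuses on the reverse implication: assuming $S_{n}f(x)=S_{n}g(x)$ on every periodic orbit, construct a transfer function $h:\TMS\to\R$ realizing $\phi := f-g$ as a coboundary. Since summable variation is preserved under subtraction, the whole problem reduces to proving the statement that a summable-variation function $\phi$ whose Birkhoff sums vanish on all periodic orbits must be of the form $h - h\circ\sigma$.

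The construction is the Sinai/Liv\v{s}ic closing construction adapted to the countable-alphabet setting. Fix a reference state $a\in\mathcal{S}$. Topological mixing gives, for every state $b\in\mathcal{S}$ and all sufficiently large $k$, an admissible word $w_{b,k}$ of length $k$ starting at $b$ and ending at a prescribed letter returning to $a$; thus for each $x\in\TMS$ and each $n$ large enough, one can append a short closing tail to the word $(x_{0},\dots,x_{n-1})$ to produce a periodic point $p_{n}(x)$ whose first $n$ coordinates agree with those of $x$. The hypothesis gives $S_{N_{n}}\phi(p_{n}(x))=0$ where $N_{n}$ is the period. Splitting this sum into a "long" part tracking $x$ and a "short" closing part, summable variation of $\phi$ controls the difference $|S_{n}\phi(x) - S_{n}\phi(p_{n}(x))|$ uniformly in $n$, and the closing part is uniformly bounded. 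One then defines
\[
h(x) := \lim_{n\to\infty} \bigl( S_{n}\phi(x) - S_{n}\phi(p_{n}(\bar{x}(x))) \bigr)
\]
for an appropriate anchor orbit $\bar{x}(x)$ in a fixed cylinder $[a]$, after first working on $[a]$ itself and then extending using the desired identity $h(\sigma x) = h(x) - \phi(x)$. Verifying that the limit exists, is independent of the closing choices (this is exactly where the periodic-orbit hypothesis is used, to kill the ambiguity), and satisfies the cohomology equation is then a direct check; summable variation yields the requisite regularity of $h$.

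The principal obstacle is convergence and well-definedness of $h$ in the noncompact countable-alphabet setting: one cannot appeal to a compact Anosov closing lemma, and the closing lengths $k$ furnished by topological mixing may depend on the state $b$ in an unbounded way. The key technical input is that summable variation gives an exponentially-vanishing bound on $|\phi(y)-\phi(y')|$ whenever $y,y'$ agree in their first $m$ coordinates, which is exactly the decay one needs to tame the difference between Birkhoff sums along $x$ and along its periodic shadow $p_{n}(x)$, and to show that different choices of closing words produce the same limit modulo the periodic-orbit hypothesis. Once this convergence is in hand, the identity $\phi = h - h\circ\sigma$ follows immediately from the definition, completing the proof.
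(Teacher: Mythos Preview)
The paper does not contain a proof of this statement: it is quoted verbatim as Theorem~1.1 of Sarig's lecture notes \cite{Sarig:2009wta} and used as a black box (specifically in the proof of Proposition~\ref{prop: Manhattan curve features}). There is therefore nothing in the paper to compare your argument against.

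On the merits of your sketch: the easy direction is fine, and the overall strategy for the converse---reduce to $\phi=f-g$, shadow initial segments of $x$ by periodic points, and use the vanishing of $S_{N}\phi$ on those periodic points to control $S_{n}\phi(x)$---is the right idea. However, the displayed definition of $h$ is not well-formed as written: you subtract $S_{n}\phi$ evaluated at a periodic shadow $p_{n}(\bar{x}(x))$ of some auxiliary point $\bar{x}(x)$, not of $x$ itself, and it is unclear what object this is or why the limit exists. The construction in Sarig's notes avoids this by a different route: fix a point $z$ with dense forward orbit (topological mixing supplies one), set $h(\sigma^{n}z):=-S_{n}\phi(z)$ on that orbit, and then show this extends continuously to $\TMS$. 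The periodic-data hypothesis enters exactly when checking uniform continuity: if $\sigma^{n}z$ and $\sigma^{m}z$ lie in the same long cylinder, one closes the orbit segment between them into a periodic point and invokes $S_{N}\phi=0$ there, with summable variation controlling the error. Your shadowing idea can be made to work, but you would need to be more careful about what is being shadowed and to verify that the unbounded closing times in the countable-alphabet setting do not spoil the estimates; as written, that verification is asserted rather than carried out.
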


\subsection{Thermodynamic Formalism for Suspension Flows }

Let $(\TMS,\sigma)$ be a topologically mixing (countable) Markov
shift and $\tau:\TMS\to\R^{+}$ be a positive function of summable
variation and bounded away from zero which we call the \textit{roof
function}. We define the \textit{suspension space} (relative to $\tau)$
as 
\[
\TMS_{\tau}:=\{(x,t)\in\TMS\times\R:\ 0\leq t\leq\tau(x)\},
\]
 with the identification $(x,\tau(x))=(\sigma x,0)$.

The \textit{suspension flow} $\phi$ (relative to $\tau)$ is defined
as the (vertical) translation flow on $\TMS_{\tau}$ given by 
\[
\phi_{t}(x,s)=(x,s+t)\ \mbox{for}\ 0\leq s+t\leq\tau(x).
\]

Let $F:\TMS_{\tau}\to\R$ be a continuous function, we define $\Delta_{F}:\TMS\to\R$
as 
\[
\Delta_{F}(x)=\int_{0}^{\tau(x)}F(x,t)\dd t.
\]

The following version of the Gurevich pressure for suspension flows
is given in Kempton \cite{Kempton:2011hs}.

\begin{defn}
[Gurevich Pressure for Suspension Flows]\label{def:-Gurevich pressure for sus flow}
Suppose $F:\TMS_{\tau}\to\R$ is a function such that $\Delta_{F}:\TMS\to\R$
has summable variation. The \textit{Gurevich pressure} of $F$ over
the suspension flow $(\TMS_{\tau},\phi)$ is defined as 
\[
P_{\phi}(F):=\lim_{T\to\infty}\frac{1}{T}\log\left(\sum_{\underset{0\leq s\leq T}{\phi_{s}(x,0)=(x,0)}}\exp\left(\int_{0}^{s}F(\phi_{t}(x,0))dt\right)\chi_{[a]}(x)\right),
\]
where $a$ is any element of $\mathcal{S}$.
\end{defn}

Notice that as pointed out by Kempton (cf. Lemma 3.3 \cite{Kempton:2011hs}),
this definition is independent with the choice of $a\in\mathcal{S}$.
Moreover, there are several alternative ways of defining the Gurevich
pressure for suspension flows such as using the variational principle.
In the following, we summarize some of them from works of Savchenko
\cite{Savchenko:1998fh}, Barreira-Iommi \cite{Barreira:2006fd},
Kempton \cite{Kempton:2011hs}, and Jaerisch-Kesseböhmer-Lamei \cite{Jaerisch:2014js}.

\begin{thm}
[Charaterizations for the Gurevich Pressure] \label{thm:Pressure formuals for flow}
Under the same assumptions as in Definition \ref{def:-Gurevich pressure for sus flow},
we have: 
\begin{alignat*}{1}
P_{\phi}(F) & =\inf\{t\in\R:\ P_{\sigma}(\Delta_{F}-t\tau)\leq0\}\\
 & =\sup\{t\in\R:\ P_{\s}(\Delta_{F}-t\tau)\geq0\}\\
 & =\sup\left\{ h_{\phi}(\nu)+\int_{\TMS_{\tau}}F\dd\nu:\ \nu\in\mathcal{M_{\phi}}\ \mbox{and }-\int_{\TMS_{\tau}}\tau\dd\nu<\infty\right\} ,
\end{alignat*}

where $\mathcal{M}_{\phi}$ is the set of $\phi-$invariant Borel
probability measures on $\TMS_{\tau}$. 
\end{thm}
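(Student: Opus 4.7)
The plan is to prove the three formulas are equal by establishing a chain: the periodic-orbit definition of $P_{\phi}(F)$ matches the $\inf/\sup$ characterization in terms of the shift pressure, which in turn matches the variational formula via Abramov's formula. First, I would note that the equality of the $\inf$ and $\sup$ formulas follows from monotonicity: since $\tau \geq \tau_{0} > 0$, the map $t \mapsto P_{\sigma}(\Delta_{F} - t\tau)$ is strictly decreasing wherever finite (add $-\epsilon\tau$ to shrink the pressure by at least $\epsilon\tau_{0}$ in the variational expression), and convexity in $t$ gives continuity on the interior of the finiteness interval, so the crossing of level $0$ occurs at a unique value $t_{0}$ equal to both the $\inf$ and the $\sup$.

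Second, to identify $P_{\phi}(F)$ with $t_{0}$, I would exploit the bijection between $\sigma$-periodic points and $\phi$-periodic orbits: if $\sigma^{n}x = x$, then $(x,0)\in\TMS_{\tau}$ is flow-periodic with period $S_{n}\tau(x)$ and $\int_{0}^{S_{n}\tau(x)}F(\phi_{s}(x,0))\,ds = S_{n}\Delta_{F}(x)$. Thus the partition sum defining $P_{\phi}(F)$ can be regrouped as
\[
\sum_{n\geq 1}\;\sum_{\substack{x\in\mathrm{Fix}^{n}\\ S_{n}\tau(x)\leq T}} e^{S_{n}\Delta_{F}(x)}\chi_{[a]}(x),
\]
while $\sum_{x\in\mathrm{Fix}^{n}}\exp(S_{n}(\Delta_{F}-t\tau)(x))\chi_{[a]}(x)$ has exponential growth rate $P_{\sigma}(\Delta_{F}-t\tau)$ in $n$. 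A Laplace-transform comparison (writing $e^{-tS_{n}\tau(x)}$ as a weight on period and using that $\tau$ is bounded away from zero, so $S_{n}\tau(x)\asymp n$) shows that the discrete series converges for $t > P_{\phi}(F)$ and diverges for $t < P_{\phi}(F)$, which translates to $P_{\sigma}(\Delta_{F}-t\tau)\leq 0 \iff t\geq P_{\phi}(F)$, giving $t_{0}=P_{\phi}(F)$.

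Third, for the variational formula I would invoke Abramov's correspondence: $\s$-invariant probability measures $\mu$ on $\TMS$ with $\int\tau\,d\mu<\infty$ biject with $\phi$-invariant probability measures $\nu = (\mu\times\mathrm{Leb})/\int\tau\,d\mu$ on $\TMS_{\tau}$, and under this bijection
\[
h_{\phi}(\nu)=\frac{h_{\sigma}(\mu)}{\int\tau\,d\mu},\qquad \int_{\TMS_{\tau}}F\,d\nu=\frac{\int\Delta_{F}\,d\mu}{\int\tau\,d\mu}.
\]
Hence $h_{\phi}(\nu)+\int F\,d\nu \geq t \iff h_{\sigma}(\mu)+\int(\Delta_{F}-t\tau)\,d\mu \geq 0$, and applying the shift variational principle (Theorem \ref{thm:variational principle for shift}) to the right-hand side yields that the supremum over $\nu$ equals $\sup\{t : P_{\sigma}(\Delta_{F}-t\tau)\geq 0\} = t_{0}$.

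The main obstacle I expect is the non-compact, countable-alphabet setting. The shift variational principle requires $\sup(\Delta_{F}-t\tau)<\infty$ and measures satisfying $-\int(\Delta_{F}-t\tau)\,d\mu<\infty$; I need to check these conditions translate correctly under the Abramov bijection to the flow-side constraint $\int\tau\,d\nu<\infty$, and that equilibrium measures picked out by $P_{\sigma}$ have finite $\tau$-integral so the bijection is non-vacuous at the extremizers. Additionally, because $\tau$ need not be bounded above, some care is needed to approximate general $\sigma$-invariant measures by ones of finite $\tau$-integral. I would handle these points by adapting the arguments of Savchenko, Barreira--Iommi, and Kempton cited above rather than reproving them, verifying only that the hypotheses on $\tau$ and $\Delta_{F}$ here (summable variation, bounded away from zero) meet the requirements of their framework.
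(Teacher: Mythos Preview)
The paper does not supply a proof of Theorem~\ref{thm:Pressure formuals for flow}; it is stated as a summary of results from Savchenko, Barreira--Iommi, Kempton, and Jaerisch--Kesseb\"ohmer--Lamei, with no argument given in the paper itself. So there is no in-paper proof to compare your proposal against.

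That said, your sketch is the standard route taken in those references: monotonicity of $t\mapsto P_{\sigma}(\Delta_{F}-t\tau)$ from $\tau\geq\tau_{0}>0$ to identify the $\inf$ and $\sup$; the bijection between $\sigma$-periodic points and $\phi$-periodic orbits to match the periodic-orbit definition with the crossing value; and Abramov's formula to pass between the shift and flow variational principles. One caution on your first step: in the countable-alphabet setting $P_{\sigma}(\Delta_{F}-t\tau)$ can drop from $+\infty$ directly to a finite (possibly negative) value at a phase-transition point, so there need not be a $t$ where it \emph{equals} zero; the $\inf/\sup$ formulation absorbs this, but your phrase ``the crossing of level $0$ occurs at a unique value'' slightly overstates what you actually need (and can prove). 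Your final paragraph already signals you would defer the integrability and approximation issues to the cited works, which is exactly what the paper does.
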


As before, we call a measure $\nu\in{\cal M}_{\phi}$ an \textit{equilibrium
state} for $F$ if $P_{\phi}(F)=h_{\phi}(\nu)+\int F\dd\nu$.

\subsection{Hyperbolic Surfaces}

Let $S$ be a surface with negative Euler characteristic. Recall that
a \textit{Fuchsian representation} $\rho$ is a discrete and faithful
representation from $G:=\pi_{1}S$ to $\rho(G):=\G\leq$${\rm PSL}(2,\R)\cong{\rm Isom}(\H)$.
It is well-known that all hyperbolic surfaces (i.e., surfaces with
constant Gaussian curvature $-1$) can be realized by a Fuchsian representation,
and vise versa. A Fuchsian representation is called \textit{geometrically
finite} if there exists a fundamental domain which is a finite-sided
convex polygon. Recall that $\vbdy\H$ the boundary of $\H$ is defined
as $\R\cup\{\infty\}$, and the \textit{limit set} $\Lambda(\G)\subset\vbdy\H$
of $\G$ is the set of limit points of all $\G$-orbits $\G\cdot o$
for $o\in\H$. We call an element $\g\in\G$ \textit{hyperbolic} (reps.
\textit{parabolic}), if $\g$ has exactly two (resp. one) fixed points
on $\vbdy\H$. For a hyperbolic element $\g$ we denote the \textit{attracting
fixed point} by $\g^{+}$ (i.e., $\g^{+}=\lim_{n\to\infty}\g^{n}o$)
and \textit{repelling fixed point} by $\g^{-}$ (i.e., $\g^{-}=\lim_{n\to\infty}\g^{-n}o$).
For each hyperbolic element $\g\in\G$, the geodesic on $\H$ connecting
$\g^{-}$ and $\g^{+}$ projects to a closed geodesic on $\G\backslash\H$.
We denote this closed geodesic on $\G\backslash\H$ by $\lambda_{\gamma}$.
Conversely, each closed geodesic $\lambda$ on $\G\backslash\H$ it
corresponds to a unique hyperbolic element (up to conjugation) which
is denoted by $\gamma_{\lambda}$. Moreover, the length $l[\lambda_{\g}]$
of the closed geodesic $\lambda_{\g}$ is exactly the translation
distance $l[\g]$ of $\g$, where $l[\g]:=\min\{d(x,\g x):\ x\in\H\}$.

\begin{defn}
The \textit{Busemann function} $B:\vbdy\H\times\H\times\H$ is defined
as
\[
B_{\xi}(x,y):=\lim_{z\to\xi}d(x,z)-d(x,y)
\]
 where $\xi\in\vbdy\H$ and $x,y,z\in\H$. 
\end{defn}

We summarize several well-known properties of the Busemann function:

\begin{prop}
Let $B:\vbdy\H\times\H\times\H\to\R$ be the Busemann function. Then
for $\xi\in\vbdy\H$ and $x,y,z\in\H$ \begin{enumerate}[font=\normalfont]

\item $B_{\xi}(x,y)+B_{\xi}(y,z)=B_{\xi}(x,z)$;

\item For $\gamma\in{\rm PSL}(2,\R)$, $B_{\g(\xi)}(\g(x),\g(y))=B_{\xi}(x,y)$;
and

\item $B_{\xi}(x,y)\leq d(x,y)$.

\end{enumerate}
\end{prop}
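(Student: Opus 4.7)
The three assertions are all standard consequences of the definition together with the geometry of $\H$, so my plan is simply to unpack the definition carefully, with one preliminary remark about existence of the limit. Before proving any of the three items I would confirm that, for a sequence $z_n \to \xi$ with $\xi \in \vbdy\H$, the quantity $d(x,z_n) - d(y,z_n)$ is Cauchy (hence convergent) and independent of the chosen sequence; this uses the negative curvature of $\H$ and can be seen cleanly by foliating $\H$ by the horocycles based at $\xi$, so that $d(x,z_n) - d(y,z_n)$ converges to the signed horocyclic distance from $y$ to the horocycle through $x$ based at $\xi$. (I am writing the definition in the standard form $B_\xi(x,y) = \lim_{z\to\xi}[d(x,z) - d(y,z)]$; the expression displayed in the excerpt appears to contain a typo.)

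For the cocycle identity (1), I would pick any sequence $z_n \to \xi$ and add the two telescoping identities
\[
\bigl(d(x,z_n) - d(y,z_n)\bigr) + \bigl(d(y,z_n) - d(z,z_n)\bigr) = d(x,z_n) - d(z,z_n),
\]
then pass to the limit as $n\to\infty$. The existence of each of the three limits (justified in the preliminary step) is what allows one to split the limit on the left-hand side.

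For isometric invariance (2), the key point is that every $\gamma \in \psl(2,\R)$ is an isometry of $\H$ that extends to a homeomorphism of $\vbdy\H$. Hence $z_n \to \xi$ implies $\gamma z_n \to \gamma\xi$, while $d(\gamma x,\gamma z_n) = d(x,z_n)$ and similarly for $y$. Substituting these equalities into the definition of $B_{\gamma\xi}(\gamma x,\gamma y)$ gives exactly the same limit that defines $B_\xi(x,y)$. For the upper bound (3), the triangle inequality $d(x,z) \le d(x,y) + d(y,z)$ rearranges to $d(x,z) - d(y,z) \le d(x,y)$, and since the right-hand side is independent of $z$, taking $z \to \xi$ preserves the inequality and yields $B_\xi(x,y) \le d(x,y)$.

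The only genuine content is the preliminary existence/independence statement for the defining limit; once that is granted, (1)--(3) are three short manipulations (telescoping, change of variables under an isometry, and the triangle inequality), and I would expect the entire proof to occupy only a few lines in the paper.
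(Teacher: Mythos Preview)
Your argument is correct: the telescoping sum for (1), the isometry-plus-boundary-extension observation for (2), and the triangle inequality for (3) are exactly the standard verifications, and your remark about the existence and well-definedness of the defining limit is the right preliminary step. You also correctly spotted the apparent typo in the displayed definition.

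There is nothing to compare with, however: the paper states this proposition as a summary of ``well-known properties of the Busemann function'' and does not supply a proof. So your write-up goes beyond what the paper provides rather than paralleling or diverging from it.
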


\begin{rem}
$\ $\begin{enumerate}[font=\normalfont]

\item Equivalently, using the Poincaré disk model, we can replace
$\H$ by the unit disk $\D$ (through the map $\Psi:\H\to\D$ where
$\mbox{\ensuremath{\Psi}}(z)=i\frac{z-i}{z+i}$). We have ${\rm Isom}(\H)\cong{\rm Isom}(\D)\cong\psl(2,\R)$.
In this paper, we will alternate the use of $\H$ and $\D$ depending
on the convenience of computation and presentation. 

\item In the Poincaré disk model, $\vbdy\D$ is $S^{1}$ and the
Busemann function $B:\vbdy\D^{1}\times\D\times\D\to\R$ satisfies
the same properties stated above. 

\item Moreover, there is a neat formula for the Busemann function:
for $\xi\in\vbdy\D$
\[
\left|\g'(\xi)\right|=e^{B_{\xi}(o,\g^{-1}o)}
\]
where $\g(z):\D\to\D$ is the Möbius map associated with $\g\in{\rm PSL}(2,\R)$
and $o$ is the origin.

\end{enumerate}

\end{rem}

\subsubsection{Marked Length Spectrum }

As mentioned in the previous subsection, for a hyperbolic surface
$R=\G\backslash\H$, there exists a bijection between free homotopy
classes on $R$ and conjugacy classes of $\G$. Moreover, we have
a bijection between closed geodesics on $R$ and conjugacy classes
of hyperbolic elements of $\G$.

\begin{defn}
A \textit{marked length spectrum} function $l:[c]\mapsto l[c]\in\R^{+}$
which assigns to a homotopy class $[c]$ the length $l[c]$. In other
words, it is also the function $l:[h]\mapsto l[h]$ which assigns
to a conjugacy class of a hyperbolic element $[h]$ of the length
$l[h]$ of the corresponding unique closed geodesic.
\end{defn}

The following theorem shows that for each Fuchsian representation
its proportional marked length spectrum determines the surface. We
remark that for convex-cocompact cases the same result was stated
(without a proof) in Burger \cite{Burger:1993wb}. For general Fuchsian
representations, we found it in \cite{Kim:2001km}.

\begin{thm}
[Proportional Marked Length Spectrum Rigidity; Theorem A \cite{Kim:2001km} ]\label{thm:proportinal marked length spectrum}Let
$\rho_{1},\rho_{2}:G\to{\rm PSL}(2,\R)$ be Zariski dense Fuchsian
representations having the proportional marked length spectrum (i.e.,
there exists a constant $c>0$ such that $l[\rho_{1}(\gamma)]=c\cdot l[\rho_{2}(\gamma)]$
for all $\g\in G)$. Then $\rho_{1}$ and $\rho_{2}$ are conjugate
in ${\rm PSL}(2,\R)$. 
\end{thm}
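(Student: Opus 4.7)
The plan is two-stage: first I would force the proportionality constant $c$ to equal $1$; then I would deduce conjugacy from classical character--variety rigidity for $\psl(2,\R)$.

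For the first stage, my key tool is the trace--length identity $|\mathrm{tr}(g)|=2\cosh(l[g]/2)$ for any hyperbolic $g\in\psl(2,\R)$, which converts the hypothesis into
\[
|\mathrm{tr}(\rho_1(\g))|=2\cosh\!\bigl(c\cdot \mathrm{arccosh}(|\mathrm{tr}(\rho_2(\g))|/2)\bigr)
\]
whenever $\rho_i(\g)$ is hyperbolic. Zariski density supplies elements $\g_1,\g_2\in G$ such that $\rho_i(\g_1)$, $\rho_i(\g_2)$, $\rho_i(\g_1\g_2)$, $\rho_i([\g_1,\g_2])$ are all hyperbolic with axes in general position for both $i=1,2$. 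I would then apply the Fricke identity
\[
\mathrm{tr}(A)^2+\mathrm{tr}(B)^2+\mathrm{tr}(AB)^2-\mathrm{tr}(A)\mathrm{tr}(B)\mathrm{tr}(AB)=2+\mathrm{tr}(ABA^{-1}B^{-1}),
\]
valid in $\mathrm{SL}(2,\R)$ and descending to a relation among squared traces in $\psl(2,\R)$. Running this identity in parallel for $\rho_1$ and $\rho_2$, and substituting $l[\rho_1(\g)]=c\cdot l[\rho_2(\g)]$ via the trace-length relation, yields a transcendental functional equation in the single variable $c$; letting the length parameters vary over the abundant family of hyperbolic configurations guaranteed by Zariski density, this equation should admit only $c=1$.

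For the second stage, with $c=1$ I have $|\mathrm{tr}(\rho_1(\g))|=|\mathrm{tr}(\rho_2(\g))|$ for every $\g\in G$ with hyperbolic image, hence $\mathrm{tr}^2\rho_1\equiv\mathrm{tr}^2\rho_2$ on all of $G$ by density of hyperbolic elements in $G$ and the fact that $\mathrm{tr}^2$ is a regular function on the representation variety. The classical Fricke--Procesi theorem then asserts that two Zariski dense representations of $G$ into $\psl(2,\R)$ with identical squared-trace characters are $\psl(2,\R)$-conjugate, completing the argument.

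The main obstacle will be the first stage: rigorously justifying that the Fricke relation, once combined with the transcendental substitution $|\mathrm{tr}|=2\cosh(c\,\cdot\,)$, admits only $c=1$, while correctly handling the sign ambiguity intrinsic to $\psl(2,\R)$. If this transcendental analysis becomes unwieldy, a geometric alternative via Bonahon--Thurston geodesic currents should work: proportional marked length spectra translate to proportional Liouville currents, and the rigidity of the Liouville current of a curvature $-1$ surface forces the scaling constant to be $1$, after which Bonahon's marked length spectrum rigidity for hyperbolic surfaces closes the argument.
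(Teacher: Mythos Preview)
The paper does not supply its own proof of this theorem; it is quoted as Theorem~A of Kim \cite{Kim:2001km} and used as a black box (in the proof of Proposition~\ref{prop: Manhattan curve features}). So there is no in-paper argument to compare your proposal against.

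On your proposal itself: the two-stage strategy is sound in outline, and the second stage (once $c=1$, conjugacy follows from character rigidity for Zariski dense representations into $\psl(2,\R)$) is standard. The real content lies in the first stage, and you correctly flag it as the obstacle. As written, though, that stage is a heuristic rather than a proof: you assert that the Fricke identity, after the substitution $|\mathrm{tr}|\mapsto 2\cosh\bigl(c\cdot\operatorname{arccosh}(|\mathrm{tr}|/2)\bigr)$, ``should admit only $c=1$'' once evaluated on enough configurations, but you do not carry this out. To make it rigorous you would need to show that a Zariski dense subgroup actually realizes a sufficiently rich (e.g.\ Zariski dense, or containing an open real arc) family of trace triples $(x,y,z)$ so that the resulting functional equation in $c$ is overdetermined, and you would need to handle the $\psl(2,\R)$ sign ambiguity consistently throughout the Fricke relation.

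Your fallback via Liouville currents is cleaner in spirit, but Bonahon's theory is developed for closed (or at least finite-area) negatively curved surfaces. The Zariski dense representations relevant here include the extended Schottky groups of this paper, which have infinite covolume, so invoking Liouville-current rigidity in that generality would itself require justification or a suitable extension.
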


\begin{rem}
$\ $\begin{enumerate}[font=\normalfont] 

\item A representation $\rho:G\to{\rm PSL}(2,\R)$ is called \textit{Zariski
dense} if it is irreducible and non-parabolic, where non-papabolic
means $\rho(G)$ has no global fixed point on the boundary of $\H$.
It is clear that Fuchsian representations satisfying the extended
Schottky condition (see Section \ref{sec:Preliminaries-on-Extended})
are Zariski dense. 

\item Kim's result is way more general than the version stated above.
However, this version is sufficient for us. Also, the stated version
should be known before Kim; however, we cannot find a proper reference
earlier than this one. 

\end{enumerate}
\end{rem}

\subsubsection{Boundary-Preserving Isomorphic Representations}

\begin{defn}
\label{def:type-preserving} Let $\rho_{1},\rho_{2}$ be two geometrically
finite Fuchsian representations from $G(=\pi_{1}S)$ into ${\rm PSL}(2,\R)$.
We say $\rho_{1}$ and $\rho_{2}$ are \textit{boundary-preserving
isomorphic} if there exists an isomorphism $\iota:\rho_{1}(G)\to\rho_{2}(G)$
such that \begin{enumerate}[font=\normalfont]

\item $\iota$ is \textit{type-preserving}, i.e., $\iota$ sends
hyperbolic elements to hyperbolic elements and parabolic elements
to parabolic elements, 

\item $\iota$ is \textit{peripheral-structure-preserving}, i.e.,
$\g\in\rho_{1}(G)$ corresponds to a geodesic boundary of $S_{1}$
if and only $\iota(\g)\in\rho_{2}(G)$ corresponds to a geodesic boundary
of $S_{2}$.

\end{enumerate}
\end{defn}

\begin{thm}
[Fenchel-Nielsen Isomorphism Theorem, cf. Theorem 5.4 \cite{Kapovich:2009fk},  Theorem V.H.1 \cite{Maskit:1988th}]\label{thm: bilipsthitz }
Let $\rho_{1},\rho_{2}$ be two geometrically finite Fuchsian representations
and $S_{1}=\rho_{1}(G)\backslash\H$, $S_{2}=\rho_{2}(G)\backslash\H$.
Suppose there is a boundary-preserving isomorphism $\iota:\rho_{1}(G)\to\rho_{2}(G)$.
Then there exists an $\iota$-equivariant bilipschitz homeomorphism
$f:S_{1}\to S_{2}$. 
\end{thm}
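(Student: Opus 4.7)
The plan is to build $f$ by decomposing each surface into pieces whose geometry is well understood, matching the pieces using the combinatorial data encoded in $\iota$, and gluing. Geometric finiteness lets me write $S_{i}=C_{i}\cup F_{i}^{(1)}\cup\cdots\cup F_{i}^{(n_{i})}\cup K_{i}^{(1)}\cup\cdots\cup K_{i}^{(m_{i})}$, where $C_{i}$ is the compact truncated convex core (the convex core with a horocyclic neighborhood of each cusp excised), each funnel $F_{i}^{(j)}$ is isometric to the standard model $(0,\infty)\times(\R/\ell_{i,j}\Z)$ with metric $dr^{2}+\cosh^{2}(r)\,d\theta^{2}$, and each cusp $K_{i}^{(k)}$ is isometric to $[0,\infty)\times(\R/\Z)$ with metric $dt^{2}+e^{-2t}d\theta^{2}$ (the parabolic generators of $\rho_{i}(G)$ being normalized to $z\mapsto z+1$).

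First I would translate the hypotheses on $\iota$ into topological data. Type-preservation identifies the conjugacy classes of maximal parabolic subgroups of $\rho_{1}(G)$ and $\rho_{2}(G)$, so $m_{1}=m_{2}$; peripheral-structure-preservation identifies the boundary-geodesic classes, so $n_{1}=n_{2}$; and because $\iota$ is induced by an automorphism of $\pi_{1}S$, the Euler characteristics agree, so $S_{1}$ and $S_{2}$ have the same genus. In particular, $C_{1}$ and $C_{2}$ are homeomorphic as compact surfaces with boundary and labeled boundary circles (the labeling distinguishing mouths of funnels from mouths of cusps).

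Next I would realize $\iota$ by a diffeomorphism. By the Dehn-Nielsen-Baer theorem for compact orientable surfaces with boundary, an isomorphism of fundamental groups preserving the peripheral structure is realized by an orientation-preserving homeomorphism, so $\iota$ is induced by a homeomorphism $f_{0}:S_{1}\to S_{2}$ that sends the $j$-th funnel and $k$-th cusp of $S_{1}$ onto the corresponding $\iota$-matched funnel and cusp of $S_{2}$. After a homotopy, supported away from the ends, I would take $f_{0}$ to be a diffeomorphism on the compact piece $C_{1}$, which is then automatically bilipschitz between the compact Riemannian manifolds with boundary $C_{1}$ and $C_{2}$.

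Finally I would replace $f_{0}$ on each end by an explicit bilipschitz model map: in cusp coordinates the identity $(t,\theta)\mapsto(t,\theta)$ is an isometry between the $k$-th cusps, and in funnel coordinates $(r,\theta)\mapsto(r,\ell_{2,j}\theta/\ell_{1,j})$ is bilipschitz between the $j$-th funnels with Lipschitz constant depending only on the finitely many ratios $\ell_{2,j}/\ell_{1,j}$. After interpolating in a standard collar $[0,\epsilon]\times(\R/\ell\Z)$ at each mouth, one obtains a bilipschitz homeomorphism $f:S_{1}\to S_{2}$ in the homotopy class of $f_{0}$; its lift to the universal cover is the required $\iota$-equivariant bilipschitz map $\tilde f:\H\to\H$.

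The main obstacle is the interpolation between the model maps on the ends and the smooth map on $C_{1}$: one must keep the map a homeomorphism, hold the bilipschitz constants uniform across the gluings, and preserve the homotopy class so that the lift is honestly $\iota$-equivariant. The saving grace is that a collar neighborhood of each boundary component of $C_{i}$ is isometric to a standard collar that agrees on the one hand with the mouth of the adjacent funnel and on the other hand (after a compactly supported bilipschitz reparametrization of $t$) with the mouth of the adjacent cusp, so the interpolation reduces to a one-dimensional problem on $[0,\epsilon]$ that is handled by a standard convex combination of the two parametrizations.
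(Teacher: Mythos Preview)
The paper does not give its own proof of this theorem: it is quoted as a black box from Kapovich and Maskit, with the subsequent remark that Kapovich's version yields an $\iota$-equivariant \emph{quasiconformal} homeomorphism, from which the paper infers bilipschitz. Your proposal, by contrast, supplies an actual argument via the thick--thin decomposition: match the compact truncated cores by Dehn--Nielsen--Baer, use explicit model maps on the funnels and cusps, and interpolate on collars. This is the standard hands-on route and is essentially how one proves such statements in the cited references; it has the virtue of producing a map that is visibly bilipschitz piece by piece, whereas the paper's remark (quasiconformal $\Rightarrow$ bilipschitz ``by Mori's theorem'') is a bit glib, since $K$-quasiconformal self-maps of $\H$ are in general only quasi-isometries in the hyperbolic metric, not bilipschitz. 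For the paper's later applications (e.g.\ Lemma~\ref{lem:pps critical pt<->classical pt}) a quasi-isometry would in fact suffice, but your construction delivers the stronger statement honestly.

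Two small points to tighten in your sketch. First, Dehn--Nielsen--Baer for surfaces with boundary realizes a peripheral-structure-preserving outer automorphism by a homeomorphism that may be orientation-reversing; drop ``orientation-preserving'' or argue separately for it. Second, Dehn--Nielsen--Baer gives a homeomorphism inducing $\iota$ only up to inner automorphism; to get a lift $\tilde f:\H\to\H$ that is exactly $\iota$-equivariant you must choose the lift correctly (equivalently, compose with a deck transformation), so say one sentence about that. Neither issue is serious.
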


We then lift $f$ to their universal coverings, and, thus, derive
an $\iota$-equivariant bilipschitz homeomorphism between universal
coverings (both are $\H$). By abusing the notation, we still denote
this homeomorphism by $f:\H\to\H$. More precisely, there exists a
constant $C>0$ such that for $x,y\in\H$ 
\[
\frac{1}{C}d(x,y)\leq d(f(x),f(x))\leq Cd(x,y).
\]

\begin{rem}
$\ $\begin{enumerate}[font=\normalfont] \label{rem:bilipschitz}

\item In Theorem 5.4 \cite{Kapovich:2009fk}, the $\iota$-equivariant
homeomorphism $f:S_{1}\to S_{2}$ is stated to be quasiconformal.
Nevertheless, it is well-known (cf. Mori's theorem) that quasiconformal
homeomorphisms are bilipschitz maps. 

\item \label{rem:-Tukia's-theorem}Tukia's isomorphism Theorem (cf.
Theorem 3.3 \cite{Tukia:1985vi}) points out that the boundaries of
these two Fuchsian groups are also strongly related. More precisely,
there exists an $\iota$-equivariant Hölder continuous homeomorphism
$q:\Lambda(\G_{1})\to\Lambda(\G_{2})$.

\end{enumerate}
\end{rem}

\section{Extended Schottky Surfaces \label{sec:Preliminaries-on-Extended}}

In this section, following the notations in Dal'Bo-Peigné, we will
mostly use the Poincaré disk model $\D$. Nevertheless, one can easily
convert it to the upper-half plane model $\H$. Let us fix two integers
$N_{1}$, $N_{2}$ such that $N_{1}+N_{2}\geq2$ and $N_{2}\geq1$
and consider $N_{1}$ hyperbolic isometries $h_{1},...,h_{N_{1}}$
and $N_{2}$ parabolic isometries $p_{1},...,p_{N_{2}}$ satisfying
the following conditions:

\begin{enumerate}[font=\normalfont]

\item[(C1)] For $1\leq i\leq N_{1}$ there exists in $\vbdy\D=S^{1}$
a compact neighborhood $C_{h_{i}}$ of the attracting fixed point
$h_{i}^{+}$ of $h_{i}$ and a compact neighborhood $C_{h_{i}^{-1}}$
of the repelling fixed point $h_{i}^{-}$ of $h_{i}$ such that 
\[
h_{i}(S^{1}\backslash C_{h_{i}^{-1}})\subset C_{h_{i}}.
\]

\item[(C2)] For $1\leq i\leq N_{2}$ there exists in $S^{1}$ a compact
neighborhood $C_{p_{i}}$ of the unique fixed point $p_{i}^{\pm}$
of $p_{i}$ such that for all $n\in\Z^{*}:=\Z\backslash\{o\}$ 
\[
p_{i}^{n}(S^{1}\backslash C_{p_{i}})\subset C_{p_{i}}.
\]

\item[(C3)] The $2N_{1}+N_{2}$ neighborhoods introduced in ${\rm (C1)}$
and ${\rm (C2)}$ are pairwise disjoint.

\end{enumerate}

The group $\G=\langle h_{1},...,h_{N_{1}},p_{1},...,p_{N_{2}}\rangle\leq{\rm Isom}(\D)\cong{\rm PSL}(2,\R)$
is proved (cf. \cite{Dalbo:1996wh}) to be a non-elementary free group
which acts properly discontinuously and freely on $\D$.

\begin{defn}
\label{def:extended Schottky} We call $\G=\langle h_{1},...,h_{N_{1}},p_{1},...,p_{N_{2}}\rangle$
an \textit{extended Schottky group} if it satisfies conditions $({\rm C1}),\ ({\rm C2}),\ ({\rm C3})$,
and $N_{1}+N_{2}\geq3$. Moreover, if $\G$ is an extended Schottky
group and $R$ is the hyperbolic surface $\G\backslash\D$, then we
say that the corresponding Fuchsian representation $\rho$ (i.e.,
$\rho:\pi_{1}R\to{\rm PSL}(2,\R)$ such that $\rho(\pi_{1}R)=\G$)
satisfies\textit{ the extended Schottky condition}.
\end{defn}

\begin{center}\includegraphics[scale=0.35]{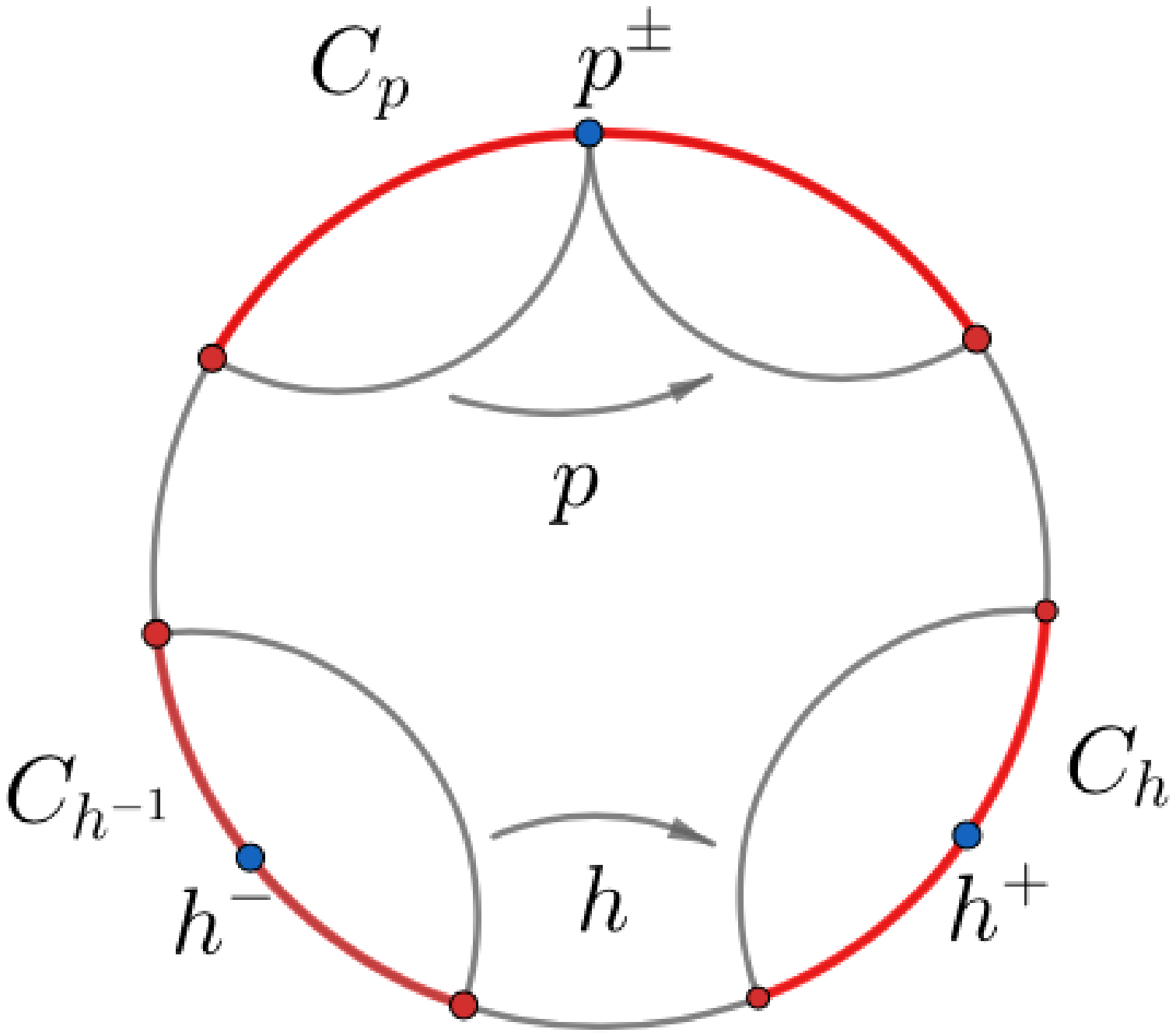}\end{center}

\begin{rem}
$\ $\begin{enumerate}[font=\normalfont]

\item If $N_{2}=0$ the groups $\G$ is a (classical) Schottky group.

\item Hyperbolic surface satisfying ${\rm (C1),\ (C2),\ (C3)}$ are
geometrically finite with infinite volume. 

\item For a hyperbolic surface satisfying ${\rm (C1),\ (C2),\ (C3)}$,
by the computation in Lemma \ref{lem:compute critical exponent},
one has the elementary parabolic groups $\langle p_{i}\rangle$ for
$1\leq i\leq N_{2}$ are of divergent type.

\item The definition of extended Schottky condition here (for hyperbolic
surfaces) is extracted from a more general definition for manifolds
with pinched negative curvatures (cf. \cite{Dalbo:1996wh,Dalbo:1998io}).

\end{enumerate}
\end{rem}

Let ${\cal A}^{\pm}=\left\{ h_{1}^{\pm1},...,h_{N_{1}}^{\pm1},p_{1},...,p_{N_{2}}\right\} $.
For $a\in{\cal A}^{\pm}$ denote by $U_{a}$ the convex hull in $\D\cup\vbdy\D$
of the set $C_{a}$. For extended Schottky surfaces, we have the following
important and very useful lemma.

\begin{lem}
\label{lem:d(x,y)>d(x,o)+d(y,0)+C}Let $\G$ be an extended Schottky
group. Fix $o\in\D$, then there exists an universal constant $C>0$
(depending only on generators of $\G$ and the fixed point $o$) such
that for every $a_{1},a_{2}\in{\cal A}^{\pm}$ satisfying $a_{1}\neq a_{2}^{\pm1}$,
and for every $x\in U_{a_{1}}$ and $y\in U_{a_{2}}$, one has 
\[
d(x,y)\geq d(x,o)+d(y,o)-C.
\]
 
\end{lem}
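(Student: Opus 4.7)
The plan is to reduce the inequality to a uniform bound on the distance from $o$ to the geodesic segment $[x,y]$ and then obtain that bound by a compactness argument using the disjointness property $({\rm C3})$.

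First, a purely hyperbolic-plane inequality. For any $x,y \in \D$, let $p \in [x,y]$ be the foot of the perpendicular from $o$, so $d(o,p) = d(o,[x,y])$. Then
$$d(x,o) \leq d(x,p) + d(p,o), \qquad d(y,o) \leq d(p,y) + d(p,o),$$
and adding,
$$d(x,o) + d(y,o) \leq d(x,p) + d(p,y) + 2d(o,p) = d(x,y) + 2\, d(o,[x,y]).$$
So if I can find a constant $D>0$ such that $d(o,[x,y]) \leq D$ for every admissible pair $(a_1,a_2)$ and every $x \in U_{a_1} \cap \D$, $y \in U_{a_2} \cap \D$, then the conclusion holds with $C = 2D$.

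Next, I would produce the bound $D$ via compactness. By $({\rm C3})$, whenever $a_1 \neq a_2^{\pm 1}$, the arcs $C_{a_1}, C_{a_2} \subset S^1$ are disjoint, hence so are their closed convex hulls $U_{a_1}, U_{a_2}$ inside $\D \cup \vbdy \D$. Each $U_{a_i}$ is compact in $\D \cup \vbdy \D$ (in the usual topology of the closed disk), so $U_{a_1} \times U_{a_2}$ is compact. The map
$$F : U_{a_1} \times U_{a_2} \longrightarrow [0,\infty), \qquad F(x,y) = d\bigl(o,\ [x,y]\bigr),$$
where $[x,y]$ denotes the geodesic (segment, ray, or bi-infinite line) joining the two points, is continuous on this compact domain: the endpoints of $[x,y]$ depend continuously on $(x,y)$ in the visual topology, and the disjointness $U_{a_1} \cap U_{a_2} = \emptyset$ guarantees $x \neq y$ throughout, so $[x,y]$ is never degenerate and $F$ stays finite. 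Hence $F$ attains a finite maximum $D_{a_1,a_2}$.

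Finally, there are only finitely many pairs $(a_1,a_2) \in \mathcal{A}^{\pm} \times \mathcal{A}^{\pm}$ with $a_1 \neq a_2^{\pm 1}$ (at most $(2N_1 + N_2)^2$ of them), so $D := \max_{(a_1,a_2)} D_{a_1,a_2}$ is the desired uniform bound, and $C := 2D$ proves the lemma. The only real subtlety is the continuity of $F$ at boundary points; this is straightforward once one uses the disjointness and the standard fact that the nearest-point projection onto a geodesic in $\D$ depends continuously on the ideal endpoints, so I do not expect a serious obstacle.
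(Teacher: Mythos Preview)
Your proof is correct. The paper does not actually prove this lemma; it only remarks that the result is well-known and cites Lemma~4.4 of Iommi--Riquelme--Velozo \cite{Iommi:2016tv} for the stated version. Your argument supplies a clean self-contained proof: the reduction $d(x,o)+d(y,o)\le d(x,y)+2\,d(o,[x,y])$ is the standard perpendicular-foot inequality, and the uniform bound on $d(o,[x,y])$ by compactness of $U_{a_1}\times U_{a_2}$ in the closed disk is exactly the right mechanism, with condition $({\rm C3})$ ensuring the convex hulls are disjoint so that the geodesic $[x,y]$ never degenerates. The only point worth making explicit is that disjoint closed \emph{arcs} on $S^1$ have non-interleaving endpoints, so the bounding geodesics of $U_{a_1}$ and $U_{a_2}$ do not cross and the hulls are genuinely disjoint in $\D\cup\vbdy\D$; once that is said, the continuity of $F$ up to the boundary is routine.
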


\begin{rem}
The above lemma is well-known. The version that we stated is taken
from Lemma 4.4 \cite{Iommi:2016tv}. 
\end{rem}

\subsection{Coding of Closed Geodesics }

In this subsection, we plan to present a coding of closed geodesics
on extended Schottky surfaces. This symbolic coding is given in Dal'Bo-Peigné
\cite{Dalbo:1996wh} (the case of ${\cal P}=\emptyset$ in their notation). 

Throughout this subsection, let $S$ be a surface with negative Euler
characteristic and $\rho_{1}$, $\rho_{2}$ be two boundary-preserving
isomorphic Fuchsian representations, from $G=\pi_{1}S$ into ${\rm PSL}(2,\R)$,
satisfying the extended Schottky condition. For $i=1$,2, we denote
$\G_{i}=\rho_{i}(G)$, $S_{i}=\G_{i}\backslash\D$, and $\Lambda(\G_{i})$
denotes the limit set of $\G_{i}$. 

Since $\rho_{1}$ and $\rho_{2}$ are boundary-preserving isomorphic
and satisfying the extended Schottky condition, we write $G=\langle h_{1},h_{2},...,h_{N_{1}},p_{1},p_{2},...,p_{N_{2}}\rangle$
where $h_{j}$ (resp. $p_{k}$) is called hyperbolic (resp. parabolic)
and it corresponds to a hyperbolic (resp. parabolic) element $\rho_{i}(h_{j})$
(resp. $\rho_{i}(p_{k}$)). We denote the set of generators by $\mathcal{A}=\{h_{1},h_{2},...,h_{N_{1}},p_{1},p_{2},...,p_{N_{2}}\}$. 

We first work on one fixed extended Schottky surface, say $S_{1}$.
In the following, we recall definitions and summarize several useful
propositions from Dal'Bo-Peigné \cite{Dalbo:1996wh} about the coding
of the geodesics on $S_{1}$.

\begin{defn}
$\ $\begin{enumerate}[font=\normalfont]\label{def:TMS_Coding}

\item Let $\mathcal{A}=\{h_{1},h_{2},...,h_{N_{1}},p_{1},p_{2},...,p_{N_{2}}\}$,
the countable Markov shift $(\TMS,\s)$ associated with $S_{1}$ is
defined as 
\[
\TMS=\{x=(a_{i}^{n_{i}})_{i\geq1}:\ a_{i}\in{\cal A},\ n_{i}\in\Z{\rm ^{*},\ and\ }a_{i}\neq a_{i+1}^{\pm}\}\ {\rm where}\ \Z^{*}=\Z\backslash\{0\},
\]
and the shift map $\sigma(a_{1}^{n_{1}}a_{2}^{n_{2}}a_{3}^{n_{3}}...)=a_{2}^{n_{2}}a_{3}^{n_{3}}...$.;

\item $\Lambda_{1}^{0}$ is a subset of $\L(\G_{1})$ defined as
\[
\L_{1}^{0}=\L(\G_{1})\backslash\{\G_{1}\xi:\ \mbox{\ensuremath{\xi}{\rm \ is a fixed point of \ensuremath{\rho_{1}(\alpha)} for} }\alpha\in{\cal A}\};{\rm \ and}
\]

\item ${\cal G}_{S_{1}}$ is the set of all closed geodesics on $S_{1}$
except those corresponding to hyperbolic elements in ${\cal A}$.

\end{enumerate}
\end{defn}

\begin{prop}
[Coding Property and the Geometric Potential ] \label{prop:coding_property}$\ $\begin{enumerate}[font=\normalfont]

\item ${\rm (p.759\ }$\cite{Dalbo:1996wh}$)$ There exists a bijection
$\omega_{1}:\L_{1}^{0}\to\TMS$.

\item ${\rm (p.760\ }$\cite{Dalbo:1996wh}$)$ The Bowen-Series
map $T:\L_{1}^{0}\to\L_{1}^{0}$ is given by $T(\xi)=\omega_{1}^{-1}(\s(\omega_{1}(\xi))$
for $\xi\in\Lambda_{1}^{0}$. 

\item ${\rm (Lemma}\ {\rm II.1\ }$\cite{Dalbo:1996wh}${\rm )}$
There exists a bijection (up to cyclic permutations) ${\cal H}:{\cal G}_{S_{1}}\to{\rm Fix}(\TMS)$
where ${\rm Fix}(\TMS)=\cup_{n}{\rm Fix}^{n}(\TMS)$ is the set of
fixed points of $\sigma$.

\item ${\rm (p.759\ }$\cite{Dalbo:1996wh}$)$ Let $\tau:\TMS\to\R$
be the geometric potential (relative to $T$), that is, 
\[
\tau(x):=-\log|T'(\omega_{1}^{-1}(x))|=B_{\omega_{1}^{-1}(x)}(o,\rho_{1}(a_{1}^{n_{1}})o),{\rm \ }where\ x=a_{1}^{n_{1}}a_{2}^{n_{2}}...\in\TMS.
\]
Suppose $\g\in\G_{1}$ is a hyperbolic element and $\w_{1}(\g^{+})=\overline{a_{1}^{n_{1}}...a_{k}^{n_{k}}}\in{\rm Fix}^{k}(\TMS)$,
then 
\[
l_{1}[\g]=S_{k}(\tau(\w_{1}(\g^{+})).
\]

\item ${\rm (Lemma}\ {\rm II.4\ }$\cite{Dalbo:1996wh}$)$ There
exist $K,C>0$ such that ${\displaystyle S_{n}\tau(x)\geq C}$ for
all $n>K$ and $x\in\TMS$.

\item ${\rm (Lemma}\ {\rm V.2,V.5\ }$\cite{Dalbo:1996wh}$)$ \label{lem:locally holder}
$\tau$ is locally Hölder continuous. 

\end{enumerate}
\end{prop}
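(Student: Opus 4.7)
The plan is to build everything out of the tree-like Schottky structure: each $\xi\in\Lambda_{1}^{0}$ lies in exactly one compact neighborhood $C_{a_{1}}$ for some $a_{1}\in\mathcal{A}^{\pm}$, and applying $\rho_{1}(a_{1}^{n_{1}})^{-1}$ for a uniquely determined exponent $n_{1}$ moves $\xi$ off of $C_{a_{1}}$ while keeping it inside the limit set. Iterating this observation gives the coding for parts (1) and (2): the inverse branches nest $\xi$ inside a decreasing sequence of cylinders whose diameters shrink to zero by (C1)--(C2), producing a unique word $\omega_{1}(\xi)=a_{1}^{n_{1}}a_{2}^{n_{2}}\cdots$ with $a_{i+1}\neq a_{i}^{\pm}$ forced by (C3). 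Surjectivity follows from compactness and nestedness of the convex hulls $U_{a}$. The Bowen--Series map is then forced by the semiconjugacy $\omega_{1}\circ T=\sigma\circ\omega_{1}$, and a direct unwrapping gives $T(\xi)=\rho_{1}(a_{1}^{n_{1}})^{-1}\xi$.

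For part (3), any hyperbolic $\gamma\in\G_{1}$ not already in $\rho_{1}(\mathcal{A})$ admits a cyclically reduced representative $\rho_{1}(a_{1}^{n_{1}}\cdots a_{k}^{n_{k}})$; its attracting fixed point $\gamma^{+}$ lies in $\Lambda_{1}^{0}$, and uniqueness of coding gives $\omega_{1}(\gamma^{+})=\overline{a_{1}^{n_{1}}\cdots a_{k}^{n_{k}}}$. Conversely, any periodic $\sigma$-orbit decodes to the attracting fixed point of a cyclically reduced word. Cyclic permutations of the periodic orbit correspond to conjugating $\gamma$ by a prefix, which is exactly the ambiguity in choosing a base point for a closed geodesic. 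For part (4) I would use the derivative-Busemann identity $|\gamma'(\xi)|=e^{B_{\xi}(o,\gamma^{-1}o)}$ together with the cocycle relation $B_{\gamma\xi}(x,y)+B_{\xi}(\gamma^{-1}x,\gamma^{-1}y)=B_{\xi}(\gamma^{-1}x,y)$: telescoping along the periodic orbit collapses $S_{k}\tau(\omega_{1}(\gamma^{+}))$ to $B_{\gamma^{+}}(o,\rho_{1}(\gamma)^{-1}o)$, which is $l_{1}[\gamma]$ because $\gamma^{+}$ is the attracting fixed point of $\gamma$.

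Part (5) is where Lemma \ref{lem:d(x,y)>d(x,o)+d(y,0)+C} does the work. After telescoping, $S_{n}\tau(x)$ differs from the hyperbolic distance $d(o,\rho_{1}(a_{1}^{n_{1}}\cdots a_{n}^{n_{n}})o)$ only by a bounded Busemann-versus-distance error, and the admissibility condition $a_{i+1}\neq a_{i}^{\pm}$ places the consecutive partial products in disjoint convex hulls $U_{a_{j}}$, so inductive application of the lemma gives a lower bound that grows linearly in $n$, up to an additive constant. Positivity for $n>K$ is then immediate. For part (6), each inverse branch of $T$ is a M\"obius transformation on $\vbdy\D$; on a cylinder disjoint from the fixed-point neighborhood of its generator, standard Koebe-type distortion gives exponential contraction and H\"older control of $\log|T'|$. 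The main obstacle I expect is the parabolic case: a high power $p_{i}^{n}$ has derivative arbitrarily flat near $p_{i}^{\pm}$, so uniform contraction across all of $C_{p_{i}}$ fails. The point is that the coding records the exponent $n_{i}$ as part of the symbol $a_{i}^{n_{i}}$, so two points in the same $n$-cylinder $[a_{1}^{n_{1}},\ldots,a_{n}^{n_{n}}]$ automatically stay away from the parabolic fixed point at the relevant step, which is enough to produce the estimate $V_{n}(\tau)\le c\theta^{n}$ via the distortion lemma for M\"obius maps.
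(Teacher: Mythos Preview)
The paper does not give a proof of this proposition at all: it is a summary statement, with each item attributed to a specific page or lemma of Dal'Bo--Peign\'e \cite{Dalbo:1996wh}, and the paper proceeds immediately to the next proposition. So there is no ``paper's proof'' to compare against; what you have written is an outline of how the cited reference establishes these facts, and as such it is broadly faithful to the standard Bowen--Series/Schottky construction.

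Two small points on your sketch. In part (4), after telescoping you obtain $S_{k}\tau(\omega_{1}(\gamma^{+}))=B_{\gamma^{+}}(o,\rho_{1}(\gamma)o)$, not $B_{\gamma^{+}}(o,\rho_{1}(\gamma)^{-1}o)$: with the convention used here, the attracting fixed point gives $B_{\gamma^{+}}(o,\gamma o)=l[\gamma]>0$ (cf.\ the computation for hyperbolic elements in Lemma~\ref{lem:compute critical exponent}). In part (5), your claim of linear growth in $n$ is stronger than what is asserted and is not quite what Lemma~\ref{lem:d(x,y)>d(x,o)+d(y,0)+C} yields directly, since each application of that lemma costs an additive constant $C$ which may exceed the minimal displacement $d(o,\rho_{1}(a^{n})o)$. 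The actual statement only requires $S_{n}\tau\geq C$ for $n>K$; this follows once you observe (via the same telescoping as in part (4)) that $S_{n}\tau(x)=B_{\omega_{1}^{-1}(x)}(o,g_{n}o)$ with $g_{n}o$ lying in the same Schottky region $U_{a_{1}}$ as $\omega_{1}^{-1}(x)$, so the Busemann value is comparable to $d(o,g_{n}o)$, which tends to infinity uniformly. That is the content of Lemma~II.4 in \cite{Dalbo:1996wh}, and it is also why the paper needs the separate Lemma~\ref{lem:roof fcn >0} to pass to a cohomologous potential that is pointwise bounded below.
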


Furthermore, the countable Markov shift $(\TMS,\s)$ derived above
satisfies the following two favorable conditions.

\begin{prop}
[Properties of the Markov Shift]\label{thm:coding of geodesic flow}
Let $(\TMS,$$\s$) be the countable Markov shift associated to $S_{1}$.
Then

\begin{enumerate}[font=\normalfont]

\item The Markov shift $(\TMS,\sigma)$ satisfies the BIP property;
and

\item If $N_{1}+N_{2}\geq3$, then $(\TMS,\sigma)$ is topologically
mixing.

\end{enumerate}
\end{prop}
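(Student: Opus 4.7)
The key structural observation driving both parts is that admissibility in the definition of $\TMS$ depends only on the generator labels $a_i,a_{i+1}\in\mathcal{A}$ through the condition $a_i\neq a_{i+1}^{\pm}$, and not on the exponents $n_i\in\Z^{*}$. Thus, once a finite letter sequence with no consecutive repetition is produced, any choice of nonzero exponents extends it to an admissible finite word of $\TMS$; and any admissible finite word extends to an infinite admissible sequence, because from any final symbol one can always append a symbol whose letter differs.

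For part (1), since $N_1+N_2\geq 2$, fix two distinct generators $c_1,c_2\in\mathcal{A}$ and set $B=\{c_1^{1},c_2^{1}\}$, a finite subset of the alphabet. For any alphabet symbol $a^n$, the letter $a$ can coincide with at most one of $c_1,c_2$, so there exists $j\in\{1,2\}$ with $c_j\neq a$; then both $c_j^{1}a^n$ and $a^n c_j^{1}$ are admissible, so the adjacency entries $t_{c_j^{1},a^n}=t_{a^n,c_j^{1}}=1$, which is exactly the BIP condition.

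For part (2), I need to show that for any pair of symbols $\alpha=a^m$ and $\beta=b^k$ there is an integer $N$ with $\s^{-n}[\alpha]\cap[\beta]\neq\emptyset$ for every $n>N$. I would explicitly produce, for each $n\geq 2$, a letter path $b=\ell_0,\ell_1,\dots,\ell_n=a$ with $\ell_i\neq \ell_{i+1}$; filling in arbitrary nonzero exponents at the intermediate positions and extending by the previous observation yields an element of $[\beta]\cap\s^{-n}[\alpha]$. The hypothesis $N_1+N_2\geq 3$ is essential at this step: with at least three letters one can always choose $\ell_1$ different from both $b$ and $a$ when $n=2$, and for larger $n$ freely insert intermediate letters drawn from a $3$-letter subset, breaking the strict alternation parity that obstructs mixing in a $2$-letter alphabet. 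Hence $N=1$ suffices.

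The argument is essentially a combinatorial verification, so no serious obstacle is anticipated. The only subtlety lies in handling small $n$ in part (2), where $N_1+N_2\geq 3$ (as opposed to $\geq 2$) is exactly what removes the parity obstruction inherent to the $2$-letter alphabet.
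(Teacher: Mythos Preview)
Your proposal is correct and follows essentially the same combinatorial approach as the paper: for BIP you exhibit a finite set of alphabet symbols (you use two generators, the paper uses all of $\mathcal{A}$), and for topological mixing you produce admissible words of every sufficiently large length connecting any two symbols, using a third generator to break the parity obstruction. Your treatment of part~(2) is in fact slightly cleaner---you argue uniformly that $N=1$ works for every pair, whereas the paper writes down explicit alternating words after a ``w.l.o.g.''\ reduction that tacitly assumes the two endpoint letters are distinct.
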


\begin{proof}
Taking the finite set to be ${\cal A}=\{h_{1},h_{2},...,h_{N_{1}},p_{1},p_{2},...,p_{N_{2}}\}$,
then it is clear that $(\TMS,\sigma)$ satisfies the BIP property
(see Definition \ref{def:BIP}). The topologically mixing property
for Markov shifts is a combinatorics condition: 

\textbf{Claim:} For every $x,y\in\{a_{i}^{m}:\ a_{i}\in{\cal A},\ m\in\Z\}$,
there exists $N=N(x,y)\in\N$ such that for all $k>N$ there is an
admissible word of length $k$ of the form $xa_{2}^{n_{2}}a_{3}^{n_{3}}...a_{k-1}^{n_{k-1}}y$
for some $n_{i}\in\Z^{*}$ and $i=2,...,k-1$.

pf. Recall that $\TMS=\{x=(a_{i}^{n_{i}})_{i\geq1}:\ a_{i}\in{\cal A},\ n_{i}\in\Z{\rm ^{*},\ and\ }a_{i}\neq a_{i+1}^{\pm}\}$.
Since $N_{1}+N_{2}\geq3,$ we have at least three distinct elements
in ${\cal A}$, say $a_{1},a_{2},a_{3}$. Pick two elements $x,y$
in $\{a_{i}^{m}:\ a_{i}\in{\cal A},\ m\in\Z\}$, w.l.o.g., say $x=a_{1}^{m_{1}}$
and $y=a_{2}^{m_{2}}$. 

For $k=2t+2$ for any $t\in\N$, then the following word is admissible:
\[
a_{1}^{m_{1}}\underset{t\ {\rm paris}}{\underbrace{(a_{2}a_{3})...(a_{2}a_{3})}}a_{2}^{m_{2}}.
\]

For $k=2t+3$ for any $t\in\N$, then the following word is admissible:
\[
a_{1}^{m_{1}}\underset{t\ {\rm paris}}{\underbrace{(a_{2}a_{3})...(a_{2}a_{3})}}a_{1}a_{2}^{m_{2}}.
\]

\end{proof}

Using a standard argument in symbolic dynamics, we observe the following
handy lemma for the geometric potential $\tau$.

\begin{lem}
\label{lem:roof fcn >0}There exists a locally Hölder continuous functions
$\tau'$ such that $\tau\sim\tau'$ and $\tau'$ is bounded away from
zero.
\end{lem}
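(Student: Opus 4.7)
The plan is to construct $\tau'$ via the standard Birkhoff averaging trick. Fix an integer $N > K$, where $K$ is the constant from item (5) of Proposition \ref{prop:coding_property}, and define
\[
\tau'(x) := \frac{1}{N} S_N \tau(x) = \frac{1}{N}\sum_{i=0}^{N-1} \tau(\sigma^i x).
\]
That proposition gives $S_N \tau(x) \geq C$ uniformly for $x \in \TMS$, so $\tau'(x) \geq C/N > 0$, yielding the desired uniform lower bound. Since $\tau$ is locally Hölder continuous and $\tau'$ is a finite average of the compositions $\tau \circ \sigma^i$ (each of which remains locally Hölder, with the Hölder constant simply shifted in the exponent), $\tau'$ is also locally Hölder continuous.

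The remaining task is to show $\tau' \sim \tau$. The key observation is the telescoping identity: for each $i \geq 1$,
\[
(S_i \tau)(\sigma x) - (S_i \tau)(x) = \sum_{j=1}^{i} \tau(\sigma^j x) - \sum_{j=0}^{i-1} \tau(\sigma^j x) = \tau(\sigma^i x) - \tau(x),
\]
so each $\tau \circ \sigma^i - \tau$ is a coboundary with transfer function $S_i \tau$. Summing,
\[
\tau'(x) - \tau(x) = \frac{1}{N}\sum_{i=1}^{N-1}\bigl( \tau(\sigma^i x) - \tau(x)\bigr) = u(\sigma x) - u(x),
\]
where $u := \frac{1}{N}\sum_{i=1}^{N-1} S_i \tau$. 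Hence $\tau \sim \tau'$ with transfer function $u$.

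There is essentially no genuine obstacle in this argument; the averaging recipe is a standard tool in symbolic dynamics, and the only nontrivial input required is the uniform positivity $S_n \tau \geq C$ for $n > K$ supplied by Dal'Bo--Peigné. This inequality is exactly what makes it possible to choose $N > K$ so that the averaged function is strictly positive. Local Hölder regularity is preserved under finite sums and composition with the shift, so the construction yields the advertised $\tau'$ with no further delicate estimates needed.
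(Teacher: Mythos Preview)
Your proof is correct and follows essentially the same Birkhoff-averaging approach as the paper: both replace $\tau$ by a finite Ces\`aro-type average of its iterates under $\sigma$ to exploit the uniform lower bound on $S_n\tau$. The only cosmetic difference is that the paper first writes down the transfer function as a weighted (Abel-summation) combination $h'(x)=\sum_{n=0}^{K-1}(1-n/K)\,\tau(\sigma^n x)$ and reads off $\tau'$ from the identity $h'(x)-h'(\sigma x)=\tau(x)-\tau'(x)$, whereas you define $\tau'$ directly as the uniform average and then verify the coboundary relation by telescoping; the two constructions are equivalent.
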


\begin{proof}
By the above proposition, we know there exist $K,C>0$ such that ${\displaystyle \tau+\tau\circ\sigma+...+\tau\circ\sigma^{m}\geq C}$
for all $m>K$. Let $\lambda=\frac{1}{K}$ and consider $h'(x)={\displaystyle \sum_{n=0}^{K-1}}a_{n}\cdot\tau\circ\sigma^{n}(x)$
where $a_{n}=1-n\lambda$. Notice that $a_{0}=1$, $a_{K-1}=\lambda$
and $a_{K}=0$. Moreover, we have $a_{n}-a_{n-1}=-\lambda$ for $n=1,2,...,K$.

Therefore, 
\begin{alignat*}{1}
h'(x)-h(\sigma x) & ={\displaystyle \sum_{n=0}^{K-1}}a_{n}\cdot\tau\circ\sigma^{n}(x)-{\displaystyle \sum_{n=0}^{K-1}}a_{n}\cdot\tau\circ\sigma^{n+1}(x)\\
 & =a_{0}\cdot\tau(x)-\lambda\cdot(\tau\circ\sigma x+\tau\circ\sigma^{2}x+...+\tau\circ\sigma^{K-1}x)-a_{K-1}\tau\circ\sigma^{K}(x)\\
 & =\tau(x)-\lambda\sum_{n=1}^{K}\tau\circ\sigma^{n}x.
\end{alignat*}

Let $\tau'(x):={\displaystyle \lambda\sum_{n=1}^{K}\tau\circ\sigma^{n}x}$.
It is clear that $\tau'(x)$ is locally Hölder; moreover, we have
\[
\tau'(x)=\lambda\sum_{n=1}^{K}\tau\circ\sigma^{n}x\geq\frac{C}{K}>0.
\]

\end{proof}

Notice that the coding above is completely determined by the type
of generators (i.e., hyperbolic or parabolic) in $\G_{1}$. Because
$\G_{1}$ and $\G_{2}$ are boundary-preserving isomorphic, repeating
the same construction as above for $\G_{2}$, for $S_{2}$ we derive
the same countable Markov shift $(\TMS,\s)$ as for $S_{1}$. In other
words, the same Proposition \ref{prop:coding_property} holds for
$S_{2}$. More precisely, there exists a bijection $\omega_{2}:\L_{2}^{0}\to\TMS$
and the geometric potential $\kappa:\TMS\to\R$ given by $\kappa(x):=B_{\omega_{2}^{-1}(x)}(o,\rho_{2}(a_{1}^{n_{1}})o)$
for $x=a_{1}^{n_{1}}a_{2}^{n_{2}}...\in\TMS$. Furthermore, $\kappa$
is cohomologus to a locally Hölder continuous function $\kappa'$
which is bounded away from zero (i.e., Lemma \ref{lem:roof fcn >0}).

\begin{rem}
$\ $\begin{enumerate}[font=\normalfont]

\item\label{rem:coding_commute} Suppose $\iota:\G_{1}\to\G_{2}$
is a type-preserving isomorphism. Then by Tukia's isomorphism theorem
(cf. Remark \ref{rem:bilipschitz}.\ref{rem:-Tukia's-theorem}) there
exists an $\iota-$equivariant homeomorphism $q:\L(\G_{1})\to\L(\G_{2})$.
One can also prove that for $\xi\in\L_{1}^{0}$ we have $\w_{2}(\xi)=\w_{1}(q(\xi))$.
Moreover, we can write $\kappa(x)=B_{(\w_{1}\circ q)^{-1}(x)}(o,(\iota\circ\rho_{1})(a_{1}^{n_{1}})\cdot o)$
where $a_{1}^{n_{1}}$ is the first element of $\w_{1}^{-1}(x)$. 

\item Noticing that since $\tau$ and $\tau'$ (constructed in Corollary
\ref{lem:roof fcn >0}) are cohomologous, the thermodynamics for $\tau$
(resp. $\kappa$) and $\tau'$ (resp. $\kappa'$) are the same. Therefore,
for brevity, we will abuse our notation and continue to denote the
function $\tau'$ by $\tau$ and, similarly, $\kappa'$ by $\kappa$. 

\end{enumerate}
\end{rem}

\subsection{Phase Transition of the Geodesic Flow}

We continue this subsection with the same notations and assumptions
as the previous subsection. Recall $D=\{(x,y)\in\R^{2}:\ x\geq0,y\geq0\}\backslash(0,0)$.
Throughout, let $\rho_{1}$ and $\rho_{2}$ be two boundary-preserving
isomorphic Fuchsian representations satisfying the extended Schottky
condition.

\begin{lem}
\label{lem:compute critical exponent}Suppose $(a,b)\in D$. For any
parabolic element $p\in G$ (i.e., $\rho_{1}(p)$ and $\rho_{2}(p)$
are parabolic), we have $\delta_{\langle p\rangle}^{a,b}=\inf\left\{ t\in\R:\ Q_{\langle p\rangle}^{a,b}(t)<\infty\right\} =\frac{1}{2(a+b)}$
where $Q_{\langle p\rangle}^{a,b}(t)=\sum_{n\in\Z}e^{-t(d^{a,b}(o,p^{n}))}$.
For $h\in\G$ is hyperbolic (i.e., $\rho_{1}(h)$ and $\rho_{2}(h)$
are hyperbolic), then $\delta_{\langle h\rangle}^{a,b}=0$.
\end{lem}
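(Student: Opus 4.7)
The plan is to reduce both claims to standard asymptotics of the displacement function of a single isometry iterated $n$ times, and then recognize the Poincar\'e series as (essentially) a $p$-series in the parabolic case and a geometric series in the hyperbolic case.

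First I would recall the well-known asymptotic for a parabolic isometry $p$ of $\H$ with reference point $o$: there exists a constant $C_p \in \R$ such that
\[
d(o, p^{n} o) = 2\log|n| + C_p + o(1) \quad \text{as } |n|\to\infty.
\]
This can be seen, e.g., by conjugating $p$ so its fixed point is $\infty$ in the upper half-plane; then $p$ is a Euclidean translation $z \mapsto z + \alpha$, and one computes directly that $d(o, p^n o) = 2\log|n\alpha| + O(1/n^2)$. Applying this to both $\rho_1(p)$ and $\rho_2(p)$ with the respective reference points $o_1$, $o_2$ gives
\[
d_{\rho_1,\rho_2}^{a,b}(o, p^{n} o) = 2(a+b)\log|n| + C + o(1),
\]
where $C$ is a constant depending on $p$, $o$, $a$, and $b$.

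Substituting this into the defining series,
\[
Q_{\langle p\rangle}^{a,b}(t) = 1 + \sum_{n\in\Z^{*}} e^{-t\, d^{a,b}(o, p^n o)} = e^{-tC}\sum_{n\in\Z^{*}} |n|^{-2t(a+b)}\,(1+o(1)),
\]
so the series behaves like the classical $p$-series $\sum |n|^{-2t(a+b)}$. This converges precisely when $2t(a+b) > 1$ and diverges when $2t(a+b) \leq 1$, giving $\delta_{\langle p\rangle}^{a,b} = \frac{1}{2(a+b)}$ as claimed. Note $a+b > 0$ since $(a,b)\in D$, so the expression is well-defined.

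For the hyperbolic case, let $h \in G$ so that $\rho_i(h)$ is hyperbolic with translation length $\ell_i := l[\rho_i(h)] > 0$. The displacement satisfies $d(o_i, \rho_i(h)^n o_i) = |n|\ell_i + O(1)$ (the error being bounded since $o_i$ lies within bounded distance of the axis of $\rho_i(h)$), hence
\[
d_{\rho_1,\rho_2}^{a,b}(o, h^n o) = |n|(a\ell_1 + b\ell_2) + O(1),
\]
and $a\ell_1 + b\ell_2 > 0$ since at least one of $a,b$ is positive. Then $Q_{\langle h\rangle}^{a,b}(t)$ is comparable to a two-sided geometric series with ratio $e^{-t(a\ell_1+b\ell_2)}$, which converges for every $t > 0$. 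Therefore $\delta_{\langle h\rangle}^{a,b} = 0$. The only mild subtlety is making sure the $O(1)$ error terms in each asymptotic are genuinely uniform in $n$, but this is standard and the whole argument is essentially a direct calculation rather than an obstacle.
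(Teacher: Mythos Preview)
Your proof is correct and follows essentially the same approach as the paper: conjugate the parabolic to a translation to obtain the $2\log|n|$ asymptotic and compare to a $p$-series, and use linear growth of displacement for the hyperbolic element to reduce to a geometric series. The only cosmetic differences are that the paper writes out explicit upper and lower bounds $m_i,M_i$ in place of your $O(1)$ notation, and phrases the hyperbolic computation via the Busemann function $B_{\rho_i(h)^+}(o,\rho_i(h)o)$ rather than the translation length $\ell_i$; these amount to the same thing.
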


\begin{proof}
Let $p\in G$ be a parabolic element. Without loss generality, we
can assume $\rho_{i}(p):\H\to\H$ to be the Möbius transformation
$\rho_{i}(p)(z)=z+c_{i}$ for $i=1,2$ where $c_{i}\in\R$. Then direct
computation shows that 
\[
d(i,\rho_{i}(p^{n})(i))=d(i,i+nc_{i})=\log\frac{\sqrt{(nc_{i})^{2}+4}+|nc_{i}|}{\sqrt{(nc_{i})^{2}+4}-|nc_{i}|}.
\]

Notice that 
\[
\frac{\sqrt{(nc_{i})^{2}+4}+|nc_{i}|}{\sqrt{(nc_{i})^{2}+4}-|nc_{i}|}=\frac{2n^{2}c_{i}^{2}+4+2|nc_{i}|\sqrt{(nc_{i})^{2}+4}}{4},
\]

so when $|n|$ is big enough (say $|n|>M_{p})$, there exist $m_{i}$
and $M_{i}$ such that 
\[
2\log|n|+m_{i}\leq d(i,i+nc_{i})\leq2\log|n|+M_{i}.
\]
 Converting the above inequalities to the disk model, we have 

\[
2\log|n|+m_{i}\leq d(o,p^{n}o)\leq2\log|n|+M_{i}.
\]

Therefore,
\begin{alignat*}{1}
Q_{\langle p\rangle}^{a,b}(t)= & \sum_{n\in\Z}e^{-t\cdot d^{a,b}(o,p^{n}o)}\\
= & \sum_{|n|\leq M_{p}}e^{-t\cdot d^{a,b}(o,p^{n}o)}+\sum_{|n|>M_{p}}e^{-t\cdot d^{a,b}(o,p^{n}o)},
\end{alignat*}
where ${\displaystyle \sum_{|n|\leq M_{p}}e^{-t\cdot d^{a,b}(o,p^{n}o)}}<\infty$
is a finite sum. Furthermore, for $|n|>M$ one has

\begin{alignat*}{1}
-tad(o,\rho_{1}(p^{n})(o))-tbd(o,\rho_{1}(p^{n})(o)) & \geq-ta(2\log|n|+M_{1})-tb(2\log|n|+M_{2})\\
 & =-t\underset{C_{1}^{a,b}(p)}{\underbrace{(aM_{1}+bM_{2})}}-2t(a+b)\log|n|
\end{alignat*}
and 

\begin{alignat*}{1}
-tad(o,\rho_{1}(p^{n})(o))-tbd(o,\rho_{1}(p^{n})(o)) & \leq-ta(2\log|n|+m_{1})-tb(2\log|n|+m_{2})\\
 & =-t\underset{C_{2}^{a,b}(p)}{\underbrace{(am_{1}+bm_{2})}}-2t(a+b)\log|n|.
\end{alignat*}
Hence 
\[
(\frac{1}{C_{1}^{a,b}(p)})^{t}\sum_{|n|>M_{p}}(\frac{1}{|n|})^{2t(a+b)}\leq\sum_{|n|>M_{p}}e^{-t\cdot d^{a,b}(o,p^{n}o)}\leq(\frac{1}{C_{2}^{a,b}(p)})^{t}\sum_{|n|>M_{p}}(\frac{1}{|n|})^{2t(a+b)},
\]
 and, thus, $\delta_{\langle p\rangle}^{a,b}=\frac{1}{2(a+b)}$. 

For each hyperbolic element $h\in G$, and 
\begin{alignat*}{1}
Q_{\langle h\rangle}^{a,b}(t)= & \sum_{n\in\Z}e^{-t\cdot d^{a,b}(o,h^{n}o)}\\
= & \sum_{n\in\Z}e^{-tad(o,\rho_{1}(h^{n})o)-tbd(o,\rho_{_{2}}(h^{n})o)}\\
= & 2\sum_{n\in\N}e^{-tanB_{\rho_{1}(h)^{+}}(o,\rho_{1}(h)o)-tnbB_{\rho_{2}(h)^{+}}(o,\rho_{2}(h)o)}\\
= & 2\sum_{n\in\N}e^{-tn(aB_{\rho_{1}(h)^{+}}(o,\rho_{1}(h)o)+bB_{\rho_{2}(h)^{+}}(o,\rho_{2}(h)o))}.
\end{alignat*}
Since $B_{\rho_{i}(h){}^{+}}(o,\rho_{i}(h)o)>0$ for $i=1,2$, we
have $\delta_{\langle h\rangle}^{a,b}=0.$
\end{proof}

Recall that the Markov shift $(\Sigma^{+},\sigma)$ defined above
(see Definition \ref{def:TMS_Coding}) for $\rho_{1},\rho_{2}$ is
topologically mixing and satisfying the BIP property. Also, the geometric
potentials $\tau$,$\k$ defined above (see Proposition \ref{prop:coding_property})
are locally Hölder and bounded away from zero. Therefore, we are in
the scenario that was introduced in Section \ref{sec:Preliminaries-on-Thermodynamic}.

\begin{lem}
\label{lem:phas transition and critical exponent} Let $\rho_{1}$
and $\rho_{2}$ be two boundary-preserving isomorphic Fuchsian representations
satisfying the extended Schottky condition. Let $(\Sigma^{+},\sigma)$
be Markov shift and $\tau$,$\k$ be the geometric potentials defined
in the above subsection. 

Then for $a,b\geq0,$ 
\[
P_{\sigma}(-t(a\tau+b\k))=\begin{cases}
{\rm infinite,} & {\rm for\ }t<\delta_{\langle p\rangle}^{a,b};\\
{\rm analytic,} & {\rm {\rm for\ }}t>\delta_{\langle p\rangle}^{a,b}.
\end{cases}
\]

\end{lem}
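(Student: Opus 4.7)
The plan is to reduce the assertion to the abstract Phase Transition theorem (Theorem \ref{thm: Phase transition}) and then identify the threshold $s_{\infty}$ with the parabolic critical exponent $\delta_{\langle p\rangle}^{a,b}=\frac{1}{2(a+b)}$ that was already computed in Lemma \ref{lem:compute critical exponent}.

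First I would verify the hypotheses of Theorem \ref{thm: Phase transition}. By Proposition \ref{prop:coding_property} together with Remark \ref{rem:coding_commute}(2), the potentials $\tau$ and $\kappa$ may be taken locally H\"older and bounded away from zero (after passing to the cohomologous representatives supplied by Lemma \ref{lem:roof fcn >0}), so $g_{a,b}:=a\tau+b\kappa$ is a positive, locally H\"older function for each $(a,b)\in D$. The shift $(\TMS,\sigma)$ is topologically mixing with the BIP property by Proposition \ref{thm:coding of geodesic flow}. Hence Theorem \ref{thm: Phase transition} supplies a threshold $s_{\infty}(a,b)\geq 0$ such that $P_{\sigma}(-tg_{a,b})=\infty$ for $t<s_{\infty}(a,b)$ and $t\mapsto P_{\sigma}(-tg_{a,b})$ is real analytic on $(s_{\infty}(a,b),\infty)$.

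Next I would identify $s_{\infty}(a,b)=\frac{1}{2(a+b)}$ via the one-step partition sum. By Proposition \ref{prop:coding_property}(4), for any parabolic generator $p\in\mathcal{A}$ and any $x\in[p^{n}]$ we have $\tau(x)=B_{\omega_{1}^{-1}(x)}(o,\rho_{1}(p^{n})o)$; since $\omega_{1}^{-1}([p^{n}])\subset C_{p}$ converges to the parabolic fixed point as $|n|\to\infty$, the Busemann estimate used in the proof of Lemma \ref{lem:compute critical exponent} yields $\tau(x)=2\log|n|+O(1)$ and, analogously, $\kappa(x)=2\log|n|+O(1)$; for a hyperbolic generator $h$ and $x\in[h^{n}]$, both potentials grow linearly in $|n|$. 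Consequently the one-step Gurevich sum
\[
Z(t):=\sum_{a\in\mathcal{A}}\sum_{n\in\Z^{*}}\sup_{x\in[a^{n}]}e^{-tg_{a,b}(x)}
\]
differs from $\sum_{n\neq 0}|n|^{-2t(a+b)}$ by a tail that converges for every $t>0$, and is therefore finite iff $t>\frac{1}{2(a+b)}$. Under the topologically mixing and BIP assumptions, the summability criterion of Mauldin-Urba\'nski (cf.\ Proposition 2.1.9 and Theorem 2.6.12 in \cite{Mauldin:2003dn}; see also Proposition 2 in \cite{Sarig:2003hl}) equates finiteness of $Z(t)$ with finiteness of $P_{\sigma}(-tg_{a,b})$, giving both $s_{\infty}(a,b)\leq\frac{1}{2(a+b)}$ and $s_{\infty}(a,b)\geq\frac{1}{2(a+b)}$.

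The subtlest step is the lower bound $s_{\infty}(a,b)\geq\frac{1}{2(a+b)}$. The upper bound is direct convergence, but divergence of the parabolic block must be transported into a divergent periodic-orbit count in $\mathrm{Fix}^{n}(\TMS)$. Topological mixing (Proposition \ref{thm:coding of geodesic flow}) provides, for any fixed admissible connector $w$, an abundance of genuine periodic orbits of the form $\overline{p^{n}w}$ whose weights $e^{-tS_{|w|+1}g_{a,b}}$ inherit the $|n|^{-2t(a+b)}$ asymptotic, forcing $P_{\sigma}(-tg_{a,b})=\infty$ whenever the parabolic series diverges. Everything else is bookkeeping: uniform control of the $O(1)$ constants in the Busemann estimate above, absorbed into multiplicative factors in the partition sums.
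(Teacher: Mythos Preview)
Your approach is correct but follows a genuinely different route from the paper. The paper does not invoke Theorem \ref{thm: Phase transition} at all; instead it sandwiches $P_{\sigma}(-t(a\tau+b\kappa))$ directly between explicit products of elementary Poincar\'e series. For the lower bound it builds, for each $k$, a family $B^{k}\subset\mathrm{Fix}^{k(N_{1}+N_{2}-1)}$ of periodic words that cycle once through every generator, uses $B_{\xi}(o,\gamma o)\leq d(o,\gamma o)$, and factorizes the resulting sum into $\prod_{g\in\mathcal{A}\setminus\{h_{1}\}}(Q_{\langle g\rangle}^{a,b}(t)-1)$. For the upper bound it uses Lemma \ref{lem:d(x,y)>d(x,o)+d(y,0)+C} (the Schottky separation lemma) to get $\tau(\sigma^{i}x)\geq d(o,\rho_{1}(x_{i})o)-C_{1}$, and then bounds the full periodic-orbit sum by $\prod_{g\in\mathcal{A}}(Q_{\langle g\rangle}^{a,b}(t)-1)$ times a constant. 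Lemma \ref{lem:compute critical exponent} then identifies the common convergence threshold as $\tfrac{1}{2(a+b)}$, and Theorem \ref{thm:Analyticty pressure} yields analyticity. Your argument instead packages the dichotomy into Theorem \ref{thm: Phase transition} and the Mauldin--Urba\'nski/Sarig summability criterion, reducing the problem to the one-step sum $Z(t)$; this is cleaner and shorter for the present lemma, but the paper's explicit two-sided pressure bounds are \emph{reused} in the proof of Lemma \ref{lem:P(-at-bk)=00003D0} (to show $P_{\sigma}(-t'_{a,b}(a\tau+b\kappa))>0$ for some $t'_{a,b}$ just above the threshold), so your shortcut would need supplementing there.

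One small misattribution: the estimate $\tau(x)=2\log|n|+O(1)$ on $[p^{n}]$ is not in Lemma \ref{lem:compute critical exponent} (which only computes $d(o,\rho_{i}(p^{n})o)$). The passage from distance to Busemann cocycle requires exactly the Schottky separation Lemma \ref{lem:d(x,y)>d(x,o)+d(y,0)+C}, which is what the paper invokes for its upper bound; you should cite that lemma for the lower half of your $O(1)$ claim.
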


\begin{proof}
By definition, we have 
\begin{alignat*}{1}
P_{\s}(-t(a\tau+b\k)) & =\lim_{n\to\infty}\frac{1}{n+1}\log\left(\sum_{\overset{x\in{\rm Fix}^{n}}{\underset{}{}}}\exp(-t(aS_{n}\tau+bS_{n}\k))\cdot\chi_{[h_{1}]}\right)\\
 & =\lim_{n\to\infty}\frac{1}{n+1}\log\left(\sum_{x=\overline{h_{1}x_{2}...x_{n+1}}}\exp(-t(aS_{n}\tau+bS_{n}\k))\right)
\end{alignat*}

Notice that 
\[
{\rm Fix}^{n+1}(\Sigma^{+})=\left\{ \overline{a_{1}^{m_{1}}a_{2}^{m_{2}}....a_{n+1}^{m_{1}}}:\ a_{i}\in{\cal A},\ a_{i}\neq a_{i+1}^{\pm1},\ {\rm and}\ m_{i}\in\Z^{*}\ {\rm for\ }i=1,2,..,n+1\right\} .
\]

For each $k\in\N$ and set $n+1=k(N_{1}+N_{2}-1)$, let's consider
a subset $B^{k}\subset{\rm Fix}^{n+1}$ defined as 

\[
B^{k}=\left\{ \overline{h_{1}a_{1}^{m_{1}}...a_{n}^{m_{n}}}\in{\rm Fix}^{n+1}:\ a_{i+j(N_{1}+N_{2}-1)}=\begin{cases}
h_{i+1}, & 1\leq i\leq N_{1}-1\\
p_{i+1-N_{1}}, & N_{1}\leq i\leq N_{1}+N_{2}-1
\end{cases}\right\} .
\]
 In other words, elements $b\in B^{k}$ are in the following form:

\[
b=\overline{h_{1}\underbrace{h_{2}^{m_{1}}...h_{N_{1}}^{m_{N_{1-1}}}p_{1}^{m_{N_{1}}}...p_{N_{2}}^{m_{N_{1}+N_{2}-1}}}\ ...\ \underbrace{h_{2}^{m_{(k-1)(N_{1}+N_{2}-1)}}...p_{N_{2}}^{m_{k(N_{1}+N_{2}-1)}}}.}
\]

For brevity, let's denote $N_{1}+N_{2}-1$ by $N_{3}$, then we have
for $\xi_{0}\in\Lambda_{1}^{0}$

\begin{alignat*}{1}
P_{\s}(-t(a\tau+b\k)) & \geq\lim_{k\to\infty}\frac{1}{kN_{3}}\log\left(\sum_{\overset{\xi=\rho_{1}(x)\xi_{0}}{\underset{x\in B^{k}}{}}}\exp(-t(aS_{kN_{3}}\tau+bS_{kN_{3}}\k))\right)\\
 & =\lim_{k\to\infty}\frac{1}{kN_{3}}\log\left(\sum_{\overset{\xi=\rho_{1}(x)\xi_{0}}{\underset{x\in B^{k}}{}}}\exp(f(a,b,t,kN_{3}))\right)
\end{alignat*}
where 
\[
f(a,b,t,n)=-t(\sum_{i=1}^{n}aB_{\w_{1}^{-1}(\s^{i}x)}(o,\rho_{1}(x_{i+1})o)+bB_{\w_{2}^{-1}(\s^{i}x)}(o,\rho_{2}(x_{i+1})o)).
\]
Because $B_{\xi}(x,y)\leq d(x,y)$ we have, 
\begin{alignat*}{1}
P_{\s}(-t(a\tau+b\k)) & \geq\lim_{k\to\infty}\frac{1}{kN_{3}}\log\sum_{\underset{x\in B^{k}}{\xi=\rho_{1}(x)\xi_{0}}}\exp\left(-t(\sum_{i=1}^{kN_{3}}ad(o,\rho_{1}(x_{i+1})o)+bd(o,\rho_{2}(x_{i+1})o)\right)\\
 & =\lim_{k\to\infty}\frac{1}{kN_{3}}\log\left(\sum_{\overset{\xi=\rho_{1}(x)\xi_{0}}{\underset{x\in B^{k}}{}}}\exp\left(-t\sum_{i=1}^{kN_{3}}d^{a,b}(o,x_{i+1}o)\right)\right)
\end{alignat*}
Moreover, by the definition of $B^{k}$ one has 

\begin{alignat*}{1}
\sum_{\underset{x\in B^{k}}{\xi=\rho_{1}(x)\xi_{0}}}\exp\left(-t\sum_{i=1}^{kN_{3}}d^{a,b}(o,x_{i+1}o)\right) & =\\
e^{-td^{a.b}(o,h_{1}o)\cdot}\sum_{(m_{1},...,m_{kN_{3}})\in(\Z^{*}){}^{kN_{3}}} & \exp\left(-t\sum_{i=1}^{kN_{3}}d^{a,b}(o,a_{i}^{m_{i}}o)\right).
\end{alignat*}

Also, notice that 
\begin{alignat*}{1}
\sum_{(m_{1},...,m_{kN_{3}})\in(\Z^{*})^{kN_{3}}} & \exp\left(-t\sum_{i=1}^{kN_{3}}d^{a,b}(o,a_{i}^{m_{i}}o)\right)\\
=\prod_{i=1}^{kN_{3}} & \sum_{m_{i}\in\Z^{*}}\exp\left(-t\sum_{i=1}^{kN_{3}}d^{a,b}(o,a_{i}^{m_{i}}o)\right)\\
= & \left(\prod_{i=2}^{N_{1}}\sum_{m\in\Z^{*}}e^{-td^{ab}(o,h_{i}^{m}o)}\right)^{k}\left(\prod_{i=1}^{N_{2}}\sum_{m\in\Z^{*}}e^{-td^{ab}(o,p_{i}^{m}o)}\right)^{k}.
\end{alignat*}

Hence, 
\begin{alignat*}{1}
P_{\s}(-t(a\tau+b\k)) & \geq\\
\lim_{k\to\infty}\frac{1}{kN_{3}}\log & \left(e^{-td^{a.b}(o,h_{1}o)}\left(\prod_{i=2}^{N_{1}}\sum_{m\in\Z^{*}}e^{-td^{ab}(o,h_{i}^{m}o)}\right)^{k}\left(\prod_{i=1}^{N_{2}}\sum_{m\in\Z^{*}}e^{-td^{ab}(o,p_{i}^{m}o)}\right)^{k}\right)\\
 & =\frac{1}{N_{3}}\left(\log\left(\prod_{i=2}^{N_{1}}\sum_{m\in\Z^{*}}e^{-td^{ab}(o,h_{i}^{m}o)}\right)\left(\prod_{i=1}^{N_{2}}\sum_{m\in\Z^{*}}e^{-td^{ab}(o,p_{i}^{m}o)}\right)\right)\\
 & =\frac{1}{N_{3}}\log\left(\prod_{g\in{\cal A\backslash}h_{1}}\left(Q_{\langle g\rangle}^{a,b}(t)-1\right)\right)
\end{alignat*}
where $Q_{\langle g\rangle}^{a,b}(t)={\displaystyle \sum_{m\in\Z}e^{-td^{ab}(o,g^{m}o)}=1+\sum_{m\in\Z^{*}}e^{-td^{ab}(o,g^{m}o)}}$.

In the following, we derive an upper bound for $P_{\s}(-t(a\tau+b\k))$.
Let $(\xi_{t}^{i})$ be the end of the geodesic ray $[o,\omega_{1}^{-1}(\s^{i+1}x))$.
Then by Lemma \ref{lem:d(x,y)>d(x,o)+d(y,0)+C}, we have 
\begin{alignat*}{1}
\tau(\sigma^{i}x) & =B_{\w_{1}^{-1}(\sigma^{i}x)}(o,\rho_{1}(x_{i})o)\\
 & =B_{\w_{1}^{-1}(\sigma^{i+1}x)}(\rho_{1}^{-1}(x_{i})o,o)\\
 & =\lim_{t\to\infty}d(\xi_{t}^{i},\rho_{1}(x_{i})o)-d(\xi_{t}^{i},o)\\
 & \geq\left(d(\xi_{t}^{i},o)-d(o,\rho_{1}(x_{i})o)-C_{1}\right)-d(\mbox{\ensuremath{\xi}}_{t}^{i},o)\\
 & =d(o,\rho_{1}(x_{i})o)-C_{1}
\end{alignat*}
 Similarly, we have $\k(\s^{i}x)\geq d(o,\rho_{2}(x_{i})o)-C_{2}$
for some constant $C_{2}$. 

Thus, 
\[
e^{-t(a\tau(\s^{i}x)+a\k(\s^{i}x))}\leq e^{t(aC_{1}+bC_{2})}e^{-t(d^{a,b}(o,x_{i}o))}.
\]
Hence, 

\begin{alignat*}{1}
P_{\s}(-ta\t-tb\k) & \leq\lim_{n\to\infty}\frac{1}{n}\log\left(\sum_{a_{1},...,a_{n}}\sum_{m_{1},...,m_{n}\in\Z^{*}}\prod_{i=1}^{n}e^{t(aC_{1}+bC_{2})}e^{-t(d^{a,b}(o,a_{i}^{m_{i}}o))}\right)\\
 & =t(aC_{1}+bC_{2})+\log\left(\prod_{g\in{\cal A}}\left(Q_{\langle g\rangle}^{a,b}(t)-1\right)\right).
\end{alignat*}

Then, by Lemma \ref{lem:compute critical exponent} we have 
\[
P_{\sigma}(-t(a\tau+b\k))=\begin{cases}
{\rm infinite,} & {\rm for\ }t<\delta_{\langle p\rangle}^{a,b};\\
{\rm finite,} & {\rm {\rm for\ }}t>\delta_{\langle p\rangle}^{a,b}.
\end{cases}
\]

Finally, by Theorem \ref{thm:Analyticty pressure}, we know the finiteness
of the pressure function implies the analyticity. 
\end{proof}

\begin{rem}
When $a$ (or $b)$ is zero, we are back to the well-know result:
\[
P_{\sigma}(-t\t)=\begin{cases}
\infty, & t\geq\frac{1}{2}\\
{\rm finite}, & t<\frac{1}{2}.
\end{cases}
\]
 
\end{rem}

\begin{lem}
\label{lem:P(-at-bk)=00003D0}For each $(a,b)\in D$ there exists
a unique $t_{a,b}\in(\frac{1}{2(a+b)},\infty)$ such that 
\[
P_{\sigma}(-t_{a,b}(a\t+b\k))=0.
\]

\end{lem}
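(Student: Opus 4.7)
The idea is to study the function $F:(\tfrac{1}{2(a+b)},\infty)\to\R$ defined by $F(t):=P_\sigma(-t(a\tau+b\kappa))$ and show that it is continuous, strictly decreasing, tends to $+\infty$ at the left endpoint, and takes strictly negative values for all sufficiently large $t$. The intermediate value theorem combined with the strict monotonicity then produces a unique $t_{a,b}$ with $F(t_{a,b})=0$, as required. Real analyticity of $F$ on its domain, in particular continuity, is furnished directly by Lemma \ref{lem:phas transition and critical exponent}.

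For strict monotonicity I would apply the derivative formula in Theorem \ref{thm:Analyticty pressure} to obtain
\[
F'(t) = -\int_{\Sigma^+}(a\tau+b\kappa)\, d\mu_t,
\]
where $\mu_t$ is the equilibrium state for $-t(a\tau+b\kappa)$, whose existence on the relevant interval is guaranteed by Theorem \ref{thm: Phase transition}. Lemma \ref{lem:roof fcn >0} applied to both $\tau$ and $\kappa$ lets us assume each is bounded below by a positive constant; since $(a,b)\in D$ is not the zero vector, the integrand is uniformly positive and hence $F'(t)<0$ throughout the domain. For the left endpoint I would recycle the lower bound derived inside the proof of Lemma \ref{lem:phas transition and critical exponent},
\[
F(t)\;\geq\;\frac{1}{N_1+N_2-1}\log\!\!\prod_{g\in\mathcal{A}\setminus\{h_1\}}\!\!\bigl(Q^{a,b}_{\langle g\rangle}(t)-1\bigr),
\]
and use the hypothesis $N_2\geq1$: at least one factor corresponds to a parabolic generator $p_j$, and the explicit estimate in Lemma \ref{lem:compute critical exponent} shows $Q^{a,b}_{\langle p_j\rangle}(t)$ blows up like $\sum|n|^{-2t(a+b)}$ as $t\searrow\tfrac{1}{2(a+b)}$. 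Consequently the right-hand side, and therefore $F(t)$, tends to $+\infty$ at that endpoint.

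For the right endpoint I would combine the variational principle (Theorem \ref{thm:variational principle for shift}) with the Abramov-type inequalities $h_\sigma(\mu)\leq h_{top}(S_1)\int\tau\, d\mu$ and $h_\sigma(\mu)\leq h_{top}(S_2)\int\kappa\, d\mu$, each of which follows from the variational characterization of $h_{top}(S_i)$ as the supremum of Abramov ratios over the suspension flow with roof $\tau$ (resp.\ $\kappa$). Using whichever of these is compatible with the support of the variational principle (depending on whether $a$ or $b$ vanishes) and the positive uniform lower bound on $\tau,\kappa$, one shows that for $t$ exceeding a threshold depending on $a$, $b$, $h_{top}(S_1)$, $h_{top}(S_2)$, the quantity $h_\sigma(\mu)-t\int(a\tau+b\kappa)\,d\mu$ is uniformly bounded above by a strictly negative constant, driving $F(t)<0$. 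The IVT together with $F'<0$ then delivers the unique $t_{a,b}$. The main obstacle is the left-endpoint divergence: analyticity on the open interval and infiniteness on its complement do not by themselves force $F(t)\to+\infty$ as $t\searrow\tfrac{1}{2(a+b)}$, so the explicit parabolic estimate imported from the proof of Lemma \ref{lem:phas transition and critical exponent} is essential.
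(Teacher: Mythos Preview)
Your proposal is correct and follows essentially the same strategy as the paper: study $t\mapsto P_\sigma(-t(a\tau+b\kappa))$, use analyticity and the derivative formula (with $\tau,\kappa\ge c>0$) to get strict decrease, use the parabolic-series lower bound from the proof of Lemma~\ref{lem:phas transition and critical exponent} to force positivity near $t=\tfrac{1}{2(a+b)}$, and show negativity for large $t$. The only real difference is in this last step: where you invoke the variational principle together with an Abramov-type bound $h_\sigma(\mu)\le h_{top}(S_1)\int\tau\,d\mu$, the paper instead uses the direct pressure inequality $P_\sigma(-t(a\tau+b\kappa))\le P_\sigma(-ta\tau)-tbc$ (from $\kappa\ge c$) together with $P_\sigma(-h_{top}(S_1)\tau)=0$, yielding negativity once $ta>h_{top}(S_1)$ (or symmetrically once $tb>h_{top}(S_2)$ when $a=0$). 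Both routes ultimately rest on the single-potential Bowen formula, but the paper's is shorter since it avoids unpacking the variational principle and the suspension-flow entropy identification.
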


\begin{proof}
Let $(a,b)$ be a point in $D$ and $f(t)=P_{\s}(-t(a\tau+b\k))$.
It is obvious that $-t(a\tau+b\k)$ is a locally Hölder continuous
function. By Theorem \ref{thm:Analyticty pressure}, $f(t)$ is real
analytic on $t$ when $P_{\s}(-t(a\tau+b\k))<\infty$. Let $K=\{t\in\R:\ f(t)<\infty\}$.
Then for $t_{0}\in K$ one has 
\[
\left.\frac{d}{dt}f(t)\right|_{t=t_{0}}=-\int(a\tau+b\k)\dd\mu_{-t_{0}(a\tau+b\k)}<-(ac+bc)<0
\]
 where $\t,\k>c>0$ and $\mu_{-t_{0}(a\tau+b\k)}$ is the equilibrium
state of $-t_{0}(a\tau+b\k)$.

Hence, $f(t)=P_{\s}(-t(a\tau+b\k))$ is real analytic and strictly
decreasing on $K$. Moreover, we know $P_{\sigma}(-t(a\tau+b\k))<0$
when $t$ and is positive and big enough. More precisely, because
$\kappa>c>0$, we know $P_{\sigma}(-t(a\tau+b\kappa))<P_{\sigma}(-ta\tau)-tbc$.
Furthermore, we know that $P_{\sigma}(-h_{top}(S_{1})\tau)=0$, so
when $ta>h_{top}(S_{1})$ we have $P_{\sigma}(-ta\tau)<0$. Therefore,
it remains to say there exists $t'_{a,b}\in(\frac{1}{2(a+b)},\infty)$
such that $0<P_{\sigma}(-t'_{a,b}(a\t+b\k))<\infty$. 

Notice that by the computation made in Lemma \ref{lem:compute critical exponent},
for a parabolic elements $p\in G$ and for $t>\frac{1}{2(a+b)}$,
\begin{alignat*}{1}
Q_{\langle p\rangle}^{a,b}(t)-1= & -1+\sum_{|n|\leq M_{p}}e^{-t\cdot d^{a,b}(o,p^{n}o)}+\sum_{|n|>M_{p}}e^{-t\cdot d^{a,b}(o,p^{n}o)}\\
> & (\frac{1}{C_{1}^{a,b}(p)})^{t}\sum_{|n|>M_{p}}(\frac{1}{|n|})^{2t(a+b)}\\
> & (\frac{1}{C_{1}^{a,b}(p)})^{t}\cdot2\int_{M_{p}+1}^{\infty}x^{-2t(a+b)}\dd x\\
= & (\frac{1}{C_{1}^{a,b}(p)})^{t}\cdot2\cdot\frac{1}{2t(a+b)-1}\cdot(\frac{1}{M_{p}+1})^{2t(a+b)-1}>0.
\end{alignat*}

Moreover, 
\begin{alignat*}{1}
\log\left(Q_{\langle p\rangle}^{a,b}(t)-1\right) & >-t\log(C_{1}^{a,b}(p))+\log2+\log(\frac{1}{2t(a+b)-1})+(2t(a+b)-1)\log(\frac{1}{M_{p}+1})\\
 & >0,\ {\rm when\ }t\ {\rm is\ {\rm big\ enough,}}
\end{alignat*}
 because $\log(\frac{1}{2t(a+b)-1})\to\infty\ {\rm as\ }t\to(\frac{1}{2(a+b)})^{+}$
and other terms remain bounded when $t\to(\frac{1}{2(a+b)})^{+}.$

For a hyperbolic elements $h\in G$, 
\[
Q_{\langle h\rangle}^{a,b}(t)-1=2\sum_{n\in\N}e^{-tn\cdot c_{a,b}(h)}=\frac{2}{e^{t\cdot c_{a,b}(h)}-1}
\]
where $c_{a,b}(h)=(aB_{\rho_{1}(h)^{+}}(o,\rho_{1}(h)o)+bB_{\rho_{2}(h)^{+}}(o,\rho_{2}(h)o))$,
one has 
\[
\log\left(Q_{\langle h\rangle}^{a,b}(t)-1\right)=\log2+\log(e^{t\cdot c_{a,b}(h)}-1)
\]
which remains bounded when $t\to(\frac{1}{2(a+b)})^{+}.$

By repeating the argument above for $g\in{\cal A\backslash}h_{1}$
and using the computation in Lemma \ref{lem:phas transition and critical exponent},
we can choose $t'_{a,b}\in(\frac{1}{2(a+b)},0)$ such that 
\[
\infty>P_{\s}(t'_{a,b}(a\t+b\k))>\frac{1}{N_{3}}\log\left(\prod_{g\in{\cal A\backslash}h_{1}}\left(Q_{\langle g\rangle}^{a,b}(t)-1\right)\right)>0.
\]

\end{proof}

\begin{thm}
\label{thm:P(-at-bk)=00003D0 analytic} The set $\{(a,b)\in D:\ P_{\sigma}(-a\tau-b\kappa)=0\}$
is a real analytic curve.
\end{thm}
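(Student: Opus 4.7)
The plan is to realize the curve as the zero locus of a real analytic function with nowhere-vanishing gradient, and then to apply the real analytic Implicit Function Theorem. Define $F(a,b) := P_{\sigma}(-a\tau - b\kappa)$ and set $U := \{(a,b) \in \R^{2} : a + b > \tfrac{1}{2}\}$. Applying Lemma \ref{lem:phas transition and critical exponent} with $t = 1$ to the regrouped potential $(a+b)\bigl(\tfrac{a}{a+b}\tau + \tfrac{b}{a+b}\kappa\bigr)$, the function $F$ is finite precisely on $U$, and the zero set in question is contained in $U \cap D$ by Lemma \ref{lem:P(-at-bk)=00003D0}.

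Next I would establish joint real analyticity of $F$ on $U$. The map $(a,b) \mapsto -a\tau - b\kappa$ is linear, and hence a real analytic family of locally H\"older potentials with finite pressure throughout $U$. Theorem \ref{thm:Analyticty pressure} is stated for one-parameter families, but its proof in Mauldin--Urba\'nski proceeds via analyticity of the leading eigenvalue of the Ruelle--Perron--Frobenius operator under analytic perturbations of the potential, which gives joint analyticity of $F$ in $(a,b)$ directly. (Equivalently, one can apply Theorem \ref{thm:Analyticty pressure} along every real analytic curve through a given point of $U$ and then invoke a classical analyticity-along-curves criterion to obtain joint real analyticity.)

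For the gradient, the derivative formula in Theorem \ref{thm:Analyticty pressure} yields
\[
\frac{\partial F}{\partial a}(a,b) = -\int_{\TMS}\tau \,\dd\mu_{a,b}, \qquad \frac{\partial F}{\partial b}(a,b) = -\int_{\TMS}\kappa \,\dd\mu_{a,b},
\]
where $\mu_{a,b}$ denotes the unique equilibrium state for $-a\tau - b\kappa$ supplied by Theorem \ref{thm: Phase transition}. After replacing $\tau$ and $\kappa$ with the cohomologous representatives from Lemma \ref{lem:roof fcn >0}, both are bounded below by some $c > 0$, so both partial derivatives are strictly negative and $\nabla F$ never vanishes on $U$. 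The real analytic Implicit Function Theorem applied at every point of $F^{-1}(0) \cap U$ then presents the zero set locally as the graph of a real analytic function (say $b = b(a)$, using $\partial F/\partial b \neq 0$), so $\{(a,b) \in D : P_{\sigma}(-a\tau - b\kappa) = 0\}$ is a real analytic curve. The main obstacle I anticipate is the joint analyticity step: the form of Theorem \ref{thm:Analyticty pressure} quoted in the paper handles only one-parameter analytic families, so one must either reach into the spectral perturbation mechanism underlying it or invoke a separate principle in order to promote analyticity along lines to genuine joint analyticity in $(a,b)$.
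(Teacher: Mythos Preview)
Your proposal is correct and follows essentially the same route as the paper: show $P_{\sigma}(-a\tau-b\kappa)$ is real analytic in $(a,b)$ where finite, compute the partial derivative(s) via the equilibrium state to see they are strictly negative (the paper only bothers with $\partial_b$), and apply the real analytic Implicit Function Theorem. Your discussion of the joint-analyticity issue is in fact more careful than the paper's, which simply asserts that $f(a,b)$ ``is real analytic on both variables'' without further comment.
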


\begin{proof}
By Lemma \ref{lem:P(-at-bk)=00003D0}, it makes sense to discuss solutions
to $P_{\sigma}(-a\tau-b\kappa)=0$. Moreover, for $(a,b)\in D$ such
that $f(a,b)=P_{\sigma}(-a\tau-b\k)<\infty$, we have $f(a,b)$ is
real analytic on both variables, and 
\[
\left.\partial_{b}f(a,b)\right|_{(a,b)=(a_{0},b_{0})}=-\int\kappa\dd\mu_{-a_{0}\t-b_{0}\k}<-c
\]
where $\tau,\k>c>0$ and $\mu_{-a_{0}\tau-b_{0}\k}$ is the equilibrium
state of $-a_{0}\t-b_{0}\k$. 

Therefore, by the Implicit Function Theorem we have the solutions
to $P_{\s}(-a\t-b\k)=0$ in $D$ is real analytic, i.e., one has $b=b(a)$
is real analytic on $a$.
\end{proof}

\section{The Manhattan Curve\label{sec:The-Manhattan-Curve}}

\subsection{The Manhattan Curve, Critical Exponent, and Gurevich Pressure}

For any pair of Fuchsian representations $\rho_{1},\rho_{2}$, we
recall that the Manhattan curve ${\cal C}(\rho_{1},\rho_{2})$ of
$\rho_{1}$ and $\rho_{2}$ is the boundary of the convex set 
\[
\{(a,b)\in\R^{2}:\ {\displaystyle Q_{\rho_{1},\rho_{2}}^{a,b}(s)\mbox{ has critical exponent }1\}}
\]
where $Q_{\rho_{1},\rho_{2}}^{a,b}(s)={\displaystyle \sum_{\g\in G}\exp(-s\cdot d_{\rho_{1},\rho_{2}}^{a,b}(o,\g o))}$
is the Poincaré series of the weighted Manhattan metric $d_{\rho_{1},\rho_{2}}^{a,b}$.

We have a rough picture of the corresponding Manhattan curve ${\cal C}(\rho_{1},\rho_{2})$
for all Fuchsian representations.

\begin{thm}
\label{thm: Manhattan curves are conti} Let $S$ be a surface with
negative Euler characteristic, and let $\rho_{1},\rho_{2}$ be two
Fuchsian representations of $G=\pi_{1}S$ into ${\rm PSL}(2,\R)$.
We denote $S_{1}=\rho_{1}(G)\backslash\H$ and $S_{2}=\rho_{2}(G)\backslash\H$.
Then

\begin{itemize}[font=\normalfont] 

\item[(1)] $(h_{top}(S_{1}),0)$ and $(0,h_{top}(S_{2}))$ are on
${\cal C}(\rho_{1},\rho_{2})$; 

\item[(2)] ${\cal C}(\rho_{1},\rho_{2})$ is convex; and

\item[(3)] ${\cal C}(\rho_{1},\rho_{2})$ is a continuous curve.

\end{itemize}
\end{thm}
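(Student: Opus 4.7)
The plan is to verify the three items in sequence, exploiting the linearity of $d_{\rho_1,\rho_2}^{a,b}(o,\gamma o)$ in $(a,b)$ together with Sullivan's identifications $\delta^{1,0} = h_{top}(S_1)$ and $\delta^{0,1} = h_{top}(S_2)$ recalled just after Definition 1.1. The arguments are short and rely only on standard convex-analytic tools.

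For (1), I would use the alternative characterization of $\mathcal{C}$ in Definition \ref{def:-The-Manhattan-Curve} as the set of $(a,b)$ for which $Q^{a,b}(s)$ has critical exponent $1$. At $(a,b) = (h_{top}(S_1), 0)$ the series reduces to
\[
Q^{h_{top}(S_1),0}(s) = \sum_{\gamma \in G} \exp\bigl(-s\cdot h_{top}(S_1)\cdot d(o_1, \rho_1(\gamma) o_1)\bigr),
\]
whose critical exponent in $s$ equals $h_{top}(S_1)/h_{top}(S_1) = 1$ by Sullivan. The argument for $(0, h_{top}(S_2))$ is symmetric.

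For (2), I would show that the sublevel set $U := \{(a,b) \in \R^2 : Q^{a,b}(1) < \infty\}$ is convex, so that $\mathcal{C} = \partial U$ is a convex curve. Given $(a_i, b_i) \in U$ for $i = 1, 2$ and $t \in (0,1)$, the linearity of $d^{a,b}$ in $(a,b)$ yields the termwise factorization
\[
e^{-d^{ta_1 + (1-t)a_2,\, tb_1 + (1-t)b_2}(o,\gamma o)} = \bigl(e^{-d^{a_1, b_1}(o,\gamma o)}\bigr)^t \bigl(e^{-d^{a_2, b_2}(o,\gamma o)}\bigr)^{1-t},
\]
and H\"older's inequality with conjugate exponents $1/t$ and $1/(1-t)$ gives
\[
Q^{ta_1 + (1-t)a_2,\, tb_1 + (1-t)b_2}(1) \leq Q^{a_1, b_1}(1)^t \cdot Q^{a_2, b_2}(1)^{1-t} < \infty,
\]
proving convexity of $U$.

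For (3), continuity follows essentially immediately from (2) together with elementary convex analysis: the boundary of a closed convex subset of $\R^2$ with nonempty interior is a topological curve admitting a continuous parametrization. The interior of $\overline{U}$ is nonempty since $Q^{a,b}(1) \leq Q^{a,0}(1) < \infty$ for all $a > h_{top}(S_1)$, so $\{a > h_{top}(S_1)\} \cap D$ provides an open subset of $U$. Concretely, I would parametrize $\mathcal{C} \cap D$ as the graph of the convex decreasing function $g(a) := \inf\{b \geq 0 : (a,b) \in \overline{U}\}$ on $[0, h_{top}(S_1)]$, which is automatically continuous on the open interval by convexity. The mildest obstacle I foresee is bookkeeping at the endpoints: a convex function on a closed interval may in principle exhibit a downward jump at an endpoint, but any such jump would be absorbed as a straight axis segment of $\partial \overline{U}$, and in either scenario $\mathcal{C}$ remains a continuous curve connecting the two points from (1).
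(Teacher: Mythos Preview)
Your proposal is correct and follows essentially the same approach as the paper: part (1) via Sullivan's identification (which the paper simply calls ``obvious''), part (2) via H\"older's inequality applied to $Q^{a,b}(1)$, and part (3) via the elementary fact that the boundary of a planar convex set is a continuous curve. The paper's treatment of (3) is terser---it just asserts that convexity makes $\mathcal{C}$ homeomorphic to the chord joining the two points from (1)---so your more careful endpoint discussion is a harmless elaboration of the same idea.
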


\begin{proof}
The first assertion is obvious. The second assertion is because that
the domain 
\[
\{(a,b):\ Q_{\rho_{1},\rho_{2}}^{a,b}(1)<\infty\}
\]
 is convex. To see it is convex, by the Hölder inequality, for $(a_{1},b_{1}),(a_{2},b_{2})\in D$
we have 
\[
Q^{ta_{1}+(1-t)b_{1},ta_{2}+(1-t)b_{2}}(1)\leq(Q^{a_{1},b_{1}}(1))^{t}\cdot(Q^{a_{2},b_{2}}(1))^{1-t}.
\]

To see ${\cal C}$ is continuous, we notice that because ${\cal C}$
is convex, we know ${\cal C}$ is homeomorphic to the straight line
connecting $(h_{top}(S_{1}),0)$ and $(0,h_{top}(S_{2}))$. 
\end{proof}

In the rest of this subsection, we focus on $\rho_{1},\rho_{2}$ being
boundary-preserving isomorphic Fuchsian representations satisfying
the extended Schottky condition. We will see for these representations,
we have much better understanding of the Manhattan curve ${\cal C}(\rho_{1},\rho_{2})$. 

As it is known that for geometrically finite negatively curved manifolds,
the (exponential) growth rate of closed geodesics is exactly the critical
exponent (cf. \cite{Otal:2004fn}), we prove that the critical exponent
$\delta_{\rho_{1},\rho_{2}}^{a,b}$ can also be realized by the growth
rate of hyperbolic elements (or equivalently, closed orbits). To reach
that, inspired by Paulin-Pollicott-Schapira \cite{Pollicott:2012ud},
we introduce several related geometric growth rates. Through analyzing
these growth rates, we are able to link the dynamics critical exponent
$t_{a,b}$ (i.e., the solution to the Bowen's formula) with the geometric
critical exponent $\delta_{\rho_{1},\rho_{2}}^{a,b}$. In result,
these geometric growth rates give us the full picture of the Manhattan
curve ${\cal C}(\rho_{1},\rho_{2})$. 

Recall that for each closed geodesic $\lambda$ on $S_{1}$, it corresponds
a unique geodesic on $S_{2}$, abusing the notation, we still denote
it by $\lambda$. Moreover, $l_{i}[\lambda]$ denotes the length of
the closed geodesic $\lambda$ on $S_{i}$ for $i=1,2$.
\begin{defn}
[Geometric growth rates counted from $S_1$]Let $S$ be a surface
with negative Euler characteristic, and $G:=\pi_{1}S$. Suppose $\rho_{1},\rho_{2}:G\to\psl(2,\R)$
are boundary-preserving isomorphic Fuchsian representations satisfying
the extended Schottky condition. 

\begin{enumerate}[font=\normalfont]

\item $Q_{PPS,x,y}^{a,b}(s):={\displaystyle \sum_{\g\in G}e^{-d^{a,b}(x,\g y)-sd(x,\rho_{1}(\g)y)}}$
is called the \textit{Paulin-Pollicott-Schapira's (PPS) Poincaré series.}

\item $\delta_{PPS}^{a,b}$ is the \textit{critical exponent }of
$Q_{PPS,x,y}^{a,b}(s)$, i.e., $Q_{PPS,x,y}^{a,b}(s)$ converges when
$s>\delta_{PPS}^{a,b}$ and $Q_{PPS,x,y}^{a,b}(s)$ diverges when
$s<\delta_{PPS}^{a,b}$ .

\item $G_{x,y}^{a,b}(s):={\displaystyle \sum_{\g\in G;d(x,\rho_{1}(\g)y)\leq s}e^{-d^{a,b}(x,\g y)}}$
\textit{.}

\item $Z_{W}(s):={\displaystyle \sum_{\stackrel[\lambda\in{\rm Per}_{1}(s)]{\lambda\cap W\neq\phi}{}}e^{-al_{1}[\lambda]-bl_{2}[\lambda]}}$
where $W\subset T^{1}S_{1}$ is a relatively compact open set and
${\rm Per}_{1}(s):=\{\lambda:\ \mbox{\ensuremath{\lambda}\ is a closed orbit on }T^{1}S_{1}{\rm \ and\ }l_{1}[\lambda]\leq s\}$.

\item $P_{Gur}^{ab}:={\displaystyle \limsup_{s\to\infty}\frac{1}{s}\log Z_{W}(s)}$
is the \textit{geometric Gurevich pressure}.

\end{enumerate}
\end{defn}

\begin{lem}
\label{lem:all the same}$\delta_{PPS}^{a,b}=P_{Gur}^{ab}={\displaystyle \lim_{s\to\infty}\frac{1}{s}\log G_{x,y}^{a,b}(s)}={\displaystyle \lim_{s\to\infty}\frac{1}{s}\log Z_{W}(s)}$
for any relative compact $W\subset T^{1}S_{1}$. 
\end{lem}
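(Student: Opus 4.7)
The plan is to realize all four quantities as the Gurevich pressure $P_\phi(F)$ of a single potential $F$ on the suspension space $\TMS_\t$ with $\Delta_F = -a\t - b\k$, and then to exploit the topologically mixing and BIP properties of $(\TMS,\s)$ to remove the various truncations and cylinder restrictions. As a first step, I would write
$$Q_{PPS,x,y}^{a,b}(s) = \int_0^\infty e^{-st}\,dG_{x,y}^{a,b}(t),$$
exhibiting $Q_{PPS}$ as the Laplace--Stieltjes transform of the weighted counting function $G_{x,y}^{a,b}$; the Cauchy--Hadamard formula then yields $\delta_{PPS}^{a,b} = \limsup_{s\to\infty}\frac{1}{s}\log G_{x,y}^{a,b}(s)$. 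The upgrade from limsup to a genuine limit will come from the thermodynamic identification below.

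Next I would translate both $G_{x,y}^{a,b}$ and $Z_W$ into the symbolic model. Every $\g\in G\setminus\{e\}$ is a reduced word $a_1^{n_1}\cdots a_k^{n_k}$ in $\mathcal{A}$, and Lemma \ref{lem:d(x,y)>d(x,o)+d(y,0)+C} yields $d(o,\rho_i(\g)o) = \sum_{j=1}^k d(o,\rho_i(a_j^{n_j})o) + O(1)$ for $i=1,2$, uniformly in $\g$. Combined with the definition of the geometric potentials in Proposition \ref{prop:coding_property} and the cohomology replacement of Lemma \ref{lem:roof fcn >0}, this gives, uniformly in $\g$,
$$d(o,\rho_1(\g)o) = S_k\t(\hat w_\g) + O(1),\quad d(o,\rho_2(\g)o) = S_k\k(\hat w_\g)+O(1),$$
where $\hat w_\g\in\TMS$ is any admissible extension of the word associated to $\g$. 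Therefore $G_{o,o}^{a,b}(s) \asymp \sum_{\hat w\in\TMS,\ S_k\t(\hat w)\leq s} e^{-aS_k\t(\hat w)-bS_k\k(\hat w)}$ up to bounded multiplicative error. Similarly, the bijection $\mathcal{H}$ of Proposition \ref{prop:coding_property} identifies closed geodesics in $\mathcal{G}_{S_1}$ with periodic orbits of $\s$, with $l_1[\l]=S_n\t$ and $l_2[\l]=S_n\k$ at the corresponding periodic point, and the condition $\l\cap W\neq\emptyset$ becomes, up to finitely many exceptions, the requirement that the coded periodic sequence visits a prescribed finite union of cylinders.

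Both rewritten sums are then precisely the expressions defining the Gurevich pressure of the suspension flow $(\TMS_\t,\phi)$ for the potential $F$ with $\Delta_F=-a\t-b\k$, per Definition \ref{def:-Gurevich pressure for sus flow}. By Theorem \ref{thm:Pressure formuals for flow} this pressure equals $\inf\{t\in\R:\ P_\s(-a\t-b\k-t\t)\leq 0\}$, which is finite by Lemma \ref{lem:phas transition and critical exponent}. The topological mixing and BIP properties of Proposition \ref{thm:coding of geodesic flow} then guarantee simultaneously that this Gurevich pressure exists as a genuine limit, that it is independent of the cylinder restriction imposed by $W$, and that $\frac{1}{s}\log G_{x,y}^{a,b}(s)$ and $\frac{1}{s}\log Z_W(s)$ both converge to it, completing the chain of equalities.

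The main obstacle is proving the independence of $P_{Gur}^{ab}$ on the relatively compact window $W$. This requires a cylinder-insertion argument: any two cylinders in a topologically mixing BIP shift can be linked by admissible connector words of bounded length, so the cylinder-restricted periodic sums and the unrestricted ones differ by multiplicative factors of order $1$ on the exponential scale. A secondary subtlety is that the geometric comparisons above hold only up to additive constants arising from cusp excursions, but these errors are absorbed in the $s\to\infty$ limit after dividing by $s$.
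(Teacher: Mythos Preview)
Your approach is genuinely different from the paper's. The paper proves this lemma entirely by hyperbolic geometry, following Paulin--Pollicott--Schapira \cite{Pollicott:2012ud}: it first sandwiches $\delta_{PPS}^{a,b}$ between the $\liminf$ and $\limsup$ of $\frac{1}{s}\log G_{x,y,1}^{a,b}(s)$ via the obvious Laplace-type estimate, then upgrades this to a true limit by a super-multiplicativity argument (Lemma~\ref{lem:super-multiplicativity}, using Lemma~\ref{lem:d(x,y)>d(x,o)+d(y,0)+C} to concatenate orbit segments), and finally compares $G_{x,y}^{a,b}$ with the shadow-restricted counts $a_{x,y,U'}$, $b_{x,y,U',V'}$ and the closed-orbit count $Z_W$ by compactness of the limit set and an Anosov-closing argument. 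The symbolic coding is not used at all in this lemma; it enters only afterwards, in Lemma~\ref{lem:Pgur_sym<->Pgur_geo}, to link $P_{Gur}^{a,b}$ with $P_\phi$.

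Your symbolic route is plausible in outline but has a real gap at its center. After translation, $G_{x,y}^{a,b}(s)$ becomes a sum over \emph{all admissible finite words}, whereas the Gurevich pressure in Definition~\ref{def:-Gurevich pressure for sus flow} is a sum over \emph{periodic} orbits of the suspension flow restricted to a single cylinder. These are not ``precisely the same expression''; proving they share the same exponential growth rate is exactly the content of the lemma, and your cylinder-insertion remark does not supply it. Similarly, passing from an arbitrary relatively compact $W\subset T^1S_1$ to ``a prescribed finite union of cylinders'' presumes a coding of the full geodesic flow by the suspension flow, which the paper neither states nor needs. Your strategy could be made to work by invoking the standard equivalence between partition-sum and periodic-orbit definitions of pressure in BIP countable shifts, but as written the decisive step is asserted rather than argued.
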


\begin{proof}
This proof follows the (short) proof of Corollary 4.2, Corollary 4.5
and Theorem 4.7 \cite{Pollicott:2012ud} (also the proof of Theorem
2.4 \cite{Peigne:2013tn}). The strategy is standard but tedious.
We leave the proof in the appendix. 
\end{proof}

Furthermore, we show in below that the geometric Gurevich pressure
$P_{Gur}^{ab}$ matches with the symbolic Gurevich pressure (for the
suspension flow). 

In what follows, $(\TMS,\s)$ stands for countable Markov shift associated
with $\rho_{1}$, $\rho_{2}$ defined in Section \ref{sec:Preliminaries-on-Extended},
and $\tau,\kappa:\TMS\to\R^{+}$ stand for the corresponding geometric
potentials. Recall that $(\Sigma^{+},\s)$ is topologically mixing
and satisfies the BIP property, and $\tau,\kappa$ are locally Hölder
continuous functions and bounded away from zero. Let $\TMS_{\tau}$
be the suspension space relative to $\tau$ and $\phi:\TMS_{\tau}\to\TMS_{\tau}$
be the suspension flow.

We consider a function $\psi:\Sigma_{\tau}^{+}\to\R^{+}$ given by
$\psi(x,t):=\frac{\kappa(x)}{\tau(x)}$ for $x\in\TMS$, $0\leq t\leq\tau(x)$
and $\psi(x,\tau(x))=\psi(\sigma(x),0)$. Using this function $\psi$,
we can reparametrize the suspension flow $\phi:\TMS_{\tau}\to\TMS_{\tau}$
and derive information of orbits of the geodesic flow over $T^{1}S_{2}$.
Roughly speaking, $\psi$ is a reparametrization function, in the
symbolic sense, of the geodesic flow over $T^{1}S_{1}$ such that
the reparametrized flow is conjugated to the geodesic flow over $T^{1}S_{2}$.

\begin{lem}
\label{lem:Pgur_sym<->Pgur_geo}Suppose $\psi:\Sigma_{\tau}^{+}\to\R^{+}$
is defined as $\psi(x,t):=\frac{\kappa(x)}{\tau(x)}$ for $x\in\TMS$,
$0\leq t\leq\tau(x)$ and \textup{$\psi(x,\tau(x))=\psi(\sigma(x),0)$.}
Then $P_{\phi}(-a-b\psi)=P_{Gur}^{ab}.$
\end{lem}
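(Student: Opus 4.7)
The plan is to apply Definition \ref{def:-Gurevich pressure for sus flow} directly and match the resulting symbolic sum against the geometric counting function $Z_W$ whose exponential growth rate defines $P_{Gur}^{ab}$.

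First I would compute the ``canonical'' potential $\Delta_{-a-b\psi}$ on $\TMS$. Because $\psi$ is constant on each suspension fibre over $x$ equal to $\kappa(x)/\tau(x)$,
\[
\Delta_{-a-b\psi}(x) \;=\; \int_0^{\tau(x)}\!\left(-a - b\frac{\kappa(x)}{\tau(x)}\right)dt \;=\; -a\tau(x) - b\kappa(x),
\]
and iterating along an $n$-periodic $\sigma$-orbit one gets, for $s = S_n\tau(x)$ with $\sigma^n x = x$,
\[
\int_0^{s}\bigl(-a-b\psi(\phi_t(x,0))\bigr)\,dt \;=\; -a S_n\tau(x) - b S_n\kappa(x).
\]
By Proposition \ref{prop:coding_property}, such a periodic point is in bijective correspondence (up to cyclic permutation) with a closed geodesic $\gamma \in \mathcal{G}_{S_1}$ whose lengths are $l_1[\gamma] = S_n\tau(x)$ on $S_1$ and $l_2[\gamma] = S_n\kappa(x)$ on $S_2$; only finitely many closed geodesics (those associated with the generators in $\mathcal{A}$) fail to be counted this way.

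Second, I would set up the geometric side. Fix an alphabet letter $a' \in \mathcal{A}$ as the anchor cylinder required by Definition \ref{def:-Gurevich pressure for sus flow}, and choose a relatively compact open set $W \subset T^1S_1$ coming from a Poincar\'e section for the geodesic flow on $S_1$ such that a closed geodesic $\gamma$ meets $W$ precisely when its symbolic coding has a cyclic rotation starting with $a'$. Substituting the identity above into the definition of $P_\phi(-a-b\psi)$, the localized symbolic sum
\[
\sum_{\substack{\phi_s(x,0)=(x,0)\\ 0 \le s \le T,\, x \in [a']}} \exp\!\left(\int_0^{s}(-a-b\psi(\phi_t(x,0)))\,dt\right) \;=\; \sum_{\substack{\sigma^n x = x\\ S_n\tau(x) \le T,\, x \in [a']}} e^{-a S_n\tau(x) - b S_n\kappa(x)}
\]
agrees with $Z_W(T)$ up to a polynomial cyclic-permutation factor (each geodesic of period $n$ contributes $n$ fixed points) and a finite correction for the omitted generator geodesics. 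Taking $(1/T)\log\limsup$ kills both corrections, so
\[
P_\phi(-a-b\psi) \;=\; \limsup_{T\to\infty} \tfrac{1}{T}\log Z_W(T) \;=\; P_{Gur}^{ab},
\]
where the last equality is the definition of $P_{Gur}^{ab}$; Lemma \ref{lem:all the same} further guarantees the result is independent of the choice of relatively compact $W$.

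The main technical obstacle is precisely the geometric-symbolic dictionary used in the second step: one must check that the Dal'Bo-Peign\'e coding admits a Poincar\'e section over which the cylinder $[a']$ corresponds to a genuinely relatively compact open set in $T^1S_1$, and that the ``waiting time'' for a geodesic crossing $W$ to return to the section introduces only subexponential distortion in the counting. Once the dictionary is in place the computation is routine. As a consistency cross-check, the alternative variational identity $P_\phi(-a-b\psi) = \inf\{t : P_\sigma(-(a+t)\tau - b\kappa) \le 0\}$ from Theorem \ref{thm:Pressure formuals for flow} places $(a + P_\phi(-a-b\psi),\, b)$ on the real analytic curve of Theorem \ref{thm:P(-at-bk)=00003D0 analytic}, which is exactly the locus one expects to contain the Manhattan curve $\mathcal{C}(\rho_1,\rho_2)$.
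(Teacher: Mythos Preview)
Your computation of $\Delta_{-a-b\psi}=-a\tau-b\kappa$ and the identification of $\int_0^s(-a-b\psi)\circ\phi_t\,dt$ with $-al_1[\lambda]-bl_2[\lambda]$ along a coded periodic orbit are exactly what the paper uses, and the overall skeleton (match symbolic periodic orbits with closed geodesics, absorb multiplicities and the finitely many uncoded geodesics as subexponential corrections) is the same.

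Where the paper differs is precisely at your self-identified obstacle. You try to choose $W$ as a Poincar\'e section corresponding to the cylinder $[a']$, and then must argue that this $W$ is relatively compact and that the dictionary is exact up to polynomial distortion. The paper bypasses this entirely: it picks $W$ purely geometrically, as any relatively compact open set meeting \emph{every} closed orbit on $T^1S_1$ (such a $W$ exists because $S_1$ is geometrically finite), and then runs a sandwich. For the upper bound, every closed orbit through $W$ is either one of the finitely many uncoded ones or has a coding beginning with \emph{some} $g\in\mathcal{A}$, giving $Z_W(s)\le\sum_{g\in\mathcal{A}}Z_g(s)+C$. For the lower bound, every suspension-flow periodic orbit based in $[g_0]$ projects to a closed geodesic that necessarily meets $W$, with at most $s$-fold overcounting from cyclic permutations, giving $\tfrac{1}{s}Z_{g_0}(s)\le Z_W(s)$. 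Since $P_\phi(-a-b\psi)=\lim_{s\to\infty}\tfrac{1}{s}\log Z_{g}(s)$ for every $g\in\mathcal{A}$, both bounds yield $P_{Gur}^{ab}$. This avoids constructing any explicit cross-section and removes the need to check relative compactness of the symbolic cylinder in $T^1S_1$, which is the step you flagged as delicate (and which is indeed not immediate in the presence of cusps).
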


\begin{proof}
Notice that since $S_{1}$ is geometrically finite, there exists a
relatively compact open set $W$ such that $W$ meets every closed
orbit on $T^{1}S_{1}$. Therefore we have for any $g_{0}\in{\cal A}=\{h_{1},...,h_{N_{1}},p_{1},...,p_{N_{2}}\}$
\[
\frac{1}{s}Z_{g_{0}}(s)\leq Z_{W}^{a,b}(s)\leq\sum_{g\in{\cal A}}Z_{g}(s)+C
\]

where $Z_{g}(T)={\displaystyle \sum_{\stackrel[0\leq s\leq T]{\phi_{s}(x,0)=(x,0),}{}}e^{\int_{0}^{s}(-a-b\psi)\circ\phi_{t}(x,t)\dd t}}\chi_{[g]}(x)$
for $g\in{\cal A}$.

The first inequality is because for a closed orbit $\phi_{t}(x,0)=(x,0)$,
$x=g_{0}x_{2}x_{3}...$, $0\leq t\leq s$, of the suspension flow,
it corresponds at most $s$ closed orbits on $T^{1}S_{1}$. The constant
$C$ in the second inequality is from closed geodesics corresponding
to the hyperbolic generators $h_{i}$ (because these closed geodesics
are not in our coding). 

Recall that by definition, we have $P_{\phi}(-a-b\psi)={\displaystyle \lim_{s\to\infty}\frac{1}{s}\log Z_{g_{0}}(s)}$,
and  by Definition \ref{def:-Gurevich pressure for sus flow} 
\[
P_{\phi}(-a-b\psi)=\lim_{s\to\infty}\frac{1}{s}\log Z_{g_{0}}(s),\ {\rm for\ any\ }g_{0}\in{\cal A};
\]
hence $P_{\phi}(-a-b\psi)=P_{Gur}^{ab}.$
\end{proof}

\begin{lem}
\label{lem:pps critical pt<->classical pt}$\delta_{PPS}^{a,b}=0\ {\rm if\ and\ only\ if\ }\delta^{a,b}=1$.
\end{lem}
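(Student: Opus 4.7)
The plan is to view the two critical exponents as two different ways of measuring the same geometric object, namely the unique point where a one-parameter slice meets the Manhattan curve $\mathcal{C}$. Introduce the auxiliary two-variable Poincaré sum
\[
F(\alpha,\beta) := \sum_{\gamma \in G} \exp\bigl(-\alpha \, d(o_1, \rho_1(\gamma) o_1) - \beta \, d(o_2, \rho_2(\gamma) o_2)\bigr),
\]
and its finiteness domain $\mathcal{F} := \{(\alpha,\beta) \in \mathbb{R}^2 : F(\alpha,\beta) < \infty\}$. Since each summand is pointwise non-increasing in both $\alpha$ and $\beta$, the set $\mathcal{F}$ is upward-closed, and the Hölder estimate already used in the proof of Theorem \ref{thm: Manhattan curves are conti} shows it is convex; by Definition \ref{def:-The-Manhattan-Curve} the Manhattan curve is exactly $\mathcal{C} = \partial \mathcal{F}$.

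The two Poincaré series at issue then become one-parameter slices of $F$:
\[
Q_{\rho_1,\rho_2}^{a,b}(s) = F(sa,\, sb) \qquad \text{and} \qquad Q_{PPS,o,o}^{a,b}(s) = F(a+s,\, b),
\]
the first a ray from the origin through $(a,b)$, the second a horizontal translate passing through $(a,b)$ at $s = 0$. Because $F$ is non-increasing along each slice as $s$ grows, each slice meets $\partial \mathcal{F}$ at a unique transition value of $s$: for the radial slice this transition is by definition $s = \delta^{a,b}$, occurring at the boundary point $(\delta^{a,b} a, \delta^{a,b} b) \in \mathcal{C}$; for the horizontal slice it is $s = \delta_{PPS}^{a,b}$, occurring at $(a + \delta_{PPS}^{a,b}, b) \in \mathcal{C}$.

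First I would note that $\delta_{PPS}^{a,b}$ is independent of the reference points $x, y$ (the standard triangle inequality argument, exactly as for the classical critical exponent), so it is legitimate to evaluate it with $x = y = o$. The equivalence then reads off directly: $\delta^{a,b} = 1$ precisely when the radial slice transitions at $s = 1$, that is, when $(a, b)$ itself lies on $\mathcal{C}$; and $\delta_{PPS}^{a,b} = 0$ precisely when the horizontal slice transitions at $s = 0$, again placing $(a, b)$ on $\mathcal{C}$. Both conditions are equivalent to $(a, b) \in \mathcal{C}$, hence to each other. I do not anticipate any real obstacle: once the two series are recognized as slices of the single function $F$, the lemma is essentially a change-of-variables identity; the only point to watch is the monotonicity of each slice, which ensures that the transition value is unique and is therefore captured by a single number ($\delta^{a,b}$ or $\delta_{PPS}^{a,b}$) rather than an interval.
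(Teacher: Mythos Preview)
Your approach is genuinely different from the paper's and more conceptual: you identify both conditions with membership $(a,b)\in\mathcal{C}$, whereas the paper argues by direct comparison of the two Poincar\'e series using the bilipschitz equivalence $C^{-1}d_2\le d_1\le Cd_2$ supplied by the Fenchel--Nielsen theorem (Theorem~\ref{thm: bilipsthitz }). Your route makes the picture transparent and, once completed, avoids invoking the bilipschitz map altogether.

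There is, however, a real gap in one implication. You assert that ``each slice meets $\partial\mathcal{F}$ at a unique transition value of $s$,'' justified by monotonicity of $F$ along the slice. Monotonicity does give a unique one-dimensional transition $s^\ast$ (where $F$ switches from infinite to finite), and the point $(a+s^\ast,b)$ certainly lies in $\partial\mathcal{F}$; but this does \emph{not} rule out that the two-dimensional boundary $\partial\mathcal{F}$ meets the horizontal line in a whole segment. If $\mathcal{C}$ contained a horizontal segment at height $b$, then $(a,b)\in\mathcal{C}$ would not force $s^\ast=0$. So the step ``$(a,b)\in\mathcal{C}\Rightarrow\delta_{PPS}^{a,b}=0$'' is not yet justified by what you wrote.

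The gap is easy to close with the ingredients you already cite, and without bilipschitz. Suppose $\delta^{a,b}=1$ but $F(a+s_0,b)<\infty$ for some $s_0<0$. Since also $F(h_1+\epsilon,0)<\infty$ for every $\epsilon>0$ (with $h_1=h_{top}(S_1)$), H\"older places $(ta,tb)$ in $\mathcal{F}$ for $t=\dfrac{h_1+\epsilon}{h_1+\epsilon-s_0}<1$, contradicting $\delta^{a,b}=1$. A symmetric interpolation (using $(h_1+\epsilon,0)$ against $(ta,tb)$ with $t>1$) shows $F(a+s,b)<\infty$ for every $s>0$. Equivalently: convexity of $\mathcal{C}$ together with its axis intercepts forces $\mathcal{C}$ to be the graph of a \emph{strictly} decreasing function on $[0,h_1]$, so horizontal lines meet it in at most one point. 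Add this observation and your proof is complete and arguably cleaner than the paper's.
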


\begin{proof}
We first notice that the critical exponents are irrelevant with base
points, therefore we can choose 
\[
d_{a,b}(o,\gamma o)=ad(o,\rho_{1}(\gamma)o)+bd(fo,\rho_{2}(\gamma)fo)
\]
where $f:\H\to\H$ is the $\iota-$equivalent bilipschitz given in
Theorem \ref{thm: bilipsthitz } and $\iota:\rho_{1}(G)\to\rho_{2}(G)$
is the boundary-preserving isomorphism. Since $f:\H\to\H$ is bilipschitz,
there exists $C>1$ such that for $\g\in G$ and a fixed $o\in\H$
\[
\frac{1}{C}d(fo,\rho_{2}(\g)fo)\leq d(o,\rho_{1}(\g)o)\leq Cd(fo,\rho_{2}(\g)fo).
\]

With the inequalities above, the desire results are straightforward.
To simplify the notation, in this proof $d(o,\rho_{1}(\g)o)$ is denoted
by $d_{1}(\g)$ and $d(fo,\rho_{2}(\g)fo)$ is denoted by $d_{2}(\g).$

$(\implies)$ Suppose $\delta_{PPS}^{a,b}=0$. 

\textbf{Claim:} 
\[
{\displaystyle \sum_{\g\in G}e^{s(-ad_{1}(\g)-bd_{2}(\g))}<\infty\ \mbox{for \ensuremath{s>1}.}}
\]

pf. Let $s=1+t_{0}$ for some $t_{0}>0$. We have 
\begin{alignat*}{1}
\sum_{\g\in G}e^{s(-ad_{1}(\g)-bd_{2}(\g))} & =\sum_{\g\in G}e^{-ad_{1}(\g)-bd_{2}(\g)+t_{0}(-ad_{1}(\g)-bd_{2}(\g))}\\
 & \leq\sum_{\g\in G}e^{-ad_{1}(\g)-bd_{2}(\g)+t_{0}(-ad_{1}(\g)-b(\frac{1}{C}d_{1}(\g)))}\\
 & =\sum_{\g\in G}e^{-ad_{1}(\g)-bd_{2}(\g)-t_{0}(a+\frac{b}{C})d_{1}(\g)}\\
 & <\infty.
\end{alignat*}

Similarly, we have
\[
{\displaystyle \sum_{\g\in G}e^{s(-ad_{1}(\g)-bd_{2}(\g))}=\infty\ \mbox{for \ensuremath{s<1}.}}
\]

Hence, $\delta^{a,b}=1.$

$(\Longleftarrow)$ Suppose $\delta^{a,b}=1$.

\textbf{Claim:} 
\[
{\displaystyle \sum_{\g\in G}e^{-ad_{1}(\g)-bd_{2}(\g)-td_{1}(\g)}<\infty\ \mbox{for }t>0.}
\]

pf. Recall that there exists $C>1$ such that $\frac{1}{C}d_{1}(\g)<d_{2}(\g)<Cd_{1}(\g)$. 

For any $t>0$, we pick $s_{0}=\frac{a+bC+t}{a+bC}>1$, and we have 

\begin{alignat*}{1}
s_{0}=\frac{a+bC+t}{a+bC} & \iff\frac{-as_{0}+a+t}{s_{0}b-b}=C>\frac{d_{2}}{d_{1}}
\end{alignat*}
which implies 
\[
ad_{1}(\g)+bd_{2}(\g)+td_{1}(\g)>s_{0}(ad_{1}(\g)+bd_{2}(\g));
\]

Because $s_{0}>1=\delta^{a,b}$, we know 
\begin{alignat*}{1}
\sum_{\g\in G}e^{-ad_{1}(\g)-bd_{2}(\g)-td_{1}(\g)} & \leq\sum_{\g\in G}e^{-s_{0}(ad_{1}(\g)+bd_{2}(\g))}<\infty
\end{alignat*}

Similarly, one can show 
\[
{\displaystyle \sum_{\g\in G}e^{-ad_{1}(\g)-bd_{2}(\g)-td_{1}(\g)}=\infty\ \mbox{for }t<0.}
\]
Therefore, $\delta_{PPS}^{a,b}=1$.
\end{proof}

We have an immediate corollary:

\begin{cor}
\label{lem:Gur pressure<->pps critical pt}$P_{\phi}(-a-b\psi)=P_{Gur}^{a,b}=0{\rm \ if\ and\ only\ if\ }\delta^{a,b}=1$.
\end{cor}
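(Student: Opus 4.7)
The plan is to observe that this corollary is essentially a concatenation of the three preceding lemmas, so the proof is short and requires no new ideas. First I would invoke Lemma \ref{lem:Pgur_sym<->Pgur_geo}, which directly gives the equality $P_{\phi}(-a-b\psi) = P_{Gur}^{a,b}$ as an identity of real numbers, so the "$=0$" condition on one side is the same as on the other. This reduces the corollary to proving $P_{Gur}^{a,b} = 0 \iff \delta^{a,b} = 1$.

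Next I would bring in Lemma \ref{lem:all the same}, which asserts that $P_{Gur}^{a,b} = \delta_{PPS}^{a,b}$ (with both equal to several other limits, but that extra information is not needed here). Thus the condition $P_{Gur}^{a,b} = 0$ is equivalent to $\delta_{PPS}^{a,b} = 0$. At this point the corollary becomes the statement $\delta_{PPS}^{a,b} = 0 \iff \delta^{a,b} = 1$, which is precisely Lemma \ref{lem:pps critical pt<->classical pt}.

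There is no genuine obstacle, since each of the three links in the chain has been established just above. The only thing worth being careful about is to write the argument as a sequence of equivalences rather than implications, so that both directions of the "if and only if" are accounted for; this is automatic because each of the invoked statements is itself an equality or an equivalence. The whole proof should fit in two or three lines and amounts to simply stringing the three lemmas together in the order indicated.
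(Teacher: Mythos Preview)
Your proposal is correct and matches the paper's approach exactly: the paper gives no proof at all for this corollary, simply labeling it an ``immediate corollary'' of the preceding lemmas, and later (in the proof of Lemma~\ref{lem:man cur=00003D press}) spells out precisely the chain you describe, citing Lemmas~\ref{lem:pps critical pt<->classical pt}, \ref{lem:all the same}, and \ref{lem:Pgur_sym<->Pgur_geo} in that order.
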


\subsection{Proof of Main Results}

Throughout this subsection, $\rho_{1},\rho_{2}$ are boundary-preserving
isomorphic Fuchsian representations satisfying the extended Schottky
condition, and $S_{1}=\rho_{1}(G)\backslash\H$, $S_{2}=\rho_{2}(G)\backslash\H$.
Let $(\TMS,\s)$ be the topologically mixing countable Markov shift
associated with $\rho_{1}$, $\rho_{2}$ defined in Section \ref{sec:Preliminaries-on-Extended},
and $\tau,\kappa:\TMS\to\R^{+}$ be the corresponding geometric potentials.
Recall that $\TMS_{\tau}$ is the suspension space relative to $\tau$
and $\phi:\TMS_{\tau}\to\TMS_{\tau}$ is the suspension flow, and
the reparametrization function $\psi:\Sigma_{\tau}^{+}\to\R^{+}$
is given by $\psi(x,t):=\frac{\kappa(x)}{\tau(x)}$ for $x\in\TMS$,
$0\leq t\leq\tau(x)$ and $\psi(x,\tau(x))=\psi(\sigma(x),0)$.

\begin{lem}
\label{lem:Pressure flow <-> shift} Suppose $\psi:\Sigma_{\tau}^{+}\to\R^{+}$
is defined $\psi(x,t):=\frac{\kappa(x)}{\tau(x)}$ for $x\in\TMS$,
$0\leq t\leq\tau(x)$ and \textup{$\psi(x,\tau(x))=\psi(\sigma(x),0)$.}
Then $P_{\sigma}(-a\tau-b\kappa)=0$ if and only if $P_{\phi}(-a-b\psi)=0.$ 
\end{lem}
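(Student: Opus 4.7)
The plan is to identify the correct Abramov-type relationship between the shift pressure and the suspension-flow pressure via the integrated potential $\Delta_F$, and then exploit the inf/sup characterization of the Gurevich pressure for suspension flows stated in Theorem \ref{thm:Pressure formuals for flow}.

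First I would compute the integrated potential $\Delta_F$ for $F = -a - b\psi$. By the definition of $\psi$, for each $x \in \TMS$,
\[
\Delta_F(x) = \int_0^{\tau(x)}\bigl(-a - b\psi(x,t)\bigr)\dd t = \int_0^{\tau(x)}\left(-a - b\frac{\kappa(x)}{\tau(x)}\right)\dd t = -a\tau(x) - b\kappa(x).
\]
Since $\tau$ and $\kappa$ are locally Hölder (and thus of summable variation) and bounded away from zero, $\Delta_F = -a\tau - b\kappa$ is of summable variation, so Theorem \ref{thm:Pressure formuals for flow} applies.

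Next, I would plug this into the characterization
\[
P_{\phi}(-a-b\psi) \;=\; \inf\bigl\{t \in \R : P_{\sigma}(-a\tau - b\kappa - t\tau) \leq 0\bigr\} \;=\; \sup\bigl\{t \in \R : P_{\sigma}(-a\tau - b\kappa - t\tau) \geq 0\bigr\}.
\]
For the forward direction, assuming $P_\sigma(-a\tau - b\kappa) = 0$, taking $t = 0$ is admissible in both the inf and the sup side above, so these two characterizations sandwich $P_\phi(-a-b\psi)$ at $0$. For the reverse direction, assuming $P_\phi(-a-b\psi) = 0$, the same inf and sup formulas force $P_\sigma(-a\tau - b\kappa - t\tau) \leq 0$ for every $t > 0$ and $P_\sigma(-a\tau - b\kappa - t\tau) \geq 0$ for every $t < 0$; by continuity of the pressure function (on the finite region, guaranteed by Theorem \ref{thm:Analyticty pressure} and Lemma \ref{lem:phas transition and critical exponent}) one concludes $P_\sigma(-a\tau - b\kappa) = 0$.

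The one point that requires a small verification is that $t = 0$ lies in the analytic (finite) range of the pressure function $t \mapsto P_\sigma(-a\tau - b\kappa - t\tau)$, so that continuity can be invoked in the reverse implication. This is where the phase-transition analysis of Lemma \ref{lem:phas transition and critical exponent} enters: the infinite-pressure threshold occurs at $\frac{1}{2(a+b)} > 0$ for the potential $-s(a\tau + b\kappa)$, and an identical computation (replacing $a,b$ by $a,b$ and shifting by $-t\tau$) shows that $t = 0$ sits strictly inside the analytic regime whenever the assumed zero occurs in the first place. I expect this continuity/regularity check to be the only nontrivial step; once it is in place, the equivalence is essentially the Abramov-type content of Theorem \ref{thm:Pressure formuals for flow} specialized to $F = -a - b\psi$.
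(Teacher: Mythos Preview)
Your argument is correct. The forward direction matches the paper exactly: compute $\Delta_{-a-b\psi}=-a\tau-b\kappa$ and feed $t=0$ into the inf/sup characterization of Theorem~\ref{thm:Pressure formuals for flow}.

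For the reverse direction, however, the paper takes a genuinely different route. Rather than invoking continuity of $t\mapsto P_\sigma(-a\tau-b\kappa-t\tau)$ at $t=0$, the paper uses the third (variational) characterization in Theorem~\ref{thm:Pressure formuals for flow}, namely the Abramov-type formula from Jaerisch--Kesseb\"ohmer--Lamei:
\[
0=P_\phi(-a-b\psi)=\sup_{\mu\in\mathcal{M}_\sigma(\tau)}\frac{h_\sigma(\mu)+\int(-a\tau-b\kappa)\,\dd\mu}{\int\tau\,\dd\mu}.
\]
Since $\tau>c>0$, the denominator is bounded below, so one may clear it and obtain $0=\sup\{h_\sigma(\mu)+\int(-a\tau-b\kappa)\,\dd\mu\}$ over $\mathcal{M}_\sigma(\tau)$. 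The paper then observes that the integrability condition $-a\tau-b\kappa\in L^1(\mu)$ already forces $\int\tau\,\dd\mu<\infty$, so this supremum coincides with the shift variational principle for $P_\sigma(-a\tau-b\kappa)$.

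What each approach buys: the paper's route is purely measure-theoretic and sidesteps any regularity question at $t=0$; in particular it never needs to know that $(a,b)$ sits strictly inside the finite-pressure region. Your route is more elementary in that it only uses the inf/sup formula, but the finiteness check you flag is genuinely needed and is slightly more delicate than you indicate: one must rule out the boundary case $a+b=\tfrac{1}{2(1)}$, which can be done by noting that the lower bound in Lemma~\ref{lem:phas transition and critical exponent} forces $P_\sigma(-(a+t)\tau-b\kappa)\to+\infty$ as $t\to0^+$ when $a+b=\tfrac12$, contradicting $P_\sigma(-(a+t)\tau-b\kappa)\le 0$ for $t>0$.
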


\begin{proof}
$(\Longrightarrow)$ Suppose $P_{\sigma}(-a\t-b\k)=0<\infty$. Then
when $t\in(-\varepsilon,\varepsilon)$, $P_{\sigma}(-a\tau-b\k-t\tau)$
is a real analytic and is strictly decreasing, i.e., 
\[
P_{\s}(-a\t-b\k-t\t)\begin{cases}
<0, & {\rm for\ }t>0;\\
=0, & {\rm for\ }t=0;\\
>0, & {\rm for}\ t<0.
\end{cases}
\]

Therefore, by Theorem \ref{thm:Pressure formuals for flow} and $\Delta_{-a-b\psi}=-a\t-b\k$,
we have $P_{\phi}(-a-b\psi)=0$.

$(\Longleftarrow)$ To see $P_{\phi}(-a-b\psi)=0{\rm \ implies\ }P_{\s}(-a\t-b\k)=0$
Notice that because $\tau>c>0$ implies $\sum_{i=0}^{\infty}\tau\circ\sigma^{i}=\infty$,
by Lemma 4.1 and Remark 4.1 in Jaerisch-Kesseböhmer-Lamei \cite{Jaerisch:2014js},
we have 

\begin{alignat*}{1}
0= & P_{\phi}(-a-b\psi)\\
= & \sup\left\{ \frac{h_{\sigma}(\mu)}{\int\tau\dd\mu}+\frac{\int\Delta_{-a-b\psi}\dd\mu}{\int\t\dd\mu}:\ \mu\in{\cal M}_{\sigma}(\tau)\mbox{ with\ }\Delta_{-a-b\psi}\in L^{1}(\mu)\right\} \\
= & \sup\left\{ \frac{h_{\sigma}(\mu)}{\int\tau\dd\mu}+\frac{\int(-a\t-b\k)\dd\mu}{\int\t\dd\mu}:\ \mu\in{\cal M}_{\sigma}(\tau)\mbox{ with\ }-a\t-b\k\in L^{1}(\mu)\right\} 
\end{alignat*}
where ${\cal M}_{\s}(\t):=\left\{ \mu:\ \mu\in{\cal M}_{\sigma}\ {\rm and}\ \int\tau\dd\mu<\infty\right\} $. 

For all $\mu\in{\cal M}_{\sigma}$ such that $-a\tau-b\k\in L^{1}(\mu)$,
we have $\int\tau\dd\mu>c>0$; hence, 
\[
0=\sup\left\{ h_{\sigma}(\mu)+\int(-a\t-b\k)\dd\mu:\ \mu\in{\cal M}_{\sigma}(\tau)\ {\rm and}\ -a\tau-b\k\in L^{1}(\mu)\right\} .
\]

Recall that 
\[
P_{\sigma}(-a\tau-b\kappa)=\sup\left\{ h_{\sigma}(\mu)+\int(-a\t-b\k)\dd\mu:\ \mu\in{\cal M}_{\sigma}\ {\rm and}\ -a\tau-b\k\in L^{1}(\mu)\right\} .
\]

Notice that for $\mu\in{\cal M_{\sigma}}$, if $-a\tau-b\k\in L^{1}(\mu)$
then $\int\tau\dd\mu<\infty$ (i.e., $\mu\in{\cal M}_{\sigma}(\tau)$).

Moreover, it is obvious that ${\cal M_{\s}}(\tau)\subset{\cal M}_{\s}$.
Thus, we have 
\begin{alignat*}{1}
P_{\sigma}(-a\tau-b\kappa)= & \sup\left\{ h_{\sigma}(\mu)+\int(-a\t-b\k)\dd\mu:\ \mu\in{\cal M}_{\sigma}\ {\rm and}\ -a\tau-b\k\in L^{1}(\mu)\right\} \\
= & \sup\left\{ h_{\sigma}(\mu)+\int(-a\t-b\k)\dd\mu:\ \mu\in{\cal M}_{\sigma}(\tau)\ {\rm and}\ -a\tau-b\k\in L^{1}(\mu)\right\} \\
= & 0
\end{alignat*}

\end{proof}

The following theorem gives more geometric characterizations to $t_{a,b}$
(i.e., the solution to the equation $P_{\sigma}(-t_{a,b}(a\tau+b\kappa))=0$).
Without any surprise, as the famous Bowen's formula, $t_{a,b}$ is
indeed the critical exponent $\delta^{a,b}$ and the growth rate of
hyperbolic elements.

\begin{thm}
[Bowen's formula]\label{thm:t_ab=00003Ddelta_ab}For $(a,b)\in D$.
Suppose $t_{a,b}$ is the solution to $P_{\sigma}(-t_{a,b}(a\tau+b\kappa))=0$.
Then 
\[
t_{a,b}=\delta^{a,b}=\lim_{s\to\infty}\frac{1}{s}\log\overline{G}_{x,y}^{a,b}(s)
\]
 where $\overline{G}_{x,y}^{a,b}(s):=\#\{\gamma\in G:\ d^{a,b}(x,\g y)\leq s\}$;
\end{thm}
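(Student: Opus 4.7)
The theorem has two distinct equalities: (i) the thermodynamic-to-geometric identification $t_{a,b} = \delta^{a,b}$, and (ii) the Bowen-type identification of $\delta^{a,b}$ with the exponential growth rate of the orbit count $\overline{G}^{a,b}_{x,y}(s)$. I propose to handle them separately, the first by invoking the chain of equivalences established in the preceding lemmas and the second by adapting the counting technology of Lemma \ref{lem:all the same} to the weighted Manhattan metric.

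For (i), the plan is to apply the chain at the rescaled pair $(t_{a,b}a, t_{a,b}b) \in D$. By the defining property of $t_{a,b}$ from Lemma \ref{lem:P(-at-bk)=00003D0}, one has $P_{\sigma}(-(t_{a,b}a)\tau - (t_{a,b}b)\kappa) = 0$. Lemma \ref{lem:Pressure flow <-> shift} then yields $P_{\phi}(-t_{a,b}a - t_{a,b}b\,\psi) = 0$, and Corollary \ref{lem:Gur pressure<->pps critical pt} converts this into $\delta^{t_{a,b}a,\,t_{a,b}b} = 1$. Finally, the elementary homogeneity identity $d^{ca,cb} = c\,d^{a,b}$ gives $Q^{ca,cb}(s) = Q^{a,b}(cs)$, so $\delta^{ca,cb} = \delta^{a,b}/c$; plugging in $c = t_{a,b}$ gives $\delta^{a,b}/t_{a,b} = 1$, that is, $t_{a,b} = \delta^{a,b}$.

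For (ii), the $\limsup$ inequality is routine: the trivial bound $Q^{a,b}(\delta^{a,b}+\epsilon) \geq e^{-(\delta^{a,b}+\epsilon)s}\,\overline{G}^{a,b}_{x,y}(s)$ together with the convergence of $Q^{a,b}$ above its critical exponent yields $\limsup_{s\to\infty} s^{-1}\log \overline{G}^{a,b}_{x,y}(s) \leq \delta^{a,b}$. The matching $\liminf$ inequality is the main difficulty, since Abel summation alone does not rule out long stretches on which $\overline{G}^{a,b}_{x,y}$ stagnates. I would adapt the strategy of Lemma \ref{lem:all the same}, replacing the hyperbolic metric $d$ by the weighted Manhattan metric $d^{a,b}$: introduce the auxiliary Poincar\'e series $\tilde{Q}(s) = \sum_{\gamma \in G} e^{-s\,d^{a,b}(x,\gamma y)}$ (whose critical exponent is $\delta^{a,b}$), reprove the analogue of Lemma \ref{lem:all the same} for this series, and then invoke the closed-geodesic count on $T^{1}S_{1}$ weighted by $e^{-al_{1}[\lambda] - bl_{2}[\lambda]}$, whose exponential growth rate was already identified with $P_{Gur}^{a,b}$. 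The key inputs are Lemma \ref{lem:d(x,y)>d(x,o)+d(y,0)+C} (to control the metric along admissible words) and the existence of a unique Gibbs equilibrium state for $-\delta^{a,b}(a\tau + b\kappa)$, guaranteed by Theorem \ref{thm: Phase transition} since $\delta^{a,b} = t_{a,b} > \frac{1}{2(a+b)}$ by Lemma \ref{lem:P(-at-bk)=00003D0}; these supply the mixing and positivity needed to upgrade $\limsup$ to an honest limit.
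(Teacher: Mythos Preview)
Your part (i) is identical to the paper's argument: the paper runs the same chain of equivalences
\[
\delta^{a,b}=1 \iff \delta_{PPS}^{a,b}=0 \iff P_{Gur}^{a,b}=0 \iff P_\phi(-a-b\psi)=0 \iff P_\sigma(-a\tau-b\kappa)=0,
\]
applies it at the rescaled pair $(t_{a,b}a,\,t_{a,b}b)$, and uses the homogeneity $\delta^{ca,cb}=\delta^{a,b}/c$ to conclude $t_{a,b}=\delta^{a,b}$.

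For part (ii) you also correctly identify that the argument should adapt Lemma~\ref{lem:all the same} with $d_1$ replaced by $d^{a,b}$, and the paper does exactly this in the appendix. The point of departure is the mechanism for upgrading $\limsup$ to a genuine limit. You propose to invoke the Gibbs equilibrium state for $-\delta^{a,b}(a\tau+b\kappa)$ to supply ``mixing and positivity.'' The paper instead uses a purely elementary super-multiplicativity argument (Lemma~\ref{lem:super-multiplicativity}): the annular counts $b_n=\overline{G}_{x,y,1}^{a,b}(n)$ satisfy $b_n b_m \leq M\sum_{|i|\leq N} b_{n+m+i}$, because any $\gamma_n\in\overline{E}_n$, $\gamma_m\in\overline{E}_m$ can be concatenated through a suitable generator $\alpha\in\mathcal{A}^{\pm}$ with $d^{a,b}$-distortion controlled by Lemma~\ref{lem:d(x,y)>d(x,o)+d(y,0)+C} (applied to both $\rho_1$ and $\rho_2$), and the fiber of the map $(\gamma_n,\gamma_m)\mapsto\gamma_n\alpha\gamma_m$ is uniformly bounded via Lemma~\ref{lem:not_too_far} and the discreteness of $\Gamma_1,\Gamma_2$. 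No thermodynamic formalism enters this step at all. Your Gibbs-state route is plausible in principle, but you have not explained how mixing of an equilibrium state on $\Sigma^+$ translates into existence of the orbital-count limit on the group; as written that connection is left implicit, whereas the paper's super-multiplicativity argument is self-contained. The detour through closed geodesics and $P_{Gur}^{a,b}$ that you sketch is likewise unnecessary for the stated equality (though the paper does remark afterward that the same method recovers the closed-geodesic growth rate).
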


\begin{proof}
We first notice that 
\begin{alignat*}{2}
\delta^{a,b}=1 & \iff\delta_{PPS}^{a,b}=0 & {\rm Lemma\ }\ref{lem:pps critical pt<->classical pt}\ \\
 & \iff P_{Gur}^{a,b}=0 & {\rm Lemma\ }\ref{lem:all the same}\ \\
 & \iff P_{\phi}(-a-b\psi)=0 & {\rm Lemma}\ \ref{lem:Pgur_sym<->Pgur_geo}\ \\
 & \iff P_{\sigma}(-a\tau-b\kappa)=0 & \ \ \ \ \ \ {\rm Lemma\ }\ref{lem:Pressure flow <-> shift}.
\end{alignat*}

Thus, $P_{\sigma}(-t_{a,b}(a\tau+b\kappa))=0$ if and only $\delta^{t_{a,b}a,t_{a,b}b}=1$,
that is, $Q^{t_{a,b}a,t_{a,b}b}(s)={\displaystyle \sum_{\g\in G}e^{-t_{ab}d^{a,b}(o,\g o)}}$
has critical exponent 1. Hence, $Q^{a,b}(s)={\displaystyle \sum_{\gamma\in G}e^{-sd^{a,b}(o,\g o)}}$
has critical exponent $t_{a,b}$, i.e., $\delta^{a,b}=t_{a,b}$.

For the rear inequality, the prove is the same as the proof of Lemma
\ref{lem:all the same} with some simplification (in other words,
the proof is a modification of Lemma 3.3, Corollary 4.5, Theorem 4.7
\cite{Pollicott:2012ud}, or Section 2.2 \cite{Peigne:2013tn}). However,
for the completeness, we put the proof in the appendix.
\end{proof}

\begin{rem}
Using the same argument as in Lemma \ref{lem:all the same}, one can
also prove that the critical exponent $\delta^{a,b}$ is the growth
rate of closed geodesics on $S_{1}$ and $S_{2}$. One notices that
each closed geodesic on $S_{1}$ (and $S_{2}$ )is corresponds to
a hyperbolic element in $\G_{1}$ (and $\G_{2}$). In other words,
\[
\delta^{a,b}=h^{a,b}:=\lim_{s\to\infty}\frac{1}{s}\#\{\g\in G:\ \g\ {\rm is\ hyperblic\ and\ }al_{1}[\g]+bl_{2}[\g]\leq s\}.
\]

\end{rem}

\begin{lem}
\label{lem:man cur=00003D press} The Manhattan curve ${\cal C}(\rho_{1},\rho_{2})$
is the set of solutions to $P_{\sigma}(-a\tau-b\kappa)=0$ in $D$.
\end{lem}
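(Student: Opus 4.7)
The plan is to observe that this lemma is essentially an immediate corollary of the chain of equivalences established in the proof of Theorem \ref{thm:t_ab=00003Ddelta_ab} (Bowen's formula). By the alternative formulation of the Manhattan curve in Definition \ref{def:-The-Manhattan-Curve}, a pair $(a,b) \in D$ lies on $\mathcal{C}(\rho_1,\rho_2)$ if and only if the Poincaré series $Q^{a,b}_{\rho_1,\rho_2}(s)$ has critical exponent equal to $1$, i.e., $\delta^{a,b} = 1$. So the task reduces to showing
\[
\delta^{a,b} = 1 \iff P_\sigma(-a\tau - b\kappa) = 0.
\]

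First I would invoke Lemma \ref{lem:pps critical pt<->classical pt} to convert $\delta^{a,b} = 1$ into the statement $\delta^{a,b}_{PPS} = 0$, which relates the classical critical exponent of the weighted Manhattan metric to the critical exponent of the Paulin-Pollicott-Schapira Poincaré series. Next, Lemma \ref{lem:all the same} identifies $\delta^{a,b}_{PPS}$ with the geometric Gurevich pressure $P^{a,b}_{Gur}$, so $\delta^{a,b}_{PPS} = 0$ is equivalent to $P^{a,b}_{Gur} = 0$. Then Lemma \ref{lem:Pgur_sym<->Pgur_geo} allows me to pass from the geometric Gurevich pressure to the symbolic one for the reparametrized suspension flow, giving $P^{a,b}_{Gur} = P_\phi(-a - b\psi)$, so $P^{a,b}_{Gur} = 0 \iff P_\phi(-a - b\psi) = 0$. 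Finally, Lemma \ref{lem:Pressure flow <-> shift} translates the vanishing of the flow pressure into the vanishing of the shift pressure, $P_\phi(-a-b\psi) = 0 \iff P_\sigma(-a\tau - b\kappa) = 0$.

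Concatenating these equivalences yields $(a,b) \in \mathcal{C}(\rho_1,\rho_2) \iff \delta^{a,b} = 1 \iff P_\sigma(-a\tau - b\kappa) = 0$, which is exactly the claim. There is essentially no technical obstacle here because all four lemmas have already been proved; the entire content of this lemma is packaging the equivalences into the language of the Manhattan curve. The only subtle point to check is that the equivalence chain is valid on all of $D$ rather than just a neighborhood: Lemma \ref{lem:P(-at-bk)=00003D0} guarantees that for every $(a,b)\in D$ there is a unique scaling $t_{a,b}$ achieving zero pressure, which ensures the set $\{(a,b)\in D:\ P_\sigma(-a\tau - b\kappa)=0\}$ is non-degenerate and exactly picks out the slice where $t_{a,b}=1$, matching the convex boundary definition of $\mathcal{C}$.
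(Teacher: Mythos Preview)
Your proposal is correct and follows essentially the same approach as the paper: the paper's proof is exactly the chain of equivalences $(a,b)\in{\cal C}(\rho_1,\rho_2)\iff \delta^{a,b}=1\iff \delta_{PPS}^{a,b}=0\iff P_{Gur}^{a,b}=0\iff P_{\phi}(-a-b\psi)=0\iff P_{\sigma}(-a\tau-b\kappa)=0$, invoking Lemmas \ref{lem:pps critical pt<->classical pt}, \ref{lem:all the same}, \ref{lem:Pgur_sym<->Pgur_geo}, and \ref{lem:Pressure flow <-> shift} in that order. Your final remark about Lemma \ref{lem:P(-at-bk)=00003D0} is extra commentary not needed for the argument, but it does no harm.
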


\begin{proof}
It follows from the same argument as the above theorem:
\begin{alignat*}{2}
(a,b)\in{\cal C}(\rho_{1},\rho_{2}) & \iff\delta^{a,b}=1 & {\rm by\ definition}\\
 & \iff\delta_{PPS}^{a,b}=0 & {\rm Lemma\ }\ref{lem:pps critical pt<->classical pt}\ \\
 & \iff P_{Gur}^{a,b}=0 & {\rm Lemma\ }\ref{lem:all the same}\ \\
 & \iff P_{\phi}(-a-b\psi)=0 & {\rm Lemma}\ \ref{lem:Pgur_sym<->Pgur_geo}\ \\
 & \iff P_{\sigma}(-a\tau-b\kappa)=0 & \ \ \ \ \ \ {\rm Lemma\ }\ref{lem:Pressure flow <-> shift}.
\end{alignat*}

\end{proof}

\begin{thm}
\label{Thm: Manhattan cuve analytic} The Manhattan curve ${\cal C}(\rho_{1},\rho_{2})$
is real analytic.
\end{thm}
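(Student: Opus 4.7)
The plan is to observe that by the time this theorem is stated, essentially all the work has been done, and the proof amounts to assembling two previously established facts. First, Lemma \ref{lem:man cur=00003D press} identifies the Manhattan curve $\mathcal{C}(\rho_1,\rho_2)$ as the zero set
\[
\{(a,b) \in D : P_\sigma(-a\tau - b\kappa) = 0\},
\]
and second, Theorem \ref{thm:P(-at-bk)=00003D0 analytic} shows that this zero set is a real analytic curve. So my proof would simply invoke these two results in sequence.

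Concretely, I would first restate the characterization of $\mathcal{C}(\rho_1,\rho_2)$ as the zero set above, so the reader can see what the theorem actually asks us to prove. This step is pure bookkeeping and uses the chain of equivalences built up through Lemma \ref{lem:pps critical pt<->classical pt}, Lemma \ref{lem:all the same}, Lemma \ref{lem:Pgur_sym<->Pgur_geo}, and Lemma \ref{lem:Pressure flow <-> shift}, which together transfer the defining condition $\delta^{a,b} = 1$ into the symbolic pressure condition $P_\sigma(-a\tau - b\kappa) = 0$.

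Next I would apply Theorem \ref{thm:P(-at-bk)=00003D0 analytic}. The substance there is the Implicit Function Theorem: by Theorem \ref{thm:Analyticty pressure}, the map $(a,b) \mapsto P_\sigma(-a\tau - b\kappa)$ is real analytic on the open set where it is finite (which, by Lemma \ref{lem:phas transition and critical exponent}, contains a neighborhood of the zero set since $P_\sigma = 0$ forces $a + b > \tfrac{1}{2}$ so $t = 1 > \delta^{a,b}_{\langle p\rangle}$ after rescaling), and its partial derivatives are $-\int \tau \,\dd\mu$ and $-\int \kappa \,\dd\mu$ evaluated at the equilibrium state. Since $\tau, \kappa > c > 0$, both partials are bounded above by $-c < 0$, hence nonvanishing. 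The Implicit Function Theorem then gives, locally at each point of the curve, that $b$ is a real analytic function of $a$ (or vice versa), yielding real analyticity of $\mathcal{C}(\rho_1,\rho_2)$.

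Because the genuine technical content has been absorbed into the preceding lemmas, I do not anticipate a serious obstacle in writing this particular proof; the one thing I would be careful about is making sure the zero set stays inside the open domain of finiteness of the pressure (so that the analyticity hypothesis of Theorem \ref{thm:Analyticty pressure} applies at every point of $\mathcal{C}$), and that one cannot have $(a,b) = (0,0)$ on $\mathcal{C}$, which is ruled out because $(h_{top}(S_1),0)$ and $(0,h_{top}(S_2))$ are endpoints with the curve staying in $D$ by convexity (Theorem \ref{thm: Manhattan curves are conti}).
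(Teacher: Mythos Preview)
Your proposal is correct and matches the paper's own proof exactly: the paper's proof is the single sentence ``It is a direct consequence of Theorem \ref{thm:P(-at-bk)=00003D0 analytic} and Lemma \ref{lem:man cur=00003D press}.'' Your additional remarks about why the zero set lies in the domain of finiteness and why the Implicit Function Theorem applies are helpful elaborations, but the logical skeleton is identical.
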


\begin{proof}
It is a direct consequence of Theorem \ref{thm:P(-at-bk)=00003D0 analytic}
and Lemma \ref{lem:man cur=00003D press}.
\end{proof}

\begin{prop}
\label{prop: Manhattan curve features} Let $\rho_{1},\rho_{2}$ be
two boundary-preserving isomorphic Fuchsian representations satisfying
the extended Schottky condition, and $S_{1}=\rho_{1}(G)\backslash\H$,
$S_{2}=\rho_{2}(G)\backslash\H$. Then

\begin{itemize}[font=\normalfont] 

\item[(1)] $\mathcal{C}$ is strictly convex if $S_{1}$ and $S_{2}$
are NOT conjugate in ${\rm PSL}(2,\R)$; and

\item[(2)] $\mathcal{C}$ is a straight line if and only if $S_{1}$
and $S_{2}$ are conjugate in ${\rm PSL}(2,\R)$.

\end{itemize}\end{prop}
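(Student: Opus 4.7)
The approach is to combine Lemma \ref{lem:man cur=00003D press}, which identifies $\mathcal{C}(\rho_{1},\rho_{2})$ with the real analytic level set $\{(a,b)\in D : P(a,b)=0\}$ of $P(a,b):=P_{\sigma}(-a\tau-b\kappa)$, with implicit differentiation along the curve $b=b(a)$.

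First, by the derivative formula in Theorem \ref{thm:Analyticty pressure}, $\partial_{a}P = -\int\tau\,d\mu_{a,b}$ and $\partial_{b}P = -\int\kappa\,d\mu_{a,b}$, where $\mu_{a,b}$ is the equilibrium state; both are strictly negative since $\tau,\kappa$ are bounded below by a positive constant. The standard variance identity for the Hessian of pressure then yields
\begin{equation*}
\partial_{b}P(a,b(a))\cdot b''(a) \;=\; -\mathrm{Var}_{\mu_{a,b(a)}}\!\bigl(\tau+b'(a)\kappa\bigr),
\end{equation*}
so $b''(a)\ge 0$, with equality at $a_{0}$ iff $\tau+b'(a_{0})\kappa$ is a $\sigma$-coboundary (the zero-mean requirement is automatic from the definition of $b'(a_{0})$). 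By the real analyticity of $b(\cdot)$ from Theorem \ref{Thm: Manhattan cuve analytic}, vanishing of $b''$ at a single point propagates to the whole curve, so $\mathcal{C}$ is either strictly convex or a straight line segment.

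Suppose $\mathcal{C}$ is a line. Since it passes through $(h_{top}(S_{1}),0)$ and $(0,h_{top}(S_{2}))$, one has $b'(a)\equiv -c$ with $c:=h_{top}(S_{2})/h_{top}(S_{1})>0$, so $\tau\sim c\kappa$. By the Liv\v{s}ic theorem (Theorem \ref{thm:Livsic}) together with Proposition \ref{prop:coding_property}(3)--(4), this translates to $l_{1}[\gamma]=c\cdot l_{2}[\gamma]$ for every hyperbolic $\gamma\in G$ whose closed geodesic is coded---equivalently, every primitive class except those of the hyperbolic generators $h_{1},\ldots,h_{N_{1}}$ (the single-letter cyclically reduced words that are excluded by the admissibility constraint $a_{i}\neq a_{i+1}^{\pm}$). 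To extend this to the full marked length spectrum, I would fix such an excluded $h_{i}$, pick any other generator $g\in\mathcal{A}\setminus\{h_{i}^{\pm1}\}$ (available because $N_{1}+N_{2}\ge3$), and consider the coded hyperbolic family $\alpha_{m}:=h_{i}^{m}g$. The translation-length asymptotic $l_{k}[\alpha_{m}]=m\cdot l_{k}[h_{i}]+O(1)$ as $m\to\infty$ (coming from the Schottky lower bound in Lemma \ref{lem:d(x,y)>d(x,o)+d(y,0)+C} paired with the triangle inequality upper bound), combined with the coded identity $l_{1}[\alpha_{m}]=c\cdot l_{2}[\alpha_{m}]$, forces $l_{1}[h_{i}]=c\cdot l_{2}[h_{i}]$ after dividing by $m$ and letting $m\to\infty$. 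Theorem \ref{thm:proportinal marked length spectrum} (Kim's proportional marked-length-spectrum rigidity, applicable because extended Schottky representations are Zariski dense) then concludes that $\rho_{1}$ and $\rho_{2}$ are conjugate in $\mathrm{PSL}(2,\R)$. This proves the contrapositive of (1) and the forward direction of (2).

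For the converse of (2): if $\rho_{1},\rho_{2}$ are conjugate in $\mathrm{PSL}(2,\R)$, equivariance of the symbolic coding forces $\tau$ and $\kappa$ to differ only by a coboundary, whence $P(a,b)=P_{\sigma}(-(a+b)\tau)$ and $\mathcal{C}=\{a+b=h_{top}(S_{1})\}$ is a straight line. The main obstacle I expect is the extension of proportionality from the cofinite coded family to the full marked length spectrum---in particular, controlling the $O(1)$ term in $l_{k}[h_{i}^{m}g]-m\,l_{k}[h_{i}]$ uniformly in $k$, especially in the case $N_{1}=1$ when $g$ must be chosen parabolic and one must argue through the $\sim 2\log m$ distance asymptotic for parabolic translates---before Kim's rigidity theorem can be invoked on the full spectrum.
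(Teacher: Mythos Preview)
Your proof is correct and reaches the same endpoint as the paper's, but the route to the key cohomology relation $\tau\sim c\kappa$ is genuinely different. The paper argues via equilibrium states: if $\mathcal{C}$ is a line with constant slope $-h_{top}(S_2)/h_{top}(S_1)$, then the equilibrium state $m_2$ for $-h_{top}(S_2)\kappa$ is shown to satisfy $h(m_2)-h_{top}(S_1)\int\tau\,dm_2=0$, hence is also an equilibrium state for $-h_{top}(S_1)\tau$; uniqueness (Theorem~\ref{thm: Phase transition}) gives $m_1=m_2$, and Sarig's criterion (Theorem~4.8 of \cite{Sarig:2009wta}) then forces $h_{top}(S_1)\tau\sim h_{top}(S_2)\kappa$. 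Your second-derivative computation gets there more directly and simultaneously re-derives convexity of $\mathcal{C}$, at the cost of invoking the Hessian/asymptotic-variance formula for pressure (standard under BIP but not quoted in the paper; note it is the \emph{asymptotic} variance, not the static one, whose vanishing characterizes coboundaries).

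Two small remarks. First, the sentence ``vanishing of $b''$ at a single point propagates by analyticity'' is not literally a consequence of analyticity alone; what actually propagates it is the coboundary relation $\tau\sim c\kappa$ you have just established, which immediately makes $P(a,b)=P_\sigma(-(ac+b)\kappa)$ and hence $\mathcal{C}$ a line. (Equivalently, the paper's phrasing: analyticity plus $b''\ge0$ forces the dichotomy strictly convex versus affine.) Second, your concern about extending proportionality to the uncoded hyperbolic generators $h_i$ is well taken---the paper glosses over this---but your fix is sound even when $N_1=1$ and $g$ must be parabolic. You only use $g$ once, not $g^m$, so the needed estimate is simply $l_k[h_i^m g]=m\,l_k[h_i]+O(1)$; this follows from Lemma~\ref{lem:d(x,y)>d(x,o)+d(y,0)+C} together with the fact that the axis of a cyclically reduced Schottky word passes within bounded distance of $o$. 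No $2\log m$ term enters, and the $O(1)$ is uniform in $m$, so dividing by $m$ and letting $m\to\infty$ gives $l_1[h_i]=c\,l_2[h_i]$ as desired.
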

\begin{proof}
This result is a direct consequence of Theorem \ref{thm: Manhattan curves are conti}
and Theorem \ref{Thm: Manhattan cuve analytic}. Indeed, the strictly
convexity comes from the analyticity and the convexity of ${\cal C}$.

It is clear that when $S_{1}$ and $S_{2}$ are isometric we have
${\cal C}$ is a straight line. Conversely, suppose ${\cal C}$ is
a straight line. Then the slope of the tangent line of the Manhattan
curve ${\cal C}$ is a constant, i.e., 
\[
b'=-\frac{h_{top}(S_{2})}{h_{top}(S_{1})}=\frac{-\int\tau\dd m_{-a\tau-b(a)\kappa}}{\int\kappa\dd m_{-a\tau-b(a)\kappa}}
\]
 where $m_{-a\tau-b(a)\kappa}$ is the equilibrium state for $-a\tau-b(a)\kappa$
for all $a\in[0,h_{top}(S_{1})]$. In particular, 
\[
b'=-\frac{\int\tau\dd m_{-h_{top}(S_{1})\tau}}{\int\kappa\dd m_{-h_{top}(S_{1})\tau}}=-\frac{\int\tau\dd m_{-h_{top}(S_{2})\kappa}}{\int\kappa\dd m_{-h_{top}(S_{2})\kappa}}.
\]

\textbf{Claim:} $h_{top}(S_{1})\tau$ and $h_{top}(S_{2})\kappa$
are cohomologus.

It is clear that we have the desired result after we prove the claim.
Because $h_{top}(S_{1})\tau\sim h_{top}(S_{2})\kappa$ means that
$S_{1}$ and $S_{2}$ have proportional marked length spectra. Then
by the proportional marked length spectrum rigidity (i.e., Theorem
\ref{thm:proportinal marked length spectrum}) we are done.

pf. for short, we denote $m_{1}=m_{-h_{top}(S_{1})\tau}$ and $m_{2}=m_{-h_{top}(S_{2})\kappa}$.
We prove this claim by the uniqueness of the equilibrium states. In
other words, we want to show that $m_{2}$ is the equilibrium state
for $-h_{top}(S_{1})\tau$, that is, 
\[
0=P_{\sigma}(-h_{top}(S_{1})\tau)=h(m_{2})-h_{top}(S_{1})\int\tau\dd m_{2}.
\]

Notice that, by definition, 
\[
0=P_{\sigma}(-h_{top}(S_{2})\kappa)=h(m_{2})-h_{top}(S_{2})\int\kappa\dd m_{2}
\]
and, by the above observation, 
\[
\frac{h_{top}(S_{1})}{h_{top}(S_{2})}=\frac{\int\kappa\dd m_{2}}{\int\tau\dd m_{2}}.
\]
Thus, we have
\begin{alignat*}{1}
h(m_{2})-h_{top}(S_{1})\int\tau\dd m_{2} & =h_{top}(S_{2})\int\kappa\dd m_{2}-h_{top}(S_{1})\int\tau\dd m_{2}\\
 & =0\\
 & =P_{\sigma}(-h_{top}(S_{1})\tau).
\end{alignat*}

By the uniqueness of the equilibrium states (cf. Theorem \ref{thm: Phase transition}),
we know $m_{1}=m_{2}$. Moreover, Theorem 4.8 \cite{Sarig:2009wta}
showed that this only happens when $-h_{top}(S_{1})\tau$ and $-h_{top}(S_{2})\kappa$
are cohomologous. 
\end{proof}

\begin{cor}
[Bishop-Steger's entropy rigidity \cite{Bishop:1991gz}]\label{cor:-Bishop-Steger}
Let $\rho_{1},\rho_{2}$ be two boundary-preserving isomorphic Fuchsian
representations satisfying the extended Schottky condition, and $S_{1}=\rho_{1}(G)\backslash\H$,
$S_{2}=\rho_{2}(G)\backslash\H$. Then, for any fixed $o\in\H$, 
\[
\delta^{1,1}=\lim_{T\to\infty}\frac{1}{T}\log\#\{\g\in G:\ d(o,\rho_{1}(\g)o)+d(o,\rho_{2}(\g)o)\leq T\}.
\]
Moreover, $\delta^{1,1}\leq\dfrac{h_{top}(S_{1})\cdot h_{top}(S_{2})}{h_{top}(S_{1})+h_{top}(S_{2})}$
and the equality holds if and only if $S_{1}$ and $S_{2}$ are isometric.
\end{cor}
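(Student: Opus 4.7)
The plan is to reduce the three claims to two facts already established in the paper: Bowen's formula (Theorem \ref{thm:t_ab=00003Ddelta_ab}) and the strict-convexity/straight-line dichotomy for $\mathcal{C}$ (Proposition \ref{prop: Manhattan curve features}). Assertion (1) is immediate: apply Bowen's formula at $(a,b)=(1,1)$ and $x=y=o$, and note that $\overline{G}^{1,1}_{o,o}(s)=\#\{\gamma\in G:\,d(o,\rho_{1}(\gamma)o)+d(o,\rho_{2}(\gamma)o)\leq s\}$, so the limit in (1) is exactly $\delta^{1,1}$.

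For the inequality (2), the key observation is that the point $(\delta^{1,1},\delta^{1,1})$ lies on $\mathcal{C}$. This follows from the trivial scaling $d^{ta,tb}=t\,d^{a,b}$, which gives $Q^{ta,tb}_{\rho_{1},\rho_{2}}(s)=Q^{a,b}_{\rho_{1},\rho_{2}}(ts)$ and hence $\delta^{ta,tb}=\delta^{a,b}/t$; setting $(a,b)=(1,1)$ and $t=\delta^{1,1}$ yields $\delta^{\delta^{1,1},\delta^{1,1}}=1$, so by the definition of $\mathcal{C}$ the point $(\delta^{1,1},\delta^{1,1})$ is on the curve. By Theorem \ref{thm: Manhattan curves are conti}, the endpoints $(h_{top}(S_{1}),0)$ and $(0,h_{top}(S_{2}))$ are on $\mathcal{C}$ and the convergence region $\{Q^{a,b}(1)<\infty\}$ is convex, so its closure contains the chord
\[
L:\ \frac{a}{h_{top}(S_{1})}+\frac{b}{h_{top}(S_{2})}=1.
\]
The chord $L$ meets the diagonal $\{a=b\}$ at the point $(c,c)$ with $c=\dfrac{h_{top}(S_{1})\cdot h_{top}(S_{2})}{h_{top}(S_{1})+h_{top}(S_{2})}$, and this point lies in the closed convergence region. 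Since $(\delta^{1,1},\delta^{1,1})$ is on $\mathcal{C}$, monotonicity along the diagonal ray from the origin forces $\delta^{1,1}\leq c$, giving the desired bound.

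For the equality case (3), the $(\Leftarrow)$ direction is a short computation: if $\rho_{1}$ and $\rho_{2}$ are conjugate then $d(o,\rho_{1}(\gamma)o)=d(o,\rho_{2}(\gamma)o)$ after translating the basepoint, so $h_{top}(S_{1})=h_{top}(S_{2})=h$ and $\delta^{1,1}=h/2$, matching $\frac{h\cdot h}{h+h}$. For $(\Rightarrow)$, suppose equality holds. Then $(\delta^{1,1},\delta^{1,1})$ lies simultaneously on the chord $L$ and on $\mathcal{C}$, and it differs from both endpoints of $\mathcal{C}$ (since $h_{top}(S_{i})>0$). Strict convexity of $\mathcal{C}$ would force the chord and $\mathcal{C}$ to meet only at the endpoints, a contradiction; hence $\mathcal{C}$ is not strictly convex and Proposition \ref{prop: Manhattan curve features} gives that $\mathcal{C}$ is a straight line and $\rho_{1},\rho_{2}$ are conjugate in $\mathrm{PSL}(2,\mathbb{R})$.

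The whole argument is essentially a convex-geometry exercise once $(\delta^{1,1},\delta^{1,1})\in\mathcal{C}$ and the dichotomy in Proposition \ref{prop: Manhattan curve features} are in hand, so there is no real obstacle beyond double-checking the scaling identity and that the intersection of $\mathcal{C}$ with the diagonal is a non-endpoint point. These are both immediate, so the corollary is truly a packaging of the main results rather than a new computation.
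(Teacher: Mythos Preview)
Your proof is correct and follows essentially the same route as the paper's: identify $(\delta^{1,1},\delta^{1,1})$ as the point of $\mathcal{C}$ on the diagonal (the paper invokes Theorem~\ref{thm:t_ab=00003Ddelta_ab} for this, you use the equivalent scaling identity), compare with the chord through $(h_{top}(S_1),0)$ and $(0,h_{top}(S_2))$ via convexity, and appeal to Proposition~\ref{prop: Manhattan curve features} for the equality case. Your treatment of the equality case is slightly more explicit than the paper's (which simply asserts ``$\mathcal{C}$ is a straight line'' and cites the proposition), but the content is identical.
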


\begin{proof}
By Theorem \ref{thm:t_ab=00003Ddelta_ab}, we know $\delta^{1,1}(1,1)\in{\cal C}$
is the intersection of ${\cal C}$ and the line $a=b$. By the convexity
of ${\cal C}$, we know that the intersection of the line $a=b$ and
$b=\frac{-h_{top}(S_{2})}{h_{top}(S_{1})}a+h_{top}(S_{2})$ lies above
$\delta^{1,1}(1,1)$. See the following picture. \begin{center} \includegraphics[scale=0.8]{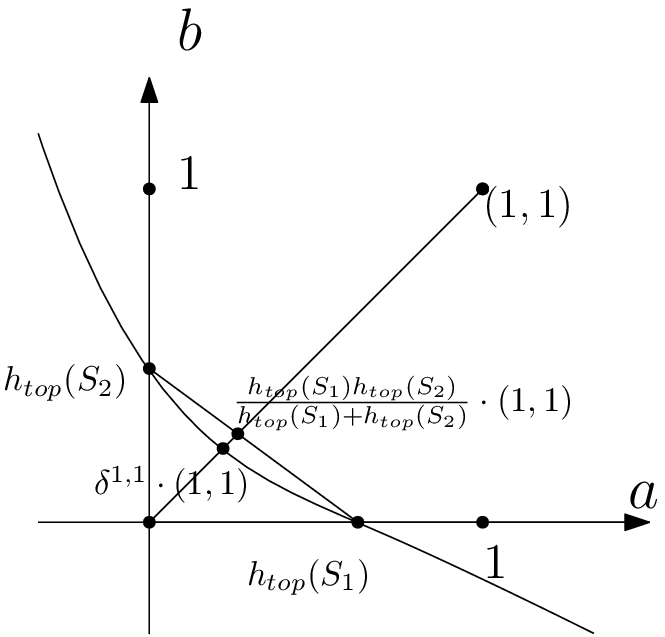}\end{center} 

Therefore, we have $\delta^{1,1}\leq\dfrac{h_{top}(S_{1})\cdot h_{top}(S_{2})}{h_{top}(S_{1})+h_{top}(S_{2})}$
. Moreover, when the equality holds, we have ${\cal C}$ is a straight
line. By Proposition \ref{prop: Manhattan curve features}, we are
done.
\end{proof}

\begin{defn*}
[Thurston's intersection number, Definition \ref{def: Thurston's-intersection-nb}]Let
$S_{1}$ and $S_{2}$ be two Riemann surfaces. Thurston's intersection
number ${\rm I}(S_{1},S_{2})$ of $S_{1}$ and $S_{2}$ is given by
\[
{\rm I}(S_{1},S_{2})=\lim_{n\to\infty}\frac{l_{2}[\g_{n}]}{l_{1}[\g_{n}]}
\]
 where $\{[\g_{n}]\}_{n=1}^{\infty}$ is a sequence of conjugacy classes
for which the associated closed geodesics $\g_{n}$ become equidistributed
on $\G_{1}\backslash\H$ with respect to area.
\end{defn*}

\begin{cor}
[Thurston's rigidity] \label{cor:Thurston's rigidity} Let $\rho_{1},\rho_{2}$
be two boundary-preserving isomorphic Fuchsian representations satisfying
the extended Schottky condition, and $S_{1}=\rho_{1}(G)\backslash\H$,
$S_{2}=\rho_{2}(G)\backslash\H$. Then ${\rm I}(S_{1},S_{2})\geq\frac{h_{top}(S_{1})}{h_{top}(S_{2})}$
and the equality hold if and only if $\rho_{1}$ and $\rho_{2}$ are
conjugate in ${\rm PSL}(2,\R)$.
\end{cor}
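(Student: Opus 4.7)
The plan is to identify Thurston's intersection number $I(S_1,S_2)$ with (the negative reciprocal of) the slope of the tangent to the Manhattan curve $\mathcal{C}$ at its right endpoint $(h_{top}(S_1),0)$, then invoke convexity of $\mathcal{C}$ to compare this slope with that of the chord joining the two endpoints, and finally read off the rigidity case from Proposition \ref{prop: Manhattan curve features}.

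First I would parametrize $\mathcal{C} = \{(a,b)\in D : P_\sigma(-a\tau - b\kappa) = 0\}$ (Lemma \ref{lem:man cur=00003D press}) near $(h_{top}(S_1),0)$ as $b = b(a)$. The Implicit Function Theorem, combined with the derivative formula from Theorem \ref{thm:Analyticty pressure}, yields
\[
b'(a) \;=\; -\frac{\partial_a P_\sigma(-a\tau - b(a)\kappa)}{\partial_b P_\sigma(-a\tau - b(a)\kappa)} \;=\; -\frac{\int \tau\, d\mu_a}{\int \kappa\, d\mu_a},
\]
where $\mu_a$ is the unique equilibrium state for $-a\tau - b(a)\kappa$ (Theorem \ref{thm: Phase transition}). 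At $a = h_{top}(S_1)$, $b(a)=0$, so $\mu := \mu_{h_{top}(S_1)}$ is the equilibrium state for $-h_{top}(S_1)\tau$; via the suspension construction it corresponds to the measure of maximal entropy on $T^1 S_1$, i.e.\ normalized Liouville measure.

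Next I would relate $I(S_1,S_2)$ to the symbolic integrals. For a hyperbolic class $[\gamma]\in G$ coded by a periodic point $x = \overline{a_1^{n_1}\cdots a_k^{n_k}}\in{\rm Fix}^k$, Proposition \ref{prop:coding_property}(4), applied to both $\rho_1$ and $\rho_2$ with the help of Remark \ref{rem:coding_commute}, gives $l_1[\gamma] = S_k\tau(x)$ and $l_2[\gamma] = S_k\kappa(x)$. If $\{[\gamma_n]\}$ is a sequence whose closed geodesics equidistribute on $\Gamma_1\backslash\H$ with respect to area, then the corresponding periodic points of the suspension flow equidistribute against $\mu$, and Birkhoff-type averaging gives
\[
I(S_1,S_2) \;=\; \lim_{n\to\infty}\frac{l_2[\gamma_n]}{l_1[\gamma_n]} \;=\; \lim_{n\to\infty}\frac{S_k\kappa(x_n)}{S_k\tau(x_n)} \;=\; \frac{\int\kappa\, d\mu}{\int\tau\, d\mu}.
\]
Combined with the first step, this gives $b'(h_{top}(S_1)) = -1/I(S_1,S_2)$.

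Finally I would exploit convexity. The convex region $\{Q^{a,b}(1)<\infty\}$ is bounded by $\mathcal{C}$ (Theorem \ref{thm: Manhattan curves are conti}), so $b(a)$ is a convex function with $b(0)=h_{top}(S_2)$ and $b(h_{top}(S_1))=0$; comparing the tangent at the right endpoint with the chord yields $b'(h_{top}(S_1))\geq -h_{top}(S_2)/h_{top}(S_1)$, i.e.\ $I(S_1,S_2)\geq h_{top}(S_1)/h_{top}(S_2)$. Equality forces $b'(h_{top}(S_1))$ to match the chord slope; since $b$ is analytic (Theorem \ref{Thm: Manhattan cuve analytic}) and convex, this happens iff $b$ is affine, and then Proposition \ref{prop: Manhattan curve features}(2) gives conjugacy of $\rho_1,\rho_2$ in ${\rm PSL}(2,\R)$. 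The main technical obstacle is the second paragraph: because $\tau$ and $\kappa$ are unbounded (blowing up near the parabolic cusps), the passage from geometric equidistribution with respect to Liouville measure to the Birkhoff average $\int\kappa\, d\mu/\int\tau\, d\mu$ requires care; I expect to handle this by replacing $\tau,\kappa$ with their bounded-from-below cohomologous versions from Lemma \ref{lem:roof fcn >0}, together with an integrability argument controlling the contribution of excursions into the cusps (following the approach already used in Section \ref{sec:The-Manhattan-Curve}).
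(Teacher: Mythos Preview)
Your outline matches the paper's proof almost exactly: compute the slope $b'(h_{top}(S_1))$ via the implicit function theorem and Theorem~\ref{thm:Analyticty pressure}, identify $-1/b'(h_{top}(S_1))$ with $I(S_1,S_2)$ through equidistribution of closed orbits, and then read off the inequality and the equality case from convexity and Proposition~\ref{prop: Manhattan curve features}. The paper does not spell out the convexity step at all (it just says ``it is enough to show\ldots''), so your argument is in fact more complete there.

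One genuine slip: the equilibrium state $\mu$ for $-h_{top}(S_1)\tau$ lifts (via the suspension) to the \emph{Bowen--Margulis} measure on $T^1S_1$, not the Liouville measure. These coincide only when $h_{top}(S_1)=1$, i.e.\ in the lattice case, whereas extended Schottky surfaces have infinite area and $h_{top}(S_1)<1$. The paper's Definition~\ref{def: Thurston's-intersection-nb} speaks of equidistribution ``with respect to area'', but in the proof the author silently replaces this by the averaged ratio $\lim_{T\to\infty}\big(\sum_{\lambda\in\mathrm{Per}_1(T)} l_2[\lambda]\big)\big/\big(\sum_{\lambda\in\mathrm{Per}_1(T)} l_1[\lambda]\big)$ and invokes Roblin's equidistribution of periodic orbits with respect to Bowen--Margulis. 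If you correct ``Liouville'' to ``Bowen--Margulis'' and interpret $I(S_1,S_2)$ the same way, your argument goes through; the integrability concern you flag is exactly what Roblin's theorem (for geometrically finite groups) handles, so you can simply cite it rather than redo the cusp-excursion analysis by hand.
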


\begin{proof}
It is enough to show that the normal of the tangent of ${\cal C}(S_{1},S_{2})$
at $(h_{top}(S_{1}),0)$ is $I(S_{1},S_{2})$.

Recall that 
\[
b'(a)=\frac{-\int\tau\dd m}{\int\kappa\dd m}
\]
 where $m=m_{-a\tau-b\kappa}$ is the equilibrium state of $-a\tau-b\kappa$.
So, for $a=h_{top}(S_{1})$, $b=0$ we have 
\[
b'(-h_{top}(S_{1}))=-\frac{\int\tau\dd m_{-h_{top}(S_{1})\tau}}{\int\kappa\dd m_{-h_{top}(S_{1})\tau}}.
\]

Thus, it is sufficient to show 
\[
{\rm I}(S_{1},S_{2}):={\displaystyle \lim_{T\to\infty}\frac{{\displaystyle \sum_{\lambda\in{\rm Per}_{1}(T)}l_{2}[\lambda]}}{{\displaystyle \sum_{\lambda\in{\rm Per}_{1}(T)}l_{1}[\lambda]}}}=\frac{\int\kappa\dd m_{-h_{top}(S_{1})\tau}}{\int\tau\dd m_{-h_{top}(S_{1})\tau}}.
\]

Because $m_{-h_{top}(S_{1})\tau}$ is the Bowen-Margulis measure for
the geodesic flow on $T^{1}S_{1}$, and $S_{1}$ is geometrically
finite, we know the Bowen-Margulis measure is equidistributed with
respect to closed orbits (see, for example, Theorem 4.1.1 \cite{Roblin:2003vz}).
Therefore, the above equation is true.
\end{proof}

\section{Appendix\label{sec:Appendix}}

Recall our notation that $\rho_{1},\rho_{2}$ are two boundary-preserving
isomorphic Fuchsian representations satisfying the extended Schottky
condition, and $S_{1}=\rho_{1}(G)\backslash\H$, $S_{2}=\rho_{2}(G)\backslash\H$.
Let $d_{\rho_{1},\rho_{2}}^{a,b}$ be the weighted Manhattan metric.
Recall that $\delta^{a,b}$ is the critical exponent of the Poincaré
series associated with $d_{\rho_{1},\rho_{2}}^{a,b}$.

\subsection*{The Proof of Lemma \ref{lem:all the same}}

We first recall three useful lemmas.

\begin{lem}
[Lemma 2.2 \cite{Schapira:2004a}]\label{lem:not_too_far} Suppose
$a,b,c\in\H$ and $d(a,b)+d(a,c)-d(b,c)\leq C$ for some $C>0$, then
$a$ is in a $D-$neighborhood of the geodesic segment $[b,c]$ where
$D$ is a constant only depending on $C$.
\end{lem}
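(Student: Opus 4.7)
The plan is to reformulate the hypothesis as a bound on the Gromov product $(b|c)_{a} := \tfrac{1}{2}(d(a,b)+d(a,c)-d(b,c)) \leq C/2$, and then to obtain $D$ as an explicit function of $C$ via hyperbolic trigonometry on the triangle with vertices $a,b,c$. I would introduce the geodesic line $L$ through $b$ and $c$, let $p \in L$ be the foot of the perpendicular from $a$ onto $L$, and set $D_{0} := d(a,p)$. The right triangles $apb$ and $apc$ then satisfy the hyperbolic Pythagorean identities
\[
\cosh d(a,b) = \cosh D_{0} \cdot \cosh d(p,b), \qquad \cosh d(a,c) = \cosh D_{0} \cdot \cosh d(p,c),
\]
and the elementary bounds $x - \log 2 \leq \log \cosh x \leq x$ (valid for $x \geq 0$) convert each multiplicative identity into an additive one with $O(1)$ error, roughly $d(a,b) \approx D_{0} + d(p,b)$ and $d(a,c) \approx D_{0} + d(p,c)$.

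Next I would split into two cases depending on whether $p$ lies in the segment $[b,c]$. If $p \in [b,c]$, then $d(p,b)+d(p,c) = d(b,c)$, and summing the additive Pythagorean estimates gives
\[
d(a,b) + d(a,c) - d(b,c) \geq 2 D_{0} - 4 \log 2;
\]
combined with the hypothesis this forces $D_{0} \leq C/2 + 2\log 2$, and since the closest point of $[b,c]$ to $a$ is exactly $p$, one has $d(a,[b,c]) = D_{0}$ and the bound follows. If $p \notin [b,c]$, suppose by symmetry that $c$ lies between $b$ and $p$ on $L$; then the nearest point of $[b,c]$ to $a$ is $c$, and $d(b,p) = d(b,c) + d(c,p)$. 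The Pythagorean lower bound $d(a,b) \geq D_{0} + d(b,c) + d(c,p) - 2\log 2$ together with the triangle inequality $d(a,c) \leq D_{0} + d(c,p)$ yields $d(a,b) - d(a,c) \geq d(b,c) - 2\log 2$, so $d(a,[b,c]) = d(a,c) \leq C/2 + \log 2$.

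Taking $D := C/2 + 2\log 2$ works in both cases. The only mildly delicate point is the case distinction on the location of the perpendicular foot; once it is handled, everything reduces to the hyperbolic Pythagorean theorem together with the universal estimate $|\log\cosh x - x| \leq \log 2$. As an alternative shortcut, one could invoke that $\H$ is CAT$(-1)$, hence $\delta$-hyperbolic with explicit constant, and quote the standard $\delta$-hyperbolic inequality $d(a,[b,c]) \leq (b|c)_{a} + 2\delta$; but the direct trigonometric argument above has the advantage of producing the constant $D$ in elementary closed form.
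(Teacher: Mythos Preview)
Your argument is correct: the hyperbolic Pythagorean identity combined with the estimate $x-\log 2\le \log\cosh x\le x$ gives the additive approximations $d(a,b)=D_0+d(p,b)+O(1)$, and your case split on the location of the perpendicular foot $p$ cleanly handles both situations, yielding the explicit constant $D=C/2+2\log 2$.

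There is no proof in the paper to compare against: the lemma is simply quoted from \cite{Schapira:2004a} (Lemma~2.2 there) and used as a black box in the Appendix. Your direct trigonometric proof is therefore strictly more than what the paper provides, and it has the merit of producing an explicit $D$ in closed form. The alternative you mention---invoking the $\delta$-hyperbolicity inequality $d(a,[b,c])\le (b|c)_a+2\delta$---is essentially how this fact is usually packaged in the coarse-geometry literature, and is likely closer in spirit to how the cited reference treats it; your approach trades that abstraction for a self-contained computation specific to~$\H$.
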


\begin{lem}
[Lemma 4.4 \cite{Pollicott:2012ud}] \label{lem:super-multiplicativity}
Let $b_{n}\geq0$ such that there exist $C>0$ and $N\in\N$ such
that for all $n,m\in\N$, we have 
\[
b_{n}b_{m}\leq C\sum_{i=-N}^{i=N}b_{n+m+i},
\]
 then with $a_{n}=\sum_{k=0}^{n-1}b_{n}$, the limit of $a_{n}^{\frac{1}{n}}$
as $n\to\infty$ exists (and hence is equal to its limit-sup). 
\end{lem}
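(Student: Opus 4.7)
The plan is to reduce the super-multiplicative-type inequality on $b_n$ to an approximate super-multiplicative inequality on the partial sums $a_n=\sum_{k=0}^{n-1}b_k$ (the statement as written has $b_n$, but $b_k$ is the obvious intended reading), and then invoke a classical Fekete-type lemma for nearly super-additive sequences after taking logarithms. First I would expand and use the hypothesis pointwise:
$$a_na_m=\sum_{k=0}^{n-1}\sum_{l=0}^{m-1}b_kb_l\;\le\;C\sum_{i=-N}^{N}\sum_{k=0}^{n-1}\sum_{l=0}^{m-1}b_{k+l+i}.$$
For each fixed $i\in\{-N,\dots,N\}$, reindexing by $p=k+l$ shows that each $b_{p+i}$ appears with multiplicity at most $\min(n,m)$, and $p+i$ ranges over an interval contained in $\{-N,\dots,n+m-2+N\}$. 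Extending $b_j=0$ for $j<0$ if needed, this yields the key structural estimate
$$a_na_m\;\le\;C(2N+1)\,\min(n,m)\cdot a_{n+m+N}.$$

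Next I would take logarithms. Assuming $a_n>0$ for $n$ large (otherwise $b_n\equiv 0$ eventually, in which case $a_n^{1/n}\to 1$ or $a_n\equiv 0$ and $a_n^{1/n}=0$ trivially), set $c_n=\log a_n$. The inequality becomes
$$c_n+c_m\;\le\;c_{n+m+N}+\log\min(n,m)+\log\bigl(C(2N+1)\bigr).$$
This is a weak near-super-additivity: the index shift $N$ is fixed and the error $\log\min(n,m)=o(n+m)$. A standard extension of Fekete's lemma (either by substituting $\tilde c_n=c_n-K\log n$ for suitably large $K$ and applying the classical result, or by dividing $m$ by $n$ with remainder and iterating to compare $\limsup c_m/m$ with $c_n/n$ for fixed $n$) then gives existence of $\lim_{n\to\infty}c_n/n\in[-\infty,\infty)$, and hence of $\lim_{n\to\infty}a_n^{1/n}$, necessarily equal to $\limsup_n a_n^{1/n}$.

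The combinatorial reindexing in the first step is the substantive part of the argument; once the inequality $a_na_m\le C'\min(n,m)\,a_{n+m+N}$ is in hand, the remainder is a routine adaptation of Fekete. The only mild subtlety is accommodating simultaneously the index shift $+N$ and the sublinear error $\log\min(n,m)$, but both are absorbed by the standard generalization. I would not expect any genuine obstacle beyond bookkeeping.
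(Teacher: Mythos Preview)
The paper does not supply its own proof of this lemma: it is stated as a citation of Lemma~4.4 in Paulin--Pollicott--Schapira and used as a black box in the appendix (in the proofs of Lemma~\ref{lem:orbit-growth-limit} and its analogue for $\overline{G}^{a,b}_{x,y,1}$). So there is no in-paper argument to compare against.

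Your sketch is essentially a correct proof of the cited statement. The reduction
\[
a_n a_m \;\le\; C(2N+1)\,\min(n,m)\,a_{n+m+N}
\]
via the reindexing $p=k+l$ is the right combinatorial step, and from there you are in the territory of a Fekete-type lemma with a bounded index shift and a sub-linear additive error. Two minor comments. First, your edge-case dichotomy is slightly misphrased: $a_n>0$ for large $n$ fails exactly when \emph{all} $b_k$ vanish (not merely eventually), since a single positive $b_k$ makes $a_n>0$ for every $n>k$; the conclusion in that degenerate case is of course trivial. Second, for a near-super-additive sequence the Fekete limit lies in $(-\infty,+\infty]$, not $[-\infty,+\infty)$; this does not matter here since all you need is existence of $\lim a_n^{1/n}$, but the stated range is on the wrong side. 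Neither point affects the validity of the argument.
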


Recall that $\delta_{PPS}^{a,b}$ is the critical exponent of $Q_{PPS,x,y}^{a,b}(s)$,
i.e., $Q_{PPS,x,y}^{a,b}(s)$ converges when $s>\delta_{PPS}^{a,b}$
and $Q_{PPS,x,y}^{a,b}(s)$ diverges when $s<\delta_{PPS}^{a,b}$
where ${\displaystyle Q_{PPS,x,y}^{a,b}(s)={\displaystyle \sum_{\g\in G}e^{-d^{a,b}(x,\g y)-sd(x,\rho_{1}(\g)y)}}}$.

W.l.o.g., we can write $d^{a,b}(x,\g y)=ad(x,\g y)+bd(fx,\iota(\g)fy)$
for $\g\in\G_{1}$ and $\iota:\G_{1}\to\G_{2}$ is a boundary-preserving
isomorphism and $f:\H\to\H$ is the bilipschitz map given by Theorem
\ref{thm: bilipsthitz }. To simply our notation, we denote $d_{1}(x,\g y):=d(x,\g y)$
and $d_{2}(x,\g y):=d(fx,\iota(\rho_{1}(\g))\cdot fy)$. Therefore,
$G_{x,y}^{a,b}(s)$ can be equivalently defined as:

\[
G_{x,y}^{a,b}(s):={\displaystyle \sum_{\g\in\G_{1};d_{1}(x,\g y)\leq s}e^{-d^{a,b}(x,\g y)}}.
\]
Similarly, the PPS Poincaré series $Q_{PPS,x,y}^{a,b}(s)$ can be
rewrite as 
\[
Q_{PPS,x,y}^{a,b}(s)=\sum_{\g\in\G_{1}}e^{-d^{a,b}(x,\g y)-sd_{1}(x,\g y)}.
\]

Let us first define several useful growth rates. 

\begin{itemize}[font=\normalfont]

\item $G_{x,y,1}^{a,b}(s):={\displaystyle \sum_{\g\in\G_{1};s-1<d_{1}(x,\g y)\leq s}e^{-d^{a,b}(x,\g y)}}.$

\item$A_{x,y,U'}(s):=\{\g\in\G_{1}:\ d_{1}(x,\g y)\leq s\ {\rm and}\ \g y\in U'\}$
where $U'$ is an open set in $\vbdy\H\times\H$.

\item $a_{x,y,U'}(s):={\displaystyle \sum_{\g\in A_{x,y,U'}(s)}e^{-d^{a,b}(x,\g y)}}$
.

\item$B_{x,y,U',V'}(s):=\{\g\in\G_{1}:\ d_{1}(x,\g y)\leq s\ ,\g y\in U'\ {\rm and}\ \g^{-1}x\in V'\}$
where $U'$, $V'$ are an open sets in $\vbdy\H\times\H$.

\item $b_{x,y,U',V'}(s):={\displaystyle \sum_{\g\in B_{x,y,U',V'}(s)}e^{-d^{a,b}(x,\g y)}}$. 

\end{itemize}

We notice that, by the triangle inequality, we know ${\displaystyle \limsup_{s\to\infty}}\frac{1}{s}\log a_{x,y,U'}^{a,b}(s)$,
${\displaystyle \limsup_{s\to\infty}}\frac{1}{s}\log b_{x,y,U',V'}^{a,b}(s)$
and ${\displaystyle \limsup_{s\to\infty}\frac{1}{s}\log G_{x,y}^{a,b}(s)}$
are independent with the choice of bases point $x$ and $y$, and
it is obvious that $b_{x,y,U',V'}^{a,b}(s)\leq a_{x,y,U'}^{a,b}(s)\leq G_{x,y}^{a,b}$.

\begin{lem*}
[Lemma \ref{lem:all the same}]$\delta_{PPS}^{a,b}=P_{Gur}^{ab}={\displaystyle \lim_{s\to\infty}\frac{1}{s}\log G_{x,y}^{a,b}(s)}={\displaystyle \lim_{s\to\infty}\frac{1}{s}\log Z_{W}(s)}$
for any relative compact $W\subset T^{1}S_{1}$. 
\end{lem*}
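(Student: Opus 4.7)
The plan is to establish the chain
\[ \delta_{PPS}^{a,b} = \limsup_{s\to\infty}\tfrac{1}{s}\log G_{x,y}^{a,b}(s) = \lim_{s\to\infty}\tfrac{1}{s}\log G_{x,y}^{a,b}(s) = \lim_{s\to\infty}\tfrac{1}{s}\log Z_W(s) = P_{Gur}^{ab}, \]
following the Paulin--Pollicott--Schapira scheme \cite{Pollicott:2012ud}, with the geometric additivity estimate of Lemma \ref{lem:d(x,y)>d(x,o)+d(y,0)+C} substituting for the uniform hyperbolicity used there.

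First, I would write $Q_{PPS,x,y}^{a,b}(s)$ as a Stieltjes-type sum over dyadic shells in $d_1(x,\g y)$: grouping orbit points by $n-1 < d_1(x,\g y) \le n$ gives $e^{-s}\sum_n e^{-sn} G_{x,y,1}^{a,b}(n) \le Q_{PPS,x,y}^{a,b}(s) \le \sum_n e^{-sn} G_{x,y,1}^{a,b}(n)$. Cauchy--Hadamard applied to this power-series-like expression identifies the abscissa of convergence $\delta_{PPS}^{a,b}$ with $\limsup_n\tfrac{1}{n}\log G_{x,y,1}^{a,b}(n)$, which summation-by-parts converts to $\limsup_{s\to\infty}\tfrac{1}{s}\log G_{x,y}^{a,b}(s)$.

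To upgrade $\limsup$ to $\lim$, I would fix relatively compact open sets $U',V'\subset\vbdy\H\times\H$ lying in the shadow of a pair of distinct Schottky letters. For $\g_1,\g_2$ contributing respectively to the dyadic shells of $b_{x,y,U',V'}^{a,b}$ at scales $n$ and $m$, the Schottky separation (C1)--(C3) guarantees the hypothesis of Lemma \ref{lem:d(x,y)>d(x,o)+d(y,0)+C} for the composition, yielding
\[ d_1(x,\g_1\g_2 y) = d_1(x,\g_1 y) + d_1(x,\g_2 y) + O(1), \]
and the analogous bound for $d_2$ via the $\iota$-equivariant bilipschitz map $f:\H\to\H$ of Theorem \ref{thm: bilipsthitz } (or equivalently by applying the same Schottky lemma on $S_2$). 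Hence $d^{a,b}$ is additive up to $O(1)$ under such concatenation, producing the super-multiplicativity inequality $b_n b_m\le C\sum_{i=-N}^N b_{n+m+i}$. Lemma \ref{lem:super-multiplicativity} then yields existence of $\lim_n\tfrac{1}{n}\log b_{x,y,U',V'}^{a,b}(n)$. Covering all orbit points by finitely many such configurations of $(U',V')$ (indexed by pairs of distinct Schottky letters) sandwiches $G_{x,y}^{a,b}(s)$ between bounded multiples of these restricted counts, so its $\limsup$ equals its $\liminf$ and equals $\delta_{PPS}^{a,b}$.

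The final identification proceeds by an Anosov-closing-type correspondence: every $\g\in B_{x,y,U',V'}(s)$ whose axis meets a lift of $W$ produces a closed geodesic $\lambda_\g$ of length $l_1[\lambda_\g] = d_1(x,\g y)+O(1)$ meeting $W$, and conversely, with multiplicity bounded in terms of the diameter of $W$. Because $|l_i[\lambda_\g]-d_i(x,\g y)|=O(1)$ (standard translation-length versus displacement comparison on $\H$), one obtains $Z_W(s)\asymp b_{x,y,U',V'}^{a,b}(s)$, hence equal exponential growth rates; the limit (not just limsup) of $\tfrac{1}{s}\log Z_W(s)$ then follows by the same super-multiplicativity, and by construction this limit equals $P_{Gur}^{ab}$. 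The principal obstacle is cuspidal geometry: a parabolic power $p_i^n$ satisfies $d^{a,b}(x,p_i^n x)\sim 2(a+b)\log|n|$, not linear, so naive concatenation of such elements breaks additivity; moreover closing lemmas for geodesics visiting cusps require care. The extended Schottky condition (C1)--(C3) is precisely what allows Lemma \ref{lem:d(x,y)>d(x,o)+d(y,0)+C} to apply uniformly regardless of the hyperbolic or parabolic type of each letter, which tames the cusps and lets the Paulin--Pollicott--Schapira argument go through essentially verbatim.
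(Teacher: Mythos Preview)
Your proposal is correct and follows essentially the same Paulin--Pollicott--Schapira scheme as the paper. The only structural difference is the order in which you deploy super-multiplicativity and covering: the paper first proves super-multiplicativity for the \emph{full} shell counts $G_{x,y,1}^{a,b}(n)$ by inserting an extra Schottky letter $\alpha$ between $\g_n$ and $\g_m$ (this guarantees the Schottky additivity Lemma~\ref{lem:d(x,y)>d(x,o)+d(y,0)+C} applies regardless of how $\g_n,\g_m$ begin or end), bounds the fibers of $(\g_n,\g_m)\mapsto\g_n\rho_1(\alpha)\g_m$ via Lemma~\ref{lem:not_too_far}, and only afterward passes to the restricted counts $a_{x,y,U'}$ and $b_{x,y,U',V'}$ by covering $\Lambda_1$ with finitely many $\G_1$-translates of $U',V'$. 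You instead run super-multiplicativity on the restricted $b_{x,y,U',V'}$ and then cover to recover $G$; this also works provided $U',V'$ are taken to be full Schottky domains $C_a,C_b$ with $a\neq b^{\pm}$, so that concatenation stays in $B_{x,y,U',V'}$ without letter insertion. One small caution: your parenthetical ``via the bilipschitz map $f$'' for the $d_2$-additivity is not quite right on its own (bilipschitz gives only multiplicative comparison, not additive $O(1)$); the paper, like your alternative, applies Lemma~\ref{lem:d(x,y)>d(x,o)+d(y,0)+C} directly on $S_2$, which is available since $\rho_2$ is also extended Schottky.
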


The proof of this lemma will be separated into several lemmas. The
following proofs are using the same argument as Lemma 4,2, Corollary
4.5 and Theorem 4.7 \cite{Pollicott:2012ud} with minor modifications.

\begin{lem}
\label{lem:infG-delta-supG}
\[
\liminf_{s\to\infty}\frac{1}{s}\log G_{x,y,1}^{a,b}(s)\leq\delta_{PPS}^{a,b}\leq\limsup_{s\to\infty}\frac{1}{s}\log G_{x,y,1}^{a,b}(s)
\]

\end{lem}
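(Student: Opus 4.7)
The plan is to compare the Paulin--Pollicott--Schapira Poincar\'e series $Q_{PPS,x,y}^{a,b}(s)$ with a scalar power series in $e^{-s}$ whose coefficients are the integer annular sums $G_{x,y,1}^{a,b}(n)$. First I would decompose the sum defining $Q_{PPS,x,y}^{a,b}(s)$ according to the shells $n-1<d_1(x,\gamma y)\leq n$ for $n\in\N$:
\[
Q_{PPS,x,y}^{a,b}(s)=\sum_{n=1}^{\infty}\,\sum_{\substack{\gamma\in\G_{1}\\ n-1<d_1(x,\gamma y)\leq n}}e^{-d^{a,b}(x,\gamma y)}\,e^{-sd_1(x,\gamma y)}.
\]
For $s>0$, the crude estimate $e^{-sn}\leq e^{-sd_1(x,\gamma y)}\leq e^{-s(n-1)}$ inside the $n$-th shell yields the two-sided sandwich
\[
\sum_{n=1}^{\infty}e^{-sn}\,G_{x,y,1}^{a,b}(n)\ \leq\ Q_{PPS,x,y}^{a,b}(s)\ \leq\ e^{s}\sum_{n=1}^{\infty}e^{-sn}\,G_{x,y,1}^{a,b}(n),
\]
so $\delta_{PPS}^{a,b}$ agrees with the abscissa of convergence of the power series $\sum_{n\geq1}G_{x,y,1}^{a,b}(n)\,z^{n}$ evaluated at $z=e^{-s}$.

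Next I would extract the two inequalities from this sandwich via the root test. Set $L:=\limsup_{n\to\infty}\tfrac{1}{n}\log G_{x,y,1}^{a,b}(n)$ and $\ell:=\liminf_{n\to\infty}\tfrac{1}{n}\log G_{x,y,1}^{a,b}(n)$. If $s>L$, the right-hand side of the sandwich converges by the Cauchy--Hadamard criterion, forcing $Q_{PPS,x,y}^{a,b}(s)<\infty$ and hence $\delta_{PPS}^{a,b}\leq s$; letting $s\downarrow L$ gives $\delta_{PPS}^{a,b}\leq L$. If $s<\ell$, then for all sufficiently large $n$ one has $G_{x,y,1}^{a,b}(n)\geq e^{(s+\varepsilon)n}$ for some $\varepsilon>0$, so the general term of the lower sum satisfies $e^{-sn}G_{x,y,1}^{a,b}(n)\geq e^{\varepsilon n}\to\infty$; the left-hand side diverges, hence $\delta_{PPS}^{a,b}\geq s$, and letting $s\uparrow\ell$ gives $\delta_{PPS}^{a,b}\geq\ell$.

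Finally I would reconcile these integer-indexed estimates with the continuous formulation in the statement. Taking $s=n$ an integer shows $\limsup_{s\to\infty}\tfrac{1}{s}\log G_{x,y,1}^{a,b}(s)\geq L$; conversely, an annulus at a real $s\in(n-1,n]$ sits inside $(n-2,n]$, so $G_{x,y,1}^{a,b}(s)\leq G_{x,y,1}^{a,b}(n-1)+G_{x,y,1}^{a,b}(n)$, giving $\limsup_{s\to\infty}\tfrac{1}{s}\log G_{x,y,1}^{a,b}(s)\leq L$. For the $\liminf$, restricting to integers can only increase the $\liminf$, so $\liminf_{s\to\infty}\tfrac{1}{s}\log G_{x,y,1}^{a,b}(s)\leq\ell\leq\delta_{PPS}^{a,b}$, which is exactly what is claimed. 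I do not anticipate any real obstacle; the only conceptual point worth flagging is the asymmetric use of $\limsup$ (convergence of the majorant, bounding $\delta_{PPS}^{a,b}$ from above) versus $\liminf$ (term-by-term divergence of the minorant, bounding $\delta_{PPS}^{a,b}$ from below), which is precisely the asymmetry visible in the statement.
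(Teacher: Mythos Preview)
Your proposal is correct and follows essentially the same route as the paper: decompose $Q_{PPS,x,y}^{a,b}(s)$ into shells $n-1<d_1(x,\gamma y)\leq n$, derive the sandwich $\sum_n e^{-sn}G_{x,y,1}^{a,b}(n)\leq Q_{PPS,x,y}^{a,b}(s)\leq e^{s}\sum_n e^{-sn}G_{x,y,1}^{a,b}(n)$, and then read off the two inequalities via the root test (the paper writes out the same $\varepsilon$-argument explicitly). Your final paragraph reconciling the integer-indexed $\limsup/\liminf$ with the continuous parameter $s$ is a point the paper leaves implicit, so if anything you are slightly more careful there.
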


\begin{proof}
The proof is elementary. However, for the completeness, we give a
proof here. We first notice that 
\[
Q_{PPS,x,y}^{a,b}(t)=\sum_{n=0}^{\infty}\sum_{\g\in E_{n}}e^{-d^{a,b}(x,\g y)-td_{1}(x,\g y)}
\]
where $E_{n}:=\{\g\in\G_{1}:\ n-1<d_{1}(x,\g y)\leq n\}$. Therefore,
we have 
\[
\sum_{n}e^{-tn}G_{x,y,1}^{a,b}(n)\leq Q_{PPS,x,y}^{a,b}(t)\leq\sum_{n}e^{-t(n-1)}G_{x,y,1}^{a,b}(n).
\]

\textbf{Claim:} If $t>\Delta:={\displaystyle \limsup_{s\to\infty}\frac{1}{s}\log G_{x,y,1}^{a,b}(s)}$
then $Q_{PPS,x,y}^{a,b}(t)$ converges (i.e., $\delta^{a,b}\leq\Delta).$

pf. For $\varepsilon=\frac{t-\Delta}{2}$, there exists $N>0$ such
that for all $n>N$ we have $\Delta+\varepsilon>\frac{1}{n}\log G_{x,y,1}^{a,b}(n)$.
Therefore, we have 
\begin{alignat*}{1}
Q_{PPS,x,y}^{a,b}(t) & \leq\sum_{n}e^{-t(n-1)}G_{x,y,1}^{a,b}(n)\\
 & <C+\sum_{n=N}^{\infty}e^{-t(n-1)}e^{n(\Delta+\varepsilon)}\\
 & =C+e^{t}\sum_{n=N}^{\infty}e^{n(-t+\Delta+\varepsilon)}<C+e^{t}\sum_{n=N}^{\infty}e^{n(\frac{-t+\Delta}{2})}<\infty
\end{alignat*}
where $C={\displaystyle \sum_{n=0}^{N-1}}e^{-t(n-1)}G_{x,y,1}^{a,b}(n)<\infty$.

\textbf{Claim:} If $t<\overline{\Delta}:={\displaystyle \liminf_{s\to\infty}\frac{1}{s}\log G_{x,y,1}^{a,b}(s)}$
then $Q_{PPS,x,y}^{a,b}(t)$ diverges (i.e., $\delta^{a,b}\geq\overline{\Delta})$.

For $\varepsilon=\frac{\overline{\Delta}-t}{2}$, there exists $N'>0$
such that for all $n>N'$ we have $\overline{\Delta}-\varepsilon<\frac{1}{n}\log G_{x,y,1}^{a,b}(n)$.
Therefore, we have 
\begin{alignat*}{1}
Q_{PPS,x,y}^{a,b}(t) & \geq\sum_{n}e^{-tn}G_{x,y,1}^{a,b}(n)\\
 & \geq\sum_{n=N'}^{\infty}e^{-tn}e^{n(\overline{\Delta}-\varepsilon)}\\
 & =\sum_{n=N'}^{\infty}e^{n(-t+\overline{\Delta}-\varepsilon)}>\sum_{n}e^{n(\frac{-t+\overline{\Delta}}{2})}=\infty.
\end{alignat*}

\end{proof}

\begin{lem}
\label{lem:orbit-growth-limit}The inequalities of the above lemma
are indeed equalities. Moreover, 
\[
\delta_{PPS}^{a,b}=\lim_{n\to\infty}\frac{1}{n}\log G_{x,y}^{a,b}(s).
\]

\end{lem}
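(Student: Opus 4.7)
The plan is to apply Lemma \ref{lem:super-multiplicativity} to the restricted count $b_n := b_{x,y,U',V'}(n)$ for a suitably chosen pair of cones $U',V'$, and then to transfer the conclusion from these restricted counts to the total count $G_{x,y}^{a,b}(s)$. First I would fix two distinct boundary points $\xi,\eta \in \Lambda(\G_1)$, take small open neighborhoods $U'$ of $\xi$ and $V'$ of $\eta$, and position $x,y$ so that the geodesic joining $V'$ to $U'$ passes within a bounded distance of $y$. Thus $B_{x,y,U',V'}(n)$ collects the $\gamma\in\G_1$ of length at most $n$ aligned along this channel.

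The key geometric step is to establish the almost-additivity
\[
d^{a,b}(x,\gamma\gamma' y) = d^{a,b}(x,\gamma y) + d^{a,b}(x,\gamma' y) + O(1)
\]
for all $\gamma\in B_{x,y,U',V'}(n)$ and $\gamma'\in B_{x,y,U',V'}(m)$. For the $d_1$-component, since $\gamma^{-1}x\in V'$ and $\gamma' y\in U'$ with $\xi\ne\eta$, the geodesic from $\gamma^{-1}x$ to $\gamma' y$ stays within a bounded distance of $y$; applying $\gamma$, the geodesic from $x$ to $\gamma\gamma' y$ stays within a bounded distance of $\gamma y$, and Lemma \ref{lem:not_too_far} converts this shadow property into the additive estimate. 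For the $d_2$-component I would transport the argument through the $\iota$-equivariant bilipschitz map $f:\H\to\H$ of Theorem \ref{thm: bilipsthitz}, which extends to a boundary homeomorphism and sends the cones $U',V'$ to cones near $f(\xi)\ne f(\eta)$; applying the same shadow argument with base points $fx,fy$ and group $\iota(\G_1)=\G_2$ yields the analogous additive estimate for $d_2$. Combining, and using that $G$ is free so that $(\gamma,\gamma')\mapsto\gamma\gamma'$ is injective on the admissible pairs, I obtain
\[
b_n b_m \leq C\sum_{|i|\leq N} b_{n+m+i}
\]
for constants $C,N$ depending only on the cones and the bilipschitz constant of $f$. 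Lemma \ref{lem:super-multiplicativity} then gives that $\lim_{s\to\infty}\tfrac{1}{s}\log A(s)$ exists, where $A(s) := \sum_{k\leq s} b_k$.

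To pass to the full count, I note $b_n\leq G_{x,y,1}^{a,b}(n)$ gives the lower bound $\liminf_{s\to\infty}\tfrac{1}{s}\log G_{x,y,1}^{a,b}(s)\geq\lim \tfrac{1}{s}\log A(s)$. For the matching upper bound I would cover $\Lambda(\G_1)\times\Lambda(\G_1)$ away from the diagonal by finitely many product cones $U'_j\times V'_j$; compactness ensures that every $\gamma\in\G_1$ (outside a negligible bounded-index set) lies in some $B_{x,y,U'_j,V'_j}(n+O(1))$, whence $G_{x,y,1}^{a,b}(n)\leq C'\max_j b_{x,y,U'_j,V'_j}(n+O(1))$ and the reverse inequality in the limit follows. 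Combined with Lemma \ref{lem:infG-delta-supG}, this forces both $\liminf$ and $\limsup$ of $\tfrac{1}{s}\log G_{x,y,1}^{a,b}(s)$ to equal $\delta_{PPS}^{a,b}$. The summation identity $G_{x,y}^{a,b}(n)=\sum_{k\leq n} G_{x,y,1}^{a,b}(k)$ together with a standard $\tfrac{1}{n}\log$-asymptotic argument then transfers the limit to $G_{x,y}^{a,b}$. The principal obstacle is the shadow argument for the $d_2$-component: one must check that the multiplicative distortion introduced by $f$ only enters the implicit constants and leaves the $O(1)$ additive error intact, which requires that $f$-conjugated geodesics are quasi-geodesics and so fellow-travel genuine geodesics at bounded distance.
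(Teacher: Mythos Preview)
Your approach has a genuine gap at the super-multiplicativity step. You apply Lemma~\ref{lem:super-multiplicativity} to $b_n := b_{x,y,U',V'}(n)$, and to verify $b_n b_m \leq C\sum_{|i|\leq N} b_{n+m+i}$ you map $(\gamma,\gamma')\mapsto \gamma\gamma'$. But for the right-hand side to absorb the product you need $\gamma\gamma'\in B_{x,y,U',V'}(n+m+i)$, i.e.\ $\gamma\gamma' y\in U'$ and $(\gamma\gamma')^{-1}x\in V'$. Nothing in your setup guarantees this: from $\gamma' y\in U'$ you would need $\gamma(U')\subset U'$, which fails for generic small neighborhoods $U'$ of a boundary point $\xi$ and generic $\gamma$ with $\gamma y\in U'$. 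Without this closure property the inequality simply does not hold, and the argument collapses. (Choosing $U',V'$ to be Schottky domains $C_{h_i},C_{h_i^{-1}}$ does not fix this automatically either, since $\gamma y\in C_{h_i}$ does not force $\gamma$ to contract $C_{h_i}$.)

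The paper avoids this entirely by applying Lemma~\ref{lem:super-multiplicativity} directly to the \emph{full} annular count $b_n = G^{a,b}_{x,y,1}(n)$, with no cone restriction. The concatenation map is $(\gamma_n,\gamma_m)\mapsto \gamma_n\,\rho_1(\alpha)\,\gamma_m$, where $\alpha\in\mathcal{A}^{\pm}$ is a Schottky generator chosen (depending on the first letters of $\gamma_n^{-1}$ and $\gamma_m$) so that Lemma~\ref{lem:d(x,y)>d(x,o)+d(y,0)+C} gives the additive estimate for $d_1$ \emph{and}, because $\rho_2$ is also extended Schottky, the same lemma applied to $\Gamma_2$ gives it for $d_2$. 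This sidesteps any need for the bilipschitz map $f$ or quasi-geodesic considerations; the $d_2$-estimate is obtained intrinsically, not by transport through $f$. Bounded fibers come from Lemma~\ref{lem:not_too_far} and discreteness. Since the full count is used, no covering argument or transfer from restricted counts is needed, and the later lemmas on $a_{x,y,U'}$ and $b_{x,y,U',V'}$ are deduced \emph{from} this one rather than feeding into it.
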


\begin{proof}
The proof of this Lemma follows the idea of the (short) proof of Lemma
4.2 (see also the proof of Theorem 2.4 \cite{Peigne:2013tn}). 

We notice that by the triangle inequality, it is obvious that the
$\limsup_{s\to\infty}\frac{1}{s}\log G_{x,y}^{a,b}(s)$ does not depend
on the reference point $x$ and $y$. W.l.o.g, we pick $x=y=o$. Recall
the generating set of the extended Schottky group $G=\pi_{1}S$ is
${\cal A^{\pm}}=\{h_{1}^{\pm},..,h_{N_{1}}^{\pm},p_{1},...,p_{N_{2}}\}$
with $N_{1}+N_{2}\geq3$. 

Let \begin{itemize}

\item$E_{n}:=\{\g\in\G_{1}:\ n-1<d_{1}(o,\g o)\leq n\}$.

\item $b_{n}:=G_{x,y,1}^{a,b}(n)={\displaystyle \sum_{\g\in E_{n}}e^{-d^{a,b}(o,\g o)}}.$

\end{itemize}

By Lemma \ref{lem:super-multiplicativity}, it is enough prove that
there exist $M>0$ and $N\in\N$ such that for all $n,m\in\N$, we
have 
\[
b_{n}b_{m}\leq M\sum_{i=-N}^{i=N}b_{n+m+i}.
\]

\textbf{Claim: }There exist $N\in\N$ and $M>0$ such that $\#E_{n}\times\#E_{m}\leq M\cdot\sum_{i=-N}^{i=N}\#E_{n+m+i}$

pf. Let $\g_{n}\in E_{n}$ and $\g_{m}\in E_{m}$, by Lemma \ref{lem:d(x,y)>d(x,o)+d(y,0)+C},
there exists $\alpha\in{\cal A^{\pm}}$ (more precisely, if $\g_{n}=g_{i}...$
and $\g_{m}=g_{j}...$ for $g_{i},g_{j}\in{\cal A}$ then we take
$\alpha=g_{k}$ for $g_{k}\in{\cal A^{\pm}}\backslash\{g_{i}^{\pm},g_{j}^{\pm}\}$)
such that 
\[
\left|d(o,\g_{n}\rho_{1}(\alpha)\g_{m}o)-d(o,\g_{n}o)-d(o,\g_{m}o)\right|<C_{1}
\]
 and 
\[
\left|d(o,(\iota\circ\g_{n})\rho_{2}(\alpha)(\iota\circ\g_{m})o)-d(o,(\iota\circ\g_{n})o)-d(o,(\iota\circ\g_{m})o)\right|<C_{2}
\]
where $C_{1}$ only depending on $\rho_{1}$ and $C_{2}$ only depending
on $\rho_{2}$ .

Thus, 
\[
n+m-C_{1}-2<d(o,\g_{n}\rho_{1}(\alpha)\g_{m}o)\leq n+m+C_{1}+2.
\]
Let us consider the map 
\begin{align*}
\Psi:E_{n}\times E_{m} & \to\sum_{i=-C_{1}-2}^{i=C_{1}+2}\#E_{n+m+i}\\
(\g_{n},\g_{m}) & \mapsto\g_{n}\rho_{1}(\alpha)\g_{m}
\end{align*}

This maps is obvious not one-to-one. Nevertheless, we claim $\#\Psi^{-1}(\g_{n}\rho_{1}(\alpha)\g_{m})$
is finite. By Lemma \ref{lem:not_too_far}, we know that $d(\g_{n}o,[o,\g_{n}\rho_{1}(\alpha)\g_{m}o])\leq D$
(where $D$ only depends on $C_{1}$), and which implies if there
exist $\g_{n}'\in E_{n}$ and $\g_{m}'\in E_{m}$ such that $\g'_{n}\rho_{1}(\alpha)\g_{m}'=\g{}_{n}\rho_{1}(\alpha)\g_{m}=\g$
then $d(\g_{n}o,\g_{n}'o)\leq2(D+1)$ (because $n-1<d(\g_{n}o,o),d(\g'_{n}o,o)\leq n$,
and $\g_{n}o$, $\g'_{n}o$ are in a $D$-neighborhood of $[o,\g o]$).
Moreover, by the discreteness of $\G_{1}$, the set $\{\g\in\G_{1}:\ d(\g o,o)\leq2(D+1)\}$
is finite (say, smaller than or equal to $M_{1}$). Hence $\#\Psi^{-1}(\g_{n}\rho_{1}(\alpha)\g_{m})\leq M_{1}^{2}$.

Therefore, 
\[
\#E_{n}\times\#E_{m}\leq(2N_{1}+N_{2})M_{1}^{2}\cdot\sum_{i=-C_{1}-2}^{i=C_{1}+2}\#E_{n+m+i}
\]
 where $2N_{1}+N_{2}$ is the cardinality of ${\cal A}^{\pm}$. 

Moreover, we know 
\[
\left|d^{a,b}(o,\g_{n}\rho_{1}(\alpha)\g_{m}o)-d^{a,b}(o,\g_{n}o)-d^{a,b}(o,\g_{m}o)\right|\leq aC_{1}+bC_{2},
\]
thus we have the lemma, more precisely, 
\[
b_{n}b_{m}\leq(N_{1}+N_{2})M_{1}^{2}\cdot e^{aC_{1}+bC_{2}}\sum_{i=-(C_{1}+2)}^{i=C_{1}+2}b_{n+m+i}.
\]

\end{proof}

\begin{lem}
\label{lem:pps_a}
\[
{\displaystyle \lim_{s\to\infty}\frac{1}{s}\log a_{x,y,U'}^{a,b}(s)}=\delta_{PPS}^{a,b}.
\]
\end{lem}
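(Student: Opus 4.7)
\textbf{Proof plan for Lemma \ref{lem:pps_a}.}
I will establish the two inequalities separately. The upper bound
\[
\limsup_{s\to\infty}\frac{1}{s}\log a^{a,b}_{x,y,U'}(s) \le \delta^{a,b}_{PPS}
\]
is immediate from the trivial domination $a^{a,b}_{x,y,U'}(s) \le G^{a,b}_{x,y}(s)$ noted just above the statement, combined with Lemma \ref{lem:orbit-growth-limit}. The substance of the lemma is therefore the matching lower bound; the goal will be to show that restricting the count by the ``direction condition'' $\gamma y \in U'$ costs at most a bounded multiplicative factor together with a bounded additive shift in $s$.

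The key geometric input will be a \emph{finite cover} of the relevant set of asymptotic directions by $\Gamma_1$-translates of $U'$. Concretely, I plan to produce elements $\alpha_1, \dots, \alpha_k \in \Gamma_1$ such that for every $\gamma \in \Gamma_1$ (outside a finite exceptional set) there is an index $j$ with $\alpha_j \gamma y \in U'$. The existence of such a family rests on the dynamics of the extended Schottky group $\Gamma_1$: since $N_1 + N_2 \ge 3$, the limit set $\Lambda(\Gamma_1)$ is perfect and the $\Gamma_1$-action on it is minimal, so one may fix a hyperbolic element $h \in \Gamma_1$ whose attracting fixed point lies in the interior of the shadow cast by $U'$ from $x$. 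By the attracting dynamics of $h$, suitably chosen powers $h^{n}$ pull open neighborhoods of that fixed point back to cover any prescribed limit direction; the disjoint-neighborhood condition (C3) makes this cover uniform, and the finitely many parabolic ``cusp directions'' are absorbed by adding translates supported on neighborhoods $C_{p_i}$ of the cusp fixed points.

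Once the family $\{\alpha_j\}$ is in place, set $C := \max_j d_1(x, \alpha_j x) + 1$ and $C' := \max_j d^{a,b}(x, \alpha_j x)$. For each $\gamma \in \Gamma_1$ with $d_1(x, \gamma y) \le s$ (outside the finite exceptional set), pick an index $j = j(\gamma)$ with $\alpha_j \gamma y \in U'$. The triangle inequality gives
\[
d_1(x, \alpha_j \gamma y) \le s + C, \qquad d^{a,b}(x, \alpha_j \gamma y) \le d^{a,b}(x, \gamma y) + C'.
\]
Since $\gamma \mapsto \alpha_j \gamma$ is a bijection of $\Gamma_1$, partitioning the sum defining $G^{a,b}_{x,y}(s)$ according to $j(\gamma)$ yields
\[
k \cdot a^{a,b}_{x,y,U'}(s + C) \ \ge\ e^{-C'} \bigl( G^{a,b}_{x,y}(s) - E \bigr)
\]
for a finite error $E$ coming from the exceptional set. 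Taking $\frac{1}{s}\log$ as $s \to \infty$ and applying Lemma \ref{lem:orbit-growth-limit} produces the desired lower bound $\liminf \frac{1}{s}\log a^{a,b}_{x,y,U'}(s) \ge \delta^{a,b}_{PPS}$.

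The main obstacle will be constructing and verifying the finite cover $\{\alpha_j^{-1} U'\}$: one must rule out any ``escape'' of orbit directions eluding every $\alpha_j^{-1} U'$, including escapes into cusps. The full extended Schottky structure---in particular (C1)--(C3) with their disjoint attracting/cusp neighborhoods, rather than mere non-elementarity---is precisely what enables this uniform covering and hence the uniform constants $C$, $C'$ needed above.
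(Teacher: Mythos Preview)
Your overall strategy is correct and matches the paper's: the upper bound is trivial from $a^{a,b}_{x,y,U'}(s)\le G^{a,b}_{x,y}(s)$, and the lower bound comes from covering the orbit directions by finitely many $\Gamma_1$-translates of $U'$, then comparing via the triangle inequality for $d_1$ and $d^{a,b}$. Your final inequality $k\cdot a^{a,b}_{x,y,U'}(s+C)\ge e^{-C'}(G^{a,b}_{x,y}(s)-E)$ is exactly what the paper obtains (phrased there as $a_{x,y,U'}(s)\ge\frac{1}{ke^{c}}(G^{a,b}_{x,y}(s-r)-c_1')$), and the passage from ``$\gamma y\in\alpha_j^{-1}U'$'' to ``$\alpha_j\gamma y\in U'$'' is equivalent to the paper's base-point identity $a_{x,y,\gamma U'}(s)=a_{\gamma^{-1}x,y,U'}(s)$.

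Where you diverge is in the construction of the cover, and here you make the argument harder than it needs to be. You invoke attracting dynamics of a single hyperbolic $h$ and the full Schottky combinatorics (C1)--(C3); but with one $h$ the sets $h^{-n}(U')$ form an increasing chain exhausting $\overline{\mathbb{H}}\setminus\{h^-\}$, so no finite subfamily covers the direction $h^-$, and your sketch does not explain how that direction is handled. The paper sidesteps all of this: since $\Gamma_1$ is non-elementary, the action on $\Lambda_1$ is minimal, so the open set $U'\cap\Lambda_1$ has dense orbit; by compactness of $\Lambda_1$ one immediately gets $\gamma_1,\dots,\gamma_k$ with $\Lambda_1\subset\bigcup_i\gamma_iU'$, and then $\mathrm{Conv}(\Lambda_1)\setminus\bigcup_i\gamma_iU'$ is compact in $\mathbb{H}$, contributing only finitely many orbit points. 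No Schottky structure is needed for this step---only non-elementarity---so your closing remark that (C1)--(C3) are ``precisely what enables this uniform covering'' is misplaced. Replace your explicit construction with the minimality/compactness argument and the proof goes through cleanly.
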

\begin{proof}
It is obvious that $a_{x,y,U'}^{a,b}(s)\leq G_{x,y}^{a,b}(s)$, so
it is enough to prove 
\[
{\displaystyle \liminf_{s\to\infty}\frac{1}{s}\log a_{x,y,U'}^{a,b}(s)}\geq\delta_{PPS}^{a,b}.
\]

By the compactness of $\L_{1}$, there exist $\g_{1},...,\g_{k}\in\G_{1}$
such that $\L_{1}\subset{\displaystyle \bigcup_{i=1}^{k}\gamma_{k}U'}$.
Since ${\rm Conv}(\L_{1})\backslash\bigcup_{i=1}^{k}\gamma_{k}U'$
is compact, there exists a constant $c_{1}'\geq0$ such that 
\begin{equation}
G_{x,y}^{a,b}(s)\leq c_{1}'+\sum_{i=1}^{k}a_{x,y,\g_{i}U'}(s).\label{eq:G<A}
\end{equation}

\textbf{Claim:} There exists a constant $c>0$ such that $a_{x,y,\g_{i}U'}(s-r)\leq e^{c}a_{x,y,U'}(s)$
for all $s>r$ where $r=\max\{d_{1}(x,\g_{i}x):\ i\in1,2..,k\}$.

It is clear that using this claim and (\ref{eq:G<A}), we have 
\[
a_{x,y,U'}(s)\geq\frac{1}{ke^{c}}(G_{x,y}^{a,b}(s-r)-c_{1}'),
\]
 and, thus, the lemma.

pf of the claim: We first notice that by definition we have 
\[
a_{x,y,U'}(s)=a_{\g x,y,\g U'}(s).
\]
 To be more precise, it is because $\g^{-1}\left(A_{\g x,y,\g U'}(s)\right)$$=A_{x,y,U'}(s)$,
and also $d^{a,b}(x,gy)=d^{a,b}(\g x,\g gy)$ for all $g\in A_{x,y,U'}(s)$. 

Therefore, it is enough to show there exists $c>0$ such that $a_{x,y,\g_{i}U'}(s-r)=a_{\g_{i}^{-1}x,y,U'}(s-r)\leq e^{c}a_{x,y,U'}(s)$.

To see that, we notice that by the triangle inequality we have if
$d_{1}(\g_{i}^{-1}x,\g y)\leq s-r$ then $d_{1}(x,\g y)\leq s$. Thus,

\begin{alignat*}{1}
A_{\g_{i}^{-1}x,y,U'}(s-r)= & \{g\in\G_{1}:\ d_{1}(\g_{i}^{-1}x,gy)\leq s-r\ {\rm and}\ gy\in U'\}\\
\subset A_{x,y,U'}(s) & =\{g\in\G_{1}:\ d_{1}(x,gy)\leq s\ {\rm and}\ gy\in U'\}.
\end{alignat*}

Furthermore, since $d^{a,b}$ satisfies the triangle inequality, we
have 
\begin{alignat*}{1}
\left|-ad_{1}(x,\g y)-bd_{2}(x,\g y)-(-ad_{1}(\g_{i}^{-1}x,\g y)-bd_{2}(\g_{i}^{-1}x,\g y))\right|\\
\leq & ad_{1}(x,\g_{i}^{-1}x)+bd_{2}(x,\g_{i}^{-1}x)\\
\leq & c_{2}'(x,a,b).
\end{alignat*}
Hence $a_{x,y,\g_{i}U'}(s-r)\leq e^{c_{2}'}a_{x,y,U'}(s)$.
\end{proof}

\begin{lem}
\label{lem:pps_b}
\[
\lim_{s\to\infty}\frac{1}{s}\log b_{x,y,U',V'}^{a,b}(s)=\delta_{PPS}^{a,b}.
\]
\end{lem}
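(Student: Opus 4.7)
The plan is to mimic the argument for Lemma \ref{lem:pps_a}, but now to cover $\Lambda_1$ by $\G_1$-translates of $V'$ rather than of $U'$, in order to manufacture the ``backward'' condition $\gamma^{-1}x \in V'$ while preserving the ``forward'' condition $\gamma y \in U'$. The upper bound $\limsup_{s\to\infty}\tfrac{1}{s}\log b_{x,y,U',V'}^{a,b}(s) \leq \delta_{PPS}^{a,b}$ is immediate from $b_{x,y,U',V'}^{a,b}(s) \leq a_{x,y,U'}^{a,b}(s)$ together with Lemma \ref{lem:pps_a}, so the real task is the matching lower bound.

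For the lower bound, first use compactness of $\Lambda_1$ together with density of $\G_1$-orbits in $\Lambda_1$ and the hypothesis $V' \cap \Lambda_1 \neq \emptyset$ to fix $h_1,\ldots,h_m \in \G_1$ with $\Lambda_1 \subset \bigcup_{i=1}^m h_i V'$. Choose an auxiliary open set $U'' \subset \overline{\H}$ with $\overline{U''} \subset U'$ and $U'' \cap \Lambda_1 \neq \emptyset$; by Lemma \ref{lem:pps_a} applied to $U''$ in place of $U'$, one still has $\lim_s \tfrac{1}{s}\log a_{x,y,U''}^{a,b}(s) = \delta_{PPS}^{a,b}$. For each $i$, the $\G_1$-equivariance of $d_1$ and the bilipschitz control on $d_2$ from Theorem \ref{thm: bilipsthitz } give a constant $R>0$ with $|d^{a,b}(x,\gamma h_i y) - d^{a,b}(x,\gamma y)| \leq R$ and $|d_1(x,\gamma h_i y) - d_1(x,\gamma y)| \leq R$ uniformly in $\gamma \in \G_1$.

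Now pick a large threshold $N$. For $\gamma \in A_{x,y,U''}(s)$ with $d_1(x,\gamma y) \geq N$, the relation $d_1(\gamma^{-1}x, y) = d_1(x, \gamma y) \geq N$ forces $\gamma^{-1}x$ close to $\Lambda_1$ in the cone topology; enlarging $N$ if necessary, there exists $i=i(\gamma)$ with $\gamma^{-1}x \in h_i V'$. Setting $\eta := \gamma h_i$, we get $\eta^{-1}x = h_i^{-1}\gamma^{-1}x \in V'$. The delicate point is verifying $\eta y \in U'$. Since $d_\H(\gamma y, \eta y) = d_\H(y, h_i y)$ is bounded while $\gamma y$ escapes to $\vbdy\H$, the points $\gamma y$ and $\eta y$ share the same boundary limit; combined with $\gamma y \in U''$, $\overline{U''} \subset U'$, and openness of $U'$, enlarging $N$ guarantees $\eta y \in U'$. (If not, extract a subsequence along which $\gamma_n y \to \xi \in \overline{U''} \cap \vbdy\H \subset U'$ but $\eta_n y \notin U'$, contradicting the common-limit assertion.)

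The map $\gamma \mapsto \gamma h_i$ is injective for each fixed $i$, and sends
\[
\{\gamma \in A_{x,y,U''}(s) : d_1(x,\gamma y) \geq N,\ \gamma^{-1}x \in h_i V'\} \;\hookrightarrow\; B_{x,y,U',V'}(s+R),
\]
with weights comparing as $e^{-d^{a,b}(x,\gamma y)} \leq e^{R}\, e^{-d^{a,b}(x,\eta y)}$. Summing over $i \in \{1,\ldots,m\}$ yields
\[
a_{x,y,U''}^{a,b}(s) \leq C(N) + m\, e^{R}\, b_{x,y,U',V'}^{a,b}(s+R),
\]
where $C(N)$ absorbs the finite contribution from $d_1(x,\gamma y) < N$. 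Taking $\liminf \tfrac{1}{s}\log$ of both sides and combining with the upper bound delivers $\lim_{s\to\infty}\tfrac{1}{s}\log b_{x,y,U',V'}^{a,b}(s) = \delta_{PPS}^{a,b}$. The main technical obstacle throughout is the substitution $\gamma \mapsto \eta = \gamma h_i$: it manufactures the backward condition at the cost of displacing $\gamma y$ by a bounded hyperbolic amount, and one must exploit cone-topology convergence of long orbit points along with the cushion $\overline{U''} \subset U'$ to certify that $\eta y$ still lies in $U'$.
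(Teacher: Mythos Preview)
Your argument is correct and follows essentially the same route as the paper's proof. Both proofs cover $\Lambda_1$ by finitely many $\G_1$-translates of $V'$, use the right-multiplication substitution $\gamma\mapsto\gamma h_i$ (the paper writes it as $\gamma\mapsto\gamma\alpha_i^{-1}$) to manufacture the backward condition $\eta^{-1}x\in V'$, and then shrink $U'$ to an auxiliary $U''$ to absorb the bounded displacement of $\gamma y\mapsto\eta y$. The only stylistic difference is that the paper handles this last point by an explicit metric neighborhood (showing $\eta y\in B_rU''\subset U'$ where $r=\max_i d_1(y,\alpha_i y)$), whereas you phrase it via cone-topology convergence and a threshold $N$; these are equivalent ways of saying the same thing.
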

\begin{proof}
Similar to proof of the previous lemma. There exist $\alpha_{1},...,\alpha_{l}\in\G_{1}$
such that $\L_{1}\subset{\displaystyle \bigcup_{i=1}^{k}\alpha_{i}V'}$.
Therefore, there exists a constant $c_{3}'\geq0$ such that for all
$s>0$, we have 
\[
a_{x,y,U'}(s)\leq c_{3}'+\sum_{i=1}^{l}b_{x,y,U',\alpha_{i}V'}(s).
\]

We first notice that by definition we have 
\[
b_{x,y,U',\g V'}(s)=b_{x,\g^{-1}y,U',V'}(s).
\]
 It is because $\left(B_{x,\g^{-1}y,U',V}(s)\right)\g^{-1}$ $=B_{x,y,U',\g V'}(s)$
and $d^{a,b}(x,g\cdot\g^{-1}y)=d^{a,b}(x,g\g^{-1}\cdot y)$ for all
$g\in B_{x,\g^{-1}y,U',V}(s)$. 

Pick $r={\rm max}\{d_{1}(y,\alpha_{i}y):\ 1\leq i\leq l\}$, we notice
that by the triangle inequality of $d_{1}$, we know if $d_{1}(x,\g\alpha_{i}^{-1}y)\leq s-r,$
then $d_{1}(x,\g y)\leq s.$ So, we have 
\[
B_{x,\alpha_{i}^{-1}y,U',V'}(s)\subset B_{x,y,B_{r}U',V'}
\]
where $B_{r}U'$ is the $r-$neighborhood of $U'$. 

Moreover, again by the triangle equality of $d^{a,b}$, we know
\begin{alignat*}{1}
\left|-ad_{1}(x,\g y)-bd_{2}(x,\g y)-(-ad_{1}(x,\g\alpha_{i}^{-1}x)-bd_{2}(x,\g\alpha_{i}^{-1}x))\right|\\
\leq & ad_{1}(y,\alpha_{i}^{-1}x)+bd_{2}(y,\alpha_{i}^{-1}x))\\
\leq & c_{4}'(x,y,a,b).
\end{alignat*}
Therefore, for $1\leq i\leq l$ and $s>r$, 
\[
b_{x,\alpha_{i}^{-1}y,U',V'}(s-r)\leq e^{c_{4}'}b_{x,y,B_{r}U',V'}(s).
\]

Lastly, pick $U''$ such that $B_{r}U''\subset U'$ then for $s>r$
\begin{align*}
a_{x,y,U'',V'}(s-r) & \leq c_{3}'+\sum_{i=1}^{l}b_{x,y,U'',\alpha_{i}V'}(s-r)\leq c_{3}'+le^{c_{4}'}\cdot b_{x,y,B_{r}U'',V'}(s)\\
 & \leq c_{3}'+le^{c_{4}'}\cdot b_{x,y,U',V'}(s)\leq c_{3}'+le^{c_{4}'}\cdot a_{x,y,U'}(s).
\end{align*}

Taking limit of the both side and using the above lemma, we have completed
the proof. 
\end{proof}

\begin{lem}
For any relatively compact open set $W\subset T^{1}S_{1}$, we have 

\[
\limsup_{s\to\infty}\frac{1}{s}\log Z_{W}^{a,b}(s)\leq\delta_{PPS}^{a,b}.
\]
 
\end{lem}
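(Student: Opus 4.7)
The plan is to dominate $Z_W^{a,b}(s)$ by $G_{x_0, x_0}^{a,b}(s+2R)$ for a suitable base point $x_0 \in \H$ and radius $R > 0$, then invoke Lemma \ref{lem:orbit-growth-limit}. First, lift $W \subset T^{1}S_{1}$ to a relatively compact $\widetilde{W} \subset T^{1}\H$ whose projection to $\H$ has finite diameter $R$, and fix $x_0$ in that projection. For each closed geodesic $\lambda$ on $S_{1}$ with $\lambda \cap W \neq \emptyset$ and $l_{1}[\lambda] \leq s$, pick a hyperbolic $\g(\lambda) \in \G_{1}$ whose axis passes through $\widetilde{W}$ by lifting an intersection point of $\lambda$ with $W$. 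The assignment $\lambda \mapsto \g(\lambda)$ is injective, since distinct closed geodesics correspond to distinct (primitive or power) hyperbolic elements once a representative is chosen per $\lambda$.

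Next, I compare $d^{a,b}(x_0, \g(\lambda) x_0)$ with $al_{1}[\lambda] + bl_{2}[\lambda]$. Since $\mathrm{axis}(\g)$ meets $\widetilde{W}$, the minimization characterization of translation length yields $l_{1}[\lambda] \leq d(x_0, \g x_0) \leq l_{1}[\lambda] + 2R$. For the $\rho_2$-side, use the $\iota$-equivariant bilipschitz map $f:\H\to\H$ from Theorem \ref{thm: bilipsthitz }: by $\iota$-equivariance together with Tukia's identification of boundary limit sets, the image $f(\mathrm{axis}(\g))$ is a quasigeodesic in $\H$ with the same pair of endpoints as $\mathrm{axis}(\iota(\g))$, and the Morse lemma produces a Hausdorff distance $K$ between them depending only on the bilipschitz constant $C$ of $f$. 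A point of $\mathrm{axis}(\g)$ at distance $\leq R$ from $x_0$ is thus carried to within $CR + K$ of $\mathrm{axis}(\iota(\g))$, yielding $l_{2}[\lambda] \leq d(fx_0, \iota(\g)fx_0) \leq l_{2}[\lambda] + 2R'$ with $R' := CR + K$.

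Combining the two estimates gives $d^{a,b}(x_0, \g x_0) \leq al_{1}[\lambda] + bl_{2}[\lambda] + 2(aR + bR')$, together with the constraint $d(x_0, \g x_0) \leq s + 2R$. By injectivity of $\lambda \mapsto \g(\lambda)$,
\[
Z_W^{a,b}(s) \leq e^{2(aR+bR')}\sum_{\substack{\g \in \G_{1}\\ d(x_0, \g x_0) \leq s+2R}} e^{-d^{a,b}(x_0, \g x_0)} = e^{2(aR+bR')}\, G_{x_0, x_0}^{a,b}(s+2R).
\]
Taking $\limsup_{s\to\infty}\tfrac{1}{s}\log$ of both sides and applying Lemma \ref{lem:orbit-growth-limit} produces the desired bound $\limsup_{s\to\infty}\tfrac{1}{s}\log Z_W^{a,b}(s) \leq \delta_{PPS}^{a,b}$.

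The main technical point is the uniform upper estimate $d(fx_0, \iota(\g)fx_0) \leq l_{2}[\lambda] + 2R'$, which forces $fx_0$ to lie at a distance from $\mathrm{axis}(\iota(\g))$ bounded independently of $\g$. Using only the bilipschitz constant of $f$ would give $d(fx_0, \iota(\g)fx_0) \leq C(l_{1}[\lambda] + 2R)$, which replaces the weight $l_{2}[\lambda]$ by a multiple of $l_{1}[\lambda]$ and breaks the matching with $d^{a,b}$; it is the Morse lemma for quasigeodesics in $\H$, applied uniformly across all hyperbolic $\g$, that supplies the correct bound.
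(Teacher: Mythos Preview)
Your argument is correct and follows the same overall strategy as the paper: choose a compact lift of $W$, assign to each closed orbit $\lambda$ a hyperbolic element $\g(\lambda)$ whose axis passes through that compact, compare $e^{-al_1[\lambda]-bl_2[\lambda]}$ with $e^{-d^{a,b}(x_0,\g(\lambda)x_0)}$, and conclude $Z_W^{a,b}(s)\leq e^{c}\,G^{a,b}_{x_0,x_0}(s+2R)$, then apply Lemma~\ref{lem:orbit-growth-limit}.

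Where you differ is in the justification of the key estimate $d(fx_0,\iota(\g)fx_0)\leq l_2[\lambda]+2R'$. The paper obtains the analogous bound by a bare triangle-inequality line,
\[
\bigl|d^{a,b}(x,\g_\lambda x)-al_1[\lambda]-bl_2[\lambda]\bigr|
\leq\bigl|d^{a,b}(x,\g_\lambda x)-d^{a,b}(x_\lambda,\g_\lambda x_\lambda)\bigr|
\leq d^{a,b}(x,x_\lambda),
\]
which tacitly identifies $d^{a,b}(x_\lambda,\g_\lambda x_\lambda)$ with $al_1[\lambda]+bl_2[\lambda]$; that is not literally true, since $fx_\lambda$ need not lie on the axis of $\iota(\g_\lambda)$. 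Your route --- $f$ sends $\mathrm{axis}(\g)$ to a quasigeodesic with the same endpoints as $\mathrm{axis}(\iota(\g))$, then the Morse lemma gives a uniform Hausdorff bound $K$, hence $fx_0$ is within $CR+K$ of $\mathrm{axis}(\iota(\g))$ --- is exactly the missing ingredient that makes the inequality rigorous, and the constant is uniform in $\g$ as needed. So your proof is not merely an alternative but in fact a cleaner and more complete version of the paper's own argument on this point.
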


\begin{proof}
Let $T\widetilde{p}:T^{1}\H^{2}\to T^{1}S_{1}=\G_{1}\backslash\H$
be the projection. Since $W$ is relative compact, there exist a compact
set $K\subset\H^{2}$ such that $W\subset T\widetilde{p}(\pi^{-1}(K))$. 

\textbf{Claim: }For a fixed $x\in K$, there exists a constant $c>0$
such that 
\[
Z_{W}^{a,b}(s)\leq e^{c}G_{x,x}(s+2r)
\]
where $r$ is the diameter of $K$.

pf. To see that, first we notice that for all $s\geq0$ and $\lambda\in{\rm Per_{1}}(s)$
such that $\lambda\cap W\neq\emptyset$, there exists a hyperbolic
element $\g_{\lambda}\in\G_{1}$ such that its translation axis ${\rm Axe_{\g_{\lambda}}}$
meets $K$, it has translation length $l_{1}[\lambda]$, and $\forall$$y\in{\rm Axe}_{\g_{\lambda}}$,
the image by $T\widetilde{p}$ of the unit tangent vector at $y$
pointing towards $\g_{\lambda}y$ belongs to $\lambda$.

We remark that the number of these elements $\g_{\lambda}$ is at
least equal to the cardinality of the pointwise stabilizer of ${\rm Axe}_{\g_{\lambda}}$
(i.e., the multiplicity of $\lambda$).

Let $x_{\lambda}$ be the closest point to $x$ on ${\rm Axe_{\g_{\lambda}}}$.
We have $d_{1}(x,x_{\lambda})\leq r$, because $x\in K$ and ${\rm Axe}_{\g_{\lambda}}\cap K\neq\emptyset$.
Thus by the triangle inequality, we know 
\[
l_{1}[\lambda]\leq d_{1}(x,\g_{\lambda}x)\leq d_{1}(x,x_{\lambda})+d_{1}(x_{\lambda},\g_{\lambda}x_{\lambda})+d(\g_{\lambda}x,\g_{\lambda}x_{\lambda})\leq l_{1}[\lambda]+2r\leq s+2r.
\]

Moreover, 
\begin{alignat*}{1}
\left|-ad_{1}(x,\g_{\lambda}x)-bd_{2}(x,\g_{\lambda}x)-al_{1}[\lambda]-bl_{2}[\lambda]\right| & \leq\left|-d^{a,b}(x,\g_{\lambda}x)-(-d^{a,b}(x_{\lambda},\g_{\lambda}x_{\lambda})\right|\\
 & \leq\left|d^{a,b}(x,x_{\lambda})\right|<c_{5}'.
\end{alignat*}

Hence,

\[
Z_{W}^{a,b}(s)\leq e^{c_{5}'}G_{x,x}(s+2r).
\]

\end{proof}

\begin{lem}
For any relatively compact open set $W\subset T^{1}S_{1}$, we have
\[
\liminf_{s\to\infty}\frac{1}{s}\log Z_{W}^{a,b}(s)\geq\delta_{PPS}^{a,b}.
\]

\end{lem}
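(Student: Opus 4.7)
The plan is to reverse the inequality of the preceding lemma by producing, for each group element $\gamma$ in a suitably restricted subset of $\G_{1}$, a closed geodesic on $S_{1}$ that meets $W$ and whose $(a,b)$-weighted length differs from $d^{a,b}(o,\gamma o)$ by a bounded amount. Since Lemma \ref{lem:pps_b} identifies the exponential growth rate of $b_{o,o,U',V'}$ with $\delta_{PPS}^{a,b}$ for any admissible open sets $U',V'$, it will suffice to bound $Z_{W}^{a,b}(s)$ below, up to a polynomial factor and a bounded exponential shift, by $b_{o,o,U',V'}(s)$.

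First, I would fix $\tilde v \in T^{1}\H$ whose image in $T^{1}S_{1}$ lies in $W$, set $o = \pi(\tilde v)$, and let $\xi^{\pm} \in \vbdy\H$ denote the forward and backward endpoints of the geodesic through $\tilde v$. I then choose disjoint open neighborhoods $U' \ni \xi^{+}$ and $V' \ni \xi^{-}$ in $\overline{\H}$ small enough that every bi-infinite geodesic with endpoints in $\overline{U'}\cap\vbdy\H$ and $\overline{V'}\cap\vbdy\H$ passes through an open ball $B(o,r)$ whose projection to $T^{1}S_{1}$ lies in $W$. Shrinking $U',V'$ further if necessary, any $\gamma \in B_{o,o,U',V'}(s)$ with $d_{1}(o,\gamma o)$ sufficiently large is hyperbolic with attracting fixed point in $\overline{U'}\cap\vbdy\H$ and repelling fixed point in $\overline{V'}\cap\vbdy\H$: indeed, $\gamma o$ close to $\xi^{+}$ and $\gamma^{-1}o$ close to $\xi^{-}$ rules out parabolicity (both images would otherwise accumulate at the same fixed point), and Lemma \ref{lem:d(x,y)>d(x,o)+d(y,0)+C} together with Gromov hyperbolicity of $\H$ pins the two fixed points into the prescribed compact sets.

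Consequently $\Axe_{\gamma}$ meets $B(o,r)$ and projects to a closed geodesic $\lambda_{\gamma}$ with $\lambda_{\gamma} \cap W \neq \emptyset$. Letting $o_{\gamma}$ be the foot of $o$ on $\Axe_{\gamma}$, we have $d_{1}(o,o_{\gamma}) \leq r$, so a direct triangle inequality gives $|d_{1}(o,\gamma o) - l_{1}[\lambda_{\gamma}]| \leq 2r$. Applying the $\iota$-equivariant bi-Lipschitz map $f:\H\to\H$ from Theorem \ref{thm: bilipsthitz } (with constant $C \geq 1$) yields the analogous bound $|d_{2}(o,\gamma o) - l_{2}[\lambda_{\gamma}]| \leq 2Cr$, hence
\[
|d^{a,b}(o,\gamma o) - (al_{1}[\lambda_{\gamma}] + bl_{2}[\lambda_{\gamma}])| \leq C_{W} := 2r(a+bC).
\]
Moreover $\gamma \mapsto \lambda_{\gamma}$ is at most polynomially-to-one: if $g\gamma g^{-1}$ also belongs to $B_{o,o,U',V'}(s)$, then $g^{-1}o$ must lie within distance $\approx r$ of $\Axe_{\gamma}$, and discreteness of $\G_{1}$ forces at most $O(s)$ such conjugators inside a tube of length $s$ and bounded radius.

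Combining these ingredients,
\[
Z_{W}^{a,b}(s+C_{W}) \;\geq\; \frac{e^{-C_{W}}}{P(s)}\, b_{o,o,U',V'}(s)
\]
for some polynomial $P$ and all $s$ large enough; taking $\liminf$ and using Lemma \ref{lem:pps_b} gives the desired $\liminf_{s\to\infty}\tfrac{1}{s}\log Z_{W}^{a,b}(s) \geq \delta_{PPS}^{a,b}$, because $\tfrac{1}{s}\log P(s)\to 0$. The main obstacle is the north-south/axis-localization step: verifying that the constraints $\gamma o \in U'$ and $\gamma^{-1}o \in V'$, combined with large translation distance, genuinely force $\gamma$ to be hyperbolic with fixed points in the designated compact sets and with axis crossing $W$. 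This is precisely where the extended Schottky structure and Lemma \ref{lem:d(x,y)>d(x,o)+d(y,0)+C} are used in an essential way.
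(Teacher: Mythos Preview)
Your proposal is correct and follows essentially the same route as the paper: pick a lift $\tilde v$ of a vector in $W$ with endpoints in the limit set, choose small neighborhoods $U',V'$ of its endpoints so that elements of $B_{o,o,U',V'}(s)$ are hyperbolic with axis passing near $o$ (hence $\lambda_\gamma\cap W\neq\emptyset$), compare $d^{a,b}(o,\gamma o)$ with $al_1[\lambda_\gamma]+bl_2[\lambda_\gamma]$ up to a bounded error, bound the multiplicity of $\gamma\mapsto\lambda_\gamma$ linearly in $s$, and finish via Lemma \ref{lem:pps_b}. The only cosmetic differences are that the paper cites \cite{Pollicott:2012ud} and \cite{Ghys:1990ux} directly for the north-south/axis-localization step (this is a general hyperbolic-geometry fact, not special to the Schottky structure), and obtains the $l_2$-comparison by the same argument on $S_2$ rather than via the bi-Lipschitz map $f$.
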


\begin{proof}
Let $v\in T^{1}\H$ such that $(v^{-},v^{+})\in\Lambda^{1}\times\Lambda^{1}\backslash{\rm diagonal}$
and $T\widetilde{p}(v)\in W$, and let $x=\pi(v)$. 

\textbf{Claim:} There exists a constant $c>0$ such that 
\[
Z_{W}^{a,b}(s)\geq c\cdot b_{x,x,U',V'}(s).
\]

pf. Firstly, using the standard arguments (cf. Lemma 2.8 \cite{Pollicott:2012ud}
or P.150-151 \cite{Ghys:1990ux}), there exist small neighborhoods
$U'$ and $V'$ in $\H\cup\vbdy\H$ of $v^{+}$ and $v^{-}$, respectively,
such that if $\g\in\G_{1}$ satisfying $\g x\in U'$ and $\g^{-1}y\in V'$,
then $\g$ is a hyperbolic element and $v$ is close to the translations
axis ${\rm Axe}_{\g}$. Also, if $v_{\g}$ is the unit tangent vector
at $x_{\g}$ pointing $\g x_{\g}$, then we know $p(v_{\g})\in W$
(recall $W$ is open). Note that $U'\cap\Lambda_{1}\neq\emptyset$
and $V'\cap\Lambda_{1}\neq\emptyset$. 

Let $\g\in\G_{1}$ such that $d_{1}(x,\g x)\leq s$, $\g x\in U'$
and $\g^{-1}x\in V'$. Since $\g$ is hyperbolic, the corresponding
orbit $\lambda_{\g}$ is a closed orbit, and its length satisfies
\[
d_{1}(x,\g x)-2\leq l_{1}[\lambda_{\g}]\leq d_{1}(x,\g x)\leq s.
\]
Similarly, there exists a constant $c''>0$ such that $d_{2}(x,\g x)-c''\leq l_{2}[\lambda_{\g}]\leq d_{2}(x,\g x)$. 

Thus, there exists $c'_{6}\geq0$ such that

\begin{alignat*}{1}
\left|al_{1}[\lambda_{\g}]+bl_{2}[\lambda_{\g}]-d^{a,b}(x,\g x)\right| & =\left|a(d_{1}(x_{\g},\g x_{\g})-d_{1}(x,\g x))+b(d_{2}(x_{\g},\g x_{\g})-d_{2}(x,\g x))\right|\\
 & \leq2a+bc''=c'_{6}
\end{alignat*}

Notice that because the number of times a closed geodesic passes close
to a given point is at most linear in its length, we know the cardinality
of the fibers of the map $\g\to\lambda_{\g}$ is at most $c'_{7}s$
for some constant $c'_{7}>0$. Hence, we have 

\[
Z_{W}^{a,b}(s)\geq\frac{e^{-c_{6}'}}{c_{7}'}b_{x,x,U',V'}(s).
\]

\end{proof}

\subsection*{The Proof of Theorem \ref{thm:t_ab=00003Ddelta_ab}}

We continue using the same assumption as in the above subsection.

\begin{defn*}
$\ $\begin{enumerate}[font=\normalfont]

\item$\overline{G}_{x,y}^{a,b}(s):=\#\{\g\in G:\ d^{a,b}(x,\g\cdot y)\leq s\}$;

\item$\overline{G}_{x,y,1}^{a,b}(s):={\displaystyle \#\{\g\in G:\ s-1<d^{a,b}(x,\g\cdot y)\leq s\}}$;

\item$\overline{A}_{x,y,U',s}:=\{\g\in G:\ d^{a,b}(x,\g y)\leq s\ {\rm and}\ \rho_{1}(\g)y\in U'\}$
where $U'$ is an open set in $\vbdy\H\times\H$;

\item$\overline{a}_{x,y,U'}(s):=\#\overline{A}_{x,y,U',s}$;

\item${\displaystyle \overline{B}_{x,y,U',V',s}:=\{\g\in G:\ d^{a,b}(x,\g y)\leq s,\ \rho_{1}(\g)y\in U'\ {\rm and\ }\rho_{1}(\g^{-1})x\in V'\}}$
where $U',V'$ are open sets in $\vbdy\H\times\H$; and

\item$\overline{b}_{x,y,U',V'}(s):=\#\overline{B}_{x,y,U',V',s}.$

\end{enumerate}
\end{defn*}

Recall that $\delta^{a,b}$ is the critical exponent of $Q^{a,b}(s)={\displaystyle \sum_{\g\in G}\exp(-s\cdot d^{a,b}(x,\g y))}$,
i.e., $Q^{a,b}(s)$ converges when $s>\delta^{a,b}$ and diverges
when $s<\delta^{a,b}$.

\begin{thm*}
[Theorem  \ref{thm:t_ab=00003Ddelta_ab}]we have 
\[
\delta^{a,b}=\lim_{s\to\infty}\frac{1}{s}\log\overline{G}_{x,y}^{a,b}(s)=\lim_{s\to\infty}\frac{1}{s}\log\overline{a}_{x,y,U'}(s)=\lim_{s\to\infty}\frac{1}{s}\log\overline{b}_{x,y,U',V'}(s)
\]

\end{thm*}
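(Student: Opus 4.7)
The plan is to mirror the proofs of Lemma~\ref{lem:infG-delta-supG}, Lemma~\ref{lem:orbit-growth-limit}, Lemma~\ref{lem:pps_a}, and Lemma~\ref{lem:pps_b} essentially verbatim, but replacing $d_1(x,\gamma y)$ by $d^{a,b}(x,\gamma y)$ as the quantity that cuts off annuli, and removing the $e^{-d^{a,b}}$ weights so that pure counting appears. To start, I introduce $\overline{G}^{a,b}_{x,y,1}(n):=\#\{\gamma\in G: n-1<d^{a,b}(x,\gamma y)\le n\}$. Grouping $Q^{a,b}(s)=\sum_\gamma e^{-sd^{a,b}(x,\gamma y)}$ by annulus gives
\[
\sum_n e^{-sn}\,\overline{G}^{a,b}_{x,y,1}(n)\;\le\;Q^{a,b}(s)\;\le\;\sum_n e^{-s(n-1)}\,\overline{G}^{a,b}_{x,y,1}(n),
\]
and a direct comparison (exactly as in Lemma~\ref{lem:infG-delta-supG}) yields
\[
\liminf_{n\to\infty}\tfrac{1}{n}\log\overline{G}^{a,b}_{x,y,1}(n)\;\le\;\delta^{a,b}\;\le\;\limsup_{n\to\infty}\tfrac{1}{n}\log\overline{G}^{a,b}_{x,y,1}(n).
\]

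The main technical step is to upgrade this $\limsup$ to a genuine limit via Lemma~\ref{lem:super-multiplicativity}. I must produce constants $M>0$ and $N\in\N$ with
\[
\overline{G}^{a,b}_{x,y,1}(n)\cdot\overline{G}^{a,b}_{x,y,1}(m)\;\le\;M\sum_{i=-N}^{N}\overline{G}^{a,b}_{x,y,1}(n+m+i).
\]
Given $\gamma_n,\gamma_m$ in the respective annuli (taking $x=y=o$), I pick a generator $\alpha\in\mathcal{A}^{\pm}$ distinct from the last letter of $\gamma_n$ and the first letter of $\gamma_m$; the extended Schottky hypothesis $N_1+N_2\ge 3$ (so $|\mathcal{A}^{\pm}|\ge 5$) guarantees such an $\alpha$ exists. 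Applying Lemma~\ref{lem:d(x,y)>d(x,o)+d(y,0)+C} in each of the two hyperbolic factors produces constants $C_1,C_2$ with
\[
\bigl|\,d^{a,b}(o,\gamma_n\rho_1(\alpha)\gamma_m o)-d^{a,b}(o,\gamma_n o)-d^{a,b}(o,\gamma_m o)\,\bigr|\;\le\;aC_1+bC_2,
\]
so the concatenation lands in an annulus indexed by $n+m+i$ with $|i|\le aC_1+bC_2+2$. The map $(\gamma_n,\gamma_m)\mapsto \gamma_n\rho_1(\alpha)\gamma_m$ is uniformly finite-to-one by the standard discreteness argument (Lemma~\ref{lem:not_too_far} forces any two preimages to lie in a common ball of radius $2(D+1)$ around $\gamma_n o$, and $\G_1$ is discrete). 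This closes the super-multiplicativity estimate and gives $\delta^{a,b}=\lim_{s\to\infty}\tfrac{1}{s}\log\overline{G}^{a,b}_{x,y}(s)$.

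For the angular refinements $\overline{a}_{x,y,U'}$ and $\overline{b}_{x,y,U',V'}$, the argument is identical to that of Lemma~\ref{lem:pps_a} and Lemma~\ref{lem:pps_b}. Compactness of $\Lambda_1$ lets me cover it by finitely many $\G_1$-translates $\gamma_1U',\dots,\gamma_kU'$ (and similarly $\alpha_1V',\dots,\alpha_lV'$), and the equivariance identities $\overline{a}_{x,y,\gamma U'}(s)=\overline{a}_{\gamma^{-1}x,y,U'}(s)$ together with $\overline{b}_{x,y,U',\alpha V'}(s)=\overline{b}_{x,\alpha^{-1}y,U',V'}(s)$ combined with the triangle inequality for $d^{a,b}$ (each translation shifts the cutoff $s$ by a bounded additive constant, and therefore changes the counting function only by a bounded multiplicative factor) force the three exponential growth rates to agree. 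The main obstacle is the super-multiplicativity step: the whole argument depends on being able to splice two group elements with a buffer letter $\alpha$ without collapsing their contributions in either hyperbolic metric, and it is precisely the extended Schottky condition together with Lemma~\ref{lem:d(x,y)>d(x,o)+d(y,0)+C} that makes this possible.
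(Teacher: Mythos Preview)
Your proposal is correct and follows essentially the same route as the paper's own proof in the Appendix: the paper also mirrors Lemmas~\ref{lem:infG-delta-supG}--\ref{lem:pps_b}, replacing the $d_1$-annuli by $d^{a,b}$-annuli and dropping the exponential weights to obtain pure counting, and it uses exactly the same buffer-letter splicing via Lemma~\ref{lem:d(x,y)>d(x,o)+d(y,0)+C} together with Lemma~\ref{lem:super-multiplicativity}. One small correction: the inequality $|\mathcal{A}^{\pm}|\geq 5$ need not hold (take $N_1=0$, $N_2=3$), but the argument only requires $|\mathcal{A}|\geq 3$ to find an $\alpha$ avoiding the two problematic letters, which is exactly the extended Schottky hypothesis $N_1+N_2\geq 3$.
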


Let $\iota:\rho_{1}(G)\to\rho_{2}(G)$ be the boundary-preserving
isomorphism. For $\g\in\rho_{1}(G):=\G_{1}$ the weighted distance
can be written as $d^{a,b}(x,\g y)=d(x,\g y)+d(x,\iota(\rho_{1}(\g))y)$.
Therefore, the above growth rates can be equivalently defined as:
\begin{itemize}

\item$\overline{G}_{x,y}^{a,b}(s):=\#\{\g\in\G_{1}:\ d^{a,b}(x,\g\cdot y)\leq s\}$.

\item$\overline{G}_{x,y,1}^{a,b}(s):=\#\{\g\in\G_{1}:\ s-1<d^{a,b}(x,\g\cdot y)\leq s\}.$

\end{itemize}

The proof is given by the following lemmas.

\begin{lem}
\[
\liminf_{s\to\infty}\frac{1}{s}\log\overline{G}_{x,y,1}^{a,b}(s)\leq\delta^{a,b}\leq\limsup_{s\to\infty}\frac{1}{s}\log\overline{G}_{x,y,1}^{a,b}(s).
\]

\end{lem}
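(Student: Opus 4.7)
The plan is to reproduce verbatim the argument of Lemma \ref{lem:infG-delta-supG}, with the geometric distance $d_{1}(x,\g y)$ there replaced by the weighted Manhattan distance $d^{a,b}(x,\g y)$ throughout, and with $G_{x,y,1}^{a,b}$ (which filters by $d_1$) replaced by $\overline{G}_{x,y,1}^{a,b}$ (which filters by $d^{a,b}$). No new geometric input is needed; only the elementary comparison between a Dirichlet-type series and its annular counting sequence.

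Concretely, partition $G$ into annuli
\[
\overline{E}_{n}:=\{\g\in G:\ n-1<d^{a,b}(x,\g y)\leq n\},\qquad n\in\N,
\]
so that by definition $\#\overline{E}_{n}=\overline{G}_{x,y,1}^{a,b}(n)$. For $\g\in\overline{E}_n$ and $t>0$ one has the two-sided bound $e^{-tn}\leq e^{-t\,d^{a,b}(x,\g y)}\leq e^{-t(n-1)}$, which upon summing over $\g\in\overline{E}_n$ and then over $n$ yields
\[
\sum_{n\geq 1}e^{-tn}\,\overline{G}_{x,y,1}^{a,b}(n)\ \leq\ Q^{a,b}(t)\ \leq\ \sum_{n\geq 1}e^{-t(n-1)}\,\overline{G}_{x,y,1}^{a,b}(n).
\]

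From these two-sided bounds the conclusion is immediate by the same dichotomy used in Lemma \ref{lem:infG-delta-supG}. If $t>\Delta:=\limsup_{n\to\infty}\tfrac{1}{n}\log\overline{G}_{x,y,1}^{a,b}(n)$, pick $\varepsilon=(t-\Delta)/2$ and $N$ so that $\overline{G}_{x,y,1}^{a,b}(n)\leq e^{n(\Delta+\varepsilon)}$ for $n\geq N$; then the upper bound on $Q^{a,b}(t)$ reduces to a convergent geometric series, so $Q^{a,b}(t)<\infty$ and hence $\delta^{a,b}\leq t$. Letting $t\downarrow\Delta$ gives $\delta^{a,b}\leq\Delta$, the right-hand inequality of the statement. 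Dually, if $t<\overline{\Delta}:=\liminf_{n\to\infty}\tfrac{1}{n}\log\overline{G}_{x,y,1}^{a,b}(n)$, pick $\varepsilon=(\overline{\Delta}-t)/2$ and $N'$ so that $\overline{G}_{x,y,1}^{a,b}(n)\geq e^{n(\overline{\Delta}-\varepsilon)}$ for $n\geq N'$; then the lower bound on $Q^{a,b}(t)$ is bounded below by a divergent geometric series, so $\delta^{a,b}\geq t$, and letting $t\uparrow\overline{\Delta}$ gives $\delta^{a,b}\geq\overline{\Delta}$.

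There is no real obstacle here: this is the standard Cauchy--Hadamard-style comparison between a series $\sum_n a_n e^{-tn}$ and its coefficient sequence $a_n$, applied with $a_n=\overline{G}_{x,y,1}^{a,b}(n)$. The only thing to verify is that the bounds are truly independent of the choice of reference points $x,y$, which is immediate from the triangle inequality for $d^{a,b}$: changing $(x,y)$ shifts $d^{a,b}(x,\g y)$ by a bounded additive constant and therefore affects $\overline{G}_{x,y,1}^{a,b}(n)$ only by absorbing finitely many neighboring annuli, which does not change the $\limsup$ or $\liminf$ of $\tfrac{1}{n}\log\overline{G}_{x,y,1}^{a,b}(n)$.
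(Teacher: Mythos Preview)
Your proposal is correct and follows exactly the approach the paper intends: the paper's own proof simply says to repeat the argument of Lemma \ref{lem:infG-delta-supG} with $Q_{PPS,x,y}^{a,b}$, $G_{x,y,1}^{a,b}$, $\delta_{PPS}^{a,b}$ replaced by $Q^{a,b}$, $\overline{G}_{x,y,1}^{a,b}$, $\delta^{a,b}$, which is precisely what you have written out in detail. Your added remark on independence of base points is a harmless bonus and not needed for the statement as formulated.
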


\begin{proof}
The proof of Lemma \ref{lem:infG-delta-supG} works here. One just
need to replace $Q_{PPS,x,y}^{a,b}(s)$ by $Q^{a,b}(s)$, $G_{x,y,1}^{a,b}(s)$
by $\overline{G}_{x,y,1}^{a,b}(s)$ and $\delta_{PPS}^{a,b}$ by $\delta^{a,b}$. 
\end{proof}

\begin{lem}
The above inequalities are indeed equalities. Moreover, 
\[
\delta^{a,b}=\lim_{n\to\infty}\frac{1}{n}\log\overline{G}_{x,y}^{a,b}(s).
\]

\end{lem}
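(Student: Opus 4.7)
The plan is to mirror the proof of Lemma \ref{lem:orbit-growth-limit}, with the weighted distance $d^{a,b}$ replacing $d_1$ and the unweighted count replacing the weighted sum. Set $b_n := \#E_n$, where $E_n := \{\gamma \in \G_1 : n-1 < d^{a,b}(o, \gamma o) \leq n\}$, and $a_n := \sum_{k=0}^{n-1} b_k$, which agrees with $\overline{G}^{a,b}_{o,o}(n)$ up to a bounded shift. By Lemma \ref{lem:super-multiplicativity}, it suffices to show that there exist $M > 0$, $N \in \N$ with
\[
b_n b_m \leq M \sum_{i=-N}^{N} b_{n+m+i}
\]
for all $n, m \in \N$. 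Granting this, $L := \lim_{n \to \infty} a_n^{1/n}$ exists, and hence so does $\lim_{n \to \infty} \frac{1}{n} \log \overline{G}^{a,b}_{o,o}(n) = \log L$.

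For the super-multiplicativity, I would use the concatenation trick of Lemma \ref{lem:orbit-growth-limit}. For each pair $(\gamma_n, \gamma_m) \in E_n \times E_m$, pick $\alpha \in {\cal A}^\pm$ distinct (up to inverse) from the last letter of $\gamma_n$ and the first letter of $\gamma_m$; this is possible since $N_1 + N_2 \geq 3$, and by pigeonhole a single $\alpha$ serves at least a $1/|{\cal A}^\pm|$-fraction of the pairs. Applying Lemma \ref{lem:d(x,y)>d(x,o)+d(y,0)+C} separately inside the extended Schottky groups $\G_1 = \rho_1(G)$ (for $d_1$) and $\G_2 = \iota(\G_1)$ (for $d_2(x, \gamma y) = d(fx, \iota(\gamma) fy)$) yields constants $C_1, C_2$ with
\[
\left|d_i(o, \gamma_n \rho_1(\alpha) \gamma_m \cdot o) - d_i(o, \gamma_n o) - d_i(o, \gamma_m o)\right| \leq C_i, \quad i = 1, 2.
\]
Weighting and summing, the concatenation $\Psi : (\gamma_n, \gamma_m) \mapsto \gamma_n \rho_1(\alpha) \gamma_m$ sends $E_n \times E_m$ into $\bigcup_{|i| \leq N} E_{n+m+i}$ with $N = \lceil aC_1 + bC_2 \rceil + 2$.

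The main obstacle is bounding the fibers of $\Psi$. In Lemma \ref{lem:orbit-growth-limit}, this used Lemma \ref{lem:not_too_far} together with the tightness $d_1(o, \gamma_n o) \approx n$. Here $d^{a,b}(o, \gamma_n o) \in (n-1, n]$ does not pin down $d_1(o, \gamma_n o)$ (bilipschitz flexibility allows $\Theta(n)$ variation), so candidates need not cluster geometrically. I would argue combinatorially: a fixed $\gamma \in \G_1$ factors as $\gamma_n \rho_1(\alpha) \gamma_m$ exactly at occurrences of $\alpha$ as a single letter in the reduced extended Schottky word of $\gamma$, and different occurrences give prefixes of strictly increasing length. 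The Markov condition $a_i \neq a_{i+1}^\pm$ forces at least one letter $g^m$ ($g \in {\cal A}$, $m \in \Z^*$) between consecutive $\alpha$'s; since $\min_{g, m} d^{a,b}(o, g^m o) > 0$, a further application of Lemma \ref{lem:d(x,y)>d(x,o)+d(y,0)+C} shows consecutive prefix $d^{a,b}$-values grow by at least some fixed $c_0 > 0$. Hence at most $\lceil 1/c_0 \rceil + 1$ factorizations can place $\gamma_n$ in $E_n$, bounding the fibers of $\Psi$ uniformly.

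Finally, to identify $\log L$ with $\delta^{a,b}$, observe that $Q^{a,b}(t) = \sum_\gamma e^{-t d^{a,b}(o, \gamma o)}$ is comparable to $\sum_n b_n e^{-tn}$, which by Cauchy--Hadamard converges iff $t > \log \limsup b_n^{1/n}$. The estimates $b_n \leq a_{n+1}$ and $a_n \leq n \cdot \max_{k < n} b_k$ combined with $a_n^{1/n} \to L$ force $\limsup b_n^{1/n} = L$, hence $\delta^{a,b} = \log L = \lim_{n \to \infty} \frac{1}{n} \log \overline{G}^{a,b}_{o,o}(n)$, as claimed. The sandwich from the preceding lemma then pins $\limsup_{s \to \infty} \frac{1}{s} \log \overline{G}^{a,b}_{o,o,1}(s) = \delta^{a,b}$, upgrading the upper inequality there to an equality.
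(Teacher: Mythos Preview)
Your overall architecture matches the paper exactly: super-multiplicativity via Lemma \ref{lem:super-multiplicativity}, concatenation $(\gamma_n,\gamma_m)\mapsto\gamma_n\alpha\gamma_m$ with $\alpha\in\mathcal{A}^{\pm}$, and almost-additivity from Lemma \ref{lem:d(x,y)>d(x,o)+d(y,0)+C} applied in each factor $\rho_1,\rho_2$. The identification of $\log L$ with $\delta^{a,b}$ at the end is also fine.

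The gap is in your fiber bound. You correctly see that $d^{a,b}(o,\gamma_n o)\in(n-1,n]$ does not pin down $d_1$ alone, but your combinatorial fix does not close. Lemma \ref{lem:d(x,y)>d(x,o)+d(y,0)+C} gives $d^{a,b}(o,\gamma_n g^m o)\geq d^{a,b}(o,\gamma_n o)+d^{a,b}(o,g^m o)-(aC_1+bC_2)$, so the increment you call $c_0$ is only $\geq \min_{g,m} d^{a,b}(o,g^m o)-(aC_1+bC_2)$, and there is no reason this is positive; the Schottky constant can exceed the shortest displacement. Iterating over $K$ consecutive splits only makes this worse, accumulating $(K-1)(aC_1+bC_2)$ in the error.

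What you dismissed is actually what works. If $\gamma_n,\gamma_n'\in\overline{E}_n$ both arise as prefixes of the same reduced Schottky word $\gamma$, then one is a coding-prefix of the other, say $\gamma_n'=\gamma_n\beta$. A \emph{single} application of almost-additivity in each factor gives $d_i(o,\gamma_n' o)-d_i(o,\gamma_n o)\in[d_i(o,\beta o)-C_i,\,d_i(o,\beta o)]$, and in particular both differences are $\geq -C_i$. Combined with $|d^{a,b}(o,\gamma_n' o)-d^{a,b}(o,\gamma_n o)|\leq 1$ and $a,b\geq 0$, this forces $a\,d_1(o,\beta o)+b\,d_2(o,\beta o)\leq 1+aC_1+bC_2$, hence each $d_i(o,\beta o)$ is bounded by a constant depending only on $a,b,C_1,C_2$. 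Discreteness of $\Gamma_1$ then bounds the number of such $\beta$, and hence the fiber. This is precisely the mechanism behind the paper's use of Lemma \ref{lem:not_too_far}: the prefix ordering makes the two coordinate displacements move with the \emph{same} sign, which is what lets the single weighted constraint control them both. Your claim that ``candidates need not cluster geometrically'' overlooks this monotonicity.
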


\begin{proof}
This proof is identical with the proof of Lemma \ref{lem:orbit-growth-limit}.

We notice that by the triangle inequality, it is obvious that the
$\limsup_{s\to\infty}\frac{1}{s}\log\overline{G}_{x,y}^{a,b}(s)$
does not depend on the reference point $x$ and $y$. W.l.o.g., we
pick $x=y=o$. Recall the generating set of the extended Schottky
group $G=\pi_{1}S$ is ${\cal A}=\{h_{1},..,h_{N_{1}},p_{1},...,p_{N_{2}}\}$
with $N_{1}+N_{2}\geq3$. 

Let \begin{itemize}

\item$\overline{E}_{n}:=\{\g\in G:\ n-1<d^{a,b}(o,\g o)\leq n\}$.

\item $b_{n}:=\overline{G}_{x,y,1}^{a,b}(n)={\displaystyle \sum_{\g\in E_{n}}e^{-d^{a,b}(o,\g o)}}.$

\end{itemize}

By Lemma \ref{lem:super-multiplicativity}, it is enough prove that
there exist $M>0$ and $N\in\N$ such that for all $n,m\in\N$, we
have 
\[
b_{n}b_{m}\leq M\sum_{i=-N}^{i=N}b_{n+m+i}.
\]

\textbf{Claim: }There exist $N\in\N$ and $M>o$ such that $\#\overline{E}_{n}\times\#\overline{E}_{m}\leq M\cdot\sum_{i=-N}^{i=N}\#\overline{E}_{n+m+i}$

pf. Let $\g_{n}\in\overline{E}_{n}$ and $\g_{m}\in\overline{E}_{m}$,
by Lemma \ref{lem:d(x,y)>d(x,o)+d(y,0)+C}, there exists $\alpha\in{\cal A}$
such that 
\[
\left|d^{a,b}(o,\g_{n}\alpha\g_{m}o)-d^{a,b}(o,\g_{n}o)-d^{a,b}(o,\g_{m}o)\right|<aC_{1}+bC_{2}
\]
where $C_{1}$ only depending on $\rho_{1}$ and $C_{2}$ only depending
on $\rho_{2}$ (same $C_{1},C_{2}$ in the proof of Lemma \ref{lem:orbit-growth-limit}). 

Thus, 
\[
n+m-aC_{1}+bC_{2}-2<d^{a,b}(o,\g_{n}\alpha\g_{m}o)\leq n+m+aC_{1}+bC_{2}+2,
\]

Let us consider the map

\begin{align*}
\overline{\Psi}:\overline{E}_{n}\times\overline{E}_{m} & \to\sum_{i=-(C_{1}+aC_{1}+bC_{2}+2)}^{i=C_{1}+aC_{1}+bC_{2}+2}\#\overline{E}_{n+m+i}\\
(\g_{n},\g_{m}) & \mapsto\g_{n}\alpha\g_{m}
\end{align*}

This maps is obvious not one-to-one. Nevertheless, we claim $\#\overline{\Psi}^{-1}(\g_{n}\alpha\g_{m})$
is finite. By Lemma \ref{lem:not_too_far}, we know that for $i=1,2$,
$d(\rho_{i}(\g_{n})o,[o,\rho_{i}(\g_{n}\alpha\g_{m})o])\leq D'$,
which implies if there exist $\g_{n}'\in\overline{E}_{n}$ and $\g_{m}^{'}\in\overline{E}_{m}$
such that $\g'_{n}\alpha\g_{m}'=\g{}_{n}\alpha\g_{m}$ then $d^{a,b}(\g_{n}o,\g_{n}'o)\leq D''$
where $D'$ and $D''$ are constants only depending on $a,$ $b,$
$\rho_{1}$ and $\rho_{2}$. Moreover, by the discreteness of $\rho_{i}(\G)$,
the set $\{\g\in G:\ d^{a,b}(\g o,o)\leq2(D''+1)\}$ is finite (say,
smaller than or equal to $\overline{M}_{1}$). Hence $\#\overline{\Psi}^{-1}(\g_{n}\alpha\g_{m})\leq\overline{M}_{1}^{2}$.
Therefore, 
\[
\#\overline{E}_{n}\times\#\overline{E}_{m}\leq(N_{1}+N_{2})\overline{M}_{1}^{2}\cdot\sum_{i=-(aC_{1}+bC_{2}+2)}^{i=aC_{1}+bC_{2}+2}\#\overline{E}_{n+m+i}
\]
 where $N_{1}+N_{2}$ is the cardinality of ${\cal A}$. 

Moreover, we know 
\[
\left|d^{a,b}(o,\g_{n}\alpha\g_{m}o)-d^{a,b}(o,\g_{n}o)-d^{a,b}(o,\g_{m}o)\right|\leq aC_{1}+bC_{2},
\]
thus we have the lemma, more precisely, 
\[
b_{n}b_{m}\leq(N_{1}+N_{2})\overline{M}_{1}^{2}\cdot e^{aC_{1}+bC_{2}}\sum_{i=-(aC_{1}+bC_{2}+2)}^{i=aC_{1}+bC_{2}+2}b_{n+m+i}.
\]

\end{proof}

As presented in the proof of Lemma \ref{lem:all the same}, using
the following lemma one could prove e $\delta^{a,b}$ is also the
growth rate of closed geodesics. More precisely, one could prove that
the growth rate of $\overline{b}^{a,b}$ equal to the growth rate
of closed geodesics on $S_{1}$ and $S_{2}$.

\begin{lem}

\[
\delta^{a,b}=\lim_{s\to\infty}\frac{1}{s}\log\overline{a}_{x,y,U'}(s).
\]

\end{lem}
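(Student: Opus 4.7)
The plan is to mirror Lemma~\ref{lem:pps_a} in this counting (rather than weighted Poincar\'e-series) setting; in fact the argument becomes cleaner because no $e^{c}$-correction factor is needed. By the preceding lemma I already know $\delta^{a,b} = \lim_{s \to \infty} \frac{1}{s} \log \overline{G}_{x,y}^{a,b}(s)$. Since trivially $\overline{a}_{x,y,U'}(s) \le \overline{G}_{x,y}^{a,b}(s)$, this immediately gives $\limsup_{s\to\infty} \frac{1}{s} \log \overline{a}_{x,y,U'}(s) \le \delta^{a,b}$.

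For the reverse inequality I would exploit compactness of the limit set $\Lambda(\G_{1})$: pick finitely many $\gamma_{1},\ldots,\gamma_{k} \in G$ with $\Lambda(\G_{1}) \subset \bigcup_{i=1}^{k} \rho_{1}(\gamma_{i})\,U'$. Because the set ${\rm Conv}(\Lambda(\G_{1})) \setminus \bigcup_{i} \rho_{1}(\gamma_{i})\,U'$ is compact while orbit points $\rho_{1}(\gamma) y$ accumulate only on $\Lambda(\G_{1})$, there is a constant $c' \ge 0$ such that
\[
\overline{G}_{x,y}^{a,b}(s) \le c' + \sum_{i=1}^{k} \overline{a}_{x,y,\rho_{1}(\gamma_{i}) U'}(s).
\]

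The key point is the inequality $\overline{a}_{x,y,\rho_{1}(\gamma_{i}) U'}(s - r) \le \overline{a}_{x,y,U'}(s)$ where $r := \max_{i} d^{a,b}(x,\gamma_{i} x)$. I would establish it by two ingredients. First, the diagonal $G$-action is a $d^{a,b}$-isometry: using the $\iota$-equivariance of the bilipschitz map $f$ from Theorem~\ref{thm: bilipsthitz }, a direct computation gives $d^{a,b}(\gamma x,\gamma g y) = d^{a,b}(x,g y)$ for all $\gamma,g \in G$. This yields a bijection $\overline{A}_{x,y,\rho_{1}(\gamma_{i}) U',s-r} \leftrightarrow \overline{A}_{\gamma_{i}^{-1} x, y, U', s-r}$ via $g \mapsto \gamma_{i}^{-1} g$. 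Second, the triangle inequality $d^{a,b}(x, g y) \le d^{a,b}(x, \gamma_{i}^{-1} x) + d^{a,b}(\gamma_{i}^{-1} x, g y) \le r + (s-r) = s$, combined with the invariance of the condition $\rho_{1}(g) y \in U'$, yields the inclusion $\overline{A}_{\gamma_{i}^{-1} x, y, U', s-r} \subseteq \overline{A}_{x, y, U', s}$.

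Combining everything gives $\overline{G}_{x,y}^{a,b}(s - r) \le c' + k \cdot \overline{a}_{x,y,U'}(s)$; taking $\frac{1}{s} \log$ and letting $s \to \infty$ yields $\delta^{a,b} \le \liminf_{s \to \infty} \frac{1}{s} \log \overline{a}_{x,y,U'}(s)$, and the lemma follows. The only substantive point is the bookkeeping verification of the isometric-action identity, which is immediate from $f \circ \rho_{1}(\gamma) = \rho_{2}(\gamma) \circ f$; otherwise the proof is parallel to Lemma~\ref{lem:pps_a}.
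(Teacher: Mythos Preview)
Your proposal is correct and follows essentially the same route as the paper's own proof: both use compactness of $\Lambda(\Gamma_{1})$ to cover by finitely many translates $\rho_{1}(\gamma_{i})U'$, establish the key inequality $\overline{a}_{x,y,\rho_{1}(\gamma_{i})U'}(s-r)\le\overline{a}_{x,y,U'}(s)$ via the isometric-action identity and the triangle inequality for $d^{a,b}$, and then combine to bound $\overline{G}_{x,y}^{a,b}(s-r)$ by $c'+k\,\overline{a}_{x,y,U'}(s)$. Your observation that the $e^{c}$-correction of Lemma~\ref{lem:pps_a} disappears here is exactly right, and your explicit invocation of $f\circ\rho_{1}(\gamma)=\rho_{2}(\gamma)\circ f$ to justify the isometry is slightly more detailed than the paper's ``by definition'' remark.
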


\begin{proof}
This proof is identical with the proof of Lemma \ref{lem:pps_a}.

It is clear that $\overline{G}_{x,y}^{a,b}(s)\geq\overline{a}_{x,y,U'}(s)$,
so it is enough the prove that 
\[
\liminf_{s\to\infty}\frac{1}{s}\log\overline{a}_{x,y,U'}(s)\geq\delta^{a,b}.
\]

For fixed $(x,y)\in\H$ and $U'$ an open set in $\partial_{\infty}\H\cup\H$,
since $\G_{1}$ is non-elementary every orbit in $\Lambda_{1}$ is
dense. Thus, by the compactness of $\Lambda_{1}$, there exist $\g_{1},\g_{2},...,\g_{k}$
such that $\Lambda_{1}\subset\bigcup_{i=1}^{k}\g_{i}U'$. Furthermore,
since ${\rm Conv}(\Lambda_{1})\backslash\bigcup_{i=1}^{k}\g_{i}U'$
is compact, there exists a constant $c_{1}$ such that 
\[
\overline{G}_{x,y}^{a,b}(s)\leq\sum_{i=1}^{k}\overline{a}_{x,y,\g_{i}U'}(s)+c_{1}.
\]

\textbf{Claim: }$\overline{a}_{x,y,\g_{i}U'}(s-r)\leq\overline{a}_{x,y,U'}(s)$
where $r=\max\{d^{a,b}(\g_{i}x,x)\}$. 

pf. First we notice that by definition we have $\overline{a}_{x,y,U'}(s)=\overline{a}_{\g x,y,\g U'}(s).$

Therefore, we have 
\[
\overline{a}_{x,y,\g U'}(s-r)=\overline{a}_{\g_{i}^{-1}x,y,U'}(s-r):=\#\{\g\in\G_{1}:\ d^{a,b}(\g_{i}^{-1}x,\g y)\leq s-r,\ {\rm and}\ \g y\in U'\}.
\]

Notice that since $d^{a,b}$ satisfies the triangle inequality, we
know if $d^{a,b}(\g_{i}^{-1}x,\g y)\leq s-r$ for some $s\in(r,\infty)$,
then $d^{a,b}(\g y,x)<s$. So, we have 
\begin{alignat*}{1}
\overline{A}_{\g^{-1}x,y,U',s-r} & =\{\g\in\G_{1}:\ d^{a,b}(\g_{i}^{-1}x,\g y)\leq s-r,\ {\rm and}\ \g y\in U'\}\\
\subset & \{\g\in\G_{1}:\ d^{a,b}(x,\g y)\leq s,\ {\rm and}\ \g y\in U'\}=A_{x,y,U',s,}.
\end{alignat*}
Hence, we have the claim. 

Using this claim, we have 
\[
\overline{G}_{x.x}^{a,b}(s-r)\leq k\cdot\overline{a}_{x,y,U'}(s)+c_{1},
\]

and the result follows. 
\end{proof}

\begin{lem}
\[
\delta^{a,b}=\lim_{s\to\infty}\frac{1}{s}\log\overline{b}_{x,y,U',V'}(s).
\]

\end{lem}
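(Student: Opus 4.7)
The plan is to imitate the proof of Lemma~\ref{lem:pps_b} almost verbatim, swapping the weighted PPS sums $a$ and $b$ for the unweighted orbit counts $\overline{a}$ and $\overline{b}$, and invoking the previous lemma (which identifies $\lim_{s\to\infty}\frac{1}{s}\log\overline{a}_{x,y,U'}(s)=\delta^{a,b}$) in place of Lemma~\ref{lem:pps_a}. The upper bound is immediate: since $\overline{B}_{x,y,U',V',s}\subseteq \overline{A}_{x,y,U',s}$, one has $\overline{b}_{x,y,U',V'}(s)\leq \overline{a}_{x,y,U'}(s)$, so by the previous lemma $\limsup_{s\to\infty}\frac{1}{s}\log\overline{b}_{x,y,U',V'}(s)\leq \delta^{a,b}$.

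For the matching lower bound I would first use that $\rho_{1}(G)$ is non-elementary, hence every orbit is dense in $\Lambda_{1}$; by compactness of $\Lambda_{1}$ there exist $\alpha_{1},\dots,\alpha_{l}\in G$ with $\Lambda_{1}\subset\bigcup_{i=1}^{l}\rho_{1}(\alpha_{i})V'$. Since ${\rm Conv}(\Lambda_{1})\setminus\bigcup\rho_{1}(\alpha_{i})V'$ is compact, only finitely many orbit points lie outside, giving
\[
\overline{a}_{x,y,U''}(s)\ \leq\ c_{3}+\sum_{i=1}^{l}\overline{b}_{x,y,U'',\rho_{1}(\alpha_{i})V'}(s)
\]
for any relatively compact open $U''\subset \vbdy\H\cup\H$ that we are free to choose later.

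Next, I would exploit the $G$-equivariance of the counting sets, namely $\overline{b}_{x,y,U'',\rho_{1}(\alpha)V'}(s)=\overline{b}_{x,\rho_{1}(\alpha^{-1})y,U'',V'}(s)$ (obtained via the substitution $\g\mapsto\g\alpha$, which turns the condition $\rho_{1}(\g^{-1})x\in\rho_{1}(\alpha)V'$ into $\rho_{1}((\g')^{-1})x\in V'$). Then a change-of-basepoint bound via the triangle inequality for $d^{a,b}$ shows that, setting $r:=\max_{i}d^{a,b}(y,\rho_{1}(\alpha_{i}^{-1})y)$, for $s>r$
\[
\overline{b}_{x,\rho_{1}(\alpha_{i}^{-1})y,U'',V'}(s-r)\ \leq\ \overline{b}_{x,y,B_{r}U'',V'}(s),
\]
where $B_{r}U''$ is the $r$-neighborhood of $U''$. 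Choosing $U''$ small enough that $B_{r}U''\subset U'$, these estimates combine to give
\[
\overline{a}_{x,y,U''}(s-r)\ \leq\ c_{3}+l\cdot \overline{b}_{x,y,U',V'}(s).
\]
Dividing by $s$, taking $\liminf_{s\to\infty}$ of the logarithm, and using the previous lemma on the left-hand side yields $\liminf_{s\to\infty}\frac{1}{s}\log\overline{b}_{x,y,U',V'}(s)\geq \delta^{a,b}$, completing the proof.

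The main ``difficulty'' is really only bookkeeping: tracking how the neighborhoods $U'$, $V'$, $U''$ shrink or grow under the change of basepoint, and how the triangle inequality for $d^{a,b}$ introduces bounded losses $s\mapsto s\pm O(1)$ that do not affect the exponential growth rate. No new ideas beyond the previous lemma and the standard Schapira/Paulin--Pollicott--Schapira tricks recorded in Lemma~\ref{lem:pps_b} are required.
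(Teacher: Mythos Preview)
Your proposal is correct and follows essentially the same route as the paper's proof: cover $\Lambda_{1}$ by finitely many $\rho_{1}(\alpha_{i})V'$, use the equivariance $\overline{b}_{x,y,U'',\alpha V'}=\overline{b}_{x,\alpha^{-1}y,U'',V'}$, absorb the change of basepoint via the triangle inequality for $d^{a,b}$ and an $r$-thickening of $U''$, then choose $U''$ so that $B_{r}U''\subset U'$ and apply the previous lemma. In fact your write-up is slightly cleaner than the paper's, which carries over an unnecessary multiplicative factor $e^{c_{4}'}$ from the weighted Lemma~\ref{lem:pps_b}; for the pure counting functions $\overline{b}$ this factor is superfluous and your set-inclusion $\overline{B}_{x,\alpha_{i}^{-1}y,U'',V'}(s-r)\subset\overline{B}_{x,y,B_{r}U'',V'}(s)$ already gives the needed inequality with constant $1$.
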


\begin{proof}
This proof is the same one as the proof of Lemma \ref{lem:pps_b}.

We first notice that there exist $\alpha_{1},...,\alpha_{l}$ such
that $\Lambda_{1}\subset\bigcup_{i=1}^{l}\alpha_{i}V'$, and because
${\rm Conv}(\Lambda_{1})\backslash\bigcup_{i=1}^{l}\alpha_{i}V'$
is compact, we have
\[
\overline{a}_{x,y,U'}(s)\leq c_{2}+\sum_{i=1}^{l}\overline{b}_{x,y,U',\alpha_{i}V'}(s).
\]

We first notice that by definition we have 
\[
\overline{b}_{x,y,U',\g V'}(s)=\overline{b}_{x,\g^{-1}y,U',V'}(s).
\]
Then we pick $r={\rm max}\{d^{a,b}(y,\alpha_{i}y):\ 1\leq i\leq l\}$.
Notice that by the triangle inequality of $d^{a,b}$, we know if $d^{a,b}(x,\g\alpha_{i}^{-1}y)\leq s-r,$
then $d^{a,b}(x,\g y)\leq s.$ So, we have 
\[
\overline{B}_{x,\alpha_{i}^{-1}y,U',V'}(s)\subset\overline{B}_{x,y,B_{r}U',V'}
\]
where $B_{r}U'$ is the $r-$neighborhood of $U'$. 

Moreover, again by the triangle equality of $d^{a,b}$, we know
\begin{alignat*}{1}
\left|-ad_{1}(x,\g y)-bd_{2}(x,\g y)-(-ad_{1}(x,\g\alpha_{i}^{-1}x)-bd_{2}(x,\g\alpha_{i}^{-1}x))\right|\\
\leq & ad_{1}(y,\alpha_{i}^{-1}x)+bd_{2}(y,\alpha_{i}^{-1}x))\\
\leq & c_{4}'(x,y,a,b).
\end{alignat*}
Therefore, for $1\leq i\leq l$ and $s>r$, 
\[
\overline{b}_{x,\alpha_{i}^{-1}y,U',V'}(s-r)\leq e^{c_{4}'}\overline{b}_{x,y,B_{r}U',V'}(s).
\]

Lastly, pick $U''$ such that $B_{r}U''\subset U'$, then for $s>r$
\begin{align*}
\overline{a}_{x,y,U'',V'}(s-r) & \leq c_{2}+\sum_{i=1}^{l}\overline{b}_{x,y,U'',\alpha_{i}V'}(s-r)\leq c_{2}+le^{c_{4}'}\cdot\overline{b}_{x,y,B_{r}U'',V'}(s)\\
 & \leq c_{2}+le^{c_{4}'}\cdot\overline{b}_{x,y,U',V'}(s)\leq c_{2}+le^{c_{4}'}\cdot\overline{a}_{x,y,U'}(s).
\end{align*}
Taking limit of the both side and using the above lemma, we have completed
the proof. 
\end{proof}

\bibliographystyle{amsalpha}
\bibliography{/Users/nyima/Dropbox/TEX/Bibtex/Papers3_backup_20160901,Papers3_backup_20160901}

\end{document}